\newtheorem{theorem}{Theorem}
\newtheorem{corollary}[theorem]{Corollary}
\newtheorem{lemma}[theorem]{Lemma}
\newtheorem{conjecture}[theorem]{Conjecture}
\newtheorem{proposition}[theorem]{Proposition}
\newtheorem{remark}[theorem]{Remark}
\newtheorem{definition}[theorem]{Definition}
\numberwithin{theorem}{section}
\numberwithin{conjecture}{section}
\numberwithin{corollary}{section}
\numberwithin{lemma}{section}
\numberwithin{proposition}{section}
\numberwithin{remark}{section}
\numberwithin{definition}{section}
\numberwithin{figure}{section}
\numberwithin{equation}{section}
\DeclareMathOperator{\SLE}{SLE}
\DeclareMathOperator{\CLE}{CLE}
\DeclareMathOperator{\GFF}{GFF}
\DeclareMathOperator{\hF}{{}_2F_1}
\begin{document}
\title{Global and Local Multiple SLEs for $\kappa \leq 4$ and \\ Connection Probabilities for Level Lines of GFF}

{\small\author[1]{Eveliina Peltola\thanks{eveliina.peltola@unige.ch. E.P. is supported by the ERC AG COMPASP, the NCCR SwissMAP, and the Swiss~NSF.}}
\author[2]{Hao Wu\thanks{hao.wu.proba@gmail.com. H.W. is supported by the Thousand Talents Plan for Young Professionals (No. 20181710136).}}
\affil[1]{Section de Math\'{e}matiques, Universit\'{e} de Gen\`{e}ve, Switzerland}
\affil[2]{Yau Mathematical Sciences Center, Tsinghua University, China}}
\date{}

%
%
\maketitle
\begin{center}
\begin{minipage}{0.8\textwidth}
\abstract{
This article pertains to the classification of multiple Schramm-Loewner evolutions ($\SLE$).
We construct the pure partition functions of multiple $\SLE_\kappa$ with $\kappa\in (0,4]$ 
and relate them to certain extremal multiple $\SLE$ measures,
thus verifying a conjecture from~\cite{KytolaMultipleSLE, KytolaPeoltolaPurePartitionSLE}. 
We prove that the two approaches to construct multiple $\SLE$s --- the global, configurational construction
of~\cite{KozdronLawlerMultipleSLEs, LawlerPartitionFunctionsSLE} 
and the local, growth process construction
of~\cite{KytolaMultipleSLE, DubedatCommutationSLE, GrahamSLE, KytolaPeoltolaPurePartitionSLE}
--- agree.

The pure partition functions are closely related to crossing probabilities in critical 
statistical mechanics models. With explicit formulas in the special case of $\kappa = 4$, we show that 
these functions give the connection probabilities for the level lines of the Gaussian free field 
($\GFF$) with alternating boundary data. 
We also show that certain functions, known as
conformal blocks, give rise to multiple $\SLE_4$ that can be naturally coupled with the $\GFF$ with appropriate boundary data.
}
\end{minipage}
\end{center}

\newcommand{\eps}{\epsilon}
\newcommand{\ov}{\overline}
\newcommand{\U}{\mathbb{U}}
\newcommand{\T}{\mathbb{T}}
\newcommand{\HH}{\mathbb{H}}
\newcommand{\LA}{\mathcal{A}}
\newcommand{\LB}{\mathcal{B}}
\newcommand{\LC}{\mathcal{C}}
\newcommand{\LD}{\mathcal{D}}
\newcommand{\LE}{\mathcal{E}}
\newcommand{\LF}{\mathcal{F}}
\newcommand{\LK}{\mathcal{K}}
\newcommand{\LG}{\mathcal{G}}
\newcommand{\LL}{\mathcal{L}}
\newcommand{\LM}{\mathcal{M}}
\newcommand{\LQ}{\mathcal{Q}}
\newcommand{\LS}{\mathcal{S}}
\newcommand{\LU}{\mathcal{U}}
\newcommand{\LV}{\mathcal{V}}
\newcommand{\LZ}{\mathcal{Z}}
\newcommand{\LH}{\mathcal{H}}
\newcommand{\R}{\mathbb{R}}
\newcommand{\C}{\mathbb{C}}
\newcommand{\N}{\mathbb{N}}
\newcommand{\Z}{\mathbb{Z}}
\newcommand{\E}{\mathbb{E}}
\newcommand{\PP}{\mathbb{P}}
\newcommand{\QQ}{\mathbb{Q}}
\newcommand{\A}{\mathbb{A}}
\newcommand{\one}{\mathbb{1}}
\newcommand{\bn}{\mathbf{n}}
\newcommand{\MR}{MR}
\newcommand{\cond}{\,|\,}
\newcommand{\la}{\langle}
\newcommand{\ra}{\rangle}
\newcommand{\tree}{\Upsilon}
\newcommand{\dist}{\text{dist}}
\newcommand{\free}{\text{free}}

\global\long\def\ud{\mathrm{d}}
\global\long\def\der#1{\frac{\ud}{\ud#1}}
\global\long\def\pder#1{\frac{\partial}{\partial#1}}
\global\long\def\pdder#1{\frac{\partial^{2}}{\partial#1^{2}}}

\global\long\def\PartF{\mathcal{Z}}
\global\long\def\CobloF{\mathcal{U}}
\global\long\def\chamber{\mathfrak{X}}
\global\long\def\domainofdef{\mathfrak{U}}

\global\long\def\Catalan{\mathrm{C}}
\global\long\def\LP{\mathrm{LP}}
\global\long\def\DP{\mathrm{DP}}
\global\long\def\DPleq{\preceq} 
\global\long\def\DPgeq{\succeq} 
\newcommand{\wedgeat}[1]{\lozenge_#1} 
\newcommand{\upwedgeat}[1]{\wedge^#1}
\newcommand{\downwedgeat}[1]{\vee_#1}
\newcommand{\slopeat}[1]{\times_#1}
\newcommand{\removewedge}[1]{\setminus \wedgeat{#1}}
\newcommand{\removeupwedge}[1]{\setminus \upwedgeat{#1}}
\newcommand{\removedownwedge}[1]{\setminus \downwedgeat{#1}}
\newcommand{\wedgelift}[1]{\uparrow \wedgeat{#1}} 
\global\long\def\Mmat{\mathcal{M}}
\global\long\def\Minv{\mathcal{M}^{-1}}
\global\long\def\link#1#2{\{#1,#2\}}
\global\long\def\removeLink{/}
\global\long\def\nested{\boldsymbol{\underline{\Cap}}}
\global\long\def\unnested{\boldsymbol{\underline{\cap\cap}}}

\newcommand{\KWleq}{\stackrel{\scriptscriptstyle{()}}{\scriptstyle{\longleftarrow}}}
\newcommand{\CItilingsof}{\mathcal{C}}

\global\long\def\Rpos{\R_{> 0}}
\global\long\def\Znn{\Z_{\geq 0}}

\global\long\def\localSLE{\mathsf{P}}

\global\long\def\FKdual{\mathcal{L}}

\global\long\def\Test_space{C_c^\infty}
\global\long\def\Distr_space{(\Test_space)^*}

\global\long\def\im#1{\operatorname{Im}(#1)}

%

\global\long\def\gff{\Gamma}
\global\long\def\Height{\LH}

\newpage
\tableofcontents
\newpage

\section{Introduction}
\label{sec::intro}
Conformal invariance and critical phenomena in two-dimensional statistical physics have been active areas 
of research in the last few decades, both in the mathematics and physics communities.
Conformal invariance can be studied in terms of correlations and interfaces in critical models.
This article concerns conformally invariant probability measures on curves that 
describe scaling limits of interfaces in critical lattice models (with suitable boundary conditions). 

For one chordal curve between two boundary points, such scaling limit
results have been rigorously established for many models: 
critical percolation~\cite{SmirnovPercolationConformalInvariance, CamiaNewmanPercolation},
the loop-erased random walk and the uniform spanning 
tree~\cite{LawlerSchrammWernerLERWUST, Zhan:Scaling_limits_of_planar_LERW_in_finitely_connected_domains}, 
level lines of the discrete Gaussian free 
field~\cite{SchrammSheffieldDiscreteGFF, SchrammSheffieldContinuumGFF}, and
the critical Ising and FK-Ising models~\cite{CDCHKSConvergenceIsingSLE}. 
In this case, the limiting object is a random curve known as the chordal $\SLE_\kappa$ (Schramm-Loewner evolution),
uniquely characterized by a single parameter $\kappa \geq 0$ together with conformal invariance
and a domain Markov property~\cite{SchrammScalinglimitsLERWUST}. 
In general, interfaces of critical lattice models with suitable boundary conditions converge to variants of the $\SLE_\kappa$
(see, e.g.,~\cite{HonglerKytolaIsingFree} for the critical Ising model with plus-minus-free boundary conditions,
and~\cite{Zhan:Scaling_limits_of_planar_LERW_in_finitely_connected_domains} for the loop-erased random walk). 
In particular, multiple interfaces converge to several interacting $\SLE$ 
curves~\cite{IzyurovIsingMultiplyConnectedDomains, WuIsingHyperSLE, BeffaraPeltolaWuUniquenessGloableMultipleSLEs, KemppainenSmirnovFKIsing}. 
These interacting random curves cannot be classified by conformal invariance and the domain Markov property alone, 
but additional data is needed~\cite{KytolaMultipleSLE, DubedatCommutationSLE, GrahamSLE, KozdronLawlerMultipleSLEs, 
LawlerPartitionFunctionsSLE, KytolaPeoltolaPurePartitionSLE}.
Together with results in~\cite{BeffaraPeltolaWuUniquenessGloableMultipleSLEs}, 
the main results of the present article provide with a general classification for $\kappa \leq 4$.


It is also natural to ask questions about the global behavior of the interfaces, such as their crossing or connection probabilities.
In fact, such a crossing probability, known as Cardy's formula, was a crucial ingredient in the 
proof of the conformal invariance of the scaling limit of critical percolation~\cite{SmirnovPercolationConformalInvariance, CamiaNewmanPercolation}.
In Figure~\ref{fig::Ising}, a simulation of the critical Ising model with alternating boundary conditions is depicted.
The figure shows one possible connectivity of the interfaces separating the black and yellow regions, 
but when sampling from the Gibbs measure, other planar connectivities can also arise. 
One may then ask with which probability do the various connectivities occur.
For discrete models, the answer is known only for loop-erased random walks ($\kappa = 2$) and the double-dimer model 
($\kappa = 4$)~\cite{Kenyon-Wilson:Boundary_partitions_in_trees_and_dimers, 
KKP:Correlations_in_planar_LERW_and_UST_and_combinatorics_of_conformal_blocks},
whereas for instance the cases of the Ising model ($\kappa = 3$) and percolation ($\kappa = 6$) are unknown.
However, scaling limits of these connection probabilities are encoded in certain quantities related to multiple $\SLE$s,
known as pure partition functions~\cite{PeltolaWuIsing}. These functions give the Radon-Nikodym derivatives of 
multiple $\SLE$ measures with respect to product measures of independent $\SLE$s. 

\smallbreak

In this article, we construct the pure partition functions of multiple $\SLE_\kappa$ for all $\kappa \in (0,4]$ 
and show that they are smooth, positive, and (essentially) unique. 
We also relate these functions to certain extremal multiple $\SLE$ measures, 
thus verifying a conjecture from~\cite{KytolaMultipleSLE, KytolaPeoltolaPurePartitionSLE}. 
To find the pure partition functions, we give a global construction of multiple $\SLE_\kappa$ measures 
in the spirit of~\cite{KozdronLawlerMultipleSLEs, LawlerPartitionFunctionsSLE, LawlerSLENotes},
but pertaining to the complete classification of these random curves. 
We also prove that, as probability measures on curve segments, 
these ``global'' multiple $\SLE$s agree with another approach to construct and classify interacting $\SLE$ curves, 
known as ``local'' multiple $\SLE$s~\cite{KytolaMultipleSLE, DubedatCommutationSLE, GrahamSLE, KytolaPeoltolaPurePartitionSLE}. 

The $\SLE_4$ processes are known to be realized as level lines of the Gaussian free field ($\GFF$). 
In the spirit of~\cite{Kenyon-Wilson:Boundary_partitions_in_trees_and_dimers, 
KKP:Correlations_in_planar_LERW_and_UST_and_combinatorics_of_conformal_blocks}, 
we find algebraic formulas for the pure partition functions in this case and show that they
give explicitly the connection probabilities for the level lines of the $\GFF$ with alternating boundary data.
We also show that certain functions, known as conformal blocks, give rise to multiple $\SLE_4$ processes that
can be naturally coupled with the $\GFF$ with appropriate boundary data.

\begin{figure} 
\begin{center}
\includegraphics[width=.3\textwidth]{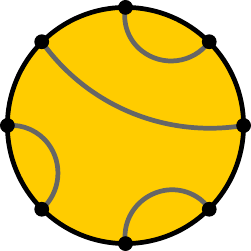} \qquad \qquad \qquad
\includegraphics[width=.3\textwidth]{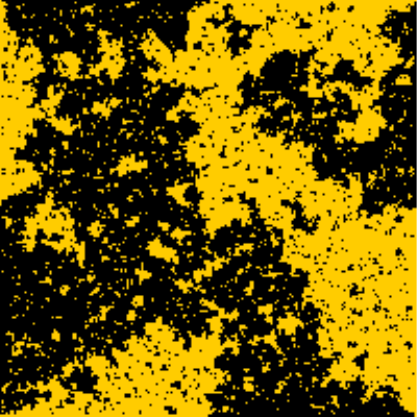}
\end{center}
\caption{Simulation of the critical Ising model with alternating boundary conditions and 
the corresponding link pattern $\alpha \in \LP_4$.}
\label{fig::Ising}
\end{figure}

\subsection{Multiple SLEs and Pure Partition Functions}

One can naturally view interfaces in discrete models as dynamical processes. 
Indeed, in his seminal article~\cite{SchrammScalinglimitsLERWUST}, O.~Schramm defined the $\SLE_\kappa$
as a random growth process (Loewner chain) whose time evolution 
is encoded in an ordinary differential equation (Loewner equation, see Section~\ref{subsec::pre_sle}). 
Using the same idea, one may generate processes of several $\SLE_\kappa$ 
curves by describing their time evolution via a Loewner chain. Such processes are \textit{local multiple} $\SLE$s:
probability measures on curve segments growing from $2N$ fixed boundary points 
$x_1, \ldots, x_{2N} \in \partial \Omega$ of a simply connected domain $\Omega \subset \C$, 
only defined up to a stopping time strictly smaller than the time when the curves touch (we call this localization). 

We prove in Theorem~\ref{thm::global_existence} that,
when $\kappa \leq 4$, localizations of global multiple $\SLE$s give rise to local multiple $\SLE$s.
Then, the $2N$ curve segments form $N$ planar, non-intersecting simple curves connecting 
the $2N$ marked boundary points pairwise, as in Figure~\ref{fig::Ising} for the critical Ising interfaces.
Topologically, these $N$ curves form a planar pair partition, which we call a \textit{link pattern} and denote by
$\alpha = \{ \link{a_1}{b_1}, \ldots, \link{a_N}{b_N} \}$, where $\link{a}{b}$ are the pairs in $\alpha$, called \textit{links}.
The set of link patterns of $N$ links on $\{1, 2, \ldots, 2N\}$ is denoted by $\LP_N$. The number of elements in $\LP_N$ 
is a Catalan number, $\#\LP_N = \Catalan_N = \frac{1}{N+1} \binom{2N}{N}$. We also denote by 
$\LP = \bigsqcup_{N \geq 0} \LP_N$ the set of link patterns of any number of links, where 
we include the empty link pattern $\emptyset \in \LP_0$ in the case $N=0$.

By the results of~\cite{DubedatCommutationSLE,KytolaPeoltolaPurePartitionSLE},
the local $N$-$\SLE_\kappa$ probability measures are classified by smooth functions $\PartF$
of the marked points, called partition functions. 
It is believed that they form a $\Catalan_N$-dimensional space, with basis given by certain special
elements $\PartF_\alpha$, called pure partition functions, indexed by 
the $\Catalan_N$ link patterns $\alpha \in \LP_N$. 
These functions can be related to scaling limits of crossing 
probabilities in discrete models --- 
see~\cite{KKP:Correlations_in_planar_LERW_and_UST_and_combinatorics_of_conformal_blocks, PeltolaWuIsing} 
and Section~\ref{subsec::ising_conjecture} below for discussions on this.
In general, however, even the existence of such functions $\PartF_\alpha$ is not clear.
We settle this problem for all $\kappa \in (0,4]$ in Theorem~\ref{thm::purepartition_existence}.

\smallbreak

To state our results, we need to introduce some definitions and notation. 
Throughout this article, we denote by $\HH=\{z\in\C \colon \im{z}>0\}$ the upper half-plane,
and we use the following real parameters: $\kappa > 0$,
\begin{align*} 
h = \frac{6-\kappa}{2\kappa} , \qquad \qquad \text{and} \qquad \qquad  c = \frac{(3\kappa-8)(6-\kappa)}{2\kappa}. 
\end{align*}

A multiple $\SLE_\kappa$ \textit{partition function} is a positive smooth function 
\begin{align*}
\PartF \colon \chamber_{2N} \to \Rpos
\end{align*}
defined on the configuration space
$\chamber_{2N} :=\; \{ (x_{1},\ldots,x_{2N}) \in \R^{2N} \colon x_{1} < \cdots < x_{2N} \} $
satisfying the following two properties: 
\begin{itemize}[align=left]
\item[$\mathrm{(PDE)}$] \textit{Partial differential equations of second order}: 
We have
\begin{align}\label{eq: multiple SLE PDEs}
\left[ \frac{\kappa}{2}\partial^2_i + \sum_{j\neq i}\left(\frac{2}{x_{j}-x_{i}}\partial_j - 
\frac{2h}{(x_{j}-x_{i})^{2}}\right) \right]
\PartF(x_1,\ldots,x_{2N}) =  0 , \qquad \text{for all } i \in \{1,\ldots,2N\} .
\end{align}
\item[$\mathrm{(COV)}$] \textit{M\"obius covariance}: 
For all M\"obius maps 
$\varphi$ of $\HH$ 
such that $\varphi(x_{1}) < \cdots < \varphi(x_{2N})$, we have
\begin{align}\label{eq: multiple SLE Mobius covariance}
\PartF(x_{1},\ldots,x_{2N}) = 
\prod_{i=1}^{2N} \varphi'(x_{i})^{h} 
\times \PartF(\varphi(x_{1}),\ldots,\varphi(x_{2N})) .
\end{align}
\end{itemize}
Given such a function, one can construct a local $N$-$\SLE_\kappa$ as we discuss in 
Section~\ref{subsec::global_vs_local}.
The above properties $\mathrm{(PDE)}$~\eqref{eq: multiple SLE PDEs} and
$\mathrm{(COV)}$~\eqref{eq: multiple SLE Mobius covariance} guarantee that this
local multiple $\SLE$ process is conformally invariant,
the marginal law of one curve with respect to the joint law of all of the curves is a suitably 
weighted chordal $\SLE_\kappa$, and that the curves enjoy a certain ``commutation'', or 
``stochastic reparameterization invariance'' property --- see~\cite{DubedatCommutationSLE,GrahamSLE,KytolaPeoltolaPurePartitionSLE} for details.  

The \textit{pure partition functions} $\PartF_{\alpha} \colon \chamber_{2N} \to \Rpos$ are indexed by
link patterns $\alpha \in \LP_N$. They are positive solutions to
$\mathrm{(PDE)}$~\eqref{eq: multiple SLE PDEs} and 
$\mathrm{(COV)}$~\eqref{eq: multiple SLE Mobius covariance} singled out by boundary conditions
given in terms of their asymptotic behavior, determined by the link pattern $\alpha$:

\begin{itemize}[align=left]
\item[$\mathrm{(ASY)}$] \textit{Asymptotics}: 
For all $\alpha \in \LP_N$ and for all $j \in \{1, \ldots, 2N-1 \}$ and $\xi \in (x_{j-1}, x_{j+2})$, we have
\begin{align}\label{eq: multiple SLE asymptotics}
\lim_{x_j , x_{j+1} \to \xi} 
\frac{\PartF_\alpha(x_1 , \ldots , x_{2N})}{(x_{j+1} - x_j)^{-2h}} 
=\begin{cases}
0, \quad &
    \text{if } \link{j}{j+1} \notin \alpha, \\
\PartF_{\hat{\alpha}}(x_{1},\ldots,x_{j-1},x_{j+2},\ldots,x_{2N}), &
    \text{if } \link{j}{j+1} \in \alpha ,
\end{cases} 
\end{align}
where 
$\hat{\alpha} = \alpha \removeLink \link{j}{j+1} \in \LP_{N-1}$ denotes
the link pattern obtained from $\alpha$ by removing the link $\link{j}{j+1}$ and relabeling 
the remaining indices by $1, 2, \ldots, 2N - 2$ (see Figure~\ref{fig::link_removal}).
\end{itemize}

\begin{figure}[h]
\centering
\includegraphics[width=\textwidth]{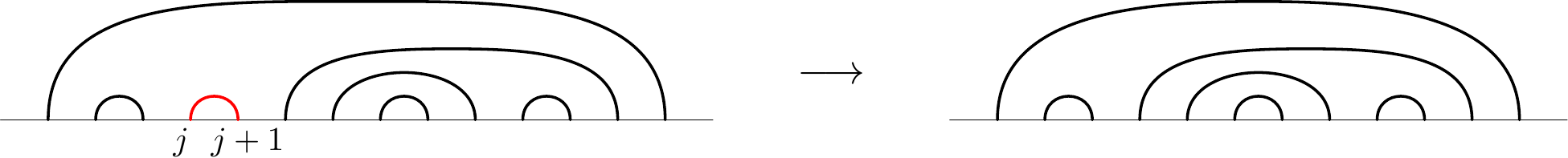}
\captionsetup{width=\textwidth}
\caption{The removal of a link from a link pattern (here $j=4$ and $N=7$).
The left figure is the link pattern 
$\alpha = \{ \link{1}{14}, \link{2}{3}, \link{4}{5}, \link{6}{13}, \link{7}{10}, \link{8}{9}, \link{11}{12} \} \in \LP_7$ 
and the right figure the link pattern
$\alpha \removeLink\link{4}{5} = \{ \link{1}{12}, \link{2}{3}, \link{4}{11}, \link{5}{8}, \link{6}{7}, \link{9}{10} \} \in \LP_6$.}
\label{fig::link_removal} 
\end{figure}

Attempts to find and classify these functions using Coulomb gas techniques 
have been made, e.g., in~\cite{KytolaMultipleSLE, Dubedat:Euler_integrals_for_commuting_SLEs, DubedatCommutationSLE,
FloresKlebanSolutionSpacePDE4, KytolaPeoltolaPurePartitionSLE};
see also~\cite{DF-four_point_correlation_functions,
FSK-Multiple_SLE_connectivity_weights_for_rectangles_hexagons_and_octagons, 
FSKZ-A_formula_for_crossing_probabilities_of_critical_systems_inside_polygons,
Lenells-Viklund:Coulomb_gas_integrals_for_commuting_SLEs1, Lenells-Viklund:Coulomb_gas_integrals_for_commuting_SLEs2}.
The main difficulty in the Coulomb gas approach is to show that the constructed functions are positive, 
whereas smoothness is immediate. On the other hand, as we will see in Lemma~\ref{lem::partitionfunction_positive},
positivity is manifest from the global construction of multiple $\SLE$s, but in this approach, the main obstacle is establishing the 
smoothness\footnote{Recently, another proof for the smoothness with $\kappa<4$  in the global approach appeared in~\cite{LawlerSmoothness}.}.
In this article, we combine the approach of~\cite{KozdronLawlerMultipleSLEs, LawlerPartitionFunctionsSLE} (global construction) with that 
of~\cite{DubedatCommutationSLE, 
Dubedat:SLE_and_Virasoro_representations_localization, 
Dubedat:SLE_and_Virasoro_representations_fusion, KytolaPeoltolaPurePartitionSLE} (local construction and PDE approach),
to show that there exist unique pure partition functions for multiple $\SLE_\kappa$ for all $\kappa \in (0,4]$:

\begin{restatable}{theorem}{purepartitionexistence}
\label{thm::purepartition_existence}
Let $\kappa \in (0,4]$. There exists a unique collection $\{\PartF_{\alpha} \colon \alpha\in \LP\}$ of smooth
functions $\PartF_\alpha \colon \chamber_{2N} \to \R$, for $\alpha \in \LP_N$, 
satisfying the normalization $\PartF_\emptyset = 1$, the power law growth bound given in~\eqref{eqn::powerlawbound} in Section~\ref{sec::pre},
and properties $\mathrm{(PDE)}$~\eqref{eq: multiple SLE PDEs},
$\mathrm{(COV)}$~\eqref{eq: multiple SLE Mobius covariance}, and $\mathrm{(ASY)}$~\eqref{eq: multiple SLE asymptotics}.
These functions have the following further properties:
\begin{enumerate}
\item 
For all 
$\alpha \in \LP_N$, 
we have the stronger power law bound
\begin{align}\label{eqn::partitionfunction_positive}
0<\PartF_{\alpha}(x_1, \ldots, x_{2N}) 
\le \prod_{\link{a}{b} \in \alpha} |x_{b}-x_{a}|^{-2h} .
\end{align} 

\item For each $N \geq 0$, the functions $\{\PartF_{\alpha} \colon \alpha\in \LP_N\}$ are linearly independent. 
\end{enumerate}
\end{restatable}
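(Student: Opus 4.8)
The plan is to combine the two constructions advertised in the introduction: build the candidate pure partition functions $\PartF_\alpha$ globally (as Radon--Nikodym derivatives of global multiple $\SLE$ measures with respect to products of independent chordal $\SLE_\kappa$'s), which gives positivity and the bound~\eqref{eqn::partitionfunction_positive} essentially for free, and then establish smoothness and the PDE/COV/ASY properties via the local, PDE-based approach of~\cite{DubedatCommutationSLE, KytolaPeoltolaPurePartitionSLE}. Uniqueness will follow from the asymptotics $\mathrm{(ASY)}$ together with the power law growth bound, by a one-by-one induction on $N$. I would organize the argument as follows.

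First, induction on $N$. The base case $N=0$ is the normalization $\PartF_\emptyset=1$, and $N=1$ gives $\PartF_{\{\link{1}{2}\}}(x_1,x_2)=(x_2-x_1)^{-2h}$, which manifestly satisfies everything. For the inductive step, assume the collection $\{\PartF_\beta:\beta\in\LP_{N-1}\}$ exists with all stated properties. To construct $\PartF_\alpha$ for $\alpha\in\LP_N$, I would set up the global multiple $\SLE$ measure associated to $\alpha$: take $N$ independent chordal $\SLE_\kappa$'s connecting the marked points according to $\alpha$, and reweight by the appropriate Radon--Nikodym derivative built from the Brownian loop measure / Loewner partition function machinery of~\cite{KozdronLawlerMultipleSLEs, LawlerPartitionFunctionsSLE}, normalized using the inductively constructed lower-order functions. (For $\kappa\le 4$ the curves are simple and non-crossing, so the configurational picture makes sense and the reweighting is a genuine probability measure after normalization.) Define $\PartF_\alpha$ to be the resulting total mass as a function of $(x_1,\ldots,x_{2N})\in\chamber_{2N}$. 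Positivity $\PartF_\alpha>0$ is then immediate since the reweighting density is strictly positive, and the upper bound in~\eqref{eqn::partitionfunction_positive} follows from comparing with the independent-$\SLE$ product measure, whose total mass is $\prod_{\link{a}{b}\in\alpha}|x_b-x_a|^{-2h}$, together with the fact that the Brownian-loop reweighting factor is bounded above by $1$ when $\kappa\le 4$ (equivalently $c\le 1$). This also yields the power law growth bound~\eqref{eqn::powerlawbound} needed for the uniqueness statement.

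Next, the properties. Conformal (M\"obius) covariance $\mathrm{(COV)}$ is inherited directly from the conformal covariance of chordal $\SLE_\kappa$ partition functions and the conformal invariance of the Brownian loop measure, with the weight $\prod\varphi'(x_i)^h$ coming from the boundary scaling exponent $h$ of each curve endpoint. The asymptotics $\mathrm{(ASY)}$ I would extract from the global construction: as $x_j,x_{j+1}\to\xi$, if $\link{j}{j+1}\in\alpha$ then one of the $N$ curves shrinks to a point at $\xi$, the reweighting factors decouple in the limit, and the total mass factorizes as $(x_{j+1}-x_j)^{-2h}$ times the mass of the global multiple $\SLE$ for $\hat\alpha=\alpha\removeLink\link{j}{j+1}$, i.e.\ $\PartF_{\hat\alpha}$ of the remaining points (a Beffara--Peltola--Wu-type degeneration argument, or the one already available from~\cite{BeffaraPeltolaWuUniquenessGloableMultipleSLEs}); if $\link{j}{j+1}\notin\alpha$, then forcing $x_j,x_{j+1}$ together pinches two distinct curves and the mass vanishes faster than $(x_{j+1}-x_j)^{-2h}$, giving the $0$ case. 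Linear independence (item 2) then follows formally from $\mathrm{(ASY)}$: order $\LP_N$ by a suitable partial order (e.g.\ refining the order in which links can be ``removed from the right''), and observe that iterating the asymptotics along the sequence of link removals dictated by $\alpha$ sends $\PartF_\alpha\mapsto 1$ but $\PartF_\beta\mapsto 0$ for any $\beta$ not compatible with that removal sequence; a standard triangularity argument then forces any vanishing linear combination to be trivial.

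Finally, smoothness and the PDE, which I expect to be the main obstacle. Positivity and the asymptotics come cheaply from the global side, but $\PartF_\alpha$ defined as a total mass is a priori only continuous (or, with work, H\"older); to get $\mathrm{(PDE)}$~\eqref{eq: multiple SLE PDEs} and smoothness I would pass through the local construction. Concretely: use the PDE approach of~\cite{DubedatCommutationSLE, Dubedat:SLE_and_Virasoro_representations_localization, Dubedat:SLE_and_Virasoro_representations_fusion, KytolaPeoltolaPurePartitionSLE} to produce, via the Loewner/Itô calculus for a single growing curve, a function $\widetilde\PartF_\alpha$ that is smooth by construction (it solves an elliptic PDE system with analytic coefficients, so elliptic regularity / hypoellipticity applies) and satisfies $\mathrm{(PDE)}$, $\mathrm{(COV)}$, and the same $\mathrm{(ASY)}$ and growth bound; then invoke the uniqueness part of the theorem (already established from $\mathrm{(ASY)}$ plus the growth bound via the triangularity induction above) to conclude $\PartF_\alpha=\widetilde\PartF_\alpha$, transporting smoothness and the PDE back to the globally-defined object. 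The delicate points here are (i) showing the local construction's solution has exactly the power-law growth bound~\eqref{eqn::powerlawbound} so that the uniqueness lemma applies, and (ii) verifying that the two objects genuinely agree on the nose rather than up to a multiplicative constant depending on $N$ --- this is pinned down by the normalization $\PartF_\emptyset=1$ propagating through $\mathrm{(ASY)}$. I would isolate the smoothness-from-the-PDE step and the local/global agreement as the technical heart of the proof, referring to Section~\ref{subsec::global_vs_local} and Theorem~\ref{thm::global_existence} for the identification of localized global multiple $\SLE$s with local multiple $\SLE$s.
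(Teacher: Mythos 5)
Your overall architecture --- global construction for positivity and the bound~\eqref{eqn::partitionfunction_positive}, covariance and asymptotics from conformal invariance and degeneration of the configurational measure, and a separate mechanism for smoothness and the PDEs --- matches the paper's strategy, and your treatment of positivity, $\mathrm{(COV)}$, $\mathrm{(ASY)}$ and linear independence is essentially the argument given in Lemmas~\ref{lem::partitionfunction_positive}--\ref{lem::partitionfunction_asy} and Proposition~\ref{prop: FK dual elements}. However, your route to smoothness and $\mathrm{(PDE)}$ has a genuine gap. You propose to build a separate smooth solution $\widetilde\PartF_\alpha$ by the local/Coulomb-gas approach and then invoke the uniqueness statement to conclude $\PartF_\alpha=\widetilde\PartF_\alpha$. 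But the uniqueness input (Theorem~\ref{thm::purepartition_unique}, due to Flores--Kleban) applies only to functions that \emph{already} satisfy $\mathrm{(PDE)}$: one must apply it to the difference $\PartF_\alpha-\widetilde\PartF_\alpha$, and this difference solves the PDEs only if the globally defined $\PartF_\alpha$ does --- which is precisely what you are trying to prove. So the identification is circular. The paper's key move, which your proposal is missing, is to show \emph{directly} that the global $\PartF_\alpha$ is a \emph{distributional} solution of each equation in~\eqref{eq: multiple SLE PDEs}, by computing the conditional expectation $\E_\alpha[R_\alpha\cond\eta_1[0,t]]$ and exhibiting an explicit bounded martingale whose generator identity, read against test functions, yields $\langle\mathcal{D}^{(i)}\PartF_\alpha,\phi\rangle=0$; smoothness then follows from H\"ormander hypoellipticity (Proposition~\ref{prop::smoothness}), not from elliptic regularity --- the system~\eqref{eq: multiple SLE PDEs} is degenerate, with only one second-order derivative per equation.

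A second, smaller overstatement: you describe uniqueness as following from $\mathrm{(ASY)}$ and the growth bound ``by a standard triangularity argument.'' The triangularity via iterated link removals does give linear independence (this is the dual-basis property $\FKdual_\alpha(\PartF_\beta)=\delta_{\alpha,\beta}$), but uniqueness of the collection requires the much stronger fact that a solution of $\mathrm{(PDE)}$ and $\mathrm{(COV)}$ obeying~\eqref{eqn::powerlawbound} whose normalized limits all vanish is identically zero. That is the content of Theorem~\ref{thm::purepartition_unique}, a hard PDE result whose proof occupies an entire article of Flores and Kleban; it is not a formal consequence of the asymptotics, and your proof should cite it (or an equivalent) explicitly rather than treat it as routine.
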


Next, we make some remarks concerning the above result.

\begin{itemize}
\item The bound~\eqref{eqn::partitionfunction_positive} stated above is very strong.
First of all, together with smoothness, the positivity in~\eqref{eqn::partitionfunction_positive} enables us to construct local multiple $\SLE$s (Corollary~\ref{cor::localmultiplesle}). 
Second, using the upper bound in~\eqref{eqn::partitionfunction_positive}, we will prove 
in Proposition~\ref{prop::loewnerchain_purepartition} 
that the curves in these local multiple $\SLE$s 
are continuous up to and including the continuation threshold, and they connect the marked points in the expected way
--- according to the connectivity $\alpha$. 
Third, the upper bound in~\eqref{eqn::partitionfunction_positive} 
is also crucial in our proof of Theorem~\ref{thm::multiple_sle_4}, 
stated below, concerning the connection probabilities of the level lines of the $\GFF$,
as well as for establishing analogous results for other models~\cite{PeltolaWuIsing}.

\item For $\kappa = 2$, the existence of the functions $\PartF_\alpha$ was already
known before~\cite{KozdronLawlerLERWestimates, KKP:Correlations_in_planar_LERW_and_UST_and_combinatorics_of_conformal_blocks}. 
In this case, the positivity and smoothness can be established by identifying $\PartF_\alpha$ as scaling limits 
of connection probabilities for multichordal loop-erased random walks.

\item In general, it follows from Theorem~\ref{thm::purepartition_existence} that 
the functions $\PartF_{\alpha}$ constructed in the previous works \cite{FloresKlebanSolutionSpacePDE1, KytolaPeoltolaPurePartitionSLE}
are indeed positive, as conjectured, and agree with the functions of Theorem~\ref{thm::purepartition_existence}. 

\item Above, the pure partition functions $\PartF_\alpha$ are only defined for the upper half-plane $\HH$. 
In other simply connected domains $\Omega$, 
when the marked points lie on sufficiently smooth boundary segments,
we may extend the definition of $\PartF_\alpha$ by conformal covariance:
taking any conformal map $\varphi \colon \Omega\to\HH$ such that $\varphi(x_1)<\cdots<\varphi(x_{2N})$, we set
\begin{align}\label{eq: multiple SLE conformal covariance}
\PartF_{\alpha}(\Omega;x_{1},\ldots,x_{2N}) := 
\prod_{i=1}^{2N} |\varphi'(x_{i})|^{h} \times \PartF_{\alpha}(\varphi(x_{1}),\ldots,\varphi(x_{2N})) .
\end{align}
\end{itemize}

Both the global and local definitions of multiple $\SLE$s enjoy conformal invariance and a domain Markov property.
However, only in the case of one curve, these two properties uniquely determine the $\SLE_\kappa$.
With $N \geq 2$, configurations of curves connecting the marked points 
$x_1, \ldots, x_{2N} \in \partial \Omega$ in the simply connected domain $\Omega$ 
have non-trivial conformal moduli, and their probability measures should 
form a convex set of dimension higher than one.
The classification of local multiple $\SLE$s is well established: they are in one-to-one correspondence with 
(normalized) partition functions~\cite{DubedatCommutationSLE, KytolaPeoltolaPurePartitionSLE}.
Thus, we may characterize the convex set of these local $N$-$\SLE_\kappa$ probability measures in the following way:

\begin{restatable}{corollary}{localmultiplesle}
\label{cor::localmultiplesle}
Let $\kappa \in (0,4]$. For any $\alpha \in \LP_N$, there exists a local $N$-$\SLE_\kappa$ with partition function~$\PartF_\alpha$. 
For any $N\geq 1$, the convex hull of the local $N$-$\SLE_\kappa$ 
corresponding to $\{\PartF_{\alpha} \colon \alpha\in \LP_N \}$
has dimension $\Catalan_N-1$.
The $\Catalan_N$ local $N$-$\SLE_\kappa$ probability measures
with pure partition functions $\PartF_\alpha$
are the extremal points of this convex set. 
\end{restatable}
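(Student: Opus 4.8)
The plan is to deduce Corollary~\ref{cor::localmultiplesle} from Theorem~\ref{thm::purepartition_existence} together with the classification of local multiple $\SLE$s. For the first assertion, note that by Theorem~\ref{thm::purepartition_existence} each $\PartF_\alpha$, $\alpha \in \LP_N$, is a positive smooth solution of $\mathrm{(PDE)}$~\eqref{eq: multiple SLE PDEs} and $\mathrm{(COV)}$~\eqref{eq: multiple SLE Mobius covariance}, hence a multiple $\SLE_\kappa$ partition function; by the construction recalled in Section~\ref{subsec::global_vs_local} (following~\cite{DubedatCommutationSLE, KytolaPeoltolaPurePartitionSLE}), every partition function is the partition function of a local $N$-$\SLE_\kappa$, and we denote by $\PP_\alpha$ the one associated with $\PartF_\alpha$. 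Fix once and for all the marked points $x^0 = (x^0_1, \dots, x^0_{2N}) \in \chamber_{2N}$ and a localization, so that all the $\PP_\alpha$ are probability measures on one common space of localized curve configurations emanating from $x^0$; the positivity bound~\eqref{eqn::partitionfunction_positive} guarantees $\PartF_\alpha(x^0) \in (0, \infty)$, which is what makes each $\PP_\alpha$ a genuine probability measure.

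The heart of the matter is to show that the $\Catalan_N$ measures $\{\PP_\alpha : \alpha \in \LP_N\}$ are linearly independent. Recall that, by definition of a local $N$-$\SLE_\kappa$ with partition function $\PartF$, the marginal law of its curve started at $x^0_1$ --- run up to a small stopping time $t$ and encoded by its Loewner driving function $W$ and uniformizing maps $g$ --- is absolutely continuous with respect to the chordal $\SLE_\kappa$ from $x^0_1$ to $\infty$, with Radon--Nikodym derivative
\[
\frac{\prod_{j \geq 2} g_t'(x^0_j)^{h} \; \PartF\big( W_t, g_t(x^0_2), \dots, g_t(x^0_{2N}) \big)}{\PartF(x^0_1, \dots, x^0_{2N})} .
\]
Suppose now that $\sum_{\alpha \in \LP_N} t_\alpha \PP_\alpha = 0$ for some real coefficients $t_\alpha$. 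Passing to the first-curve marginal and comparing Radon--Nikodym derivatives with respect to the chordal $\SLE_\kappa$, and cancelling the common factor $\prod_{j \geq 2} g_t'(x^0_j)^{h}$, we obtain $\sum_\alpha \tfrac{t_\alpha}{\PartF_\alpha(x^0)} \PartF_\alpha\big( W_t, g_t(x^0_2), \dots, g_t(x^0_{2N}) \big) = 0$ along the curve. As $t$ and the curve vary, the point $\big( W_t, g_t(x^0_2), \dots, g_t(x^0_{2N}) \big)$ sweeps out a nonempty open subset of $\chamber_{2N}$; since solutions of the elliptic system~\eqref{eq: multiple SLE PDEs} are real-analytic, this forces $\sum_\alpha \tfrac{t_\alpha}{\PartF_\alpha(x^0)} \PartF_\alpha \equiv 0$ on all of $\chamber_{2N}$, whence $t_\alpha = 0$ for every $\alpha$ by the linear independence of $\{\PartF_\alpha : \alpha \in \LP_N\}$ proved in Theorem~\ref{thm::purepartition_existence}, part~2. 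Hence the $\PP_\alpha$ are linearly independent.

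For probability measures, linear independence and affine independence coincide: integrating a relation $\sum_\alpha t_\alpha \PP_\alpha = 0$ against the constant function $1$ gives $\sum_\alpha t_\alpha = 0$ automatically. Therefore the convex hull of the $\Catalan_N$ affinely independent probability measures $\PP_\alpha$ is a simplex of dimension $\Catalan_N - 1$, whose vertices --- equivalently, whose extreme points --- are exactly the measures $\PP_\alpha$. This yields the remaining two assertions; indeed extremality of any $\PP_\alpha$ is immediate, since if $\PP_\alpha = \lambda \PP' + (1-\lambda)\PP''$ with $\lambda \in (0,1)$ and $\PP', \PP''$ in the convex hull, affine independence forces $\PP' = \PP'' = \PP_\alpha$.

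Most of the work is already contained in Theorem~\ref{thm::purepartition_existence}; what remains here is essentially bookkeeping, and I do not expect a genuine obstacle. The two points requiring care are: (i) arranging that all the $\PP_\alpha$ live on a common sample space so that their convex combinations are meaningful, which is handled by fixing the marked points and the localization and invoking~\eqref{eqn::partitionfunction_positive} for finiteness; and (ii) transferring a linear relation among the measures $\PP_\alpha$ to one among the functions $\PartF_\alpha$, which uses that a local $N$-$\SLE_\kappa$ records its partition function in the Radon--Nikodym derivative of its one-curve marginal, together with the real-analyticity of solutions of~\eqref{eq: multiple SLE PDEs}. Intuitively, the extreme measures $\PP_\alpha$ are \emph{pure} in the sense that, by Proposition~\ref{prop::loewnerchain_purepartition}, their curves continued up to the continuation threshold connect the marked points according to the single link pattern $\alpha$, so the corresponding completed measures are mutually singular.
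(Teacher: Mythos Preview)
Your approach differs from the paper's one-line proof, which simply says the corollary is a consequence of Theorem~\ref{thm::purepartition_existence} and Proposition~\ref{prop::local_multiple_SLE}. The paper works with local $N$-$\SLE_\kappa$'s as \emph{indexed collections} over all polygons and localizations: Proposition~\ref{prop::local_multiple_SLE} gives a bijection between such collections and partition functions up to a multiplicative constant, and the convex combination formula quoted just before the corollary transports the convex structure. Linear independence of $\{\PartF_\alpha\}$ from Theorem~\ref{thm::purepartition_existence} then immediately yields a $(\Catalan_N-1)$-simplex with the $\PartF_\alpha$ at its vertices.

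You instead fix a single polygon and argue affine independence of the probability measures there directly, which is a legitimate alternative but leaves two steps unjustified. First, you invoke real-analyticity ``since solutions of the elliptic system~\eqref{eq: multiple SLE PDEs} are real-analytic''. The individual operators $\mathcal{D}^{(i)}$ have principal part $\tfrac{\kappa}{2}\partial_i^2$ and are \emph{not} elliptic --- the paper establishes only hypoellipticity (Proposition~\ref{prop::smoothness}), and hypoellipticity does not imply analyticity. A correct fix is to note that any $F\in\mathcal{S}_N$ also satisfies $\big(\sum_{i=1}^{2N}\mathcal{D}^{(i)}\big)F=0$, and this sum has elliptic principal part $\tfrac{\kappa}{2}\Delta$ with real-analytic (rational) coefficients on $\chamber_{2N}$, so $F$ is real-analytic by classical elliptic regularity. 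Second, the claim that $(W_t,g_t(x_2^0),\dots,g_t(x_{2N}^0))$ ``sweeps out a nonempty open subset of $\chamber_{2N}$'' also needs an argument: it holds because H\"ormander's bracket condition (verified in the proof of Proposition~\ref{prop::smoothness}) makes the law of this diffusion at any fixed $t>0$ admit a smooth density, whose support therefore has nonempty interior. With these two points supplied your argument goes through, but the paper's route via Proposition~\ref{prop::local_multiple_SLE} sidesteps both analytic subtleties entirely.
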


\subsection{Global Multiple SLEs}

To prove Theorem~\ref{thm::purepartition_existence}, we construct the pure partition functions $\PartF_\alpha$ 
from the Radon-Nikodym derivatives of global multiple $\SLE$ measures with respect to product measures of independent $\SLE$s.
To this end, in Theorem~\ref{thm::global_existence}, 
we give a construction of global multiple $\SLE_\kappa$ measures, 
for any number of curves and for all possible topological connectivities, when $\kappa\in (0,4]$.
The construction is not new as such: it was done by M.~Kozdron and G.~Lawler~\cite{KozdronLawlerMultipleSLEs}
in the special case of the rainbow link pattern $\nested_N$, illustrated in Figure~\ref{fig::rainbow}
(see also~\cite[Section~3.4]{Dubedat:Euler_integrals_for_commuting_SLEs}).
For general link patterns, an idea for the construction appeared in~\cite[Section~2.7]{LawlerPartitionFunctionsSLE}. 
However, to prove local commutation of the curves, one needs sufficient regularity that was not established 
in these articles (for this, see~\cite{DubedatCommutationSLE, Dubedat:SLE_and_Virasoro_representations_localization, 
Dubedat:SLE_and_Virasoro_representations_fusion}). 

In the previous works~\cite{KozdronLawlerMultipleSLEs,LawlerPartitionFunctionsSLE}, 
the global multiple $\SLE$s were defined in terms of Girsanov re-weighting of chordal $\SLE$s.
We prefer another definition, where only a minimal amount of characterizing properties are given.
In subsequent work~\cite{BeffaraPeltolaWuUniquenessGloableMultipleSLEs}, 
we prove that 
this definition is optimal in the sense that the global multiple $\SLE$s are uniquely determined by the below stated conditional law property.

\smallbreak

First, we define a \emph{(topological) polygon} to be a $(2N+1)$-tuple $(\Omega; x_1, \ldots, x_{2N})$, 
where $\Omega \subset \C$ is a simply connected domain 
and $x_1, \ldots, x_{2N} \in \partial \Omega$ are $2N$ distinct boundary points appearing in counterclockwise order 
on locally connected boundary segments.
We also say that $U \subset \Omega$ is a \emph{sub-polygon of $\Omega$} if $U$ is simply connected 
and $U$ and $\Omega$ agree in neighborhoods of $x_1, \ldots, x_{2N}$.
When $N=1$, we let $X_0(\Omega; x_1,x_2)$ be the set of continuous simple unparameterized curves in $\Omega$ 
connecting $x_1$ and $x_2$ such that they only touch the boundary $\partial \Omega$ in $\{x_1,x_2\}$. 
More generally, when $N\ge 2$, we consider pairwise disjoint continuous simple curves in $\Omega$ such that each of them connects 
two points among $\{x_1, \ldots, x_{2N}\}$. We encode the connectivities of the curves in link patterns 
$\alpha  = \{ \link{a_1}{b_1}, \ldots, \link{a_N}{b_N} \} \in \LP_N$, and we let $X_0^{\alpha}(\Omega; x_1, \ldots, x_{2N})$ be the set of families $(\eta_1, \ldots, \eta_N)$
of pairwise disjoint curves $\eta_j\in X_0(\Omega; x_{a_j}, x_{b_j})$, for $j\in\{1,\ldots,N\}$.

For any link pattern $\alpha \in \LP_N$, we call a probability measure on 
$(\eta_1, \ldots, \eta_N)\in X_0^{\alpha}(\Omega; x_1, \ldots, x_{2N})$ 
\textit{a~global $N$-$\SLE_{\kappa}$ associated to $\alpha$} if, for each $j\in\{1, \ldots, N\}$, 
the conditional law of 
the curve $\eta_j$ given $\{\eta_1, \ldots, \eta_{j-1}, \eta_{j+1}, \ldots, \eta_N\}$ is the chordal 
$\SLE_{\kappa}$ connecting $x_{a_j}$ and $x_{b_j}$ in the component of the domain
$\Omega \setminus \{\eta_1, \ldots, \eta_{j-1}, \eta_{j+1}, \ldots, \eta_N\}$
that contains the endpoints $x_{a_j}$ and $x_{b_j}$ of $\eta_j$ on its boundary
(see Figure~\ref{fig::malpha_cascade} for an illustration). 
This definition is natural from the point of view of discrete models: it 
corresponds to the scaling limit of interfaces with alternating boundary conditions, 
as described in Sections~\ref{subsec::intro_GFF} and~\ref{subsec::ising_conjecture}. 

\begin{restatable}{theorem}{globalexistencethm}
\label{thm::global_existence}
Let $\kappa\in (0,4]$. Let $(\Omega; x_1, \ldots, x_{2N})$ be a polygon. 
For any $\alpha \in \LP_N$, there exists a global $N$-$\SLE_{\kappa}$ associated to~$\alpha$. 
As a probability measure on the initial segments of the curves,
this global $N$-$\SLE_{\kappa}$
coincides with the local $N$-$\SLE_{\kappa}$ with partition function $\PartF_\alpha$.
It has the following further properties:
\begin{enumerate}
\item 
If $U \subset \Omega$ is a sub-polygon, then the global $N$-$\SLE_{\kappa}$
in $U$ is absolutely continuous with respect to the one 
in $\Omega$, with explicit Radon-Nikodym derivative given in Proposition~\ref{prop::multiplesle_boundary_perturbation}.


\item The marginal law of one curve under this global $N$-$\SLE_{\kappa}$ is absolutely continuous with respect to
the chordal $\SLE_\kappa$, with explicit Radon-Nikodym derivative given in Proposition~\ref{prop::GeneralCascadeProp}.
\end{enumerate}
\end{restatable}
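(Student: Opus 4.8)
The plan is to construct the global $N$-$\SLE_\kappa$ by an inductive cascade procedure, building one curve at a time according to the link pattern $\alpha$, and then to verify the consistency (conditional law) property, the identification with the local multiple $\SLE$, and the two absolute continuity statements. I would first set up the induction on $N$: the case $N=1$ is the chordal $\SLE_\kappa$, which exists and has all the required properties. For the inductive step, fix a link $\link{a}{b} \in \alpha$ — say an innermost one, so that $\link{a}{b} = \link{j}{j+1}$ for some $j$ after relabeling, but more robustly one should handle a general outermost/innermost link — remove it to obtain $\hat\alpha \in \LP_{N-1}$, and use the $(N-1)$-curve measure together with a chordal $\SLE_\kappa$ for the removed curve. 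The natural candidate is: sample $\eta_j$ as a chordal $\SLE_\kappa$ in $\Omega$ from $x_{a}$ to $x_{b}$, weighted by a Radon-Nikodym derivative built from the pure partition function $\PartF_{\hat\alpha}$ evaluated in the complementary domain (this is precisely the content of Proposition~\ref{prop::GeneralCascadeProp} as referenced), and then conditionally on $\eta_j$ sample the remaining $N-1$ curves as a global $(N-1)$-$\SLE_\kappa$ associated to $\hat\alpha$ in the appropriate component. The key analytic input making this well-defined is the upper bound in~\eqref{eqn::partitionfunction_positive}: it guarantees the weight is an honest (bounded, or at least integrable) Radon-Nikodym derivative, and combined with positivity and smoothness it gives a genuine probability measure.

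Second, I would prove that the resulting measure does not depend on which link was chosen to peel off first, and that it satisfies the defining conditional-law property for \emph{every} index $j$, not just the one used in the construction. This is the commutation/reversibility heart of the argument. The clean way is to show that the constructed measure has Radon-Nikodym derivative with respect to the product of $N$ independent chordal $\SLE_\kappa$'s (suitably localized) equal to $\PartF_\alpha(\Omega; x_1,\dots,x_{2N}) / \prod_{\link{a}{b}\in\alpha} (\text{one-curve normalizers})$, symmetrically in all curves; the asymptotics property $\mathrm{(ASY)}$~\eqref{eq: multiple SLE asymptotics} together with the cascade relation of $\PartF_\alpha$ forces the one-curve conditional laws to be the weighted chordal $\SLE_\kappa$'s, which is exactly the global $N$-$\SLE_\kappa$ property. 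Here I would lean on the local construction and the PDE/martingale characterization from~\cite{DubedatCommutationSLE, KytolaPeoltolaPurePartitionSLE}: the $(\mathrm{PDE})$ and $(\mathrm{COV})$ properties of $\PartF_\alpha$ translate into a local martingale along each curve's Loewner evolution, and this is what couples the cascade construction to the local multiple $\SLE$, yielding the claimed coincidence "as a probability measure on the initial segments of the curves."

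Third, for the two itemized properties: the Radon-Nikodym derivative between the global $N$-$\SLE_\kappa$ in a sub-polygon $U$ and in $\Omega$ (item 1) comes from the boundary perturbation property of chordal $\SLE_\kappa$ — Girsanov applied to the driving function, producing the usual $\exp(c\,\Lambda(\dots)/\dots)$-type factor of~\cite{LawlerPartitionFunctionsSLE, LawlerSLENotes} times a ratio of partition functions — combined inductively across the cascade; this is stated as Proposition~\ref{prop::multiplesle_boundary_perturbation}. The marginal absolute continuity of one curve with respect to chordal $\SLE_\kappa$ (item 2) is immediate from the cascade construction, since by design the first-peeled curve has law a $\PartF_{\hat\alpha}$-weighted chordal $\SLE_\kappa$; for a curve $\eta_j$ not peeled first one integrates out the others and uses the consistency established in step two. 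Finally, one must check continuity of the curves up to and including the continuation threshold and that they realize the topological connectivity $\alpha$ — this uses the upper bound in~\eqref{eqn::partitionfunction_positive} to control the Loewner driving terms near the marked points, as noted in the remark following Theorem~\ref{thm::purepartition_existence} (Proposition~\ref{prop::loewnerchain_purepartition}).

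The main obstacle, I expect, is the regularity/commutation step: showing that the cascade-constructed measure is genuinely symmetric in all $N$ curves and satisfies the conditional law property for every index simultaneously, rather than just for the construction order. This is exactly the gap that~\cite{KozdronLawlerMultipleSLEs, LawlerPartitionFunctionsSLE} left open, and it requires enough regularity of $\PartF_\alpha$ (smoothness, the sharp bound~\eqref{eqn::partitionfunction_positive}, and compatibility of asymptotics) to run the martingale argument cleanly and to match with the local construction of~\cite{DubedatCommutationSLE, KytolaPeoltolaPurePartitionSLE}. The circularity — one wants $\PartF_\alpha$ to build the global measure, but $\PartF_\alpha$ is itself defined via the global measure's Radon-Nikodym derivative — must be broken by a simultaneous induction on $N$, where at level $N$ one first constructs the global measure using the already-established level-$(N-1)$ partition functions, reads off $\PartF_\alpha$ from its Radon-Nikodym derivative, verifies $\PartF_\alpha$ inherits smoothness from the chordal $\SLE$ Girsanov weights and the bound~\eqref{eqn::partitionfunction_positive} from positivity of probabilities, and only then checks $(\mathrm{PDE})$, $(\mathrm{COV})$, $(\mathrm{ASY})$.
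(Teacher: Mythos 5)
Your overall architecture (construct the measure, verify the conditional-law property for every index, then match with the local $\SLE$ and prove the two absolute-continuity statements) has the right shape, and you have correctly located the crux: proving that a sequentially built (cascade) measure is symmetric in all $N$ curves. But your proposed resolution of that crux does not work as stated, and this is a genuine gap. You claim the ``clean way'' is to show the constructed measure has Radon--Nikodym derivative with respect to the product of $N$ independent chordal $\SLE_\kappa$'s equal to $\PartF_\alpha(\Omega;x_1,\ldots,x_{2N})$ divided by one-curve normalizers. That quantity depends only on the marked points, so it is constant on the space of configurations; a constant density would make the interacting measure equal to the product measure up to normalization, which is false for $N\ge 2$. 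The correct density must be a functional of the curves themselves. The idea missing from your proposal is to write this functional down explicitly and symmetrically from the outset: the paper takes
$R_\alpha(\Omega;\eta_1,\ldots,\eta_N)=\one_{\{\eta_j\cap\eta_k=\emptyset\ \forall j\neq k\}}\exp\big(c\, m_\alpha(\Omega;\eta_1,\ldots,\eta_N)\big)$,
where $m_\alpha$ is an inclusion--exclusion combination of Brownian loop measures over the connected components of $\Omega\setminus\{\eta_1,\ldots,\eta_N\}$, see~\eqref{eqn::def_malpha}. Because $m_\alpha$ is manifestly symmetric in the curves and satisfies the cascade identity of Lemma~\ref{lem::malpha_cascade}, the conditional law of $\eta_j$ given the others is, via Lemma~\ref{lem::sle_boundary_perturbation}, the chordal $\SLE_\kappa$ in $\hat{\Omega}_j$ simultaneously for every $j$ --- no commutation or order-independence argument is needed --- and finiteness of the total mass follows by induction from the same cascade identity together with~\eqref{eqn::poissonkernel_mono}.

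A related point: your construction is circular in a way the paper's is not. You weight the first peeled curve by $\PartF_{\hat\alpha}$, but $\PartF_{\hat\alpha}$ is itself defined only through the level-$(N-1)$ global measure; your simultaneous induction could perhaps be made to work, but the density $\exp(c\,m_\alpha)$ uses no partition functions at all, and $\PartF_\alpha$ is read off a posteriori as $f_\alpha\cdot\prod_{\link{a}{b}\in\alpha}|x_b-x_a|^{-2h}$ with $f_\alpha=|\QQ_\alpha|$, see~\eqref{eqn::purepartition_alpha_def}; the cascade/marginal description you take as the \emph{definition} is then \emph{derived} as Proposition~\ref{prop::GeneralCascadeProp} via the decomposition~\eqref{eqn::malpha_decomposition}. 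Your treatment of items 1 and 2 and of the identification with the local $N$-$\SLE_\kappa$ is otherwise in line with the paper (Lemma~\ref{lem::sle_boundary_perturbation} iterated through Lemma~\ref{lem::malpha_boundaryperturbation} for item 1; the martingale $\prod_{i\neq 1,b} g_t'(x_i)^h\,\PartF_\alpha(W_t,g_t(x_2),\ldots,g_t(x_{2N}))\,(g_t(x_b)-W_t)^{2h}$ from the proof of Lemma~\ref{lem::partitionfunction_pde} for the local identification), so those parts stand once the construction itself is repaired.
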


We prove the existence of a global $N$-$\SLE_{\kappa}$ associated to $\alpha$ by constructing it 
in Proposition~\ref{prop::global_existence} in Section~\ref{subsec::multiplesle_existence}. 
The two properties of the measure are proved 
in Propositions~\ref{prop::multiplesle_boundary_perturbation} and~\ref{prop::GeneralCascadeProp}
in Section~\ref{subsec::further_properties_of_Zalpha}.
Finally, in Lemma~\ref{lem::global_is_local} in Section~\ref{subsec::global_vs_local},
we prove that the local and global $\SLE_\kappa$ associated to $\alpha$ agree.

\subsection{$\kappa = 4$: Level Lines of Gaussian Free Field}
\label{subsec::intro_GFF}

In the last sections~\ref{sec::levellines_gff} and~\ref{sec::pure_pf_for_sle4} of this article, 
we focus on the two-dimensional Gaussian free field ($\GFF$).
It can be thought of as a natural 2D time analogue of Brownian motion.
Importantly, the $\GFF$ is conformally invariant and satisfies a certain domain Markov property.
In the physics literature, it is also known as the free bosonic field, a very fundamental and well-understood object,
which plays an important role in conformal field theory, quantum gravity, and statistical physics ---
see, e.g.,~\cite{DuplantierSheffieldLQGKPZ} and references therein.
For instance, the 2D $\GFF$ is the scaling limit of the height function of the dimer model~\cite{KenyonDimer}.

In a series of works~\cite{SchrammSheffieldDiscreteGFF, SchrammSheffieldContinuumGFF, 
MillerSheffieldIG1}, the level lines and flow lines of the $\GFF$ were studied. 
The level lines are $\SLE_\kappa$ curves for $\kappa = 4$, and the flow lines $\SLE_{\kappa}$ curves 
for general $\kappa \geq 0$.
In this article, we 
study connection probabilities of the level lines (i.e., the case $\kappa=4$). 
In Theorems~\ref{thm::multiple_sle_4} and~\ref{thm: sle4 pure pffs},
we relate these 
probabilities to the pure partition functions of multiple $\SLE_4$ and find explicit formulas for them.

\smallbreak

Fix 
$\lambda:=\pi/2$. Let $x_1 < \cdots < x_{2N}$,
and let $\gff$ be the $\GFF$ on $\HH$ with alternating boundary data: 
\begin{align} \label{eq:GFFalt}
\lambda \text{ on }(x_{2j-1}, x_{2j}), \text{ for } j \in \{ 1, \ldots, N \} ,
\qquad \text{and} \qquad 
-\lambda \text{ on }(x_{2j}, x_{2j+1}) , \text{ for }  j \in \{ 0, 1, \ldots, N \} ,
\end{align}
with the convention that $x_0=-\infty$ and $x_{2N+1}=\infty$. 
For $j\in \{1,\ldots,N\}$, let $\eta_j$ be the level line of $\gff$ starting from $x_{2j-1}$, 
considered as an oriented curve. If $x_k$ is the other endpoint of $\eta_j$, 
we say that the level line $\eta_j$ terminates at $x_k$. 
The endpoints of the level lines $(\eta_1, \ldots, \eta_N)$ give rise to a planar pair partition,
which we encode in a link pattern $\LA = \LA(\eta_1, \ldots, \eta_N) \in \LP_N$.

\begin{restatable}{theorem}{multipleslefour}
\label{thm::multiple_sle_4}
Consider multiple level lines of the $\GFF$ on $\HH$ with alternating boundary data~\eqref{eq:GFFalt}. 
For any $\alpha \in \LP_N$, the probability $P_{\alpha}:=\PP[\LA=\alpha]$
is strictly positive. Conditioned on the event $\{\LA=\alpha\}$, the collection 
$(\eta_1, \ldots, \eta_N)\in X_0^{\alpha}(\HH; x_1, \ldots, x_{2N})$ is
the global $N$-$\SLE_4$ associated to $\alpha$ constructed in
Theorem~\ref{thm::global_existence}.
The connection probabilities are explicitly given by
\begin{align}\label{eq::crossing_probabilities_for_kappa4}
P_{\alpha} = \frac{\PartF_{\alpha}  (x_1, \ldots, x_{2N}) }
{\PartF^{(N)}_{\GFF}  (x_1, \ldots, x_{2N} ) }, 
\quad \text{ for all } 
\alpha\in \LP_N, \qquad\text{where } \quad \PartF^{(N)}_{\GFF} := \sum_{\alpha\in\LP_N} \PartF_{\alpha} ,
\end{align}
and $\PartF_{\alpha}$ are the functions of Theorem~\ref{thm::purepartition_existence} with $\kappa = 4$.
Finally, for $a,b\in\{1,\ldots,2N\}$, where $a$ is odd and $b$ is even, the probability that the level line 
of the $\GFF$ starting from $x_a$ terminates at $x_b$ is given by 
\begin{align}\label{eqn::levellines_proba_lk}
P^{(a,b)} (x_1, \ldots, x_{2N})
= \prod_{\substack{1\le j\le 2N, \\ j\neq a,b}} 
\bigg| \frac{x_j-x_a}{x_j-x_b} \bigg|^{(-1)^j} .
\end{align}
\end{restatable}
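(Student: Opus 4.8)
The plan is to derive all four assertions --- that $\LA$ is a.s.\ well defined with $P_\alpha>0$, that conditionally on $\{\LA=\alpha\}$ one obtains the global $N$-$\SLE_4$ of Theorem~\ref{thm::global_existence}, the connectivity formula~\eqref{eq::crossing_probabilities_for_kappa4}, and finally~\eqref{eqn::levellines_proba_lk} --- from the theory of level lines of the $\GFF$ together with Theorems~\ref{thm::purepartition_existence} and~\ref{thm::global_existence}. First I would recall from the level-line theory of Schramm--Sheffield (and its continuity refinements) that, for the alternating data~\eqref{eq:GFFalt}, each $\eta_j$ is a.s.\ a continuous simple curve started from $x_{2j-1}$, generated by an $\SLE_4(\underline\rho)$ Loewner chain with force points at the remaining marked points and alternating weights $\rho=\pm2$, and that distinct level lines do not cross; since parity forces any non-crossing pairing of $2N$ boundary points to join odd to even indices, $\LA\in\LP_N$ is well defined. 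Positivity $P_\alpha>0$ I would prove by induction on $N$: the single level line $\eta_1$ terminates at each admissible even point with positive probability (an $\SLE_4(\rho)$ fact, by absolute continuity with respect to chordal $\SLE_4$), and on that event $\HH\setminus\eta_1$ splits into two sub-polygons in each of which, by the domain Markov property of the $\GFF$, the remaining level lines solve a problem of the same type with strictly fewer points, so the restricted connectivities occur with positive probability by the inductive hypothesis. For the conditional law: on $\{\LA=\alpha\}$ fix $j$ and condition on $\{\eta_i:i\neq j\}$, which is $\GFF$-measurable; given it, the field in the component $U$ of $\HH\setminus\bigcup_{i\neq j}\eta_i$ carrying $x_{a_j},x_{b_j}$ is a $\GFF$ with boundary values $\pm\lambda$ on the traced parts, and these combine with~\eqref{eq:GFFalt} into exactly the data for which $\eta_j$ is the level line of $U$, i.e.\ a chordal $\SLE_4$ from $x_{a_j}$ to $x_{b_j}$ in $U$; here the additional conditioning on $\{\LA=\alpha\}$ is vacuous, because given the other curves $\eta_j$ is topologically forced to lie in $U$ and join $x_{a_j}$ to $x_{b_j}$. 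This is the defining property of a global $N$-$\SLE_4$ associated to $\alpha$, so Theorem~\ref{thm::global_existence} identifies the conditional law.

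To prove~\eqref{eq::crossing_probabilities_for_kappa4}, I write the $\GFF$ level-line law as $\sum_\alpha P_\alpha\,\mu_\alpha$, with $\mu_\alpha$ the global $N$-$\SLE_4$ just identified, and compare the law of the single curve $\eta_1$ under the two descriptions. On one hand $\eta_1$ is the $\SLE_4(\underline\rho)$ above, whose Radon--Nikodym derivative with respect to chordal $\SLE_4$ from $x_1$ to $\infty$ --- at any time $t$ before $\eta_1$ reaches its endpoint or a force point --- equals $C_t\,\mathcal Z_1(\Theta_t)/\mathcal Z_1(x)$, where $\Theta_t$ records the images of the marked points under the Loewner map, $C_t=\prod_{i\ge2}(g_t'(x_i))^h$ does not depend on $\alpha$, and $\mathcal Z_1$ is the explicit Coulomb-gas partition function of this $\SLE_4(\underline\rho)$ (a signed product of factors $|x_k-x_i|^{1/2}$ fixed by the boundary jumps; at $\kappa=4$ one has $h=1/4$, $c=1$, and each force point carries weight $h$, so $\mathcal Z_1$ solves the system $\mathrm{(PDE)}$~\eqref{eq: multiple SLE asymptotics}'s companion, i.e.\ the PDEs of the multiple-$\SLE$ partition functions). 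On the other hand, by Theorem~\ref{thm::global_existence} the $\eta_1$-marginal of $\mu_\alpha$ agrees on initial segments with that of the local $N$-$\SLE_4$ with partition function $\PartF_\alpha$, whose driving function is chordal $\SLE_4$ weighted by $\PartF_\alpha$, so its Radon--Nikodym derivative is $C_t\,\PartF_\alpha(\Theta_t)/\PartF_\alpha(x)$ with the same $C_t$. Equating the two and continuing real-analytically gives $\mathcal Z_1=\sum_{\alpha\in\LP_N}c_\alpha\PartF_\alpha$ on $\chamber_{2N}$, with $c_\alpha=\mathcal Z_1(x)P_\alpha/\PartF_\alpha(x)$. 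Finally $c_\alpha\equiv1$ by induction on $N$: for $N=1$, $\mathcal Z_1=|x_2-x_1|^{-2h}=\PartF_{\{1,2\}}$; for the step, choose an innermost link $\{j,j+1\}\in\alpha$ and let $x_j,x_{j+1}\to\xi$ --- a short computation shows $\mathcal Z_1/(x_{j+1}-x_j)^{-2h}$ tends to the $(N-1)$-point function $\mathcal Z_1$, so comparison with $\mathrm{(ASY)}$~\eqref{eq: multiple SLE asymptotics} and the linear independence of $\{\PartF_{\hat\alpha}\}$ from Theorem~\ref{thm::purepartition_existence} force $c_\alpha=1$. Hence $\mathcal Z_1=\sum_\alpha\PartF_\alpha=\PartF^{(N)}_{\GFF}$ and $P_\alpha=\PartF_\alpha/\PartF^{(N)}_{\GFF}$.

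For~\eqref{eqn::levellines_proba_lk}, target-independence of level lines makes $\eta_a$ the same $\SLE_4(\underline\rho)$ whatever the other curves, so $P^{(a,b)}=\PP[\eta_a\text{ terminates at }x_b]$ depends only on $\eta_a$. With driving function $W_t$ and $V^i_t=g_t(x_i)$ solving $\ud W_t=2\,\ud B_t+\sum_{i\neq a}\rho_i(W_t-V^i_t)^{-1}\ud t$ and $\ud V^i_t=2(V^i_t-W_t)^{-1}\ud t$, I claim $M_t:=\prod_{j\neq a,b}\big|(V^j_t-W_t)/(V^j_t-V^b_t)\big|^{(-1)^j}$ is a bounded martingale: its It\^o drift vanishes by a direct computation using $\kappa=4$, $\rho_i=\pm2$, and the exponents $(-1)^j$. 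As $t$ approaches the termination time, $V^b_t-W_t\to0$ while the other images stay mutually separated, so $M_t\to1$ on $\{\eta_a\text{ ends at }x_b\}$ and $M_t\to0$ otherwise (the factor indexed by the true endpoint $b'\neq b$ vanishes); optional stopping then yields $P^{(a,b)}=\E[M_0]=\prod_{j\neq a,b}\big|(x_j-x_a)/(x_j-x_b)\big|^{(-1)^j}$.

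The main obstacle is the analytic input behind the third step: controlling the single level line --- equivalently the $\SLE_4(\underline\rho)$ process --- as two consecutive marked points collide, in order to justify the $\mathrm{(ASY)}$-type degeneration of $\mathcal Z_1$ used in the induction and, at a more basic level, the existence of the limits of the connection probabilities $P_\alpha$; this sits on top of careful bookkeeping relating the boundary-data jumps to the $\rho$-values and of tracking the common conformal factor $C_t$. By contrast, the positivity and Markov-property inputs of the first step are standard $\GFF$ facts, and the martingale verification in the last step, while fiddly, is routine.
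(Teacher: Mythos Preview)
Your approach to~\eqref{eq::crossing_probabilities_for_kappa4} differs from the paper's and has two gaps. First, identifying the conditional law $\mu_\alpha$ with the global $N$-$\SLE_4$ of Theorem~\ref{thm::global_existence}---and hence, via that theorem, with the local $N$-$\SLE_4$ of partition function $\PartF_\alpha$---requires \emph{uniqueness} of the global $N$-$\SLE_4$ associated to $\alpha$, which Theorem~\ref{thm::global_existence} does not assert; the paper imports it from~\cite[Theorem~1.2]{BeffaraPeltolaWuUniquenessGloableMultipleSLEs}. Without this, you know only that $\mu_\alpha$ is \emph{some} global $N$-$\SLE_4$, and your Radon--Nikodym comparison cannot start. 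Second, the step ``continuing real-analytically'' from an identity valid along the random trajectory $\Theta_t$ (a one-dimensional set for each fixed start) to a functional identity on all of $\chamber_{2N}$ is not justified; real analyticity alone does not propagate an identity off a curve. The paper's argument (Lemma~\ref{lem::levellines_P_alpha}) avoids both issues: the ratio $M_t(\PartF_\alpha)=\PartF_\alpha(\Theta_t)/\PartF_{\GFF}^{(N)}(\Theta_t)$ is directly a bounded local martingale for the level line (boundedness from~\eqref{eqn::partitionfunction_positive}); its terminal value is identified as $\one\{\eta_1(T)=x_2\}\cdot \PartF_{\hat\alpha}/\PartF_{\GFF}^{(N-1)}$ using the strong bound~\eqref{eqn::partitionfunction_positive} together with Proposition~\ref{prop:: strong limit for ratio Zalpha and Ztotal} to kill the contribution when $\eta_1$ terminates at a wrong point; optional stopping and induction then give $P_\alpha=\PartF_\alpha/\PartF_{\GFF}^{(N)}$ (and hence $P_\alpha>0$), with no reference to the conditional-law identification or to uniqueness.

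For~\eqref{eqn::levellines_proba_lk}, your terminal-value claim $M_t\to\one\{\eta_a\text{ ends at }x_b\}$ is unjustified and is precisely the step the paper sidesteps. When $\eta_a$ terminates at some $x_{b'}$ with several marked points between $x_a$ and $x_{b'}$, all of those points are swallowed simultaneously, and the differences $V^j_t-W_t$ and $V^j_t-V^b_t$ in your product all tend to zero at comparable, curve-dependent rates; ``the factor indexed by $b'$ vanishes'' does not follow. The paper carries out your direct martingale argument only in the adjacent case $b=a+1$ (Lemma~\ref{lem::levellines_proba_neighbors}), where exactly one force point is absorbed and Lemma~\ref{lem::crossratio_zerowhenwrong_general} controls the terminal value. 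For general $b$ it proceeds indirectly (Proposition~\ref{prop::levellines_proba_lk}): it shows that $F^{(a,b)}_N:=\PartF_{\GFF}^{(N)}\cdot\prod_{j\neq a,b}|(x_j-x_a)/(x_j-x_b)|^{(-1)^j}$ lies in $\mathcal S_N$, expands it in the basis $\{\PartF_\alpha\}$, and computes the coefficients as $\FKdual_\alpha(F^{(a,b)}_N)=\one\{\{a,b\}\in\alpha\}$ by an easy inductive asymptotic calculation. Combining with the already-proved~\eqref{eq::crossing_probabilities_for_kappa4} then gives $P^{(a,b)}=\sum_{\alpha:\{a,b\}\in\alpha}P_\alpha=F^{(a,b)}_N/\PartF_{\GFF}^{(N)}$, with no terminal-value analysis beyond the adjacent case.
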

In order to prove Theorem~\ref{thm::multiple_sle_4}, we need good control of the asymptotics of the pure partition functions
$\PartF_\alpha$ of Theorem~\ref{thm::purepartition_existence} with $\kappa = 4$. Indeed, the strong 
bound~\eqref{eqn::partitionfunction_positive} enables us to control terminal values of certain martingales in Section~\ref{sec::levellines_gff}.
The property $\mathrm{(ASY)}$~\eqref{eq: multiple SLE asymptotics} is not sufficient for this purpose.

\smallbreak

An explicit, simple formula for the symmetric partition function 
$\PartF_{\GFF}$ is known~\cite{Dubedat:Euler_integrals_for_commuting_SLEs,
Kenyon-Wilson:Boundary_partitions_in_trees_and_dimers,
KytolaPeoltolaPurePartitionSLE},
see~\eqref{eq: gff symmetric pff} in Lemma~\ref{lem: gff symmetric pff}.
In fact, also the functions $\PartF_\alpha$ for $\kappa = 4$, and thus the connection probabilities 
$P_\alpha$ in~\eqref{eq::crossing_probabilities_for_kappa4}, have explicit algebraic formulas:

\begin{restatable}{theorem}{slepurepffs}
\label{thm: sle4 pure pffs}
Let $\kappa = 4$. Then, the functions $\{\PartF_\alpha \colon \alpha \in \LP\}$ 
of Theorem~\ref{thm::purepartition_existence} can be written as 
\begin{align} \label{eq: Pure partition function for kappa 4}
\PartF_\alpha(x_1, \ldots, x_{2N}) 
= \; & \sum_{\beta \in \LP_N} \Minv_{\alpha,\beta} \; \CobloF_\beta(x_1, \ldots, x_{2N}) ,
\end{align}
where $\CobloF_\beta$ are explicit functions defined in~\eqref{eq: CobloF definition} and
the coefficients $\Minv_{\alpha,\beta} \in \Z$ are given in Proposition~\ref{prop: matrix inversion}. 
\end{restatable}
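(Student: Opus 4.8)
The plan is to invoke the uniqueness statement of Theorem~\ref{thm::purepartition_existence}. Writing $\tilde{\PartF}_\alpha := \sum_{\beta \in \LP_N} \Minv_{\alpha,\beta}\, \CobloF_\beta$ for the function defined by the right-hand side of~\eqref{eq: Pure partition function for kappa 4}, it suffices to check that, for $\kappa = 4$ (so that $h = 1/4$ and $c = 1$), the collection $\{\tilde{\PartF}_\alpha \colon \alpha \in \LP\}$ consists of smooth functions satisfying the normalization $\tilde{\PartF}_\emptyset = 1$, the power law growth bound~\eqref{eqn::powerlawbound}, and properties $\mathrm{(PDE)}$, $\mathrm{(COV)}$, and $\mathrm{(ASY)}$; uniqueness then forces $\tilde{\PartF}_\alpha = \PartF_\alpha$. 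Since the conformal blocks $\CobloF_\beta$ of~\eqref{eq: CobloF definition} are explicit products of (half-integer) powers of the differences $x_j - x_i$ on $\chamber_{2N}$, they are manifestly smooth and satisfy a power law bound, hence so does the finite linear combination $\tilde{\PartF}_\alpha$; and $\CobloF_\emptyset = 1$ together with the triviality of $\Minv$ in the case $N = 0$ gives $\tilde{\PartF}_\emptyset = 1$. Property $\mathrm{(COV)}$ is a direct check from the product formula, each $\CobloF_\beta$ carrying conformal weight $h = 1/4$ at every marked point, and $\mathrm{(PDE)}$ follows by substituting the explicit $\CobloF_\beta$ into the second-order operator in~\eqref{eq: multiple SLE PDEs} and simplifying: at $\kappa = 4$ the null-field equation degenerates so that each $\CobloF_\beta$ solves it individually, whence so does every linear combination.

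The crux is property $\mathrm{(ASY)}$~\eqref{eq: multiple SLE asymptotics}. Here I would first compute the fusion asymptotics of a single conformal block: as $x_j, x_{j+1} \to \xi$ with $\xi \in (x_{j-1}, x_{j+2})$, the quantity $(x_{j+1} - x_j)^{2h}\, \CobloF_\beta(x_1,\ldots,x_{2N})$ has a finite limit that is itself a conformal block of $2N - 2$ variables on $x_1,\ldots,x_{j-1},x_{j+2},\ldots,x_{2N}$, with a coefficient determined by the local structure of $\beta$ near $j$ --- a routine limit of an explicit product. Feeding these asymptotics into $\tilde{\PartF}_\alpha = \sum_\beta \Minv_{\alpha,\beta}\,\CobloF_\beta$ reduces $\mathrm{(ASY)}$ to a purely combinatorial identity among the integer coefficients $\Minv_{\alpha,\beta}$: that $\Minv$ intertwines removal of the link $\link{j}{j+1}$ on the link-pattern side with the fusion rule on the conformal-block side. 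This is precisely the recursive structure of $\Minv$ recorded in Proposition~\ref{prop: matrix inversion}; carrying out the bookkeeping yields $\lim (x_{j+1}-x_j)^{2h}\,\tilde{\PartF}_\alpha = \tilde{\PartF}_{\alpha\removeLink\link{j}{j+1}}$ when $\link{j}{j+1} \in \alpha$ and $0$ otherwise, which is $\mathrm{(ASY)}$.

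The main obstacle I anticipate is this last step. The fusion of $\CobloF_\beta$ does not simply return a single lower conformal block when $\link{j}{j+1} \notin \beta$, so the cancellations producing the clean right-hand side of~\eqref{eq: multiple SLE asymptotics} have to be extracted from the combinatorics of $\Minv$, handled uniformly over all local configurations of $\beta$ around the fused pair. The conceptual reason the formula holds --- and the origin of the matrix $\Mmat$ whose inverse appears in~\eqref{eq: Pure partition function for kappa 4} --- is that $\Mmat_{\beta,\alpha}$ records the coefficient with which $\CobloF_\beta$ contributes under the iterated pairwise asymptotics labelled by $\alpha$, so that~\eqref{eq: Pure partition function for kappa 4} is exactly the inversion of this ``asymptotics matrix'' against the defining asymptotics of the $\PartF_\alpha$; once Proposition~\ref{prop: matrix inversion} supplies the explicit inverse with integer entries, the verification above closes the argument, and, combined with Theorem~\ref{thm::purepartition_existence}, in particular re-derives positivity of the explicit expressions $\sum_\beta \Minv_{\alpha,\beta}\,\CobloF_\beta$.
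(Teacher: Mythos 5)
Your proposal is correct and follows essentially the same route as the paper: verify smoothness, the power law bound, $\mathrm{(PDE)}$, $\mathrm{(COV)}$, and $\mathrm{(ASY)}$ for $\sum_\beta \Minv_{\alpha,\beta}\CobloF_\beta$, then conclude by the uniqueness of Corollary~\ref{cor::purepartition_unique}. You also correctly locate the crux in the asymptotics step, where the paper's Lemmas~\ref{lem: CobloF ASY} and~\ref{lem: Pure partition function ASY} carry out exactly the cancellation bookkeeping you describe (the recursive identities you attribute to Proposition~\ref{prop: matrix inversion} are recorded there as Lemma~\ref{lem: combinatorics}, via wedge-lifting and wedge-removal on Dyck paths).
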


In~\cite{Kenyon-Wilson:Boundary_partitions_in_trees_and_dimers,
Kenyon-Wilson:Double_dimer_pairings_and_skew_Young_diagrams},
R.~Kenyon and D.~Wilson derived formulas for connection probabilities in discrete models
(e.g., the double-dimer model) and related these to multichordal $\SLE$ connection 
probabilities for $\kappa = 2,4$, and $8$; see in particular
\cite[Theorem~5.1]{Kenyon-Wilson:Boundary_partitions_in_trees_and_dimers}.
The scaling limit of chordal interfaces in the double-dimer model is believed to be the multiple 
$\SLE_4$ (but this has turned out to be notoriously difficult to prove).
In~\cite[Theorem~5.1]{Kenyon-Wilson:Boundary_partitions_in_trees_and_dimers},
it was argued that the scaling limits of the double-dimer connection probabilities indeed agree 
with those of the $\GFF$, i.e., the connection probabilities given by 
$P_\alpha$ in Theorem~\ref{thm::multiple_sle_4}.
However, detailed analysis of the appropriate martingales was not carried~out.

The coefficients $\Minv_{\alpha,\beta}$ appearing in Theorem~\ref{thm: sle4 pure pffs} are enumerations of 
certain combinatorial objects known as ``cover-inclusive Dyck tilings'' (see Section~\ref{subsec: Combinatorics}). 
They were first introduced and studied in the 
articles~\cite{Kenyon-Wilson:Boundary_partitions_in_trees_and_dimers, 
Kenyon-Wilson:Double_dimer_pairings_and_skew_Young_diagrams,
Shigechi-Zinn:Path_representation_of_maximal_parabolic_Kazhdan-Lusztig_polynomials}. 
In this approach, one views the link patterns $\alpha \in \LP_N$ equivalently as walks known as Dyck paths of $2N$ steps, 
as illustrated in Figure~\ref{fig::bijection} and explained in Section~\ref{subsec: Combinatorics}.

\subsection{$\kappa = 3$:  Crossing Probabilities in Critical Ising Model}
\label{subsec::ising_conjecture}

In the article~\cite{PeltolaWuIsing}, we consider crossing probabilities in the critical planar Ising model.
The Ising model is a classical lattice model introduced and studied already in the 1920s by W. Lenz and E.~Ising.
It is arguably one of the most studied models of an order-disorder phase transition. 
Conformal invariance of the scaling limit of the 2D Ising model at criticality, 
in the sense of correlation functions, was postulated in the seminal 
article~\cite{BelavinPolyakovZamolodchikovConformalSymmetry} of A.~A. Belavin, A.~M. Polyakov, and A.~B. Zamolodchikov.
More recently, in his celebrated work~\cite{SmirnovConformalInvariance,SmirnovConformalInvarianceAnnals}, 
S.~Smirnov constructed discrete holomorphic observables,
which offered a way to rigorously establish conformal invariance for 
all correlation functions~\cite{ChelkakSmirnovIsing, ChelkakLzyurovSpinorIsing, HonglerSmirnovIsingEnergy,
ChelkakHonglerLzyurovConformalInvarianceCorrelationIsing}, as well as 
interfaces~\cite{HonglerKytolaIsingFree, 
CDCHKSConvergenceIsingSLE, BenoistHonglerIsingCLE, IzyurovIsingMultiplyConnectedDomains, BeffaraPeltolaWuUniquenessGloableMultipleSLEs}.

In this section, we briefly discuss the problem of determining crossing probabilities in the Ising model with alternating boundary conditions.
Suppose that discrete domains $(\Omega^{\delta}; x_1^{\delta}, \ldots, x_{2N}^{\delta})$ 
approximate a polygon $(\Omega; x_1, \ldots, x_{2N})$ as $\delta\to 0$ in some natural way (e.g., as specified in the aforementioned literature).
Consider the critical Ising model on $\Omega^{\delta}$ with alternating boundary conditions (see also Figure~\ref{fig::Ising}): 
\begin{align*}
\oplus\text{ on }(x_{2j-1}^{\delta} , x_{2j}^{\delta}),\quad\text{for }j\in\{1, \ldots, N\} ,
\qquad \text{and} \qquad 
\ominus\text{ on }(x_{2j}^{\delta} ,  x_{2j+1}^{\delta}),\quad \text{for }j\in\{0,1,\ldots, N\},
\end{align*}
with the convention that $x_{2N}^{\delta} = x_{0}^{\delta}$ and $x_{2N+1}^{\delta} = x_1^{\delta}$. 
Then, macroscopic interfaces $(\eta_1^{\delta}, \ldots, \eta_N^{\delta})$ connect the boundary points $x_1^{\delta}, \ldots, x_{2N}^{\delta}$,
forming a planar connectivity encoded in
a link pattern $\LA^{\delta} \in \LP_N$. 
Conditioned on $\{\LA^{\delta}=\alpha\}$, this collection  
of interfaces converges in the scaling limit to the global $N$-$\SLE_3$ associated to 
$\alpha$~\cite[Proposition~1.3]{BeffaraPeltolaWuUniquenessGloableMultipleSLEs}.

We are interested on the scaling limit of the crossing 
probability $\PP[\LA^{\delta}=\alpha]$ for $\alpha\in\LP_N$. For $N=2$, 
this limit was derived in~\cite[Equation~(4.4)]{IzyurovObservableFree}. 
In general, we expect the following:

\begin{conjecture}\label{conj::ising_crossing_proba}
We have
\begin{align*}
\lim_{\delta\to 0}\PP[\LA^{\delta}=\alpha] 
= \frac{\PartF_{\alpha}(\Omega;x_{1},\ldots,x_{2N})}{\PartF^{(N)}_{\mathrm{Ising}}(\Omega;x_{1},\ldots,x_{2N})} , 
\qquad\text{where } \quad \PartF^{(N)}_{\mathrm{Ising}} := \sum_{\alpha\in\LP_N}\PartF_{\alpha} , 
\end{align*}
and $\PartF_{\alpha}$ are the functions defined by~\eqref{eq: multiple SLE conformal covariance} 
and Theorem~\ref{thm::purepartition_existence} with $\kappa = 3$.
\end{conjecture}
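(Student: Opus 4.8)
The plan is to follow the same strategy as in the proof of Theorem~\ref{thm::multiple_sle_4} for the $\GFF$, replacing the exact identification of level lines with $\SLE_4$ by the (considerably harder) convergence of discrete Ising interfaces to $\SLE_3$; the companion article~\cite{PeltolaWuIsing} carries this out. Write $P_\alpha := \lim_{\delta\to 0}\PP[\LA^\delta=\alpha]$, which exists along subsequences by tightness of the interface collection $(\eta_1^\delta,\ldots,\eta_N^\delta)$, a consequence of uniform (RSW-type) crossing estimates for the critical Ising model. By~\cite[Proposition~1.3]{BeffaraPeltolaWuUniquenessGloableMultipleSLEs}, conditioned on $\{\LA^\delta=\alpha\}$ the interfaces converge to the global $N$-$\SLE_3$ associated to $\alpha$ from Theorem~\ref{thm::global_existence}. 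Hence any subsequential limit of the unconditioned collection is the mixture $\sum_{\alpha\in\LP_N} P_\alpha\,\QQ_\alpha$, where $\QQ_\alpha$ denotes the global $N$-$\SLE_3$ for $\alpha$. It remains to identify the weights $P_\alpha$ and to see that they do not depend on the subsequence.

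The identification proceeds by matching two descriptions of the marginal law of a single interface, say $\eta_1$ started from $x_1$. On the one hand, by convergence of one Ising interface with the given alternating boundary conditions --- an extension of~\cite{CDCHKSConvergenceIsingSLE} via the discrete holomorphic observable of~\cite{ChelkakSmirnovIsing} --- the marginal of $\eta_1^\delta$ converges to chordal $\SLE_3$ in $\HH$ from $x_1$, reweighted by the total partition function: its Radon--Nikodym derivative with respect to chordal $\SLE_3$ is the martingale built from $\PartF^{(N)}_{\mathrm{Ising}}=\sum_\beta\PartF_\beta$ (with $\kappa=3$, so $h=\tfrac12$). On the other hand, under the mixture limit this marginal equals $\sum_{\alpha}P_\alpha$ times the $\eta_1$-marginal of $\QQ_\alpha$, and by Proposition~\ref{prop::GeneralCascadeProp} the latter is chordal $\SLE_3$ reweighted by the $\PartF_\alpha$-martingale. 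Equating Radon--Nikodym derivatives with respect to chordal $\SLE_3$ and cancelling the common conformal factor leaves the identity $\PartF^{(N)}_{\mathrm{Ising}}(x)^{-1}\sum_\beta\PartF_\beta(y) = \sum_\alpha P_\alpha\,\PartF_\alpha(x)^{-1}\,\PartF_\alpha(y)$, valid for $y$ ranging over the Loewner orbit (an open set). Since, by Theorem~\ref{thm::purepartition_existence}(2), the real-analytic functions $\{\PartF_\alpha\}$ are linearly independent, matching coefficients forces $P_\alpha = \PartF_\alpha(x_1,\ldots,x_{2N})/\PartF^{(N)}_{\mathrm{Ising}}(x_1,\ldots,x_{2N})$. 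In particular the weights are subsequence-independent, so the full limit exists; conformal covariance~\eqref{eq: multiple SLE conformal covariance} then transfers the formula from $\HH$ to a general polygon $\Omega$.

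An essentially equivalent route is the cascade argument used for the $\GFF$: run $\eta_1$ as the weighted $\SLE_3$ above and note that the conditional probabilities $\PP[\LA^\delta=\alpha\mid\eta_1^\delta[0,t]]$ are bounded martingales whose scaling limit is $\PartF_\alpha/\PartF^{(N)}_{\mathrm{Ising}}$ evaluated in the slit domain $\HH\setminus\eta_1[0,t]$. Letting $t$ increase to the time $\eta_1$ reaches its endpoint $x_k$, the asymptotics $\mathrm{(ASY)}$~\eqref{eq: multiple SLE asymptotics} for $\kappa=3$ identify the terminal value with the analogous ratio for the remaining $N-1$ interfaces, closing an induction on $N$ (the base case $N=1$ being $P=1$). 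Here it is crucial that the strong upper bound~\eqref{eqn::partitionfunction_positive} holds with $\kappa=3$: it makes these martingales uniformly integrable and controls their terminal values even as $\eta_1$ approaches the boundary, exactly as flagged in the remarks following Theorem~\ref{thm::purepartition_existence}.

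The main obstacle is the analytic input of the second paragraph: proving that the single unconditioned Ising interface converges to $\SLE_3$ reweighted by $\PartF^{(N)}_{\mathrm{Ising}}$. This requires constructing a discrete martingale observable for the alternating-boundary Ising model, establishing its convergence through discrete complex analysis and boundary modification of Smirnov's observable~\cite{ChelkakSmirnovIsing,SmirnovConformalInvariance}, and identifying the continuum limit with $\PartF^{(N)}_{\mathrm{Ising}}$ --- where the identification again uses the uniqueness of solutions to $\mathrm{(PDE)}$--$\mathrm{(COV)}$--$\mathrm{(ASY)}$ from Theorem~\ref{thm::purepartition_existence}. A secondary difficulty is upgrading convergence in law to a statement about the \emph{connectivity} $\LA^\delta$: this needs the tightness together with no-pathology estimates (simple, non-touching limit curves), where the upper bound in~\eqref{eqn::partitionfunction_positive} serves to rule out degenerate connectivities in the limit.
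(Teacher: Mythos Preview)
The statement you are addressing is a \emph{Conjecture} in this paper, not a theorem: the paper does not contain a proof of it. Immediately after stating Conjecture~\ref{conj::ising_crossing_proba}, the authors write that they prove it for square lattice approximations in the companion article~\cite[Theorem~1.1]{PeltolaWuIsing}; no argument appears here. So there is no ``paper's own proof'' to compare your proposal against.

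That said, your outline is broadly in the spirit of what such a proof requires, and you correctly identify the main analytic obstacle --- establishing convergence of a single unconditioned Ising interface (with alternating boundary conditions) to the Loewner chain driven by $\PartF^{(N)}_{\mathrm{Ising}}$, via a suitable discrete holomorphic observable. You also correctly flag that the strong bound~\eqref{eqn::partitionfunction_positive} is what makes the martingale/cascade argument go through, exactly as in the $\GFF$ case of Section~\ref{sec::levellines_gff}. One caution: in your second paragraph, the ``matching coefficients'' step is more delicate than you suggest. The equality of Radon--Nikodym derivatives you write down holds only along the random Loewner orbit, not on an open deterministic set, and the linear independence in Theorem~\ref{thm::purepartition_existence}(2) is of functions on $\chamber_{2N}$, not of the resulting stochastic processes; turning this into a rigorous identification of the coefficients $P_\alpha$ requires more care (e.g., the cascade/induction route you sketch in the third paragraph, which is closer to what actually works). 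But again, none of this is carried out in the present paper.
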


We prove this conjecture for square lattice approximations in~\cite[Theorem~1.1]{PeltolaWuIsing}.
In light of the universality results in~\cite{ChelkakSmirnovIsing},
more general approximations should also work nicely.

The symmetric partition function 
$\PartF_{\mathrm{Ising}}$ has an explicit Pfaffian formula
\cite{KytolaPeoltolaPurePartitionSLE, IzyurovIsingMultiplyConnectedDomains}, 
see~\eqref{eq: ising symmetric pff} in Lemma~\ref{lem::ztotal_ising}.
However, explicit 
formulas for $\PartF_\alpha$ for $\kappa = 3$ are only known in the cases $N=1,2$, and
in contrast to the case of $\kappa = 4$, for $\kappa = 3$ the formulas are in general not algebraic. 


\medbreak
\noindent\textbf{Outline.} 
Section~\ref{sec::pre} contains preliminary material: 
the definition and properties of the $\SLE_\kappa$ processes, discussion about the multiple $\SLE$ partition functions 
and solutions of $\mathrm{(PDE)}$~\eqref{eq: multiple SLE PDEs}, 
as well as combinatorics needed in Section~\ref{sec::pure_pf_for_sle4}.
We also state a crucial result from~\cite{FloresKlebanSolutionSpacePDE2} 
(Theorem~\ref{thm::purepartition_unique} and Corollary~\ref{cor::purepartition_unique})
concerning uniqueness of solutions to $\mathrm{(PDE)}$~\eqref{eq: multiple SLE PDEs}.
Moreover, we recall H\"ormander's condition for hypoellipticity of linear partial differential operators,
crucial for proving the smoothness of $\SLE$ partition functions (Theorem~\ref{thm::HormanderHypoellipticTheorem}
and Proposition~\ref{prop::smoothness}).

The topic of Section~\ref{sec::characterization} is the construction of global multiple $\SLE$s,
in order to prove parts of Theorem~\ref{thm::global_existence}.
We construct global $N$-$\SLE_\kappa$ probability measures for all link patterns $\alpha$ and for all $N$ 
in Section~\ref{subsec::multiplesle_existence} (Proposition~\ref{prop::global_existence}). 
In the next Section~\ref{subsec::further_properties_of_Zalpha}, we give
the boundary perturbation property  
(Proposition~\ref{prop::multiplesle_boundary_perturbation}) and the characterization of the marginal law
(Proposition~\ref{prop::GeneralCascadeProp}) for these random curves.

In Section~\ref{sec::purepartition_existence}, we consider the pure partition functions $\PartF_\alpha$.
Theorem~\ref{thm::purepartition_existence} concerning
the existence and uniqueness of 
$\PartF_\alpha$ is proved in Section~\ref{subsec::purepfexistence_proof}. 
We complete the proof of Theorem~\ref{thm::global_existence} 
with Lemma~\ref{lem::global_is_local} in Section~\ref{subsec::global_vs_local},
by comparing the two definitions for multiple $\SLE$s --- the global and the local. 
In Section~\ref{subsec::global_vs_local}, we also prove Corollary~\ref{cor::localmultiplesle}.
Then, in Section~\ref{subsec::purepartition_thm}, we prove Proposition~\ref{prop::loewnerchain_purepartition}, 
which says that Loewner chains driven by the pure partition functions are generated by continuous curves up to and including the continuation threshold.
Finally, in Section~\ref{subsec::total_pf}, we discuss so-called symmetric partition functions and list explicit formulas 
for them for $\kappa = 2,3,4$.

The last Sections~\ref{sec::levellines_gff} and~\ref{sec::pure_pf_for_sle4} focus on the case of $\kappa = 4$
and the problem of connection probabilities of the level lines of the Gaussian free field.
We introduce the GFF and its level lines in Section~\ref{sub:GFF_pre}.
In Sections~\ref{subsec::GFF_pair_of_level_lines}--\ref{subsec:GFF_connection_proba}, 
we find the connection probabilities of the level lines.
Theorem~\ref{thm::multiple_sle_4} is proved in Section~\ref{subsec::gff_levellines_Plk}.
Then, in Section~\ref{sec::pure_pf_for_sle4}, we investigate the pure partition functions in the case $\kappa = 4$.
First, in Section~\ref{subsec: decay_properties}, we record decay properties of these functions and relate them to the $\SLE_4$ boundary arm-exponents.
In Sections~\ref{subsec: PDE_COV}--\ref{subsec: Conformal blocks}, 
we derive the explicit 
formulas of Theorem~\ref{thm: sle4 pure pffs} for these 
functions, 
using combinatorics and results from~\cite{Kenyon-Wilson:Boundary_partitions_in_trees_and_dimers,
Kenyon-Wilson:Double_dimer_pairings_and_skew_Young_diagrams,
KKP:Correlations_in_planar_LERW_and_UST_and_combinatorics_of_conformal_blocks}.
We find these formulas by constructing functions known as conformal blocks for the $\GFF$.
We also discuss in Section~\ref{subsec: Conformal blocks for GFF} 
how the conformal blocks generate multiple $\SLE_4$ processes that can be naturally coupled with the $\GFF$ with appropriate boundary data
(Proposition~\ref{prop::levellines_conformalblocks}).

The appendices contain some technical results needed in this article that we have found not instructive to include in the main text.

\medbreak
\noindent\textbf{Acknowledgments.}
We thank V.~Beffara, G.~Lawler, and W.~Qian for helpful discussions on multiple $\SLE$s. 
We thank M.~Russkikh for useful discussions on (double-)dimer models and 
B.~Duplantier, S.~Flores, A.~Karrila, K.~Kyt\"ol\"a, and A.~Sepulveda for interesting, 
useful, and stimulating discussions. 
Part of this work was completed during H.W.'s visit at the IHES, which we cordially thank for hospitality. 
Finally, we are grateful to the referee for careful comments on the manuscript.

\section{Preliminaries}
\label{sec::pre}
This section contains definitions and results from the literature that are needed to understand and prove 
the main results of this article. 
In Sections~\ref{subsec::pre_sle} and~\ref{subsec::boundary_perturbation}, we define the chordal $\SLE_\kappa$ and 
give a boundary perturbation property 
for it, using a conformally invariant measure 
known as the Brownian loop measure. 
Then, in Section~\ref{subsec::pre_pure} we discuss the solution space of the system~$\mathrm{(PDE)}$~\eqref{eq: multiple SLE PDEs} 
of second order partial differential equations. We give examples of solutions: multiple $\SLE$ partition functions.
In Theorem~\ref{thm::purepartition_unique}, we state a 
result of S.~Flores and P.~Kleban~\cite{FloresKlebanSolutionSpacePDE2}
concerning the asymptotics of solutions, which we use in Section~\ref{sec::purepartition_existence} to prove the uniqueness of 
the pure partition functions of Theorem~\ref{thm::purepartition_existence}.
In Proposition~\ref{prop::smoothness}, we prove that all solutions of~$\mathrm{(PDE)}$~\eqref{eq: multiple SLE PDEs} are smooth,
by showing that this PDE system is hypoelliptic 
--- to this end, we follow the idea 
of~\cite{Kontsevich:CFT_SLE_and_phase_boundaries, Friedrich-Kalkkinen:On_CFT_and_SLE, Dubedat:SLE_and_Virasoro_representations_localization},
using the powerful theory of H\"ormander~\cite{HormanderHypoelliptic}. 
Finally, in Section~\ref{subsec: Combinatorics} we introduce combinatorial notions and results needed in Section~\ref{sec::pure_pf_for_sle4}.

\subsection{Schramm-Loewner Evolutions}
\label{subsec::pre_sle}
We call a compact subset $K$ of $\overline{\HH}$ an \textit{$\HH$-hull} if $\HH\setminus K$ 
is simply connected. Riemann's mapping theorem asserts that there exists a unique conformal map 
$g_K$ from $\HH\setminus K$ onto $\HH$ with the property that $\lim_{z\to\infty}|g_K(z)-z|=0$.
We say that $g_K$ is \textit{normalized at} $\infty$.

In this article, we consider the following collections of $\HH$-hulls. 
They are associated with families of conformal maps $(g_{t}, t\ge 0)$ 
obtained by solving the Loewner equation: for each $z\in\mathbb{H}$,
\begin{align*}
\partial_{t}{g}_{t}(z)=\frac{2}{g_{t}(z)-W_{t}}, \qquad \qquad  g_{0}(z)=z,
\end{align*}
where $(W_t, t\ge 0)$ is a real-valued continuous function, which we call the driving function. 
Let $T_z$ be the \textit{swallowing time} of $z$ defined as $\sup\{t\ge 0 \colon \inf_{s\in[0,t]}|g_{s}(z)-W_{s}|>0\}$.
Denote $K_{t}:=\overline{\{z\in\mathbb{H}: T_{z}\le t\}}$.
Then, $g_{t}$ is the unique conformal map from $H_{t}:=\mathbb{H}\setminus K_{t}$ onto $\mathbb{H}$ normalized at $\infty$. 
The collection of $\HH$-hulls $(K_{t}, t\ge 0)$ associated with such maps is called a \textit{Loewner chain}.

Let $\kappa \geq 0$. The (chordal) \textit{Schramm-Loewner Evolution} $\SLE_{\kappa}$ in $\HH$ from $0$ to $\infty$ 
is the random Loewner chain $(K_{t}, t\ge 0)$ driven by $W_t=\sqrt{\kappa}B_t$, where $(B_t, t\ge 0)$ is 
the standard 
Brownian motion. S.~Rohde and O.~Schramm proved
in~\cite{RohdeSchrammSLEBasicProperty} 
that $(K_{t}, t\ge 0)$ is almost surely generated by a continuous transient curve, i.e., there almost 
surely exists a continuous curve $\eta$ such that for each $t\ge 0$, $H_{t}$ is the unbounded 
component of $\HH \setminus \eta[0,t]$ and $\lim_{t\to\infty}|\eta(t)|=\infty$. 
This random curve is 
the $\SLE_\kappa$ trace in $\HH$ from $0$ to $\infty$.
It exhibits phase transitions at $\kappa=4$ and $8$: 
the $\SLE_{\kappa}$ curves are simple when $\kappa\in [0,4]$ and they have self-touchings when $\kappa > 4$,
being space-filling when $\kappa\ge 8$. 
In this article, we focus on the range $\kappa\in (0,4]$ when the curve is simple.
Its law  is a probability measure $\PP(\HH; 0,\infty)$ on the set $X_0(\HH; 0,\infty)$.

The $\SLE_\kappa$ is \textit{conformally invariant}: it can be defined in any 
simply connected domain $\Omega$ with two boundary points $x, y \in \partial \Omega$ 
(around which the boundary is locally connected) by pushforward of a conformal map as follows.
Given any conformal map $\varphi \colon \HH \to \Omega$ such that $\varphi(0)=x$ and 
$\varphi(\infty)=y$, we have $\varphi(\eta) \sim \PP(\Omega; x,y)$ if $\eta \sim \PP(\HH; 0,\infty)$,
where $\PP(\Omega; x,y)$ denotes the law of the $\SLE_{\kappa}$ in $\Omega$ from $x$~to~$y$. 

Schramm's classification~\cite{SchrammScalinglimitsLERWUST} shows that $\PP(\Omega; x,y)$ 
is the unique probability measure on curves $\eta \in X_0(\Omega; x,y)$
satisfying conformal invariance and the 
\textit{domain Markov property}: for a stopping time $\tau$,
given an initial segment $\eta[0,\tau]$ of
the $\SLE_\kappa$ curve  
$\eta \sim \PP(\Omega; x,y)$, the conditional law of the remaining piece 
$\eta[\tau,\infty)$ is the law $\PP(\Omega \setminus K_\tau; \eta(\tau),y)$ of the $\SLE_\kappa$ 
in the remaining domain $\Omega \setminus K_\tau$ from the tip $\eta(\tau)$ to $y$.

We will also use the following \textit{reversibility} of the $\SLE_\kappa$ (for $\kappa \leq 4$)~\cite{ZhanReversibility}: 
the time reversal of the $\SLE_\kappa$ curve $\eta \sim \PP(\Omega; x,y)$ in $\Omega$ from $x$ to $y$ has 
the same law $\PP(\Omega; y,x)$ as the $\SLE_{\kappa}$ in $\Omega$ from $y$ to $x$.

\smallbreak

Finally, the following change of target point of the $\SLE_{\kappa}$ will be used in Section~\ref{sec::purepartition_existence}.
 
\begin{lemma}\label{lem::slekapparho_mart}
\textnormal{\cite{SchrammWilsonSLECoordinatechanges}}
Let $\kappa>0$ and $y>0$. 
Up to the first swallowing time of~$y$,
the $\SLE_{\kappa}$ in $\HH$ from $0$ to $y$ 
has the same law as the $\SLE_{\kappa}$ in $\HH$ from $0$ to $\infty$ weighted by 
the local martingale $g_t'(y)^h(g_t(y)-W_t)^{-2h}$.
\end{lemma}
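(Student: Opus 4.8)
This is an instance of the coordinate-change technique for $\SLE$, so the plan is to verify directly that
\begin{align*}
M_t := g_t'(y)^h \, (g_t(y) - W_t)^{-2h}
\end{align*}
is a local martingale for the $\SLE_\kappa$ in $\HH$ from $0$ to $\infty$, and then to invoke Girsanov's theorem to identify the law reweighted by $M$ with that of the $\SLE_\kappa$ targeted at $y$. Throughout, $(g_t, t \ge 0)$ is the Loewner chain driven by $W_t = \sqrt{\kappa} B_t$ and $T_y = \sup\{ t \colon g_t(y) - W_t > 0\}$ is the swallowing time of $y$; note $M_0 = y^{-2h} \in (0,\infty)$, so the reweighting makes sense. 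First I would record the evolutions coming from the Loewner equation: writing $O_t := g_t(y) - W_t$, we have $\ud g_t(y) = \tfrac{2}{O_t}\,\ud t$ and $\partial_t \log g_t'(y) = -\tfrac{2}{O_t^{2}}$, hence $\ud O_t = \tfrac{2}{O_t}\,\ud t - \sqrt{\kappa}\,\ud B_t$.

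Applying It\^o's formula to $\log M_t = h \log g_t'(y) - 2h \log O_t$ and then exponentiating, one obtains after collecting terms
\begin{align*}
\frac{\ud M_t}{M_t} = \frac{h\,(\kappa - 6 + 2h\kappa)}{O_t^{2}}\,\ud t + \frac{2h\sqrt{\kappa}}{O_t}\,\ud B_t .
\end{align*}
The whole point is the identity $2h\kappa = 6 - \kappa$, valid precisely for $h = \tfrac{6-\kappa}{2\kappa}$: it makes the drift coefficient vanish, so $M$ is a local martingale with $\ud M_t = M_t\,\tfrac{2h\sqrt\kappa}{g_t(y) - W_t}\,\ud B_t$ on $[0, T_y)$.

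Next comes the Girsanov step. For $\eps > 0$ set $\tau_\eps := \inf\{ t \colon O_t \le \eps \} \wedge \eps^{-1}$; on $[0, \tau_\eps]$ the integrand $2h\sqrt\kappa / O_t$ is bounded and the stopped process stays bounded, so $(M_{t \wedge \tau_\eps})$ is a true martingale, and weighting $\PP$ by $M_{t\wedge\tau_\eps}/M_0$ produces an equivalent probability measure under which $\tilde B_t := B_t - \int_0^{t \wedge \tau_\eps} \tfrac{2h\sqrt\kappa}{O_s}\,\ud s$ is a Brownian motion. Hence, under the reweighted measure, the driving function satisfies, up to $\tau_\eps$,
\begin{align*}
\ud W_t = \sqrt{\kappa}\,\ud \tilde B_t + \frac{2h\kappa}{g_t(y) - W_t}\,\ud t = \sqrt{\kappa}\,\ud \tilde B_t + \frac{\kappa - 6}{W_t - g_t(y)}\,\ud t ,
\end{align*}
again using $2h\kappa = 6 - \kappa$. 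This is the defining SDE of $\SLE_\kappa(\kappa - 6)$ with force point at $y$, which --- by conformal invariance, being the image of $\SLE_\kappa$ from $0$ to $\infty$ under the M\"obius automorphism of $\HH$ fixing $0$ and sending $\infty$ to $y$ --- is exactly the $\SLE_\kappa$ in $\HH$ from $0$ to $y$. Letting $\eps \downarrow 0$, the times $\tau_\eps$ increase to $T_y$, and the two descriptions agree up to the first swallowing time of $y$, as claimed.

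The one genuinely delicate point is this last localization: $M$ is merely a local martingale, and in fact $M_t \to 0$ as $t \uparrow T_y$ (which is exactly why the identification cannot be expected beyond $T_y$), so one has to argue on the events $\{\tau_\eps > t\}$, on which the two laws are mutually absolutely continuous, and then pass to the limit. Everything else is routine It\^o calculus together with the characterization of $\SLE_\kappa$ with one marked boundary point.
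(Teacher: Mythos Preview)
Your argument is correct and is precisely the standard coordinate-change computation: verify by It\^o's formula that $M_t$ has vanishing drift (using $2h\kappa = 6-\kappa$), then apply Girsanov to see that the reweighted driving function solves the $\SLE_\kappa(\kappa-6)$ SDE with force point at $y$, which is the chordal $\SLE_\kappa$ from $0$ to $y$. The paper does not give its own proof of this lemma but simply cites \cite{SchrammWilsonSLECoordinatechanges}; your write-up is essentially the argument behind that reference, so there is nothing to compare.
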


\subsection{Boundary Perturbation of SLE}
\label{subsec::boundary_perturbation}

Let $(\Omega;x,y)$ be a polygon, which in this case is also called a \emph{Dobrushin domain}.
Also, if $U \subset \Omega$ is a sub-polygon, we also call $U$ a \emph{Dobrushin subdomain}.
If, in addition, the boundary points $x$ and $y$ lie on 
sufficiently regular segments of $\partial \Omega$ (e.g., $C^{1+\eps}$ for some $\eps > 0$),
we call $(\Omega;x,y)$ a \emph{nice Dobrushin domain}.

In the next Lemma~\ref{lem::sle_boundary_perturbation},
we recall the boundary perturbation property of the chordal $\SLE_\kappa$.
It gives the Radon-Nikodym derivative between the laws of the chordal $\SLE_\kappa$ curve in $U$ and $\Omega$
in terms of the Brownian loop measure and the boundary Poisson kernel.

The \textit{Brownian loop measure} is 
a conformally invariant measure on unrooted Brownian loops 
in the plane. In the present article, we do not need the precise definition of this 
measure, so we content ourselves with referring to the literature for the definition: 
see, e.g.,~\cite[Sections~3~and~4]{LawlerWernerBrownianLoopsoup} or~\cite{FieldLawlerReversedRadialSLEBrownianLoop}.
Given a non-empty simply connected domain $\Omega\subsetneq\C$ and two disjoint subsets $V_1, V_2 \subset \Omega$, we
denote by $\mu(\Omega; V_1, V_2)$ the Brownian loop measure of loops in $\Omega$ that intersect both 
$V_1$ and $V_2$. This quantity is conformally invariant: 
$\mu(\varphi(\Omega); \varphi(V_1), \varphi(V_2)) = \mu(\Omega; V_1, V_2)$ for any conformal transformation 
$\varphi \colon \Omega \to \varphi(\Omega)$.

In general, the Brownian loop measure is an infinite measure. However, we have $0 \leq \mu(\Omega; V_1, V_2) < \infty$ 
when both of $V_1, V_2$ are closed, one of them is compact, and $\dist(V_1, V_2) > 0$.
More generally, for $n$ disjoint subsets $V_1, \ldots, V_n$ of $\Omega$, 
we denote by $\mu(\Omega; V_1, \ldots, V_n)$ the Brownian loop measure of loops in $\Omega$ that 
intersect all of $V_1, \ldots, V_n$. Provided that $V_j$ are all closed and at least one of them is compact,
the quantity $\mu(\Omega; V_1, \ldots, V_n)$ is finite.

For a nice Dobrushin domain $(\Omega;x,y)$, the \textit{boundary Poisson kernel $H_{\Omega}(x,y)$} is
uniquely characterized by the following
two properties~\eqref{eqn::poisson_cov}~and~\eqref{eqn::poisson_upperhalfplane}. 
First, it is conformally covariant: 
we have  
\begin{align}\label{eqn::poisson_cov}
|\varphi'(x)| |\varphi'(y)| H_{\varphi(\Omega)}(\varphi(x), \varphi(y)) = H_{\Omega}(x,y) ,
\end{align}
for any conformal map $\varphi \colon \Omega \to \varphi(\Omega)$
(since $\Omega$ is nice, the derivative of $\varphi$ extends continuously to neighborhoods of $x$ and $y$).
Second, for the upper-half plane 
with $x,y \in \R$, we have the explicit formula 
\begin{align}\label{eqn::poisson_upperhalfplane}
H_{\HH}(x,y) = |y-x|^{-2} 
\end{align}
(we do not include $\pi^{-1}$ here). In addition, 
if $U \subset \Omega$ is a Dobrushin subdomain, then we have
\begin{align}\label{eqn::poissonkernel_mono}
H_{U}(x,y)\le H_{\Omega}(x,y) .
\end{align}
When considering ratios of boundary Poisson kernels, we may drop the niceness assumption.

\begin{lemma} \label{lem::sle_boundary_perturbation}
Let $\kappa\in (0,4]$. 
Let $(\Omega;x,y)$ be a Dobrushin domain and $U \subset \Omega$ a Dobrushin subdomain. 
Then, the $\SLE_\kappa$ in $U$ connecting $x$ and $y$
is absolutely continuous with respect to the $\SLE_\kappa$ in $\Omega$ connecting $x$ and $y$, with
Radon-Nikodym derivative 
\begin{align*}
\frac{\ud \PP(U; x,y)}{\ud \PP(\Omega; x,y)} (\eta)
= \left(\frac{H_{\Omega}(x,y)}{H_{U}(x,y)}\right)^h 
\one_{\{\eta \subset U\}} \exp(c \mu(\Omega; \eta, \Omega \setminus U)). 
\end{align*}  
\end{lemma}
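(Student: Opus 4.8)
The plan is to establish the boundary perturbation property of chordal SLE via Girsanov's theorem, following the strategy of Lawler and collaborators (e.g.\ \cite{LawlerSLENotes, LawlerWernerBrownianLoopsoup}). Since everything is conformally covariant, I would first reduce to the case $\Omega = \HH$, $x = 0$, $y = \infty$, with $U \subset \HH$ a sub-domain agreeing with $\HH$ in neighborhoods of $0$ and $\infty$ (so that $\HH \setminus U$ is bounded and bounded away from $0$). Let $(W_t = \sqrt{\kappa} B_t, t \ge 0)$ be the driving function of the SLE$_\kappa$ in $\HH$, with Loewner maps $(g_t)$, and let $\tilde g_t$ denote the normalized maps for the SLE in $U$. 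The idea is to write the SLE in $U$ as the SLE in $\HH$ re-weighted by a martingale, and to identify that martingale.

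First I would introduce, for the hull $K_\tau$ generated by the curve up to a stopping time $\tau$ (before it exits $U$), the auxiliary conformal map $\Phi_\tau$ which uniformizes $g_\tau(U \setminus K_\tau)$ back onto $\HH$, normalized suitably at $W_\tau$ and at $\infty$. Then I would compute the Loewner driving term of $\eta$ as seen in $U$: by the chain rule for Loewner flows, the SLE$_\kappa$ curve in $U$ has driving function $\tilde W_t$ whose Girsanov density relative to $W_t$ is governed by a drift term depending on $\Phi_t'(W_t)$ and the Schwarzian-type correction. A Loewner-chain computation (this is the standard SLE coordinate-change argument, cf.\ Lemma~\ref{lem::slekapparho_mart} in spirit) shows that the Radon--Nikodym derivative, restricted to $\{\eta[0,t] \subset U\}$, is a local martingale $M_t$ of the form
\begin{align*}
M_t = \Phi_t'(W_t)^h \, \frac{H_U(0,\infty)^{-h}}{H_\HH(0,\infty)^{-h}} \, \exp\!\Big( c \int_0^t (\text{loop-measure rate}) \, \ud s \Big),
\end{align*}
where the exponential factor arises precisely because $\kappa \le 4$ forces us to track the central-charge correction $c$, and the integrand is (up to sign) the infinitesimal Brownian loop measure of loops in $g_s(\HH)$ hitting both the tip and $g_s(\HH \setminus U)$.

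The key step — and the main obstacle — is to recognize the exponential integral term as $\exp(c\,\mu(\HH; \eta, \HH \setminus U))$. This uses the description of the Brownian loop measure quantity $\mu(\Omega; V_1, V_2)$ in terms of an integral over the curve of a local "rate" function, which is itself expressible via derivatives of conformal maps; this is worked out in \cite{LawlerWernerBrownianLoopsoup} (the "loop measure and restriction" computations). Concretely, one differentiates $\mu(H_s; \eta[t,t+s'], H_s \setminus U_s)$ in $s$ using conformal invariance of the loop measure and the Loewner flow, matches it with the drift term in the stochastic exponential, and integrates. I would also need to verify that the non-exponential factor $\Phi_t'(W_t)^h \cdot (H_\Omega/H_U)^h$ telescopes correctly: as $t \to \infty$ (the curve reaching $\infty$), $\Phi_t'(W_t) \to 1$ in the normalization chosen, leaving exactly the boundary-Poisson-kernel ratio raised to the power $h$, via \eqref{eqn::poisson_cov} and \eqref{eqn::poisson_upperhalfplane}. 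Finally, since $0 \le \mu(\HH;\eta, \HH\setminus U) < \infty$ whenever $\eta \subset U$ (as $\HH \setminus U$ is closed, bounded, and at distance $> 0$ from $\eta$ on compacts — with a small additional argument near $\infty$), the density is bounded and $M_t$ is a genuine (uniformly integrable) martingale, so Girsanov's theorem applies and yields the stated formula with the indicator $\one_{\{\eta \subset U\}}$. The absolute continuity and the explicit Radon--Nikodym derivative then follow, and the general-domain statement is obtained by transporting through a conformal map using \eqref{eqn::poisson_cov} and the conformal invariance of $\mu$.
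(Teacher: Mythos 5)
Your argument is the standard restriction-martingale computation of Lawler--Schramm--Werner and Kozdron--Lawler, which is exactly what the paper relies on: its proof of this lemma consists of the citation to \cite[Section 5]{LawlerSchrammWernerConformalRestriction} and \cite[Proposition~3.1]{KozdronLawlerMultipleSLEs}, so you have reconstructed the intended route (uniformize $g_t(U\setminus K_t)$ by $\Phi_t$, identify the local martingale $\Phi_t'(W_t)^h$ times a stochastic exponential, and recognize the Schwarzian-derivative integral as the Brownian loop measure term via the conformal invariance of $\mu$).

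One point needs repair. In the last step you assert that ``the density is bounded'' because $\mu(\HH;\eta,\HH\setminus U)<\infty$ whenever $\eta\subset U$. Finiteness for each fixed $\eta$ does not give a uniform bound: the loop measure $\mu(\HH;\eta,\HH\setminus U)$ diverges as $\eta$ approaches $\HH\setminus U$, so for $\kappa\in(8/3,4]$, where $c>0$, the quantity $\one_{\{\eta\subset U\}}\exp(c\,\mu(\HH;\eta,\HH\setminus U))$ is unbounded over $X_0(\HH;0,\infty)$. Consequently the uniform integrability of $M_t$ is not automatic and is precisely where the cited references do real work (one shows instead that the candidate density has the correct finite expectation, equal to $(H_U(x,y)/H_\Omega(x,y))^h$, and passes to the limit along stopping times). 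For $\kappa\le 8/3$ one has $c\le 0$ and your boundedness claim is fine. With that caveat the proposal matches the paper's (cited) proof.
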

\begin{proof}
See~\cite[Section 5]{LawlerSchrammWernerConformalRestriction} and~\cite[Proposition~3.1]{KozdronLawlerMultipleSLEs}. 
\end{proof}

\subsection{Solutions to the Second Order PDE System (PDE)}
\label{subsec::pre_pure}
In this section, we present known facts about the solution space of the system~$\mathrm{(PDE)}$~\eqref{eq: multiple SLE PDEs} 
of second order partial differential equations. 
Particular examples of solutions are the multiple $\SLE$ partition functions, and
we give examples of known formulas for them. 
We also state a crucial result from~\cite{FloresKlebanSolutionSpacePDE2}
concerning the asymptotics of solutions. 
This result, Theorem~\ref{thm::purepartition_unique}, says that solutions to
$\mathrm{(PDE)}$~\eqref{eq: multiple SLE PDEs} and 
$\mathrm{(COV)}$~\eqref{eq: multiple SLE Mobius covariance} having certain asymptotic properties 
must vanish. We use this property 
in Section~\ref{sec::purepartition_existence} to prove the uniqueness of the pure partition functions.
Finally, we discuss regularity of the solutions to the system~$\mathrm{(PDE)}$~\eqref{eq: multiple SLE PDEs}:
in Proposition~\ref{prop::smoothness}, we prove that these PDEs are hypoelliptic, that is, 
all distributional solutions for them are in fact smooth functions. 
This result was proved in~\cite{Dubedat:SLE_and_Virasoro_representations_localization} 
using the powerful theory of H\"ormander~\cite{HormanderHypoelliptic}, which we also briefly recall. 
The hypoellipticity of the PDEs in~\eqref{eq: multiple SLE PDEs} was already pointed out earlier in the 
articles~\cite{Kontsevich:CFT_SLE_and_phase_boundaries, Friedrich-Kalkkinen:On_CFT_and_SLE}. 

\subsubsection{Examples of Partition Functions}
\label{subsubsec::partitionfunctions_examples}
For $\kappa\in (0,8)$,
the pure partition functions for $N=1$ and $N=2$ can be found by a calculation.
The case $N=1$ is almost trivial: 
then we have, for $x<y$ and  $\vcenter{\hbox{\includegraphics[scale=0.3]{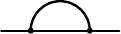}}} = \{ \link{1}{2} \}$,
\[ \PartF^{(1)}(x,y) 
= \PartF_{\vcenter{\hbox{\includegraphics[scale=0.2]{figures/link-0.pdf}}}}(x,y) 
= (y-x)^{-2h} . \]

When $N=2$, the system 
$\mathrm{(PDE)}$~\eqref{eq: multiple SLE PDEs} with M\"obius covariance
$\mathrm{(COV)}$~\eqref{eq: multiple SLE Mobius covariance} reduces to an ordinary differential 
equation (ODE), since we can fix three out of the four degrees of freedom. This ODE is a
hypergeometric equation, whose solutions are well-known. With boundary conditions
$\mathrm{(ASY)}$~\eqref{eq: multiple SLE asymptotics}, we obtain 
for $\vcenter{\hbox{\includegraphics[scale=0.3]{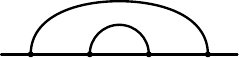}}} = \{\link{1}{4}, \link{2}{3} \}$ 
and $\vcenter{\hbox{\includegraphics[scale=0.3]{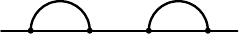}}} = \{\link{1}{2}, \link{3}{4} \}$, 
and for $x_1<x_2<x_3<x_4$,
\begin{align*}
\PartF_{\vcenter{\hbox{\includegraphics[scale=0.2]{figures/link-2.pdf}}}} (x_1,x_2,x_3,x_4)
= \; & (x_4-x_1)^{-2h}(x_3-x_2)^{-2h}z^{2/\kappa} \, \frac{F(z)}{F(1)} , \\ 
\PartF_{\vcenter{\hbox{\includegraphics[scale=0.2]{figures/link-1.pdf}}}} (x_1,x_2,x_3,x_4)
= \; &  (x_2-x_1)^{-2h}(x_4-x_3)^{-2h}(1-z)^{2/\kappa} \, \frac{F(1-z)}{F(1)},
\end{align*}
where $z$ is a cross-ratio and $F$ is a hypergeometric function:  
\begin{align*}
z=\frac{(x_2-x_1)(x_4-x_3)}{(x_4-x_2)(x_3-x_1)},\qquad \qquad
F(\cdot) := \hF\left(\frac{4}{\kappa}, 1-\frac{4}{\kappa}, \frac{8}{\kappa};\cdot\right).
\end{align*}
Note that $F$ is bounded on $[0,1]$ when $\kappa\in (0,8)$. 
For some parameter values, these formulas are algebraic:
\begin{align*}
&\text{For }\kappa=2,\quad &&
\PartF_{\vcenter{\hbox{\includegraphics[scale=0.2]{figures/link-2.pdf}}}} (x_1, x_2, x_3, x_4)=(x_4-x_1)^{-2}(x_3-x_2)^{-2}z(2-z).\\
&\text{For }\kappa=4,\quad &&
\PartF_{\vcenter{\hbox{\includegraphics[scale=0.2]{figures/link-2.pdf}}}} (x_1, x_2, x_3, x_4)=(x_4-x_1)^{-1/2}(x_3-x_2)^{-1/2}z^{1/2}. \\
&\text{For }\kappa=16/3, \quad &&
\PartF_{\vcenter{\hbox{\includegraphics[scale=0.2]{figures/link-2.pdf}}}} (x_1, x_2, x_3, x_4)=(x_4-x_1)^{-1/4}(x_3-x_2)^{-1/4}z^{3/8}(1+\sqrt{1-z})^{-1/2}.
\end{align*}
We note that when $\kappa=4$, we have
\begin{align*}
\frac{\PartF_{\vcenter{\hbox{\includegraphics[scale=0.2]{figures/link-2.pdf}}}}(x_1, x_2, x_3, x_4)}{\PartF_{\vcenter{\hbox{\includegraphics[scale=0.2]{figures/link-2.pdf}}}}(x_1, x_2, x_3, x_4)+\PartF_{\vcenter{\hbox{\includegraphics[scale=0.2]{figures/link-1.pdf}}}}(x_1, x_2, x_3, x_4)}=z.
\end{align*}
The right-hand side coincides with a connection probability of the level lines of the GFF, 
see Lemma~\ref{lem::twosle_proba}.

\subsubsection{Crucial Uniqueness Result}


The following theorem is a deep result due to S.~Flores and P.~Kleban. 
It is formulated as a lemma in the series
\cite{FloresKlebanSolutionSpacePDE1, FloresKlebanSolutionSpacePDE2, FloresKlebanSolutionSpacePDE3, 
FloresKlebanSolutionSpacePDE4} of articles, which concerns the dimension of the solution space 
of $\mathrm{(PDE)}$~\eqref{eq: multiple SLE PDEs} and
$\mathrm{(COV)}$~\eqref{eq: multiple SLE Mobius covariance} under a
condition~\eqref{eqn::powerlawbound} of power law growth given below. The proof of this lemma constitutes the whole 
article~\cite{FloresKlebanSolutionSpacePDE2}, relying on the theory of elliptic 
partial differential equations, Green function techniques, 
and careful estimates on the asymptotics of the solutions.

Uniqueness of solutions to hypoelliptic boundary value problems 
is not applicable in our situation, because the solutions that we consider 
cannot be continuously extended up to the boundary of $\chamber_{2N}$.

\begin{theorem}\label{thm::purepartition_unique}
\textnormal{\cite[Lemma~1]{FloresKlebanSolutionSpacePDE2}}
Let $\kappa\in (0,8)$. Let $F \colon \chamber_{2N} \to \C$ be a function satisfying
properties $\mathrm{(PDE)}$~\eqref{eq: multiple SLE PDEs} and
$\mathrm{(COV)}$~\eqref{eq: multiple SLE Mobius covariance}.
Suppose furthermore that there exist constants $C>0$ and $p>0$ such that for all 
$N \geq 1$ and $(x_1,\ldots, x_{2N}) \in \chamber_{2N}$, we have
\begin{align}\label{eqn::powerlawbound}
|F(x_1, \ldots, x_{2N})| \le C \prod_{1 \leq i<j \leq 2N}(x_j-x_i)^{\mu_{ij}(p)}, 
\qquad \text{where } \quad
\mu_{ij}(p) :=
\begin{cases}
p, \quad &\text{if } |x_j-x_i| > 1, \\
-p, \quad &\text{if } |x_j-x_i| < 1.
\end{cases}
\end{align}
If $F$ also has the asymptotics property
\begin{align*}
\lim_{x_j , x_{j+1} \to \xi} 
\frac{F(x_1 , \ldots , x_{2N})}{(x_{j+1} - x_j)^{-2h}} = 0 ,
\qquad \text{for all } j \in \{ 2, 3, \ldots , 2N-1 \}  \text{ and } \xi \in (x_{j-1}, x_{j+2}) 
\end{align*}
\textnormal{(}with the convention that $x_0 = -\infty$ and  $x_{2N+1} = +\infty$\textnormal{)},
then $F \equiv 0$. 
\end{theorem}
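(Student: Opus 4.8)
The plan is to prove this by induction on $N$, reducing the number of marked points by two at each step through a careful analysis of $F$ near the strata of $\partial\chamber_{2N}$ where consecutive points collide, and then closing the argument on the interior with an elliptic maximum principle. This is essentially the scheme of Flores and Kleban, and below I indicate how I would organize it and where the real difficulty sits.

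\textbf{Fusion/indicial analysis.} Fix a consecutive pair $\{j,j+1\}$, write $u=x_{j+1}-x_j$, and pass to the variables $u$ and $\tfrac12(x_j+x_{j+1})$; keeping the most singular terms, the $i=j$ equation of $\mathrm{(PDE)}$ degenerates to the Euler operator $\tfrac{\kappa}{2}u^2\partial_u^2+2u\,\partial_u-2h$, whose indicial roots are $\delta_-=-2h$ and $\delta_+=2/\kappa$, with $\delta_+-\delta_-=(8-\kappa)/\kappa>0$ throughout $\kappa\in(0,8)$. So I expect every power-law-bounded solution to admit, as $u\to 0$, an expansion
\begin{align*}
F(x_1,\dots,x_{2N}) \;=\; u^{-2h}\,\mathcal{A}_j(x_1,\dots,x_{j-1},\xi,x_{j+2},\dots,x_{2N}) \;+\; u^{2/\kappa}\,\mathcal{B}_j(\cdots) \;+\; (\text{higher order}),
\end{align*}
where $\mathcal{A}_j$ and (after rescaling) $\mathcal{B}_j$ again solve $\mathrm{(PDE)}$--$\mathrm{(COV)}$ for the remaining $2N-2$ points and again obey the bound~\eqref{eqn::powerlawbound}. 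The hypothesis of the theorem is precisely that $\mathcal{A}_j\equiv 0$ for every $j\in\{2,\dots,2N-1\}$, so that for those pairs $F$ decays like $u^{2/\kappa}\to 0$ (note $2/\kappa>0$ always). Making this expansion rigorous --- existence of the two channels with an integrable remainder, smoothness and the solution property of $\mathcal{A}_j$ and $\mathcal{B}_j$, and uniformity of the bound as the collision proceeds --- is where I expect essentially all the difficulty to lie: it cannot be obtained by a soft argument but requires delicate a priori estimates, Green-function bounds for the elliptic operator and Gronwall-type control in the radial variable $u$, including the logarithmic resonances that occur when $(8-\kappa)/\kappa\in\Z_{>0}$ (e.g.\ $\kappa=4,\tfrac83,2,\dots$). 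This is exactly why the full proof fills the separate paper~\cite{FloresKlebanSolutionSpacePDE2}, which is invoked here.

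\textbf{The induction.} The base case $N=2$: by $\mathrm{(COV)}$ the system reduces to a hypergeometric ODE in one cross-ratio with singular points at the three possible collisions; requiring the sub-dominant behaviour at the two collisions indexed by $j=2,3$ annihilates the two-dimensional solution space, the hypergeometric connection coefficients never conspiring to leave a nonzero solution throughout $\kappa\in(0,8)$ --- a short explicit check. For the inductive step I still need to (a) treat the pair $\{1,2\}$, which the hypothesis excludes; (b) kill the sub-dominant coefficients $\mathcal{B}_j$; and (c) control $F$ as a point tends to $\pm\infty$ or as three or more points collide. Point (c) I would reduce to (a)--(b) using $\mathrm{(COV)}$, which converts escape to infinity and iterated collisions into finite two-point fusions. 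For (b): a second fusion among the remaining points commutes with the first, so the asymptotics of $\mathcal{B}_j$ are iterated sub-dominant limits of $F$, which vanish by the cases already handled; hence $\mathcal{B}_j$ satisfies the theorem's hypotheses for $2N-2$ points and $\mathcal{B}_j\equiv 0$ by induction. For (a): the reflection $x\mapsto 2\xi-x$, an anticonformal symmetry compatible with $\mathrm{(COV)}$, combined with the vanishing already obtained at the interior pairs forces $\mathcal{A}_1\equiv 0$ as well. The upshot is that $|F|\to 0$ along every boundary stratum of $\chamber_{2N}$, including at infinity.

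\textbf{Elliptic endgame.} Summing the $2N$ equations of $\mathrm{(PDE)}$ yields a single uniformly elliptic equation $\mathcal{L}F=VF$ on $\chamber_{2N}$, with $\mathcal{L}=\tfrac{\kappa}{2}\sum_i\partial_i^2+(\text{first-order terms})$ and $V=2h\sum_{i\neq j}(x_j-x_i)^{-2}$. When $h\ge 0$ (that is, $\kappa\le 6$) the zeroth-order coefficient $-V$ has the favourable sign, so, since $|F|\to 0$ on $\partial\chamber_{2N}$, the maximum principle applied to $\pm\operatorname{Re}F$ and $\pm\operatorname{Im}F$ on an exhaustion by compact subsets gives $F\equiv 0$. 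When $\kappa\in(6,8)$ the sign is wrong, and I would instead divide $F$ by a fixed positive solution of the same system so that the zeroth-order term cancels, and apply the maximum principle to the quotient, which still tends to $0$ at every boundary stratum because its exponent there is $\delta_+-\delta_-=(8-\kappa)/\kappa>0$. As stressed above, the serious obstacle is not this endgame but the rigorous fusion analysis of the second step.
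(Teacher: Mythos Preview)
The paper does not prove this theorem at all: it is stated as a citation of Flores--Kleban~\cite{FloresKlebanSolutionSpacePDE2}, with the explicit remark that ``the proof of this lemma constitutes the whole article~\cite{FloresKlebanSolutionSpacePDE2}, relying on the theory of elliptic partial differential equations, Green function techniques, and careful estimates on the asymptotics of the solutions.'' So there is no in-paper proof to compare against; your proposal is, as you say yourself, a sketch of the Flores--Kleban scheme.

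As a high-level outline your sketch is in the right spirit --- the indicial exponents $-2h$ and $2/\kappa$, the two-channel Frobenius picture, decay at every boundary stratum, and an elliptic endgame are all genuine ingredients. You are also right that the rigorous fusion analysis (existence and regularity of the channel coefficients, uniformity of the remainder, resonances at $(8-\kappa)/\kappa\in\Z_{>0}$) is where essentially all the work lies. A few places where the sketch is loose or off:
\begin{itemize}
\item Your handling of the excluded pair $\{1,2\}$ via ``the reflection $x\mapsto 2\xi-x$, an anticonformal symmetry compatible with $\mathrm{(COV)}$'' is not correct as written: $\mathrm{(COV)}$ is stated only for M\"obius maps of $\HH$, which are orientation-preserving. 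The PDE system does enjoy an independent reflection symmetry $(x_1,\dots,x_{2N})\mapsto(-x_{2N},\dots,-x_1)$, which one checks directly and which does exchange the $\{1,2\}$-collision with the $\{2N{-}1,2N\}$-collision covered by the hypothesis; but that is a separate computation, not a consequence of $\mathrm{(COV)}$.
\item Your inductive step for killing the subdominant coefficients $\mathcal{B}_j$ presupposes that iterated limits commute and that the relabeling of indices after removing $\{j,j+1\}$ still produces the full hypothesis set for the $(2N{-}2)$-point problem. Both are true but neither is automatic; the commutation of limits in particular is one of the substantial technical facts in~\cite{FloresKlebanSolutionSpacePDE1}.
\item For $\kappa\in(6,8)$ you propose dividing by ``a fixed positive solution of the same system'' to fix the sign in the maximum principle. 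At this stage of the theory no such positive solution is known --- positivity of the pure partition functions is established later in the present paper, \emph{using} this uniqueness result --- so the argument as written is circular. Flores--Kleban work instead with explicit comparison functions and Green-function estimates.
\end{itemize}
None of this invalidates your outline, but it confirms your own assessment: what you have is a roadmap, not a proof, and the paper is right to simply invoke~\cite{FloresKlebanSolutionSpacePDE2}.
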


Motivated by Theorem~\ref{thm::purepartition_unique}, we define 
the following solution space of the system $\mathrm{(PDE)}$~\eqref{eq: multiple SLE PDEs}:
\begin{align}\label{eq: solution space}
\mathcal{S}_N := \{ F \colon \chamber_{2N} \to \C \colon F 
\text{ satisfies $\mathrm{(PDE)}$~\eqref{eq: multiple SLE PDEs}, $\mathrm{(COV)}$~\eqref{eq: multiple SLE Mobius covariance}, 
and~\eqref{eqn::powerlawbound}}\} .
\end{align}
We use this notation throughout.
The bound~\eqref{eqn::powerlawbound} is easy to verify for the solutions studied in the present article.
Hence, Theorem~\ref{thm::purepartition_unique} 
gives us the uniqueness of the pure partition functions for Theorem~\ref{thm::purepartition_existence}. 

%

\begin{corollary}\label{cor::purepartition_unique}
Let $\kappa\in (0,8)$. Let $\{F_\alpha \colon \alpha \in \LP\}$ be a collection of functions $F_\alpha \in \LS_N$, 
for $\alpha \in \LP_N$, satisfying $\mathrm{(ASY)}$~\eqref{eq: multiple SLE asymptotics} with normalization $F_\emptyset = 1$.
Then, the collection $\{F_\alpha \colon \alpha \in \LP\}$ is unique.
\end{corollary}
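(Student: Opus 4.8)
The plan is to derive uniqueness directly from Theorem~\ref{thm::purepartition_unique} by an induction on $N$, using the asymptotics property $\mathrm{(ASY)}$~\eqref{eq: multiple SLE asymptotics} to reduce the problem to the case $j=1$ versus $j \geq 2$ covered by the theorem. Suppose $\{F_\alpha\}$ and $\{\tilde F_\alpha\}$ are two such collections. For $N=0$ there is nothing to prove since $F_\emptyset = \tilde F_\emptyset = 1$. Assume inductively that $F_\alpha = \tilde F_\alpha$ for all link patterns of at most $N-1$ links, and fix $\alpha \in \LP_N$. Set $G := F_\alpha - \tilde F_\alpha$. Since $\LS_N$ is a linear space and both functions lie in it, $G \in \LS_N$; in particular $G$ satisfies $\mathrm{(PDE)}$~\eqref{eq: multiple SLE PDEs}, $\mathrm{(COV)}$~\eqref{eq: multiple SLE Mobius covariance}, and the power law bound~\eqref{eqn::powerlawbound}.

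Next I would check that $G$ satisfies the hypothesis of Theorem~\ref{thm::purepartition_unique}, namely that
\begin{align*}
\lim_{x_j, x_{j+1} \to \xi} \frac{G(x_1, \ldots, x_{2N})}{(x_{j+1}-x_j)^{-2h}} = 0
\qquad \text{for all } j \in \{2, 3, \ldots, 2N-1\} \text{ and } \xi \in (x_{j-1}, x_{j+2}).
\end{align*}
For each such $j$, apply $\mathrm{(ASY)}$~\eqref{eq: multiple SLE asymptotics} to both $F_\alpha$ and $\tilde F_\alpha$. If $\link{j}{j+1} \notin \alpha$, both limits are $0$, so the limit of $G$ is $0$. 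If $\link{j}{j+1} \in \alpha$, both limits equal $F_{\hat\alpha}(x_1,\ldots,x_{j-1},x_{j+2},\ldots,x_{2N})$ and $\tilde F_{\hat\alpha}(x_1,\ldots,x_{j-1},x_{j+2},\ldots,x_{2N})$ respectively, where $\hat\alpha = \alpha \removeLink \link{j}{j+1} \in \LP_{N-1}$; by the induction hypothesis these agree, so again the limit of $G$ is $0$. Hence the asymptotics hypothesis of Theorem~\ref{thm::purepartition_unique} holds, and the theorem yields $G \equiv 0$, i.e.\ $F_\alpha = \tilde F_\alpha$. This completes the induction and proves that the collection is unique.

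A small subtlety I would be careful about: Theorem~\ref{thm::purepartition_unique} excludes the endpoints $j=1$ and $j=2N$ from the asymptotics requirement, which is exactly why one only needs to know the previous collections agree (via $\mathrm{(ASY)}$ at interior indices) rather than anything about the boundary behavior at $j \in \{1, 2N\}$; the induction is designed precisely to match this. I would also note that one uses the reindexed pattern $\hat\alpha$ on the $2N-2$ remaining points, so the induction is on $N$ with the base case $N=0$ (or equivalently $N=1$, where $\mathrm{(ASY)}$ is vacuous and $\mathrm{(COV)}$ plus the bound already force $F_{\{1,2\}}(x,y) = (y-x)^{-2h}$, although this base case is not strictly needed once $N=0$ is handled). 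There is no real obstacle here — the content is entirely in Theorem~\ref{thm::purepartition_unique} — so the only thing to get right is bookkeeping: confirming $G$ inherits membership in $\LS_N$ and correctly tracking which indices the asymptotics are imposed on.
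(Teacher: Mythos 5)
Your proof is correct and follows essentially the same route as the paper: form the difference $F_\alpha - \tilde F_\alpha$, verify the interior-asymptotics hypothesis of Theorem~\ref{thm::purepartition_unique}, and conclude it vanishes. The paper's own proof is terser and leaves the induction on $N$ implicit (the case $\link{j}{j+1}\in\alpha$ silently uses that the $(N-1)$-link collections already agree), so your explicit induction is just a cleaner write-up of the same argument.
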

\begin{proof}
Let $\{F_\alpha \colon \alpha \in \LP\}$ and $\{\tilde{F}_\alpha \colon \alpha \in \LP\}$ 
be two collections satisfying the properties listed in the assertion. 
Then, for any $\alpha \in \LP_N$, the difference $F_\alpha - \tilde{F}_\alpha$ has the asymptotics property
\begin{align*}
\lim_{x_j , x_{j+1} \to \xi} 
\frac{(F_\alpha-\tilde{F}_{\alpha})(x_1 , \ldots , x_{2N})}{(x_{j+1} - x_j)^{-2h}} = 0 ,
\qquad \text{for all } j \in \{ 2, \ldots , 2N-1\} \text{ and } \xi \in (x_{j-1}, x_{j+2}) ,
\end{align*}
so Theorem~\ref{thm::purepartition_unique} shows that $F_\alpha - \tilde{F}_\alpha \equiv 0$.
The asserted uniqueness follows.
\end{proof}

\subsubsection{Hypoellipticity}

Following~\cite[Lemma~5]{Dubedat:SLE_and_Virasoro_representations_localization}, we prove next 
that any distributional solution 
to the system~$\mathrm{(PDE)}$~\eqref{eq: multiple SLE PDEs} is necessarily smooth. 
This holds by the fact that any PDE of type~\eqref{eq: multiple SLE PDEs}
is hypoelliptic, for it satisfies the H\"ormander bracket condition.  
For details concerning hypoelliptic PDEs, see, e.g.,~\cite[Chapter~7]{StroockPDEs},
and for general theory of distributions, e.g.,~\cite[Chapters~6--7]{RudinFA}, or~\cite{HormanderFA}.

\smallbreak

For an open set $O \subset \mathbb{R}^n$ and $\mathbb{F} \in \{ \mathbb{R} ,\mathbb{C}\}$, 
we denote by $C^\infty(O;\mathbb{F})$ the set of smooth functions from $O$ to $\mathbb{F}$.
We also denote by $\Test_space(O;\mathbb{F})$ the space of smooth compactly supported functions 
from $O$ to $\mathbb{F}$, and by $\Distr_space(O;\mathbb{F})$ the space of distributions, that is, 
the dual space of $\Test_space(O;\mathbb{F})$ consisting of continuous linear functionals $\Test_space(O;\mathbb{F}) \to \mathbb{F}$. 
We recall that any locally integrable (e.g., continuous)
function $f$ on $O$ defines a distribution, also denoted by $f \in \Distr_space(O;\mathbb{F})$, 
via the assignment
\begin{align}\label{eq::Fdistribution}
\langle f, \phi \rangle 
:= \; & \int_{O} f(\boldsymbol{x}) \phi (\boldsymbol{x}) \ud \boldsymbol{x} ,
\end{align}  
for all test functions $\phi \in \Test_space(O;\mathbb{F})$.
Furthermore, with this identification, the space $\Test_space(O;\mathbb{F}) \ni f$ of test functions is a dense subset 
in the space $\Distr_space(O;\mathbb{F})$ of distributions (see, e.g.,~\cite[Lemma 1.13.5]{TaoEpsilon}).
We also recall that any differential operator $\mathcal{D}$ defines a linear operator on the space of distributions
via its transpose (dual operator) $\mathcal{D}^*$: for a distribution $f \in \Distr_space(O;\mathbb{F})$,
we have $\mathcal{D} f \in \Distr_space(O;\mathbb{F})$ and
\begin{align} \label{eq::Diffdistribution}
\langle \mathcal{D} f, \phi \rangle := \int_{O} f(\boldsymbol{x}) \; (\mathcal{D}^{(i)})^* \phi (\boldsymbol{x}) \ud \boldsymbol{x} ,
\end{align}
for all test functions $\phi \in \Test_space(O;\mathbb{F})$.

Let $\mathcal{D}$ be a linear partial differential operator with real analytic coefficients defined on an open set 
$U \subset \mathbb{R}^n$. The operator $\mathcal{D}$ is said to be \textit{hypoelliptic} on $U$ if for every open set $O \subset U$, 
the following holds: if $F \in \Distr_space(O;\mathbb{C})$ satisfies $\mathcal{D} F \in C^\infty(O;\mathbb{C})$, 
then we have $F \in C^\infty(O;\mathbb{C})$.

Given a linear partial differential operator, how to prove that it is hypoelliptic?
For operators of certain form, L.~H\"ormander proved in~\cite{HormanderHypoelliptic}
a powerful characterization for hypoellipticity. Suppose $U \subset \mathbb{R}^n$ is an open set, 
denote $\boldsymbol{x} = (x_1, \ldots, x_n) \in \mathbb{R}^n$, and consider smooth vector fields
\begin{align} \label{eqn::vector fields}
X_j := \sum_{k=1}^n a_{jk}(\boldsymbol{x}) \partial_k , \quad \text{for } j \in \{0,1,\ldots, m\} ,
\end{align}
where $a_{jk} \in C^\infty(U;\mathbb{R})$ are smooth real-valued coefficients.
H\"ormander's theorem gives a characterization for hypoellipticity of partial differential operators of the form
\begin{align} \label{eqn::Hormander differential operator}
\mathcal{D} = \sum_{j=1}^m X_j^2 + X_0 + b(\boldsymbol{x}) ,
\end{align}
where $b \in C^\infty(U;\mathbb{R})$.
Denote by $\mathfrak{g}$ the real Lie algebra generated by the vector 
fields~\eqref{eqn::vector fields}, 
and for $\boldsymbol{x} \in U$, let
$\mathfrak{g}_{\boldsymbol{x}} \subset T_{\boldsymbol{x}} \mathbb{R}^n$ 
be the subspace of the tangent space of $\mathbb{R}^n$
obtained by evaluating the elements of $\mathfrak{g}$ at $\boldsymbol{x}$.
H\"ormander's theorem can be phrased as follows:

\begin{theorem} \label{thm::HormanderHypoellipticTheorem}
\textnormal{\cite[Theorem~1.1]{HormanderHypoelliptic}}
Let $U \subset \mathbb{R}^n$ be an open set and $X_0, \ldots, X_m$ vector fields as in~\eqref{eqn::vector fields}.
If for all $\boldsymbol{x} \in U$, the rank of $\mathfrak{g}_{\boldsymbol{x}}$ equals $n$,
then the operator $\mathcal{D}$ of the form~\eqref{eqn::Hormander differential operator} is hypoelliptic on $U$.
\end{theorem}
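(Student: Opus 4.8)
The final statement is a citation of H\"ormander's hypoellipticity theorem, so I only sketch the classical proof. The plan is to reduce hypoellipticity of $\mathcal{D}$ to a local \emph{subelliptic a priori estimate}: for every $\boldsymbol{x}_0 \in U$ there exist a neighborhood $V \ni \boldsymbol{x}_0$, an exponent $\epsilon > 0$, and for each $s \in \R$ a constant $C_s$ such that
\begin{align*}
\| u \|_{s+\epsilon} \;\le\; C_s \big( \| \mathcal{D} u \|_{s} + \| u \|_{s} \big), \qquad u \in \Test_space(V;\C) ,
\end{align*}
where $\|\cdot\|_t$ is the local Sobolev norm of order $t$. The formal adjoint $\mathcal{D}^*$ has the same structural form, since each $X_j^* = -X_j + a_j$ with $a_j \in C^\infty(U;\R)$ yields $\mathcal{D}^* = \sum_{j=1}^m X_j^2 - X_0 + \tilde b$ and the Lie algebra generated by $X_0, \dots, X_m$ is unchanged; hence $\mathcal{D}^*$ satisfies the same estimate. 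A standard Friedrichs-mollifier (or difference-quotient) argument then promotes a distributional solution of $\mathcal{D} F \in C^\infty(O;\C)$ to $F \in C^\infty(O;\C)$, so the entire content is the a priori estimate.

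First I would establish the \emph{basic $L^2$ estimate}. Integrating by parts, $\mathrm{Re}\,(X_j^2 u, u) = -\|X_j u\|_0^2 + (\text{lower order})$, while $\mathrm{Re}\,(X_0 u, u) = -\tfrac12\big((\operatorname{div} X_0) u, u\big)$ is itself lower order, so
\begin{align*}
\sum_{j=1}^m \| X_j u \|_0^2 \;\lesssim\; \big|\,\mathrm{Re}\,(\mathcal{D} u, u)\,\big| + \|u\|_0^2 \;\lesssim\; \|\mathcal{D} u\|_0^2 + \|u\|_0^2 .
\end{align*}
This controls only the ``horizontal'' derivatives $X_1 u, \dots, X_m u$. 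The crux is a \emph{commutator gain lemma}: if a first-order operator $Y$ satisfies $\|Y u\|_{\delta-1} \lesssim \|\mathcal{D} u\|_0 + \|u\|_0$ for some $\delta \in (0,1]$ (``$Y$ gains $\delta$ derivatives''), then each bracket $[Y, X_j]$ satisfies the same bound with $\delta$ replaced by $\delta/2$. One proves this by testing $\big([Y,X_j] u, \Lambda^{\delta-1} w\big)$, commuting the fractional-order operator $\Lambda^{\delta-1}$ through $X_j$ and $Y$ (pseudodifferential calculus, or H\"ormander's original Fourier-side estimates), and feeding the resulting pieces into the basic estimate for the $X_j$-terms and into the inductive hypothesis for the $Y$-terms; the auxiliary field $X_0$ is assigned ``weight $2$'' (it enters $\mathcal{D}$ to first order and scales like a second-order horizontal object) and fits into the same bookkeeping.

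Iterating, every iterated bracket of $X_0, \dots, X_m$ of weighted length $\le r$ gains at least $\epsilon_r := 2^{-r}$ derivatives. Since the locus where brackets of weighted length $\le r$ span the tangent space is open (it is cut out by non-vanishing of smooth minors), H\"ormander's rank hypothesis together with a compactness argument provides $r < \infty$ and a neighborhood $V$ of $\boldsymbol{x}_0$ on which $n$ such brackets $Y_1, \dots, Y_n$ form a frame. Writing $\partial_1, \dots, \partial_n$ as smooth combinations of $Y_1, \dots, Y_n$ and summing the gain estimates gives
\begin{align*}
\| u \|_{\epsilon_r} \;\lesssim\; \sum_{i=1}^n \| Y_i u \|_0 + \|u\|_0 \;\lesssim\; \|\mathcal{D} u\|_0 + \|u\|_0 ,
\end{align*}
which is the subelliptic estimate at $s=0$ with $\epsilon = \epsilon_r$. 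Commuting $\mathcal{D}$ with $\Lambda^s$ — the commutator $[\mathcal{D}, \Lambda^s]$ has order $s+1$ with a structure whose pieces are absorbed exactly as above — then propagates the estimate to all $s$, completing the argument. The main obstacle is precisely the commutator gain lemma and its weighted bookkeeping: tracking how much regularity each iterated bracket recovers, handling the weight-$2$ field $X_0$, and arranging the cancellations so that the induction actually closes is where the genuine harmonic-analytic / pseudodifferential work lies.
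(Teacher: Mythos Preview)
The paper does not prove this theorem at all: it is stated as a black-box citation of H\"ormander's original result \cite[Theorem~1.1]{HormanderHypoelliptic} and then applied in Proposition~\ref{prop::smoothness}. You correctly recognize this, and your sketch follows the classical subelliptic-estimate strategy of H\"ormander's 1967 paper (basic $L^2$ estimate for the horizontal fields, the commutator gain lemma with weight-$2$ bookkeeping for $X_0$, iteration along brackets until the tangent space is spanned, then bootstrapping in the Sobolev scale). There is nothing to compare against in the paper itself.
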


Consider now the partial differential operators appearing in the system~$\mathrm{(PDE)}$~\eqref{eq: multiple SLE PDEs}.
They are defined on the open set $\domainofdef_{2N} = \{ (x_1, \ldots, x_{2N}) \in \mathbb{R}^{2N} \colon x_i \neq x_j \text{ for all } i \neq j \}$.
The following result was proved in~\cite[Lemma~5]{Dubedat:SLE_and_Virasoro_representations_localization} in a very general setup. 
For clarity, we give the proof in our simple case.

\begin{proposition}\label{prop::smoothness}
Each partial differential operator 
$\mathcal{D}^{(i)} = \frac{\kappa}{2}\partial^2_i + \sum_{j\neq i}\left(\frac{2}{x_{j}-x_{i}}\partial_j - 
\frac{2h}{(x_{j}-x_{i})^{2}}\right)$, for fixed $i \in \{1,\ldots,2N\}$, is hypoelliptic.
In particular, any distributional solution $F$ to 
$\mathcal{D}^{(i)} F = 0$ is smooth: 
\begin{align*}
\begin{cases}
F \in \Distr_space(\domainofdef_{2N};\mathbb{C}) \\
\langle \mathcal{D}^{(i)} F, \phi \rangle = 0 , \qquad
\text{for all } \phi \in \Test_space(\domainofdef_{2N};\mathbb{C})
\end{cases}
\qquad \Longrightarrow \qquad 
F \in C^\infty(\domainofdef_{2N};\mathbb{C}) .
\end{align*}
\end{proposition}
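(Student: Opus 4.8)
The plan is to apply H\"ormander's Theorem~\ref{thm::HormanderHypoellipticTheorem} to each operator $\mathcal{D}^{(i)}$. First I would put $\mathcal{D}^{(i)}$ into the form~\eqref{eqn::Hormander differential operator}: since the only second-order term is $\frac{\kappa}{2}\partial_i^2$, I set
\begin{align*}
X_0 := \sqrt{\tfrac{\kappa}{2}}\,\partial_i , \qquad
X_1 := \sum_{j \neq i} \frac{2}{x_j - x_i}\,\partial_j , \qquad
b(\boldsymbol{x}) := -\sum_{j \neq i} \frac{2h}{(x_j - x_i)^2} ,
\end{align*}
so that $\mathcal{D}^{(i)} = X_0^2 + X_1 + b$ (here there is a single squared field $X_0$, the drift field is $X_1$, and $b$ is the zeroth-order coefficient; the roles of the indices $0$ and $1$ are just labels). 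All coefficients are real analytic on $\domainofdef_{2N}$ because the denominators $x_j - x_i$ never vanish there. The hypotheses of Theorem~\ref{thm::HormanderHypoellipticTheorem} are met once I check the rank (H\"ormander bracket) condition: the Lie algebra generated by $\{X_0, X_1\}$ must span the full tangent space $T_{\boldsymbol{x}}\R^{2N}$ at every point.

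The key computation is the bracket condition. I have $X_0 = \sqrt{\kappa/2}\,\partial_i$, which already gives the $\partial_i$ direction. To recover the other $2N-1$ coordinate directions, I would bracket $X_0$ with $X_1$ repeatedly. Since $X_1 = \sum_{j\neq i} \frac{2}{x_j-x_i}\,\partial_j$ and the coefficients $\frac{2}{x_j-x_i}$ depend only on the variable $x_i$ (for each fixed $j$), the commutator is
\begin{align*}
[X_0, X_1] = \sqrt{\tfrac{\kappa}{2}}\sum_{j\neq i}\partial_i\!\left(\frac{2}{x_j-x_i}\right)\partial_j
= \sqrt{\tfrac{\kappa}{2}}\sum_{j\neq i}\frac{2}{(x_j-x_i)^2}\,\partial_j ,
\end{align*}
and iterating, $\operatorname{ad}_{X_0}^k X_1$ is a linear combination $\sum_{j\neq i} c_{k,j}(x_i, x_j)\,\partial_j$ with $c_{k,j}(x_i,x_j) = \sqrt{\kappa/2}^{\,k}\cdot (k+1)!\cdot 2\,(x_j-x_i)^{-(k+2)}$, up to the obvious constants. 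Evaluating these at a fixed point $\boldsymbol{x}$, the vectors $\bigl(c_{k,j}(x_i,x_j)\bigr)_{j\neq i}$ for $k = 0, 1, \ldots, 2N-2$ form a Vandermonde-type system in the distinct quantities $(x_j - x_i)^{-1}$, hence are linearly independent and span the coordinate subspace $\operatorname{span}\{\partial_j : j \neq i\}$. Together with $X_0$ this gives rank $2N$ at every $\boldsymbol{x} \in \domainofdef_{2N}$, so $\mathcal{D}^{(i)}$ is hypoelliptic.

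The final conclusion is then immediate: if $F \in \Distr_space(\domainofdef_{2N};\C)$ satisfies $\langle \mathcal{D}^{(i)} F, \phi\rangle = 0$ for all test functions $\phi$, then $\mathcal{D}^{(i)} F = 0 \in C^\infty$, so hypoellipticity of $\mathcal{D}^{(i)}$ forces $F \in C^\infty(\domainofdef_{2N};\C)$. I do not expect a serious obstacle here; the only slightly delicate point is the bookkeeping in the iterated-bracket/Vandermonde argument (making sure the scalar coefficients do not accidentally vanish and that the system is genuinely nondegenerate because the $x_j$ are pairwise distinct on $\domainofdef_{2N}$), but this is routine. An alternative to the explicit Vandermonde computation, if one prefers, is to observe that already $\{X_0, X_1\}$ together with the single bracket family $\{\operatorname{ad}_{X_0}^k X_1\}_{k\geq 0}$ suffices and then invoke the distinctness of the poles; either way the rank condition holds on all of $\domainofdef_{2N}$.
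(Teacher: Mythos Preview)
Your proposal is correct and follows essentially the same route as the paper: put $\mathcal{D}^{(i)}$ into H\"ormander form with one squared field $\sqrt{\kappa/2}\,\partial_i$ and drift $\sum_{j\neq i}\frac{2}{x_j-x_i}\partial_j$, then iterate $\operatorname{ad}_{\partial_i}$ on the drift to produce a Vandermonde system in the distinct quantities $(x_j-x_i)^{-1}$, yielding full rank. The paper uses the conventional labeling (drift $=X_0$, squared field $=X_1$) and records the iterated brackets as $X_0^{[\ell]}=\sum_{j\neq i}\frac{2}{(x_j-x_i)^{\ell+1}}\partial_j$; your explicit coefficients have a harmless off-by-one (it should be $k!\,(x_j-x_i)^{-(k+1)}$, not $(k+1)!\,(x_j-x_i)^{-(k+2)}$), but this does not affect the Vandermonde nondegeneracy argument.
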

\begin{proof}
Note that choosing $X_0 = \sum_{j\neq i} \frac{2}{x_{j}-x_{i}}\partial_j$, $X_1 = \sqrt{\frac{\kappa}{2}} \partial_i$,
and 
$b(\boldsymbol{x}) = \sum_{j\neq i} \frac{2h}{(x_{j}-x_{i})^{2}}$,
the operator $\mathcal{D}^{(i)}$ is of the form~\eqref{eqn::Hormander differential operator}.
Thus, by Theorem~\ref{thm::HormanderHypoellipticTheorem}, we only need to check that at any 
$\boldsymbol{x} = (x_1, \ldots, x_{2N}) \in \domainofdef_{2N}$, the vector fields $X_0$ and $X_1$ and their commutators 
at $\boldsymbol{x}$ generate a vector space of dimension $2N$. For this, without loss of generality, we let $i = 1$, and 
consider the $\ell$-fold commutators 
\begin{align*}
X_0^{[0]} := X_0 =  \sum_{j=2}^{2N} \frac{2}{x_{j}-x_{1}}\partial_j  \qquad \text{and} \qquad
X_0^{[\ell]} := \frac{1}{\ell!} \; \big[ \partial_1, X_0^{[\ell-1]} \big] =  \sum_{j=2}^{2N} \frac{2}{(x_{j}-x_1)^{\ell+1}} \partial_j , 
\quad \text{for $\ell \geq 1$.}
\end{align*}
Now, we can write $(X_0^{[0]}, \ldots, X_0^{[2N-2]})^t = 2 \,A \, (\partial_2, \ldots, \partial_{2N})^t$,
where $A = (A_{ij})$ with $A_{ij} = (x_{j}-x_1)^{-i}$ for $i,j, \in \{1,\ldots,2N-1 \}$
is a Vandermonde type matrix, whose determinant is non-zero. 
Thus, we have $\partial_1 = \sqrt{\frac{2}{\kappa}} X_1$
and we can solve for $\partial_2, \ldots, \partial_{2N}$ in terms of $X_0^{[0]}, \ldots, X_0^{[2N-2]}$.
This concludes the proof.
\end{proof}

\begin{remark}
The proof of Proposition~\ref{prop::smoothness} in fact shows that all partial differential operators of the form
\begin{align*}
\frac{\kappa}{2}\partial^2_i + \sum_{j\neq i}\left(\frac{2}{x_{j}-x_{i}}\partial_j - 
\frac{2\Delta_j}{(x_{j}-x_{i})^{2}}\right) ,
\end{align*}
where $i \in \{1,\ldots,2N\}$ and $\Delta_j \in \R$, for all $j \in \{1,\ldots,2N\}$, are hypoelliptic.
\end{remark}

\subsubsection{Dual Elements}

To finish this section, we consider certain linear functionals $\FKdual_\alpha \colon \mathcal{S}_N \to \C$ 
on the solution space $\mathcal{S}_N$ defined in~\eqref{eq: solution space}.
It was proved in the series~\cite{FloresKlebanSolutionSpacePDE1,FloresKlebanSolutionSpacePDE2,FloresKlebanSolutionSpacePDE3,FloresKlebanSolutionSpacePDE4} 
of articles that $\mathrm{dim}\mathcal{S}_N = \Catalan_N$.
The linear functionals $\FKdual_\alpha$ were defined in~\cite{FloresKlebanSolutionSpacePDE1}, 
where they were called ``allowable sequences of limits''
(see also~\cite{KytolaPeoltolaPurePartitionSLE}).
In fact, for each $N$, they form a dual basis for the multiple $\SLE$ pure partition functions 
$\{ \PartF_\alpha \colon \alpha \in \LP_N \}$ --- see Proposition~\ref{prop: FK dual elements}.
To define these linear functionals, we consider a link pattern 
\begin{align*}
\alpha = \{\link{a_1}{b_1},\ldots,\link{a_N}{b_N}\}\in\LP_N
\end{align*}
with its link ordered as $\link{a_1}{b_1},\ldots,\link{a_N}{b_N}$, where we take by convention $a_j < b_j$, for all $j \in \{ 1, \ldots, N\}$.
We consider successive removals of links of the form $\link{j}{j+1}$ from $\alpha$. Recall that the link pattern obtained from $\alpha$
by removing the link $\link{j}{j+1}$ is denoted by $\alpha \removeLink \link{j}{j+1}$, as illustrated in Figure~\ref{fig::link_removal}.
Note that after the removal, the indices of the remaining links have to be relabeled by $1,2,\ldots,2N-2$.
The ordering of links in $\alpha$ is said to be \textit{allowable} if all links of $\alpha$ can be removed in the order
$\link{a_1}{b_1},\ldots,\link{a_N}{b_N}$ in such a way that at each step, the link to be removed connects two consecutive indices,
as illustrated in Figure~\ref{fig::allowable_order}
(see, e.g.,~\cite[Section~3.5]{KytolaPeoltolaPurePartitionSLE} for a more formal definition). 

\begin{figure}[h]
\bigskip
\includegraphics[width=\textwidth]{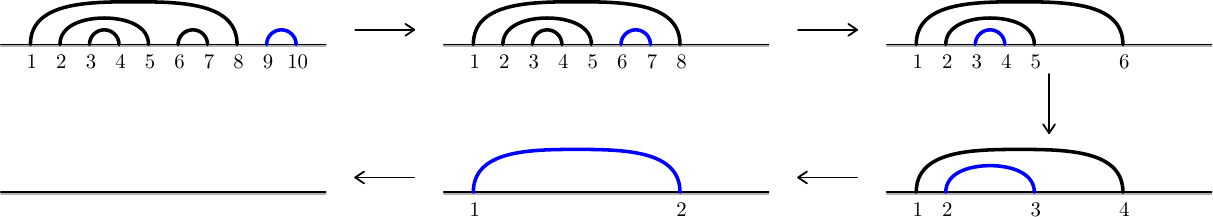} 
{\caption{ 
An allowable ordering of links in a link pattern $\alpha$ and the corresponding link removals.
}\label{fig::allowable_order}}
\end{figure}

Suppose the ordering $\link{a_1}{b_1},\ldots,\link{a_N}{b_N}$ of the links of $\alpha$ is allowable. 
Fix points $\xi_j \in (x_{a_j-1}, x_{b_j+1})$ for all $j \in \{ 1,\ldots,N \}$, with the convention that $x_0 = -\infty$ and  $x_{2N+1} = +\infty$.
It was proved in~\cite[Lemma~10]{FloresKlebanSolutionSpacePDE1} 
that the following sequence of limits exists and is finite for any solution $F \in \mathcal{S}_N$:
\begin{align}\label{eq: limit operation}
\FKdual_\alpha (F )
:= \lim_{x_{a_N},x_{b_{N}}\to\xi_{N}}
\cdots
\lim_{x_{a_{1}},x_{b_{1}}\to\xi_{1}}
(x_{b_{N}}-x_{a_{N}})^{2h}
\cdots
(x_{b_{1}}-x_{a_{1}})^{2h} \, F(x_1,\ldots,x_{2N}) .
\end{align}
Furthermore, by~\cite[Lemma~12]{FloresKlebanSolutionSpacePDE1}, 
any other allowable ordering of the links of $\alpha$ gives the same limit~\eqref{eq: limit operation}.
Therefore, for each $\alpha \in \LP_N$ with any choice of allowable ordering of links,~\eqref{eq: limit operation} defines a linear functional 
\[ \FKdual_\alpha \colon \mathcal{S}_N \to \C . \]
Finally, it was proved in~\cite[Theorem~8]{FloresKlebanSolutionSpacePDE3} that, for any $\kappa \in (0,8)$, the collection
$\{\FKdual_\alpha \colon \alpha \in \LP_N\}$ is a basis for the dual space $\mathcal{S}_N^*$ of the $\Catalan_N$-dimensional solution space $\mathcal{S}_N$.

\subsection{Combinatorics and Binary Relation ``$\;\KWleq\;$''}
\label{subsec: Combinatorics}
In this section, we 
introduce combinatorial objects closely related to the link patterns $\alpha \in \LP$, and
present properties of them which are needed to complete the proof of Theorem~\ref{thm: sle4 pure pffs} in Section~\ref{sec::pure_pf_for_sle4}.
Results of this flavor appear in~\cite{Kenyon-Wilson:Boundary_partitions_in_trees_and_dimers,
Kenyon-Wilson:Double_dimer_pairings_and_skew_Young_diagrams},
and in~\cite{KKP:Correlations_in_planar_LERW_and_UST_and_combinatorics_of_conformal_blocks} 
for the context of pure partition functions.
We follow the notations and conventions of the latter reference. 

\textit{Dyck paths} are walks on $\Znn$ with steps of length one, starting and ending at zero.
For $N \geq 1$, we denote the set of all Dyck paths of $2N$ steps by
\begin{align*}
\DP_N := \big\{ \alpha \colon \{ 0,1, \ldots,2N \} \rightarrow \Z_{\ge 0} \, \colon \, \alpha(0) = \alpha(2N) = 0, 
\text{ and } | \alpha(k) - \alpha(k-1) | = 1, \text{ for all } k \big\} .
\end{align*}
To each link pattern $\alpha \in \LP_N$, we associate a Dyck path, also denoted by 
$\alpha \in \DP_N$, as follows. We write $\alpha$ as an ordered collection
\begin{align} \label{eq: begin and end point convention sec 2}
\alpha = \{ \link{a_1}{b_1}, \ldots, \link{a_N}{b_N} \} ,
\qquad \text{where } a_1 < a_2 < \cdots < a_N \text{ and }
a_j < b_j , \text{ for all } j \in \{ 1, \ldots, N \} .
\end{align}
Then, we set $\alpha(0) = 0$ and, for all $k \in \{1, \ldots, 2N\}$, we set
\begin{align} \label{eq::DP} 
\alpha(k) = \begin{cases}
\alpha(k-1) + 1 , & \text{if } k = a_r \text{ for some } r, \\
\alpha(k-1) - 1 , & \text{if } k = b_s \text{ for some } s .
\end{cases}
\end{align}
Indeed, this defines a Dyck path $\alpha \in \DP_N$. Conversely, for any Dyck path 
$\alpha \colon \{ 0,1, \ldots,2N \} \rightarrow \Z_{\ge 0}$, we associate a link pattern $\alpha$
by associating to each up-step (i.e., step away from zero) an index $a_r$, for $r = 1,2,\ldots,N$, and 
to each down-step (i.e., step towards zero) an index $b_s$, for $s = 1,2,\ldots,N$, and setting 
$\alpha := \{ \link{a_1}{b_1}, \ldots, \link{a_N}{b_N} \}$. 
These two mappings $\LP_N \to \DP_N$ and $\DP_N \to \LP_N$ define a bijection between the sets of 
link patterns and Dyck paths, illustrated in Figure~\ref{fig::bijection}.
We thus identify the elements $\alpha$ of these two sets and use the indistinguishable notation
$\alpha \in \LP_N$ and $\alpha \in \DP_N$ for both.

\begin{figure}[h!]
\bigskip
\floatbox[{\capbeside\thisfloatsetup{capbesideposition={right,center},capbesidewidth=0.4\textwidth}}]{figure}[\FBwidth]
{\caption{Illustration of the bijection $\LP_N \leftrightarrow \DP_N$, 
identifying link patterns and Dyck paths
for $\alpha = \{\link{1}{10}, \link{2}{5}, \link{3}{4}, \link{6}{7}, \link{8}{9} \}$. 
Both the link pattern (top) and the Dyck path (bottom) are 
denoted~by~$\alpha$.}\label{fig::bijection}}
{\includegraphics[width=0.4\textwidth]{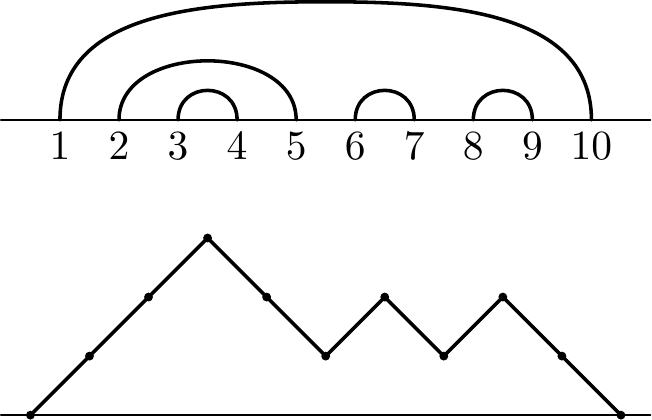} \qquad \quad}
\end{figure}

\begin{figure}[h!]
\bigskip
\begin{center}
\includegraphics[width=.3\textwidth]{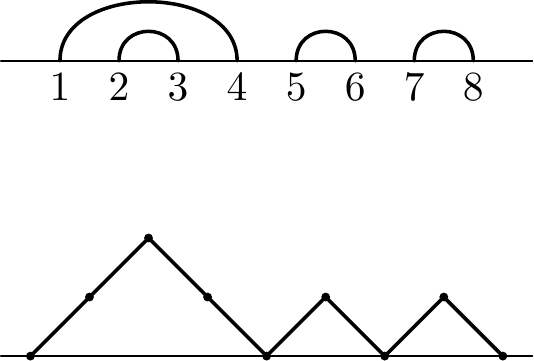} \qquad \qquad \qquad
\includegraphics[width=.3\textwidth]{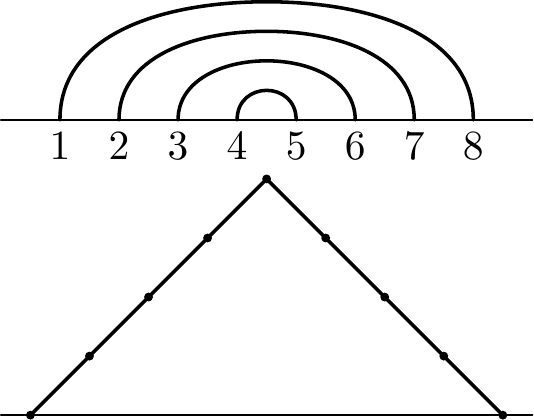}
\end{center}
\caption{These two link patterns are comparable in the partial order $\DPleq$, 
but incomparable in the binary relation $\KWleq$:
the left link pattern is $\alpha = \{ \link{1}{4}, \link{2}{3}, \link{5}{6}, \link{7}{8} \}$ and
the right link pattern is $\beta = \{ \link{1}{8}, \link{2}{7}, \link{3}{6}, \link{4}{5} \}$.}
\label{fig::not_comparable} 
\end{figure}


These sets have a natural partial order $\DPleq$ measuring how nested their elements are: we define
\begin{align}\label{eq: partial order}
\alpha \DPleq \beta  \qquad \text{ if and only if } \qquad
 \alpha(k) \le \beta(k) , \text{ for all } k \in \{0, 1, \ldots, N\} .
\end{align}
For instance, the rainbow link pattern $\nested_N$ is maximally nested --- it is the largest element 
in this partial order. 
In fact, the partial order $\DPleq$ is the transitive closure of a binary relation 
which was introduced by R.~Kenyon and D.~Wilson 
in~\cite{Kenyon-Wilson:Boundary_partitions_in_trees_and_dimers,
Kenyon-Wilson:Double_dimer_pairings_and_skew_Young_diagrams}
and K.~Shigechi and P.~Zinn-Justin 
in~\cite{Shigechi-Zinn:Path_representation_of_maximal_parabolic_Kazhdan-Lusztig_polynomials}. 
We give a definition for this binary relation $\KWleq$ 
that we have found the most suitable to the purposes of the present article. 
We refer to 
\cite[Section~2]{KKP:Correlations_in_planar_LERW_and_UST_and_combinatorics_of_conformal_blocks}
for a detailed survey of this binary relation and many equivalent definitions of it;
see also Figure~\ref{fig::not_comparable} for an example.
We define $\KWleq$ as follows:

\begin{definition}\label{def: KW relation}
\textnormal{\cite[Lemma~2.5]{KKP:Correlations_in_planar_LERW_and_UST_and_combinatorics_of_conformal_blocks}}
Let $\alpha = \{ \link{a_1}{b_1}, \ldots, \link{a_N}{b_N}  \} \in \LP_N$ be ordered as 
in~\eqref{eq: begin and end point convention sec 2}. Let $\beta \in \LP_N$.
Then, $\alpha \KWleq \beta$ if and only if there exists a permutation 
$\sigma \in \mathfrak{S}_N$ such that 
\[ \beta = \{ \link{a_1}{b_{\sigma(1)}}, \ldots, \link{a_N}{b_{\sigma(N)}} \} . \]
For each $N \geq 1$, the incidence matrix $\Mmat$ of this relation on the set 
$\LP_N \leftrightarrow \DP_N$ is the $\Catalan_N \times \Catalan_N$ matrix 
$\Mmat = (\Mmat_{\alpha,\beta})$ whose matrix elements are
\begin{align*}
\Mmat_{\alpha,\beta} = \one\{\alpha \KWleq \beta \} =
\begin{cases}
    1, & \text{ if } \alpha \KWleq \beta, \\
    0, & \text{ otherwise} .
\end{cases}
\end{align*}
\end{definition}

In order to state and prove Theorem~\ref{thm: sle4 pure pffs} in Section~\ref{sec::pure_pf_for_sle4}, we 
have to invert the matrix $\Mmat$.
For this purpose, we need some more combinatorics, related to skew-Young diagrams
and their tilings. Let $\alpha \DPleq \beta$. 
When the two Dyck paths $\alpha, \beta \in \DP_N$ are drawn on the same coordinate system, 
their difference forms a (rotated) skew Young diagram, denoted by $\alpha / \beta$,
which can be thought of as a union of atomic squares --- see Figure~\ref{fig::atomic_squares}.
We denote by $|\alpha / \beta|$ the number of atomic square tiles in the skew Young diagram 
$\alpha / \beta$.

\begin{figure}[h!]
\bigskip
\begin{center}
\includegraphics[width=.3\textwidth]{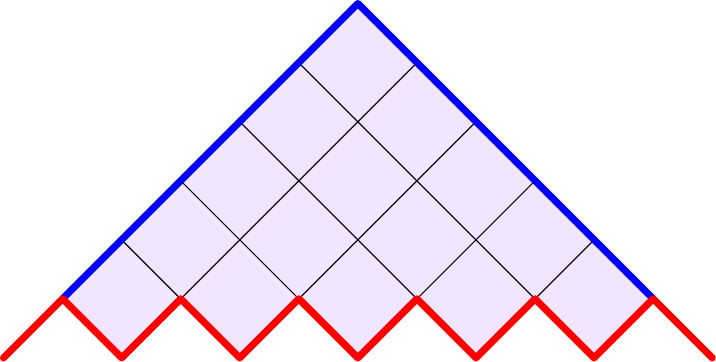} \quad
\includegraphics[width=.3\textwidth]{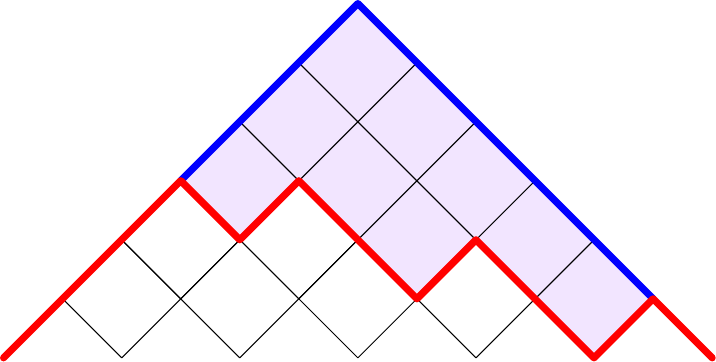} \quad
\includegraphics[width=.3\textwidth]{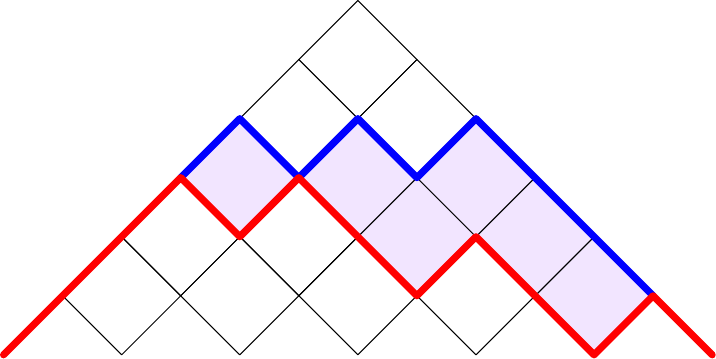} 

\bigskip
\bigskip

\includegraphics[width=.3\textwidth]{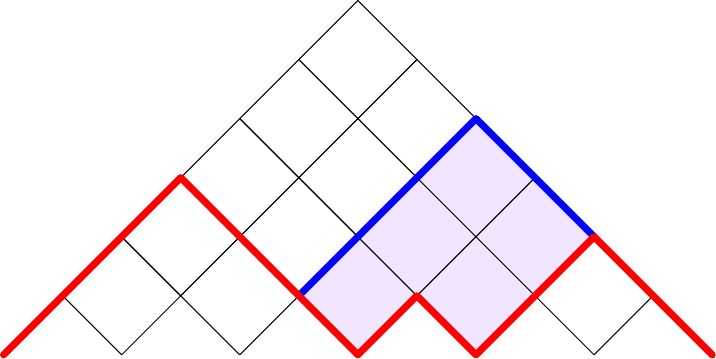} \quad
\includegraphics[width=.3\textwidth]{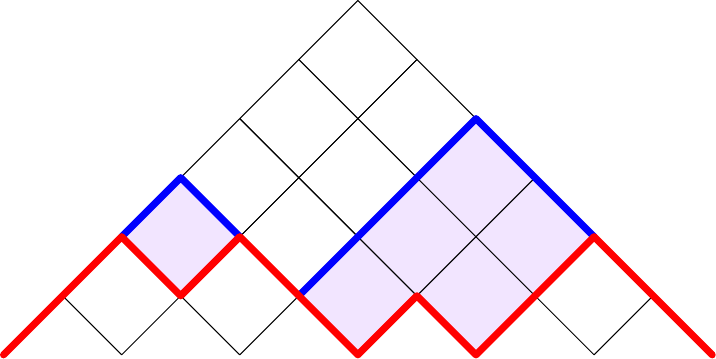} 
\end{center}
\caption{Skew Young diagrams $\alpha / \beta$. 
The smaller Dyck path $\alpha$ (resp. larger~$\beta$) is red (resp. blue).}
\label{fig::atomic_squares} 
\end{figure}

Consider then tilings of the skew Young diagram $\alpha / \beta$. The atomic square tiles form
one possible tiling of $\alpha / \beta$, a rather trivial one. In this article, following the 
terminology of~\cite{Kenyon-Wilson:Boundary_partitions_in_trees_and_dimers,
Kenyon-Wilson:Double_dimer_pairings_and_skew_Young_diagrams, 
KKP:Correlations_in_planar_LERW_and_UST_and_combinatorics_of_conformal_blocks}, 
we consider tilings of $\alpha / \beta$ by Dyck tiles, called Dyck tilings.
A \textit{Dyck tile} is a non-empty union of atomic squares, where the midpoints of the squares 
form a shifted Dyck path, see Figure~\ref{fig::Dyck_tiles}.
Note that also an atomic square is a Dyck tile.
A \textit{Dyck tiling} $T$ of a skew Young diagram $\alpha / \beta$ is a
collection of non-overlapping Dyck tiles whose union is $\bigcup T = \alpha / \beta$. 
Dyck tilings are also illustrated in Figure~\ref{fig::Dyck_tiles}.

\begin{figure}[h!]
\bigskip
\begin{center}
\includegraphics[width=.3\textwidth]{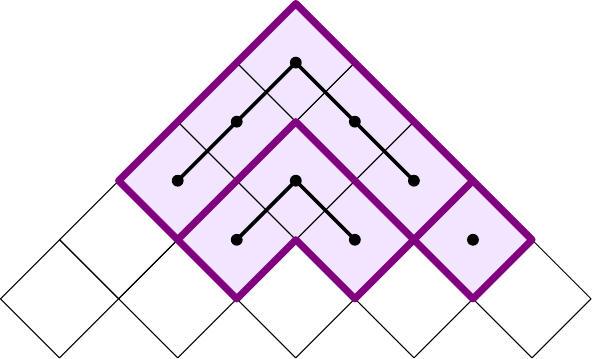} \quad
\includegraphics[width=.3\textwidth]{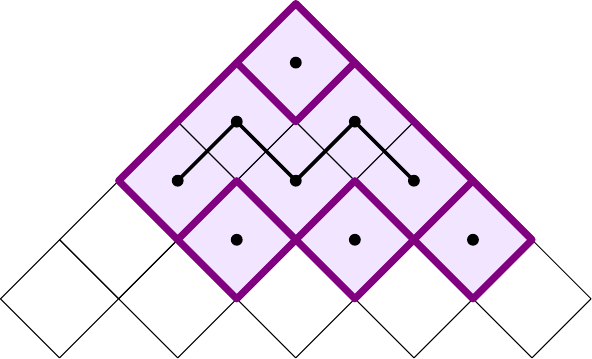} \quad
\includegraphics[width=.3\textwidth]{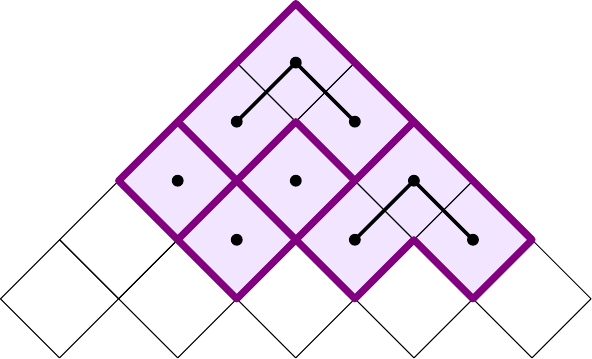} 

\bigskip
\bigskip

\includegraphics[width=.3\textwidth]{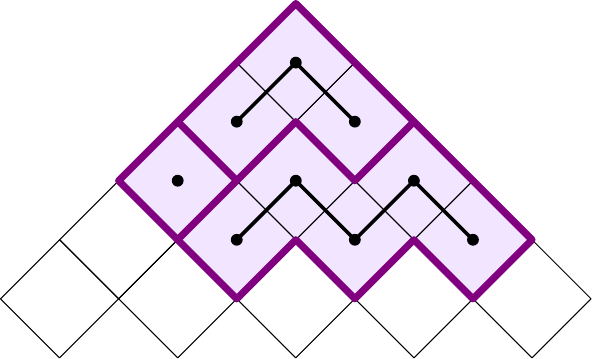} \quad
\includegraphics[width=.3\textwidth]{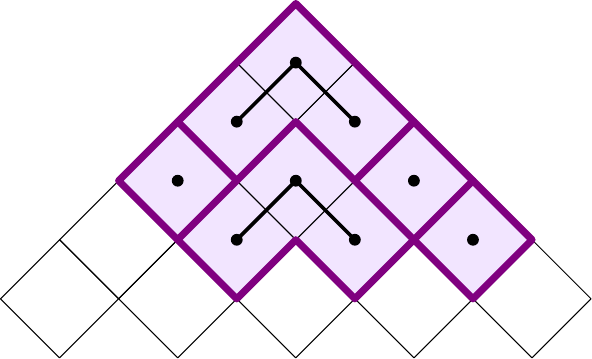} 
\end{center}
\caption{Examples of Dyck tilings, that is, tilings of a skew Young diagrams $\alpha / \beta$ 
by Dyck tiles.}
\label{fig::Dyck_tiles} 
\end{figure}


The \textit{placement} of a Dyck tile $t$ is given by the integer coordinates $(x_t,h_t)$ of 
the bottom left position of $t$, that is, the midpoint of the bottom left atomic square of $t$. 
If $(x'_t,h_t)$ is the bottom right position of $t$, 
we call the closed interval $[x_t , x'_t] \subset \R$ the \textit{horizontal extent} of $t$
--- see Figure~\ref{fig::Dyck_tiles_with_horizontal_extents} for an illustration.

A Dyck tile $t_1$ is said to \textit{cover} a Dyck tile $t_2$
if $t_1$ contains an atomic square which is an upward vertical
translation of some atomic square of $t_2$.
A Dyck tiling $T$ of $\alpha / \beta$ is said to be \textit{cover-inclusive} if
for any two distinct tiles of $T$, either the horizontal extents are disjoint, or the tile 
that covers the other has horizontal extent contained in the horizontal extent of the other.
See Figures~\ref{fig::Dyck_tiles}~and~\ref{fig::Dyck_tiles_with_horizontal_extents} 
for illustrations.

\begin{figure}[h!]
\bigskip
\begin{center}
\includegraphics[width=.3\textwidth]{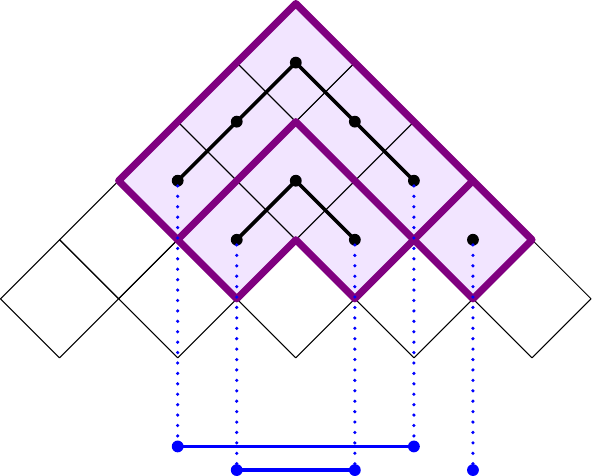} \quad
\includegraphics[width=.3\textwidth]{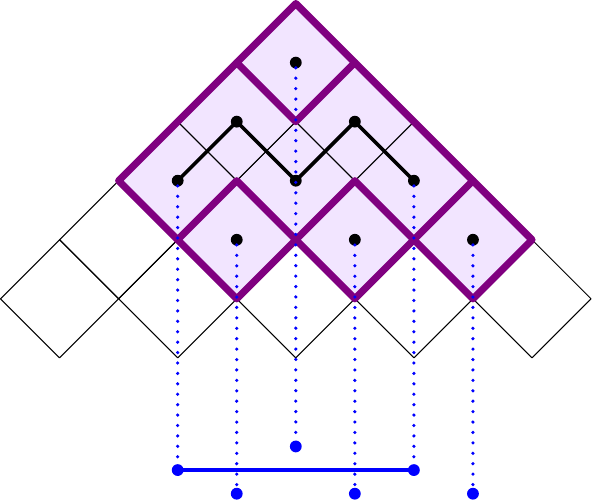} \quad
\includegraphics[width=.3\textwidth]{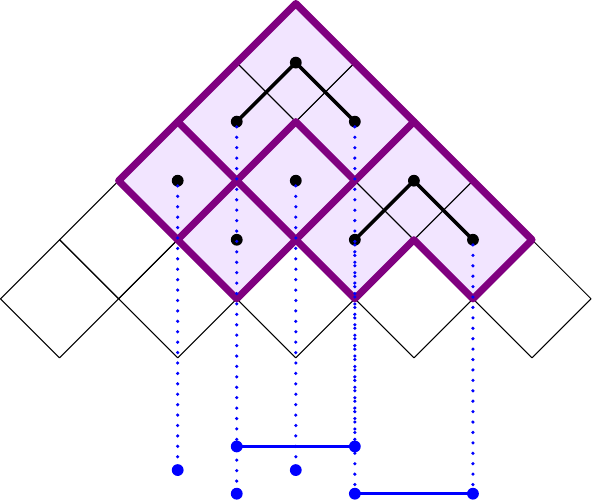} 

\bigskip
\bigskip

\includegraphics[width=.3\textwidth]{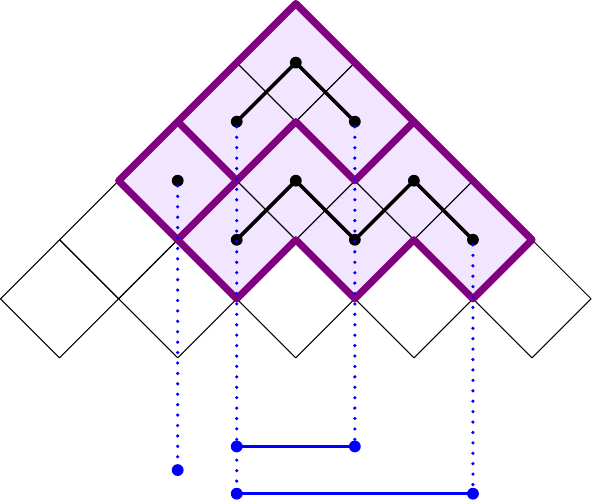} \quad
\includegraphics[width=.3\textwidth]{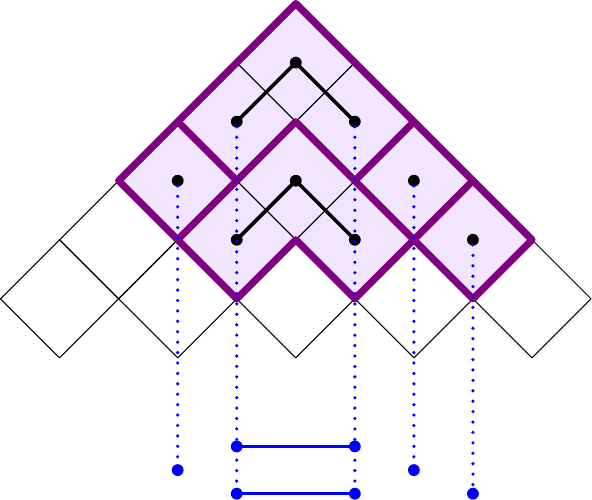} 
\end{center}
\caption{Examples of Dyck tilings with their horizontal extents illustrated. 
The two on the second row are cover-inclusive, but the three on the first row are not.}
\label{fig::Dyck_tiles_with_horizontal_extents} 
\end{figure}

\smallbreak

After these preparations, we are now ready to recall from 
\cite{Kenyon-Wilson:Double_dimer_pairings_and_skew_Young_diagrams,
KKP:Correlations_in_planar_LERW_and_UST_and_combinatorics_of_conformal_blocks} 
the following result, which enables us to write an explicit formula for the pure partition functions
for $\kappa = 4$ in Theorem~\ref{thm: sle4 pure pffs}.
\begin{proposition}\label{prop: matrix inversion}
The matrix $\Mmat$ is invertible with inverse given by 
\begin{align*}
\Minv_{\alpha,\beta} = \begin{cases}
    (-1)^{|\alpha / \beta|} \# \CItilingsof (\alpha / \beta) , & \text{ if } \alpha \DPleq \beta , \\
    0 , & \text{ otherwise} ,
\end{cases}
\end{align*}
where $|\alpha / \beta|$ is the number of atomic square tiles in the skew Young diagram $\alpha / \beta$,
and $\# \CItilingsof (\alpha / \beta)$ denotes the number of cover-inclusive Dyck tilings of 
$\alpha / \beta$, with the convention that $\# \CItilingsof (\alpha / \alpha) = 1$.
\end{proposition}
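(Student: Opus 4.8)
The plan is to verify that the matrix $\Minv$ defined in the statement satisfies $\Minv \, \Mmat = \mathrm{Id}$; since $\Mmat$ admits a unique two-sided inverse, this is enough. Invertibility of $\Mmat$ over $\Z$ is immediate from triangularity: we have $\alpha \KWleq \alpha$ for all $\alpha$ (take $\sigma = \mathrm{id}$ in Definition~\ref{def: KW relation}), and $\alpha \KWleq \beta$ implies $\alpha \DPleq \beta$ since $\DPleq$ is the transitive closure of $\KWleq$; hence, after fixing a linear order on $\LP_N$ refining $\DPleq$, the matrix $\Mmat$ is upper triangular with $1$'s on the diagonal and $\det \Mmat = 1$.

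Expanding the product, and using that $\Minv_{\alpha,\beta} \neq 0$ forces $\alpha \DPleq \beta$ while $\Mmat_{\beta,\gamma} \neq 0$ forces $\beta \KWleq \gamma$ (and hence $\beta \DPleq \gamma$), the $(\alpha,\gamma)$-entry of $\Minv \, \Mmat$ equals
\begin{align*}
\sum_{\beta \,:\, \alpha \DPleq \beta \KWleq \gamma} (-1)^{|\alpha / \beta|} \; \# \CItilingsof (\alpha / \beta) .
\end{align*}
When $\alpha = \gamma$ only $\beta = \alpha$ contributes, giving the value $\# \CItilingsof (\alpha / \alpha) = 1$; and when $\alpha \not\DPleq \gamma$ the sum is empty, since any admissible $\beta$ would satisfy $\alpha \DPleq \beta \DPleq \gamma$. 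Thus the proposition reduces to the combinatorial identity
\begin{align} \label{eq: KW matrix inversion key identity}
\sum_{\beta \,:\, \alpha \DPleq \beta \KWleq \gamma} (-1)^{|\alpha / \beta|} \; \# \CItilingsof (\alpha / \beta) = 0 , \qquad \text{for all } \alpha \DPleq \gamma \text{ with } \alpha \neq \gamma .
\end{align}

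To prove~\eqref{eq: KW matrix inversion key identity}, I would interpret the left-hand side as a signed enumeration of the set $\mathcal{P} = \mathcal{P}(\alpha, \gamma)$ of pairs $(\beta, T)$ with $\alpha \DPleq \beta \KWleq \gamma$ and $T$ a cover-inclusive Dyck tiling of $\alpha / \beta$, where the sign of $(\beta, T)$ is $(-1)^{|\alpha / \beta|}$ --- which is precisely the parity of the number of atomic squares of $T$. (Every such $\beta$ satisfies $\alpha / \beta \subseteq \alpha / \gamma$, because $\alpha \DPleq \beta \DPleq \gamma$.) The aim is then to construct a fixed-point-free, sign-reversing involution $\iota$ on $\mathcal{P}$, which forces the signed sum to vanish. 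The natural construction toggles a canonically chosen atomic square: fixing a reading order on the atomic squares of the ambient shape $\alpha / \gamma$, let $s_0$ be the first one; given $(\beta, T) \in \mathcal{P}$, if the position of $s_0$ lies in $\alpha / \beta$ then $\iota$ deletes it, lowering $\beta$ and re-tiling near $s_0$, while if it does not then $\iota$ inserts it, raising $\beta$ and re-tiling. These operations are mutually inverse and change the number of atomic squares by one, so $\iota$ is sign-reversing. This is in essence the argument of~\cite{Kenyon-Wilson:Double_dimer_pairings_and_skew_Young_diagrams, KKP:Correlations_in_planar_LERW_and_UST_and_combinatorics_of_conformal_blocks}, and I would follow it, adapting the bookkeeping to the Dyck-path conventions of Section~\ref{subsec: Combinatorics}.

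The main obstacle is showing that $\iota$ is well defined, i.e., that the toggle keeps $(\beta, T)$ inside $\mathcal{P}$: one must check that after inserting or deleting $s_0$ the modified path still obeys $\alpha \DPleq \beta \KWleq \gamma$ --- this is where the permutation description of $\KWleq$ in Definition~\ref{def: KW relation} is used, and where it matters that $s_0$ was chosen relative to the outer path $\gamma$ --- and that the modified tiling remains cover-inclusive, which relies on the extremality of $s_0$ in the chosen order, so that the horizontal-extent condition on covering tiles cannot be broken locally. An alternative, and perhaps more transparent, route is induction on $N$ using the two basic operations on link patterns: wrapping a rainbow arc around a pattern, and juxtaposing two patterns on disjoint blocks of indices. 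Both $\Mmat$ and $\Minv$ are block-triangular under these operations --- for $\Mmat$ this is immediate from Definition~\ref{def: KW relation}, and for $\Minv$ it mirrors the factorization of skew shapes and of their cover-inclusive tilings under disjoint union and under adjoining a border strip --- so that $\Minv \, \Mmat = \mathrm{Id}$ at level $N$ follows from the corresponding identities at smaller levels. In either approach, the crux is the same: reconciling the cover-inclusivity condition on tilings with the relation $\KWleq$.
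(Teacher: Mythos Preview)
The paper does not actually prove this proposition: it cites \cite[Theorem~2.9]{KKP:Correlations_in_planar_LERW_and_UST_and_combinatorics_of_conformal_blocks} (with tile weight $-1$) and the original source \cite[Theorems~1.5~and~1.6]{Kenyon-Wilson:Double_dimer_pairings_and_skew_Young_diagrams}. Your reduction to the identity~\eqref{eq: KW matrix inversion key identity} is exactly right, and since you end by pointing to the same references, your proposal is in line with the paper's treatment.

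That said, the specific involution you sketch has a genuine gap. You fix a single atomic square $s_0 \in \alpha/\gamma$ once and for all (independent of $\beta$) and then, for each pair $(\beta,T)$, toggle whether $s_0$ belongs to $\alpha/\beta$. But toggling $s_0$ changes $\beta$ to a valid Dyck path only if $s_0$ lies on the $\beta$-boundary of the skew shape; for a fixed $s_0$ this fails as soon as $\alpha/\gamma$ contains two vertically stacked squares. Concretely, if $s_0$ is the lower of two stacked squares and $\beta=\gamma$, then ``deleting'' $s_0$ from $\alpha/\beta$ would leave the upper square floating, and no Dyck path $\beta'$ realizes that region as $\alpha/\beta'$. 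So the map $\iota$ is not well defined on all of $\mathcal{P}$ with this choice.

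The actual sign-reversing involution in \cite{Kenyon-Wilson:Double_dimer_pairings_and_skew_Young_diagrams} does not toggle a globally fixed square; the position that is modified depends on the pair $(\beta,T)$, and the cover-inclusive condition is what guarantees that the natural local move (adding or removing a single tile along the $\beta$-boundary at a canonically chosen location) is always available and reversible. Your ``alternative route'' by induction on $N$ via wedge-removals is closer to how \cite{KKP:Correlations_in_planar_LERW_and_UST_and_combinatorics_of_conformal_blocks} organizes the argument (and to the recursion encoded in Lemma~\ref{lem: combinatorics}(d)); if you pursue that, the block-triangularity you describe is the right structural observation, but the ``juxtaposition'' decomposition alone is not enough --- you also need the compatibility of $\Minv$ with the wedge operations, which is precisely the content of Lemma~\ref{lem: combinatorics}(d).
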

\begin{proof}
This follows immediately 
from~\cite[Theorem~2.9]{KKP:Correlations_in_planar_LERW_and_UST_and_combinatorics_of_conformal_blocks} 
with tile weight $-1$. Originally, the proof appears 
in~\cite[Theorems~1.5~and~1.6]{Kenyon-Wilson:Double_dimer_pairings_and_skew_Young_diagrams}.
\end{proof}

The entries $\Minv_{\alpha,\beta}$ are always integers, 
and the diagonal entries are all equal to one: $\Minv_{\alpha,\alpha} = 1$ for all~$\alpha$.
Thus, the formula~\eqref{eq: Pure partition function for kappa 4} in Theorem~\ref{thm: sle4 pure pffs} is lower-triangular
in the partial order $\DPgeq$. For instance, we have $\PartF_{\nested_N} = \CobloF_{\nested_N}$ for 
the rainbow link pattern. In Tables~\ref{tab::Matrix_ex1} and~\ref{tab::Matrix_ex2}, we give
examples of the matrix $\Mmat$ and its inverse $\Minv$.

\begin{table}[h!]
\begin{displaymath}
\begin{tabular}{c | cc}
$\LP_N$ with $N=2$
& $\vcenter{\hbox{\includegraphics[scale=0.2]{figures/link-2.pdf}}}$ 
& $\vcenter{\hbox{\includegraphics[scale=0.2]{figures/link-1.pdf}}}$ \\
\hline
$\vcenter{\hbox{\includegraphics[scale=0.2]{figures/link-2.pdf}}}$ & 1 & 0  \\
$\vcenter{\hbox{\includegraphics[scale=0.2]{figures/link-1.pdf}}}$ & 1 & 1  \\
\end{tabular}
\qquad \qquad
\begin{tabular}{c | cc}
$\LP_N$ with $N=2$
& $\vcenter{\hbox{\includegraphics[scale=0.2]{figures/link-2.pdf}}}$ 
& $\vcenter{\hbox{\includegraphics[scale=0.2]{figures/link-1.pdf}}}$ \\
\hline
$\vcenter{\hbox{\includegraphics[scale=0.2]{figures/link-2.pdf}}}$ & 1 & 0  \\
$\vcenter{\hbox{\includegraphics[scale=0.2]{figures/link-1.pdf}}}$ & -1 & 1  \\
\end{tabular}
\end{displaymath}
\caption{\label{tab::Matrix_ex1}
The matrix elements of $\Mmat$ (left) and $\Minv$ (right) for $N = 2$.}
\end{table}


\begin{table}[h!]
\begin{displaymath}
\begin{tabular}{c | ccccc}
$\LP_N$ with $N=3$
& $\vcenter{\hbox{\includegraphics[scale=0.2]{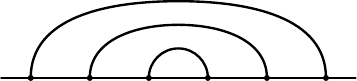}}}$ 
& $\vcenter{\hbox{\includegraphics[scale=0.2]{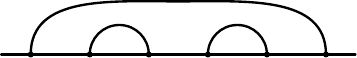}}}$ 
& $\vcenter{\hbox{\includegraphics[scale=0.2]{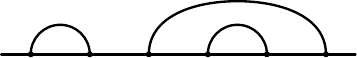}}}$ 
& $\vcenter{\hbox{\includegraphics[scale=0.2]{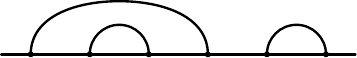}}}$ 
& $\vcenter{\hbox{\includegraphics[scale=0.2]{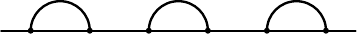}}}$ \\
\hline
$\vcenter{\hbox{\includegraphics[scale=0.2]{figures/link-7.pdf}}}$ & 1 & 0 & 0 & 0 & 0  \\
$\vcenter{\hbox{\includegraphics[scale=0.2]{figures/link-6.pdf}}}$ & 1 & 1 & 0 & 0 & 0  \\
$\vcenter{\hbox{\includegraphics[scale=0.2]{figures/link-5.pdf}}}$ & 0 & 1 & 1 & 0 & 0  \\
$\vcenter{\hbox{\includegraphics[scale=0.2]{figures/link-4.pdf}}}$ & 0 & 1 & 0 & 1 & 0  \\
$\vcenter{\hbox{\includegraphics[scale=0.2]{figures/link-3.pdf}}}$ & 1 & 1 & 1 & 1 & 1  \\
\end{tabular}
\end{displaymath}
\bigskip
\begin{displaymath}
\begin{tabular}{c | ccccc}
$\LP_N$ with $N=3$
& $\vcenter{\hbox{\includegraphics[scale=0.2]{figures/link-7.pdf}}}$ 
& $\vcenter{\hbox{\includegraphics[scale=0.2]{figures/link-6.pdf}}}$ 
& $\vcenter{\hbox{\includegraphics[scale=0.2]{figures/link-5.pdf}}}$ 
& $\vcenter{\hbox{\includegraphics[scale=0.2]{figures/link-4.pdf}}}$ 
& $\vcenter{\hbox{\includegraphics[scale=0.2]{figures/link-3.pdf}}}$ \\
\hline
$\vcenter{\hbox{\includegraphics[scale=0.2]{figures/link-7.pdf}}}$ & 1 & 0 & 0 & 0 & 0  \\
$\vcenter{\hbox{\includegraphics[scale=0.2]{figures/link-6.pdf}}}$ & -1 & 1 & 0 & 0 & 0  \\
$\vcenter{\hbox{\includegraphics[scale=0.2]{figures/link-5.pdf}}}$ & 1 & -1 & 1 & 0 & 0  \\
$\vcenter{\hbox{\includegraphics[scale=0.2]{figures/link-4.pdf}}}$ & 1 & -1 & 0 & 1 & 0 \\
$\vcenter{\hbox{\includegraphics[scale=0.2]{figures/link-3.pdf}}}$ & -2 & 1 & -1 & -1 & 1  \\
\end{tabular}
\end{displaymath}
\caption{\label{tab::Matrix_ex2}
The matrix elements of $\Mmat$ (top) and $\Minv$ (bottom) for $N = 3$.}
\end{table}

\smallbreak

To finish this preliminary section, we introduce notation for certain combinatorial operations on
Dyck paths and summarize results about them that are needed to complete the proof of
Theorem~\ref{thm: sle4 pure pffs} in Section~\ref{sec::pure_pf_for_sle4}.
In the bijection $\LP_N \leftrightarrow \DP_N$ illustrated in Figure~\ref{fig::bijection},
a link between $j$ and $j+1$ in $\alpha \in \LP_N$ corresponds with
an up-step followed by a down-step in the Dyck path $\alpha$, so $\link{j}{j+1} \in \alpha$
is equivalent to $j$ being a local maximum of the Dyck path $\alpha \in \DP_N$.
In this situation, we denote $\upwedgeat{j} \in \alpha$ and we say that $\alpha$ has 
an \textit{up-wedge} at $j$. 
\textit{Down-wedges} $\downwedgeat{j}$ are defined analogously, and an unspecified 
local extremum is called a \textit{wedge} $\wedgeat{j}$. Otherwise, we say that $\alpha$ 
has a \textit{slope} at $j$, denoted by $\slopeat{j} \in \alpha$.
When $\alpha$ has a down-wedge, $\downwedgeat{j} \in \alpha$, we define 
the \textit{wedge-lifting operation}  $\alpha \mapsto \alpha \wedgelift{j}$ by letting 
$\alpha \wedgelift{j}$ be the Dyck path obtained by converting the down-wedge $\downwedgeat{j}$ 
in $\alpha$ into an up-wedge $\upwedgeat{j}$. 

We recall that, if a link pattern $\alpha \in \LP_N$ has a link $\link{j}{j+1} \in \alpha$,
then we denote by $\alpha \removeLink \link{j}{j+1} \in \LP_{N-1}$ the link pattern obtained 
from $\alpha$ by removing the link $\link{j}{j+1}$ and relabeling 
the remaining indices by $1, 2, \ldots, 2N - 2$ (see Figure~\ref{fig::link_removal}).
In terms of the Dyck path, this operation is called an up-wedge removal and denoted by 
$\alpha \removeupwedge{j} \in \DP_{N-1}$. For Dyck paths, we can define a completely analogous 
down-wedge removal $\alpha \mapsto \alpha \removedownwedge{j}$.
Occasionally, it is not important to specify the type of wedge that is removed, so
whenever $\alpha$ has either type of local extremum at $j$ 
(that is, $\wedgeat{j} \in \alpha$), we denote by $\alpha \removewedge{j} \in \DP_{N-1}$ the two steps
shorter Dyck path obtained by removing the two steps around $\wedgeat{j}$, 
see Figure~\ref{fig::wedge_removal}.


\begin{figure}[h!]
\bigskip
\begin{center}
\includegraphics[width=\textwidth]{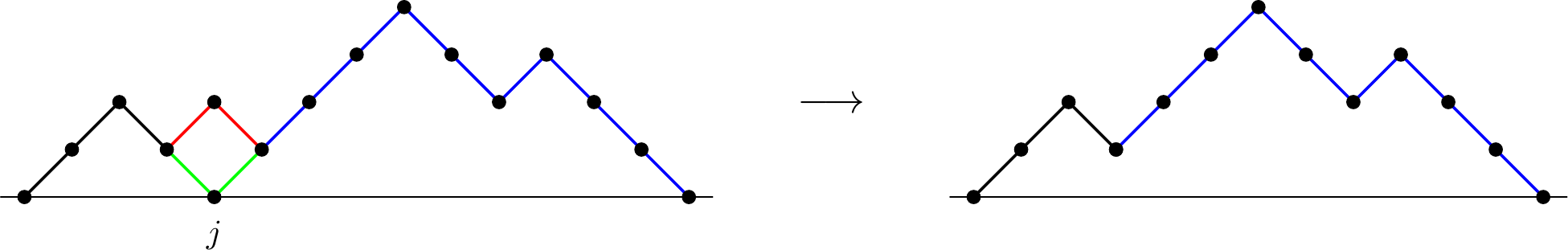}
\end{center}
\caption{The removal of a wedge from a Dyck path.
The left figure is the Dyck path $\alpha \in \DP_N$ and the right figure the shorter Dyck path
$\alpha \removewedge{j} \in \DP_{N-1}$, with $j=4$ and $N=7$.}
\label{fig::wedge_removal} 
\end{figure}


\begin{lemma} \label{lem: combinatorics} 
The following statements hold for Dyck paths $\alpha, \beta \in \DP_N$.

\begin{itemize}
\item[(a):] Suppose $\upwedgeat{j} \not \in \alpha$ and $\downwedgeat{j} \in \beta$. 
Then, we have $\alpha \DPleq \beta$ if and only if $\alpha \DPleq \beta \wedgelift{j}$.

\item[(b):] Suppose $\upwedgeat{j} \not \in \alpha$. 
Then the Dyck paths $\beta \in \DP_N$ such that $\beta \DPgeq \alpha$ and $\wedgeat{j} \in \beta$ 
come in pairs, one containing an up-wedge and the other a down-wedge at $j$:
\begin{align*}
\{ \beta \in \DP_N \colon \beta \DPgeq \alpha \} 
\; = \; \{ \beta \colon \beta \DPgeq \alpha, \downwedgeat{j} \in \beta \}  \cup 
\{ \beta \wedgelift{j} \colon \beta \DPgeq \alpha, \downwedgeat{j} \in \beta \}  \cup 
\{ \beta \colon \beta \DPgeq \alpha, \slopeat{j} \in \beta \} .
\end{align*}

\item[(c):] Suppose $\upwedgeat{j} \in \beta$. 
Then, we have $\alpha \KWleq \beta$ if and only if $\wedgeat{j} \in \alpha$ and 
$\alpha \removewedge{j} \KWleq \beta \removeupwedge{j}$.

\item[(d):] Suppose 
$\upwedgeat{j} \not \in \alpha$, $\downwedgeat{j} \in \beta$, and $\alpha \DPleq \beta$. 
Then we have $\Minv_{\alpha,\beta} = - \Minv_{\alpha,\beta \wedgelift{j}}$.

\end{itemize}
\end{lemma}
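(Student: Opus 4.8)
The plan is to reduce the statement, via Proposition~\ref{prop: matrix inversion}, to a purely combinatorial identity for cover-inclusive Dyck tilings. Since $\beta\wedgelift{j}$ is obtained from $\beta$ by changing only the value at $j$, namely $\beta(j)\mapsto\beta(j)+2$, the skew Young diagram $\alpha/(\beta\wedgelift{j})$ is obtained from $\alpha/\beta$ by adjoining exactly one atomic square, so that $|\alpha/(\beta\wedgelift{j})| = |\alpha/\beta|+1$. By~(a), $\alpha\DPleq\beta$ holds if and only if $\alpha\DPleq\beta\wedgelift{j}$, hence $\Minv_{\alpha,\beta}$ and $\Minv_{\alpha,\beta\wedgelift{j}}$ vanish simultaneously unless $\alpha\DPleq\beta$; and in the latter case Proposition~\ref{prop: matrix inversion} gives $\Minv_{\alpha,\beta} = (-1)^{|\alpha/\beta|}\,\#\CItilingsof(\alpha/\beta)$ and $\Minv_{\alpha,\beta\wedgelift{j}} = -(-1)^{|\alpha/\beta|}\,\#\CItilingsof(\alpha/(\beta\wedgelift{j}))$. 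Thus it suffices to prove $\#\CItilingsof(\alpha/\beta) = \#\CItilingsof(\alpha/(\beta\wedgelift{j}))$ whenever $\upwedgeat{j}\notin\alpha$, $\downwedgeat{j}\in\beta$, and $\alpha\DPleq\beta$, and I would establish this by an explicit bijection.

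Write $D$ for the unique atomic square of $\alpha/(\beta\wedgelift{j})$ not contained in $\alpha/\beta$; it sits at the top of column $j$, at the (new) local maximum of $\beta\wedgelift{j}$ above $j$. The core of the proof is the claim that \emph{in every cover-inclusive Dyck tiling $T$ of $\alpha/(\beta\wedgelift{j})$, the square $D$ forms a tile by itself.} Granting this, the map $T\mapsto T\setminus\{\{D\}\}$ is the desired bijection onto the cover-inclusive Dyck tilings of $\alpha/\beta$: deleting a tile preserves cover-inclusivity, while conversely, adjoining the singleton $\{D\}$ to any cover-inclusive tiling of $\alpha/\beta$ again yields a cover-inclusive tiling, since $D$ is the topmost square of its column (so no tile can cover $\{D\}$) and $\{D\}$ has horizontal extent equal to the single point $j$, which is trivially contained in the horizontal extent of any tile it covers. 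Combining this bijection with the sign identity of the first paragraph then gives $\Minv_{\alpha,\beta} = -\Minv_{\alpha,\beta\wedgelift{j}}$.

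It remains to prove the claim, which I expect to be the only real obstacle, and which is exactly where the hypothesis $\upwedgeat{j}\notin\alpha$ is used. Put $v:=\beta(j)$, so $D$ is centered at $(j,v+1)$, whereas every atomic square of $\alpha/(\beta\wedgelift{j})$ in the adjacent columns $j-1$ and $j+1$ has height at most $v$ (because $\beta\wedgelift{j}(j\pm1)=\beta(j\pm1)=v+1$). Suppose the tile $t$ containing $D$ had at least two atomic squares. Since an endpoint atomic square of a nontrivial Dyck tile lies strictly below its neighbour, and the squares of $t$ in columns $j\pm1$ lie strictly below $D$, the square $D$ can be neither the leftmost nor the rightmost square of $t$; hence $D$ is an interior local maximum of $t$, so $t$ also contains the two atomic squares immediately below-left and below-right of $D$, lying in columns $j-1$ and $j+1$. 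Now list the atomic squares of $\alpha/(\beta\wedgelift{j})$ in column $j$ from top to bottom as $D=D_0,D_1,\ldots,D_p$, and let $s_0=t,s_1,\ldots,s_p$ be the tiles of $T$ containing them; these are $p+1$ pairwise distinct tiles, as a Dyck tile meets each column in at most one square. Since $D_0$ lies directly above each $D_i$, the tile $t$ covers every $s_i$, so cover-inclusivity forces the horizontal extent of $s_i$ to contain that of $t$, which contains $\{j-1,j,j+1\}$ because $t$ meets columns $j-1$ and $j+1$. Consequently each of $s_0,\ldots,s_p$ meets column $j+\epsilon$, where I choose $\epsilon\in\{-1,+1\}$ with $\alpha(j+\epsilon)=\alpha(j)+1$; such a choice exists precisely because $\alpha$ has no up-wedge at $j$. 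But column $j+\epsilon$ of $\alpha/(\beta\wedgelift{j})$ contains only $\tfrac12(\beta(j+\epsilon)-\alpha(j+\epsilon)) = \tfrac12(v-\alpha(j)) = p$ atomic squares, each lying in a single tile, which is incompatible with the $p+1$ distinct tiles $s_0,\ldots,s_p$. This contradiction proves the claim, and with it the lemma.
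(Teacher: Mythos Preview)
Your proof of part~(d) is correct and follows the same route as the paper: reduce via Proposition~\ref{prop: matrix inversion} to the identity $\#\CItilingsof(\alpha/\beta)=\#\CItilingsof(\alpha/(\beta\wedgelift{j}))$, together with $|\alpha/(\beta\wedgelift{j})|=|\alpha/\beta|+1$. The paper simply cites \cite[Lemma~2.15 and its proof]{KKP:Correlations_in_planar_LERW_and_UST_and_combinatorics_of_conformal_blocks} for this combinatorial step, whereas you supply a full self-contained argument via the explicit bijection $T\mapsto T\setminus\{\{D\}\}$; your proof that $D$ must be a singleton tile (the pigeonhole argument on the $p+1$ tiles $s_0,\dots,s_p$ against the $p$ squares in column $j+\epsilon$, with $\epsilon$ chosen using $\upwedgeat{j}\notin\alpha$) is exactly the content of that cited lemma. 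So the approaches coincide, with yours being more detailed rather than genuinely different.
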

\begin{proof}
Parts (a) and (b) were proved, e.g., 
in~\cite[Lemma~2.11]{KKP:Correlations_in_planar_LERW_and_UST_and_combinatorics_of_conformal_blocks} 
(see also the remark below that lemma). Part (c) was proved, e.g., 
in~\cite[Lemma~2.12]{KKP:Correlations_in_planar_LERW_and_UST_and_combinatorics_of_conformal_blocks}. 
For completeness, we give a short proof for Part (d).
First,~\cite[Lemma~2.15]{KKP:Correlations_in_planar_LERW_and_UST_and_combinatorics_of_conformal_blocks} 
says that 
if $\upwedgeat{j} \not \in \alpha$, $\downwedgeat{j} \in \beta$, and $\alpha \DPleq \beta$,
then we have 
$\# \CItilingsof (\alpha / \beta) = 
\# \CItilingsof \big(\alpha / (\beta \wedgelift{j}) \big)$.
On the other hand, Proposition~\ref{prop: matrix inversion} shows that 
$\Minv_{\alpha,\beta} = (-1)^{|\alpha / \beta|} \# \CItilingsof (\alpha / \beta)$. 
The claim follows from this and the observation that the number of Dyck tiles in a cover-inclusive 
Dyck tiling of $\alpha / (\beta \wedgelift{j})$
is one more than the number of Dyck tiles in a cover-inclusive Dyck tiling of 
$\alpha / \beta$, 
by~\cite[proof of Lemma~2.15]{KKP:Correlations_in_planar_LERW_and_UST_and_combinatorics_of_conformal_blocks}.
\end{proof}

\section{Global Multiple SLEs}
\label{sec::characterization}
Throughout this section, we fix the value of $\kappa\in (0,4]$ and we let $(\Omega; x_1, \ldots, x_{2N})$ be a polygon.
For each link pattern $\alpha = \{\link{a_1}{b_1}, \ldots, \link{a_N}{b_N}\} \in \LP_N$,  
we construct an $N$-$\SLE_{\kappa}$ probability measure $\QQ_{\alpha}^{\#}$ 
on the set $X_0^{\alpha}(\Omega; x_1, \ldots, x_{2N})$ of pairwise disjoint, continuous simple curves 
$(\eta_1, \ldots, \eta_N)$ in $\Omega$ 
such that, for each $j\in\{1,\ldots,N\}$, the curve $\eta_j$ connects $x_{a_j}$ to $x_{b_j}$ according to $\alpha$
(see Proposition~\ref{prop::global_existence}).

In~\cite{KozdronLawlerMultipleSLEs}
M.~Kozdron and G.~Lawler constructed such a probability measure in the special case when the curves 
form the rainbow connectivity, illustrated in Figure~\ref{fig::rainbow}, encoded in the link pattern
$\nested_N = \{ \link{1}{2N}, \link{2}{2N-1}, \ldots, \link{N}{N+1} \}$
(see also \cite[Section~3.4]{Dubedat:Euler_integrals_for_commuting_SLEs}).
The generalization of this construction to the case of any possible topological 
connectivity of the curves, encoded in a general link pattern $\alpha \in \LP_N$,
was stated in Lawler's works~\cite{LawlerPartitionFunctionsSLE, LawlerSLENotes}, but without proof.

In the present article, we give a combinatorial construction,
which appears to agree with~\cite[Section~2.7]{LawlerPartitionFunctionsSLE}.
In contrast to the previous works, we formulate the result focusing on the conceptual definition of the global multiple $\SLE$s, 
instead of just defining them as weighted $\SLE$s. These 
$N$-$\SLE_\kappa$ measures have the defining property that, for each $j\in\{1,\ldots,N\}$,
the conditional law of $\eta_j$ given $\{\eta_1, \ldots, \eta_{j-1}, \eta_{j+1}, \ldots, \eta_N\}$ is 
the $\SLE_{\kappa}$ connecting $x_{a_j}$ and $x_{b_j}$ in the component $\hat{\Omega}_j$ of 
the domain $\Omega \setminus \{\eta_1, \ldots, \eta_{j-1}, \eta_{j+1}, \ldots, \eta_N\}$ having 
$x_{a_j}$ and $x_{b_j}$ on its boundary. 
In subsequent work~\cite{BeffaraPeltolaWuUniquenessGloableMultipleSLEs}, we prove that this property uniquely determines the global multiple $\SLE$ measures.

\begin{figure}[h]
\begin{center}
\includegraphics[scale=.6]{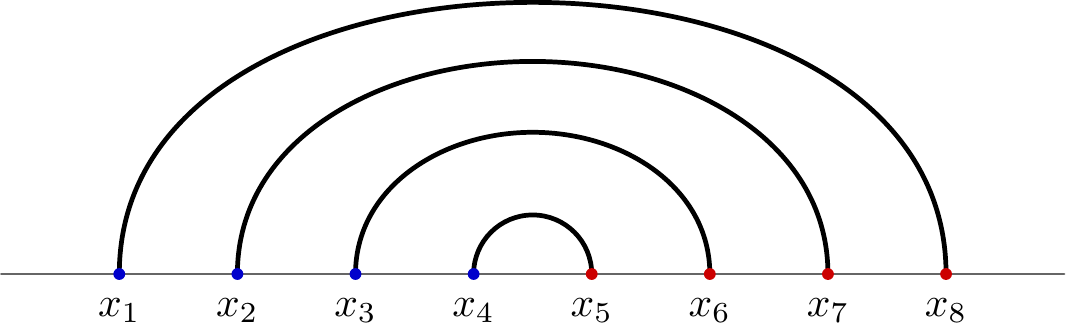}
\quad \quad
\includegraphics[scale=.6]{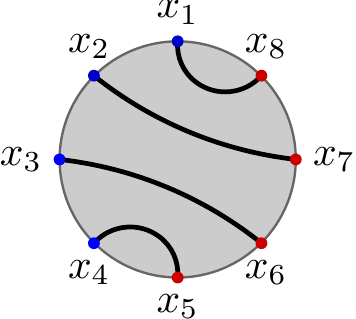}
\end{center}
\caption{The rainbow link pattern with four links, denoted by $\nested_4$.}
\label{fig::rainbow} 
\end{figure}

\subsection{Construction of Global Multiple SLEs}
\label{subsec::multiplesle_existence}
The general idea to construct global multiple $\SLE$s
is that one defines the measure by
its Radon-Nikodym derivative with respect to the product measure
of independent chordal $\SLE$s. This Radon-Nikodym derivative can be written
in terms of the Brownian loop measure. The same idea 
can also be used to construct multiple $\SLE$s in finitely connected domains, 
see~\cite{LawlerPartitionFunctionsSLE,LawlerSLENotes,LawlerSLEMultiplyConnected}.
\smallbreak

Fix $\alpha = \{ \link{a_1}{b_1}, \ldots, \link{a_N}{b_N} \}\in \LP_N$. 
To construct the global $N$-$\SLE_{\kappa}$ associated to $\alpha$,
we introduce a combinatorial expression of Brownian loop measures, denoted by $m_{\alpha}$.
For each configuration $(\eta_1, \ldots, \eta_N) \in X_0^{\alpha}(\Omega; x_1, \ldots, x_{2N})$, 
we note that $\Omega \setminus \{\eta_1, \ldots, \eta_N\}$ has $N+1$ connected components (c.c).
The boundary of each c.c. 
$\LC$ contains some of the curves $\{\eta_1, \ldots, \eta_N\}$. We denote by 
\[ \LB(\LC) := \{j\in \{1,\ldots,N\} \colon \eta_j \subset \partial\LC\} \] 
the set of indices $j$ specified by the curves $\eta_j \subset \partial \LC$. If $\LB(\LC) = \{j_1, \ldots, j_p\}$,
we define
\begin{align*}
m(\LC) 
:= & \; \sum_{\substack{i_1,i_2 \in \LB(\LC), \\ i_1\neq i_2}} 
\mu(\Omega; \eta_{i_1}, \eta_{i_2}) 
- \sum_{\substack{i_1, i_2, i_3 \in \LB(\LC), \\ i_1 \neq i_2 \neq i_3 \neq i_1}} 
\mu(\Omega; \eta_{i_1}, \eta_{i_2}, \eta_{i_3}) 
+ \cdots + (-1)^p \mu(\Omega; \eta_{j_1}, \ldots, \eta_{j_p}).
\end{align*}
For $(\eta_1, \ldots, \eta_N) \in X_0^{\alpha}(\Omega; x_1, \ldots, x_{2N})$, 
we define 
\begin{align}\label{eqn::def_malpha}
m_{\alpha}(\Omega; \eta_1, \ldots, \eta_N) := 
\sum_{\text{c.c. } \LC \text{ of } \Omega \setminus \{\eta_1, \ldots, \eta_N\} } m(\LC). 
\end{align}
If $\alpha$ is the rainbow pattern $\nested_N$, then
the quantity $m_{\alpha}$ has a simple expression:
\[ m_{\nested_N}(\Omega; \eta_1, \ldots, \eta_N) 
= \sum_{j=1}^{N-1} \mu(\Omega; \eta_j, \eta_{j+1}), 
\qquad \text{for }
\nested_N = \{ \link{1}{2N}, \link{2}{2N-1}, \ldots, \link{N}{N+1} \} . \]
More generally, $m_{\alpha}$ is given by an inclusion-exclusion procedure that depends on $\alpha$.
It has the following cascade property, which will be crucial in the sequel.

\begin{lemma} \label{lem::malpha_cascade}
Let $\alpha \in \LP_N$ and $j \in \{1,\ldots, N\}$, and denote\footnote{
We recall that the link pattern obtained from $\alpha$ by removing the link $\link{a}{b}$ 
is denoted by $\alpha \removeLink \link{a}{b}$, and, importantly,
after the removal, the indices of the remaining links relabeled by $1,2,\ldots,2N-2$ (see also Figure~\ref{fig::link_removal}).}
$\hat{\alpha} = \alpha \removeLink \link{a_j}{b_j} \in \LP_{N-1}$. 
Then we have
\[m_{\alpha}(\Omega; \eta_1, \ldots, \eta_N) 
= m_{\hat{\alpha}}(\Omega; \eta_1,\ldots, \eta_{j-1}, \eta_{j+1},\ldots, \eta_N) 
+ \mu(\Omega; \eta_j, \Omega\setminus\hat{\Omega}_j),\]
where $\hat{\Omega}_j$ is the connected component of $\Omega\setminus\{\eta_1,\ldots, \eta_{j-1}, \eta_{j+1}, \ldots, \eta_N\}$ 
having $x_{a_j}$ and $x_{b_j}$ on its boundary.
\end{lemma}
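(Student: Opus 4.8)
\emph{Overall approach.} The plan is to reduce the cascade identity to a loop-by-loop statement by expressing all three quantities as integrals of explicit functionals against the Brownian loop measure $\mu$ of loops in $\Omega$. First I would record the relevant combinatorics of the configuration: since $\eta_1,\ldots,\eta_N$ are pairwise disjoint crosscuts of the simply connected domain $\Omega$, the complement $\Omega\setminus\{\eta_1,\ldots,\eta_N\}$ has exactly $N+1$ connected components and each $\eta_i$ lies on the boundary of exactly two of them; hence the \emph{dual graph} $\mathcal{T}_\alpha$ --- one vertex per component, one edge $\eta_i$ joining the two components it borders --- is a tree. Two planarity facts will be used repeatedly: (i) the single crosscut $\eta_j$ separates $\Omega$ into two pieces, which are exactly the unions of the components lying in the two subtrees of $\mathcal{T}_\alpha$ obtained by deleting the edge $\eta_j$, so any connected subset of $\Omega$ meeting components strictly on both sides of $\eta_j$ must meet $\eta_j$ itself; and (ii) removing the link $\link{a_j}{b_j}$ corresponds on the dual side to contracting the edge $\eta_j$, i.e.\ $\mathcal{T}_{\hat\alpha}=\mathcal{T}_\alpha/\eta_j$, with $\hat\Omega_j$ playing the role of the merged vertex.

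\emph{Integral representation of $m_\alpha$.} For a loop $\omega$ in $\Omega$ put $E(\omega):=\{i : \omega\cap\eta_i\neq\emptyset\}$, and for an edge set $E$ let $c_\alpha(E)$ be the number of connected components of the subgraph of $\mathcal{T}_\alpha$ spanned by $E$. I would establish
\[ m_\alpha(\Omega;\eta_1,\ldots,\eta_N)=\int\bigl(|E(\omega)|-c_\alpha(E(\omega))\bigr)\,\ud\mu(\omega). \]
For a fixed component $\LC$ with $|\LB(\LC)|=p$, set $f_\LC(\omega):=|E(\omega)\cap\LB(\LC)|$; since $\binom{f_\LC(\omega)}{k}$ counts the $k$-subsets $T\subseteq\LB(\LC)$ with $\omega$ meeting every $\eta_i$, $i\in T$, integrating gives $\int\binom{f_\LC}{k}\,\ud\mu=\sum_{|T|=k}\mu(\Omega;\{\eta_i\}_{i\in T})$, which is finite for $k\geq2$ because distinct links have disjoint closures. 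Summing the definition of $m(\LC)$ over $k$ and using the elementary identity $\sum_{k=2}^{p}(-1)^k\binom{m}{k}=(m-1)^+$, valid for integers $0\leq m\leq p$, would yield $m(\LC)=\int(f_\LC(\omega)-1)^+\,\ud\mu(\omega)$. Finally, summing over the finitely many components and applying the forest identity $\#\{\text{non-isolated vertices}\}=\#\{\text{edges}\}+\#\{\text{components}\}$ to the subgraph of $\mathcal{T}_\alpha$ with edge set $E(\omega)$ gives $\sum_\LC(f_\LC(\omega)-1)^+=|E(\omega)|-c_\alpha(E(\omega))$.

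\emph{The cascade.} By the integral representation applied both to $\alpha$ and to $\hat\alpha$ (with $\hat E(\omega):=E(\omega)\setminus\{j\}$; the relabeling of indices in $\hat\alpha$ is immaterial for the functional above), the claimed identity is equivalent to the pointwise statement
\[ \bigl(|E(\omega)|-c_\alpha(E(\omega))\bigr)-\bigl(|\hat E(\omega)|-c_{\hat\alpha}(\hat E(\omega))\bigr)=\one\{\omega\cap\eta_j\neq\emptyset \text{ and } \omega\cap(\Omega\setminus\hat\Omega_j)\neq\emptyset\}, \]
together with the remark that integrating the right-hand side against $\mu$ is precisely $\mu(\Omega;\eta_j,\Omega\setminus\hat\Omega_j)$, since a loop meeting $\eta_j$ necessarily enters $\hat\Omega_j$ and then meets $\Omega\setminus\hat\Omega_j$ exactly when it leaves $\hat\Omega_j$, i.e.\ when it meets $\partial\hat\Omega_j\cap\Omega=\bigcup_{i\in\LB(\hat\Omega_j)}\eta_i$. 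The left-hand side I would evaluate from the effect of contracting the edge $\eta_j$, using that contracting an edge lying in a subgraph leaves the number of its connected components unchanged and removes one vertex: if $j\notin E(\omega)$, then $\hat E(\omega)=E(\omega)$ and fact (i) shows that at most one of the two components bordering $\eta_j$ is incident to an edge of $E(\omega)$, so contracting $\eta_j$ merges no two components, giving $c_{\hat\alpha}(E(\omega))=c_\alpha(E(\omega))$ and difference $0$; if $j\in E(\omega)$, the difference is $0$ when $\eta_j$ is by itself a connected component of the subgraph spanned by $E(\omega)$ --- equivalently, when $\omega$ meets no $\eta_i$ with $i\in\LB(\hat\Omega_j)$ --- and is $1$ otherwise. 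In every case this matches the indicator, and integrating against $\mu$ finishes the proof.

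\emph{Main difficulty.} The delicate point is this last step: organizing the comparison of $\mathcal{T}_\alpha$ with its contraction $\mathcal{T}_{\hat\alpha}=\mathcal{T}_\alpha/\eta_j$ while carefully tracking isolated vertices in the relevant subgraphs, and above all justifying via planar topology that a loop linking the two sides of the crosscut $\eta_j$ cannot avoid $\eta_j$ --- this is exactly what rules out the would-be obstructions to the local cascade. The first two steps are routine: inclusion--exclusion for the Brownian loop measure together with elementary graph theory, relying only on the finiteness of $\mu(\Omega;V_1,V_2)$ for disjoint closed sets recalled in Section~\ref{subsec::boundary_perturbation}.
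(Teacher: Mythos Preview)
Your proof is correct and takes a genuinely different route from the paper's.

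The paper argues directly at the level of the inclusion--exclusion sums defining $m(\LC)$: it splits $m_\alpha = S_{1,1}+S_{1,2}+S_2$ according to whether a term involves one of the two components $\LC^L_j,\LC^R_j$ bordering $\eta_j$ and whether $\eta_j$ itself appears among the curves; it then writes $m_{\hat\alpha}=S_2+S_{1,2}+S_3$, where $S_3$ collects the ``cross terms'' coming from curves on opposite sides of $\eta_j$ inside $\hat\Omega_j$, and uses the same topological fact you single out (a loop meeting curves on both sides of $\eta_j$ must meet $\eta_j$) to identify $\mu(\Omega;\eta_j,\Omega\setminus\hat\Omega_j)=S_{1,1}-S_3$. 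Your approach instead first establishes the closed-form integral representation $m_\alpha=\int(|E(\omega)|-c_\alpha(E(\omega)))\,\ud\mu(\omega)$ via the dual tree $\mathcal{T}_\alpha$, and then reduces the cascade to a purely combinatorial statement about what happens to $|E|-c(E)$ under edge contraction, verified loop by loop. Both arguments hinge on the same crosscut-separation fact, but your detour buys a reusable formula for $m_\alpha$ (which in fact also streamlines later identities such as Lemma~\ref{lem::malpha_boundaryperturbation}), at the cost of setting up the tree framework; the paper's proof is shorter and self-contained. One small point worth making explicit in your write-up: your $c_\alpha(E)$ counts only the \emph{non-trivial} components (those containing at least one edge), since otherwise the forest identity you invoke would not match.
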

\begin{proof} As illustrated in Figure~\ref{fig::malpha_cascade}, the domain
$\Omega \setminus \{\eta_1, \ldots, \eta_N\}$
has $N+1$ connected components, two of which have the curve
$\eta_j$ on their
boundary. We denote them by $\LC^L_j$ and $\LC^R_j$. 
We split the summation in $m_{\alpha}$ into two parts, depending on whether or not $\eta_j$ is a part of 
the boundary of the c.c.~$\LC$:
\[ m_{\alpha}(\Omega; \eta_1, \ldots, \eta_N) = S_1 + S_2 , 
\qquad \text{where } \quad S_1 = m(\LC^L_j) + m(\LC^R_j) \quad \text{ and } 
\quad S_2 = \sum_{\LC \; : \; j \notin \LB(\LC)} m(\LC).\] 
The quantity $S_1$ is a sum
of terms of the form $\mu(\Omega; \eta_{i_1}, \ldots, \eta_{i_k})$. 
We split the terms in $S_1 = S_{1, 1} + S_{1, 2}$
into two parts: $S_{1, 1}$ is the sum 
of the terms in $S_1$ including $\eta_j$ and $S_{1,2}$ is the sum 
of the terms in $S_1$ excluding $\eta_j$.  
Now we have $m_{\alpha}(\Omega; \eta_1, \ldots, \eta_N)=S_{1,1}+S_{1,2}+S_2$. 

On the other hand, by definition~\eqref{eqn::def_malpha}, 
the quantity $m_{\hat{\alpha}}$ can be written in the form
\[m_{\hat{\alpha}}(\Omega; \eta_1, \ldots, \eta_{j-1}, \eta_{j+1}, \ldots, \eta_N)=S_2+S_{1,2}+S_3,\]
where $S_3$ contains the contribution of terms of type
$\mu(\Omega; \eta_{i_1}, \ldots, \eta_{i_k})$ for curves 
$\eta_{i_1}, \ldots,\eta_{i_k}$ such that $i_1, \ldots, i_k \in \LB(\hat{\Omega}_j)$ and 
at least two of these curves belong to different $\partial\LC^L_j$ and $ \partial\LC^R_j$. 
For such curves, any
Brownian loop intersecting all of them must also intersect $\eta_j$. Thus, we have 
$\mu(\Omega; \eta_j, \Omega\setminus\hat{\Omega}_j)=S_{1,1}-S_3$. The asserted identity follows:
\begin{align*}
m_{\alpha}(\Omega; \eta_1, \ldots, \eta_N) 
&= S_{1,1}-S_3+S_3+S_{1,2}+S_2 \\
&= \mu(\Omega; \eta_j, \Omega\setminus\hat{\Omega}_j) 
+ m_{\hat{\alpha}}(\Omega; \eta_1, \ldots, \eta_{j-1}, \eta_{j+1}, \ldots, \eta_N) .
\end{align*}
\end{proof}

\begin{figure} 
\floatbox[{\capbeside\thisfloatsetup{capbesideposition={right,center},capbesidewidth=0.5\textwidth}}]{figure}[\FBwidth]
{\caption{
Illustration of notations used throughout. \\ 
The red curve is $\eta_j$. 
The domain $\Omega \setminus \{\eta_1, \ldots, \eta_{N}\}$ has $N+1$ connected components. 
Two of them have $\eta_j$ on their boundary, denoted by
$\LC^L_j$ and $\LC^R_j$. The grey domain is $\hat{\Omega}_j$, that is, the connected component of 
$\Omega \setminus \{\eta_1, \ldots, \eta_{j-1}, \eta_{j+1}, \ldots, \eta_N\}$ having 
the endpoints $x_{a_j}, x_{b_j} \in \partial \hat{\Omega}_j$
of the curve $\eta_j$ on its boundary. \\
On the other hand, in Proposition~\ref{prop::GeneralCascadeProp} 
we denote by $\eta = \eta_j$ and $D_{\eta}^L$ and $D_{\eta}^R$ the two connected components
of $\Omega \setminus \eta$ on the left and right of the curve, respectively.
The sub-link patterns of $\alpha$ associated to these two components are denoted by $\alpha^L$
and $\alpha^R$, and illustrated in blue and green in the figure.
}\label{fig::malpha_cascade}}
{\includegraphics[width=0.4\textwidth]{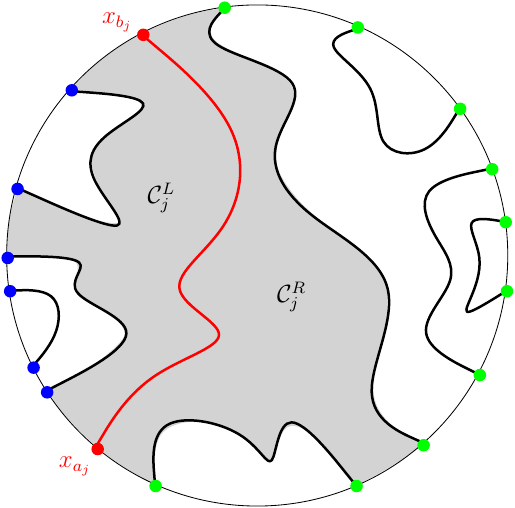} \qquad \quad}
\end{figure}

Next, we record a boundary perturbation property for the quantity $m_\alpha$,
also needed later.
\begin{lemma}\label{lem::malpha_boundaryperturbation}
Suppose $K$ is a relatively compact subset of $\Omega$ 
such that $\Omega\setminus K$ is simply connected, and assume that 
the distance between $K$ and $\{\eta_1, \ldots, \eta_N\}$ is strictly positive. 
Then we have
\begin{align}\label{eq::malpha_boundaryperturbation}
m_{\alpha}(\Omega; \eta_1, \ldots, \eta_N) 
= m_{\alpha}(\Omega\setminus K; \eta_1, \ldots, \eta_N) + 
\sum_{j=1}^N \mu(\Omega; K, \eta_j) - \mu\big(\Omega; K, \bigcup_{j=1}^N \eta_j\big) .
\end{align}
\end{lemma}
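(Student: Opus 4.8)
The plan is to prove~\eqref{eq::malpha_boundaryperturbation} by induction on the number $N$ of curves, using the cascade property of Lemma~\ref{lem::malpha_cascade} together with the \emph{restriction property} of the Brownian loop measure: since $K$ is closed (it is relatively compact in $\Omega$ with $\Omega\setminus K$ open), a loop lies in $\Omega\setminus K$ if and only if it lies in $\Omega$ and avoids $K$, so for any closed sets $V_1,\ldots,V_n\subset\Omega$,
\begin{align*}
\mu(\Omega\setminus K; V_1,\ldots,V_n) = \mu(\Omega; V_1,\ldots,V_n) - \mu(\Omega; K, V_1,\ldots,V_n) ,
\end{align*}
an innocuous subtraction whenever $\mu(\Omega;K,V_1,\ldots,V_n)<\infty$; in our applications one of the $V_i$ will always be a curve $\eta_j$, which is compact with $\dist(K,\eta_j)>0$, so finiteness is automatic. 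I would also record one topological input used at every stage of the induction: because $\Omega\setminus K$ is simply connected and $\dist(K,\{\eta_1,\ldots,\eta_N\})>0$, the set $K$ cannot disconnect any of the $N+1$ connected components of $\Omega\setminus\{\eta_1,\ldots,\eta_N\}$ --- if some $\LC\setminus K$ split into pieces, one piece $U$ would have $\partial U\subset K$, hence be open and closed in $\Omega\setminus K$ and proper, contradicting the connectedness of $\Omega\setminus K$. Consequently the components of $(\Omega\setminus K)\setminus\{\eta_1,\ldots,\eta_N\}$ are exactly the sets $\LC\setminus K$, carrying the same curves on their boundaries and the same marked boundary points; in particular, Lemma~\ref{lem::malpha_cascade} applied to the domain $\Omega\setminus K$ uses the component $\hat\Omega_j\setminus K$ in place of $\hat\Omega_j$.

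The base case $N\le 1$ is immediate: then every boundary component sees at most one curve, so $m_\alpha(\Omega;\eta_\bullet)=m_\alpha(\Omega\setminus K;\eta_\bullet)=0$, while the right-hand side of~\eqref{eq::malpha_boundaryperturbation} collapses to $\mu(\Omega;K,\eta_1)-\mu(\Omega;K,\eta_1)=0$ (or to $0$ when $N=0$). For the inductive step, fix $\alpha\in\LP_N$, pick any index $j$, set $\hat\alpha=\alpha\removeLink\link{a_j}{b_j}\in\LP_{N-1}$, and apply Lemma~\ref{lem::malpha_cascade} both in $\Omega$ and in $\Omega\setminus K$ (using the topological input above, and noting $(\Omega\setminus K)\setminus(\hat\Omega_j\setminus K)=(\Omega\setminus\hat\Omega_j)\setminus K$, which for loops already in $\Omega\setminus K$ is met exactly when $\Omega\setminus\hat\Omega_j$ is met). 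Subtracting the two cascade identities, applying the restriction property to the pair of ``$\mu(\,\cdot\,;\eta_j,\Omega\setminus\hat\Omega_j)$'' terms, and then the induction hypothesis to $\hat\alpha$, the difference $m_\alpha(\Omega;\eta_\bullet)-m_\alpha(\Omega\setminus K;\eta_\bullet)$ becomes
\begin{align*}
\sum_{k\neq j}\mu(\Omega;K,\eta_k) - \mu\Big(\Omega;K,\bigcup_{k\neq j}\eta_k\Big) + \mu(\Omega;K,\eta_j,\Omega\setminus\hat\Omega_j) .
\end{align*}
Comparing with the target~\eqref{eq::malpha_boundaryperturbation}, all that remains is the single loop-measure identity
\begin{align*}
\mu\Big(\Omega;K,\bigcup_{k=1}^{N}\eta_k\Big)-\mu\Big(\Omega;K,\bigcup_{k\neq j}\eta_k\Big)
= \mu(\Omega;K,\eta_j) - \mu(\Omega;K,\eta_j,\Omega\setminus\hat\Omega_j) .
\end{align*}

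To close, I would observe that both sides of this last identity are the Brownian loop measure of the \emph{same} family of loops --- those loops that meet $K$ and $\eta_j$ but no other curve $\eta_k$. Indeed, the left-hand side is the measure of loops meeting $K$ and $\eta_j$ but not $\bigcup_{k\neq j}\eta_k$ (since $\{\text{hits }\bigcup_{k=1}^N\eta_k\}\setminus\{\text{hits }\bigcup_{k\neq j}\eta_k\}=\{\text{hits }\eta_j\}\setminus\{\text{hits }\bigcup_{k\neq j}\eta_k\}$), and the right-hand side is the measure of loops meeting $K$ and $\eta_j$ but not $\Omega\setminus\hat\Omega_j$, i.e.\ loops meeting $K$ and $\eta_j$ and contained in $\hat\Omega_j$; and a loop that meets $\eta_j$ (necessarily at an interior point, as loops avoid $\partial\Omega$) is contained in the component $\hat\Omega_j$ of $\Omega\setminus\{\eta_k:k\neq j\}$ exactly when it avoids every $\eta_k$ with $k\neq j$ (one direction because $\hat\Omega_j$ is disjoint from those curves, the other because a connected set avoiding them and meeting $\eta_j$ lies in the component whose interior contains $\eta_j$). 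This completes the induction.

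The step I expect to need the most care is the topological claim that removing $K$ does not disconnect the components of $\Omega\setminus\{\eta_1,\ldots,\eta_N\}$: this is precisely where the hypothesis that $\Omega\setminus K$ is simply connected is genuinely used, and making it fully rigorous requires a short point-set topology argument (that a compact $K$ with positive distance to $\partial\LC$ cannot enclose a region inside a simply connected $\LC$ without creating a bounded complementary component of $\Omega\setminus K$). Everything else --- the restriction property, the finiteness bookkeeping, and the identification of loop families --- is routine once Lemma~\ref{lem::malpha_cascade} is available.
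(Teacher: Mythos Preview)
Your proof is correct and follows essentially the same route as the paper's: induction on $N$ via the cascade Lemma~\ref{lem::malpha_cascade}, the restriction identity $\mu(\Omega\setminus K;\,\cdot\,)=\mu(\Omega;\,\cdot\,)-\mu(\Omega;K,\,\cdot\,)$, and the final inclusion--exclusion step identifying $\mu(\Omega;K,\eta_j,\Omega\setminus\hat\Omega_j)=\mu(\Omega;K,\eta_j,\bigcup_{k\neq j}\eta_k)$. The only difference is organizational---you apply the cascade in both $\Omega$ and $\Omega\setminus K$ and subtract, whereas the paper applies it in $\Omega$, manipulates, and then recombines via the cascade in $\Omega\setminus K$---and you are more explicit about the topological point (that the components of $(\Omega\setminus K)\setminus\{\eta_1,\ldots,\eta_N\}$ are exactly the sets $\LC\setminus K$), which the paper uses implicitly when it writes $\mu(\Omega\setminus K;\eta_1,\Omega\setminus\hat\Omega_1)$ with $\hat\Omega_1$ taken in $\Omega$.
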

\begin{proof}
We prove the asserted identity by induction on $N \ge 1$. For $N=1$, we have 
$m_{\vcenter{\hbox{\includegraphics[scale=0.2]{figures/link-0.pdf}}}}(\Omega;\eta) = 0$, so the claim is clear. 
Assume that~\eqref{eq::malpha_boundaryperturbation}
holds for all link patterns in $\LP_{N-1}$, denote 
$\hat{\alpha} = \alpha \removeLink \link{x_{a_1}}{x_{b_1}} \in \LP_{N-1}$, and let $\eta_1$
be the curve from $x_{a_1}$ to $x_{b_1}$. Finally, 
let $\hat{\Omega}_1$ be the connected component of $\Omega \setminus \{\eta_2,\ldots,\eta_N\}$ 
having the endpoints of $\eta_1$ on its boundary (as in Figure~\ref{fig::malpha_cascade}). 
Using Lemma~\ref{lem::malpha_cascade} and the obvious fact that
$\mu(\Omega; \eta_1, \Omega\setminus\hat{\Omega}_1) 
= \mu(\Omega\setminus K; \eta_1, \Omega\setminus\hat{\Omega}_1)
+ \mu(\Omega; K, \eta_1, \Omega\setminus\hat{\Omega}_1)$, we can write $m_{\alpha}$ in the form
\begin{align*}
m_{\alpha}(\Omega; \eta_1, \ldots ,\eta_N) 
= & \; m_{\hat{\alpha}}(\Omega; \eta_2, \ldots, \eta_N) 
+ \mu(\Omega; \eta_1, \Omega\setminus\hat{\Omega}_1) \\
= & \; m_{\hat{\alpha}}(\Omega; \eta_2, \ldots, \eta_N)
+ \mu(\Omega\setminus K; \eta_1, \Omega\setminus\hat{\Omega}_1)
+ \mu(\Omega; K, \eta_1, \Omega\setminus\hat{\Omega}_1) .
\end{align*}
By the induction hypothesis, for $\hat{\alpha} \in \LP_{N-1}$,
we have
\begin{align*}
m_{\hat{\alpha}}(\Omega; \eta_2, \ldots, \eta_N) 
= m_{\hat{\alpha}}(\Omega\setminus K; \eta_2, \ldots, \eta_N)
+ \sum_{j=2}^N \mu(\Omega; K, \eta_j) - \mu\big(\Omega; K, \bigcup_{j=2}^N \eta_j\big) .
\end{align*}
Combining these two relations with Lemma~\ref{lem::malpha_cascade}, we obtain
\begin{align*}
m_{\alpha}(\Omega; \eta_1, \ldots ,\eta_N)
= \; & m_{\alpha}(\Omega\setminus K; \eta_1,\ldots, \eta_N) 
+ \sum_{j=2}^N \mu(\Omega; K, \eta_j) 
- \mu\big(\Omega; K, \bigcup_{j=2}^N\eta_j\big) + \mu(\Omega; K, \eta_1, \Omega\setminus\hat{\Omega}_1) .
\end{align*}
Note now that 
$\mu(\Omega; K, \eta_1, \Omega\setminus\hat{\Omega}_1) 
= \mu\big(\Omega; K, \eta_1, \bigcup_{j=2}^N\eta_j \big)$, so
\begin{align*}
\mu\big(\Omega; K, \bigcup_{j=1}^N \eta_j \big) 
= & \; \mu(\Omega; K, \eta_1) + \mu \big(\Omega; K, \bigcup_{j=2}^N \eta_j\big) 
- \mu\big(\Omega; K, \eta_1, \bigcup_{j=2}^N\eta_j \big) \\
= & \; \mu(\Omega; K, \eta_1) + \mu\big(\Omega; K, \bigcup_{j=2}^N \eta_j\big) 
- \mu(\Omega; K, \eta_1, \Omega\setminus\hat{\Omega}_1) .
\end{align*}
Combining the above two equations, we get the asserted identity~\eqref{eq::malpha_boundaryperturbation}: 
\[ m_{\alpha}(\Omega; \eta_1, \ldots ,\eta_N) 
= m_{\alpha}(\Omega\setminus K; \eta_1,\ldots, \eta_N) 
+ \sum_{j=2}^N \mu(\Omega; K, \eta_j) - \mu(\Omega; K, \bigcup_{j=1}^N \eta_j) + \mu(\Omega; K, \eta_1) .\] 
\end{proof}

Now, we are ready to construct the probability measure of Theorem~\ref{thm::global_existence}.
\begin{proposition} \label{prop::global_existence}
Let $\kappa\in (0,4]$ and 
let $(\Omega; x_1, \ldots, x_{2N})$ be a polygon. 
For any $\alpha \in \LP_N$, there exists a global $N$-$\SLE_{\kappa}$ associated to~$\alpha$. 
\end{proposition}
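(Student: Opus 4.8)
The plan is to construct the measure $\QQ_\alpha^\#$ by specifying its Radon–Nikodym derivative with respect to the product measure of $N$ independent chordal $\SLE_\kappa$'s connecting the prescribed endpoint pairs, using the quantity $m_\alpha$ defined in~\eqref{eqn::def_malpha} as the exponential weight. Concretely, let $\PP_1 \otimes \cdots \otimes \PP_N$ denote the law of independent curves $\eta_j \sim \PP(\Omega; x_{a_j}, x_{b_j})$; these are a.s.\ simple and touch $\partial\Omega$ only at their endpoints, but they need not be pairwise disjoint. First I would define
\begin{align*}
\ud \QQ_\alpha (\eta_1,\ldots,\eta_N) := \one_{\{\eta_1,\ldots,\eta_N \text{ pairwise disjoint}\}} \; \exp\big(c\, m_\alpha(\Omega; \eta_1,\ldots,\eta_N)\big) \; \ud(\PP_1 \otimes \cdots \otimes \PP_N)(\eta_1,\ldots,\eta_N),
\end{align*}
and then, after checking that the total mass $|\QQ_\alpha|$ is finite and strictly positive, set $\QQ_\alpha^\# := \QQ_\alpha / |\QQ_\alpha|$. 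Finiteness of the mass follows because on the disjointness event all the pairwise Brownian loop measures appearing in $m_\alpha$ are finite (the curves are compact and at positive distance once disjoint, though one must be slightly careful near the common endpoints — here the simplicity of the curves and the fact that $\kappa \le 4$ keeps things integrable, and in any case $m_\alpha$ is bounded above by a sum of such pairwise terms); positivity follows since the disjointness event has positive probability under the product measure and the integrand is strictly positive there.

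The main content is verifying the defining conditional-law property: for each $j$, the conditional law of $\eta_j$ given $\{\eta_i : i \ne j\}$ under $\QQ_\alpha^\#$ is the chordal $\SLE_\kappa$ in the component $\hat\Omega_j$ of $\Omega \setminus \{\eta_i : i\ne j\}$ containing $x_{a_j}, x_{b_j}$. To see this, fix the curves $\{\eta_i : i \ne j\}$ and examine the conditional density of $\eta_j$. By the cascade property (Lemma~\ref{lem::malpha_cascade}),
\begin{align*}
m_\alpha(\Omega; \eta_1,\ldots,\eta_N) = m_{\hat\alpha}(\Omega; \eta_1,\ldots,\eta_{j-1},\eta_{j+1},\ldots,\eta_N) + \mu(\Omega; \eta_j, \Omega \setminus \hat\Omega_j),
\end{align*}
where $\hat\alpha = \alpha \removeLink \link{a_j}{b_j}$. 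The first term does not depend on $\eta_j$, so it is absorbed into the normalizing constant of the conditional law. Hence, conditionally, the law of $\eta_j$ is proportional to
\begin{align*}
\one_{\{\eta_j \subset \hat\Omega_j,\ \eta_j \cap \eta_i = \emptyset\ \forall i\}} \; \exp\big(c\, \mu(\Omega; \eta_j, \Omega \setminus \hat\Omega_j)\big) \; \ud\PP(\Omega; x_{a_j}, x_{b_j})(\eta_j).
\end{align*}
Now I would invoke Lemma~\ref{lem::sle_boundary_perturbation} (the boundary perturbation property of chordal $\SLE_\kappa$, with $U = \hat\Omega_j$ a Dobrushin subdomain of $\Omega$): it says precisely that the chordal $\SLE_\kappa$ in $\hat\Omega_j$ is absolutely continuous w.r.t.\ that in $\Omega$ with density proportional to $\one_{\{\eta_j \subset \hat\Omega_j\}} \exp(c\,\mu(\Omega; \eta_j, \Omega \setminus \hat\Omega_j))$ (the ratio of boundary Poisson kernels is a deterministic constant given the conditioning, hence also absorbed). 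Since on the disjointness event $\{\eta_j \subset \hat\Omega_j\}$ already forces $\eta_j$ to avoid all $\eta_i$ (as the $\eta_i$ lie on $\partial\hat\Omega_j$), the indicator $\one_{\{\eta_j \cap \eta_i = \emptyset\}}$ is automatic, and we conclude that the conditional law of $\eta_j$ is exactly $\PP(\hat\Omega_j; x_{a_j}, x_{b_j})$, as required.

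The step I expect to be the main obstacle is the careful justification that $|\QQ_\alpha| \in (0,\infty)$ and, relatedly, that the weight $\exp(c\, m_\alpha)$ is $\PP_1 \otimes \cdots \otimes \PP_N$-integrable on the disjointness event — the delicate point being the behavior of $m_\alpha$ near the shared marked points $x_1,\ldots,x_{2N}$, where two curves can come arbitrarily close and the Brownian loop measure of loops hitting both of them could in principle diverge. This is controlled because near a common endpoint the $\SLE_\kappa$ curves with $\kappa \le 4$ behave like boundary-touching simple curves whose mutual loop measure near the shared point is integrable against the product $\SLE$ law; one can make this rigorous by a short-range cutoff argument and monotone convergence, or simply by noting $m_\alpha \le \sum_{i_1 \ne i_2} \mu(\Omega; \eta_{i_1}, \eta_{i_2})$ and bounding each pairwise term. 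Once integrability is in hand, positivity of the mass is immediate from the discussion above, and the conditional-law argument completes the proof that $\QQ_\alpha^\#$ is a global $N$-$\SLE_\kappa$ associated to $\alpha$.
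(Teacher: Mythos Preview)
Your construction of $\QQ_\alpha$ and your verification of the conditional-law property are exactly the paper's approach: the cascade relation (Lemma~\ref{lem::malpha_cascade}) isolates the $\eta_j$-dependence as $\one_{\{\eta_j \subset \hat\Omega_j\}}\exp(c\,\mu(\Omega;\eta_j,\Omega\setminus\hat\Omega_j))$, and Lemma~\ref{lem::sle_boundary_perturbation} identifies this (up to a deterministic constant) as the Radon--Nikodym derivative of $\PP(\hat\Omega_j;x_{a_j},x_{b_j})$ with respect to $\PP(\Omega;x_{a_j},x_{b_j})$. That part is fine.

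The gap is in your finiteness argument for $|\QQ_\alpha|$. For $8/3 < \kappa \le 4$ the central charge $c$ is positive, so bounding $m_\alpha$ \emph{above} by $\sum_{i_1\neq i_2}\mu(\Omega;\eta_{i_1},\eta_{i_2})$ does not help --- you would still need exponential moments of this sum under the product law, which is precisely the hard statement. Your ``short-range cutoff and monotone convergence'' suggestion is likewise not a proof: monotone convergence goes the wrong way when $c>0$. (Also, your worry about ``shared marked points'' is misplaced: the $N$ curves have $2N$ distinct endpoints, so no two curves share a boundary point.) The clean argument, which the paper uses and which you already have all the ingredients for, is to run the \emph{same} cascade decomposition inductively: write $R_\alpha = R_{\hat\alpha}\cdot \one_{\{\eta_j\subset\hat\Omega_j\}}\exp(c\,\mu(\Omega;\eta_j,\Omega\setminus\hat\Omega_j))$, take the conditional expectation over $\eta_j$ given the other curves, and apply Lemma~\ref{lem::sle_boundary_perturbation} to evaluate that inner expectation exactly as $(H_{\hat\Omega_j}(x_{a_j},x_{b_j})/H_\Omega(x_{a_j},x_{b_j}))^h \le 1$ by~\eqref{eqn::poissonkernel_mono}. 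This gives $\E_\alpha[R_\alpha] \le \E_{\hat\alpha}[R_{\hat\alpha}]$, and induction on $N$ yields $|\QQ_\alpha| \le 1$.
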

\begin{proof}
For $\alpha = \{ \link{a_1}{b_1}, \ldots, \link{a_N}{b_N} \} \in \LP_N$, let $\PP_{\alpha}$ 
denote the product measure
\begin{align*}
\PP_{\alpha}
:= \bigotimes_{j = 1}^{N} \PP(\Omega; x_{a_j}, x_{b_j}) 
\end{align*}
of $N$ independent chordal $\SLE_\kappa$ curves connecting the boundary points 
$x_{a_j}$ and $x_{b_j}$ for $j \in \{ 1,2,\ldots,N\}$ according to the connectivity $\alpha$.
Denote by $\E_{\alpha}$ the expectation with respect to $\PP_{\alpha}$. 
Define $\QQ_{\alpha}$ to be the measure which is absolutely continuous with respect to $\PP_{\alpha}$ 
with Radon-Nikodym derivative 
\begin{align}\label{eqn::def_radon_alpha}
\frac{\ud \QQ_{\alpha}}{\ud \PP_{\alpha}} (\eta_1, \ldots, \eta_N) =
R_{\alpha}(\Omega; \eta_1, \ldots, \eta_N) 
:= \one_{\{\eta_j\cap\eta_k=\emptyset \; \forall \; j\neq k\}} 
\exp(c m_{\alpha}(\Omega; \eta_1, \ldots, \eta_N)).
\end{align}

First, we prove that the total mass $|\QQ_{\alpha}| = \E_{\alpha} [R_{\alpha}(\Omega; \eta_1,\ldots, \eta_N)]$
of $\QQ_{\alpha}$ 
is positive and finite. 
Positivity is clear from the definition~\eqref{eqn::def_radon_alpha}.
We prove the finiteness by induction on $N \ge 1$, using the cascade property of Lemma~\ref{lem::malpha_cascade}.
The initial case $N = 1$ is obvious: $R_{\vcenter{\hbox{\includegraphics[scale=0.2]{figures/link-0.pdf}}}} = 1$.  
Let $N \geq 2$ and assume that
$|\QQ_{\hat{\alpha}}|$ is finite for all $\hat{\alpha} \in \LP_{N-1}$.
Using Lemma~\ref{lem::malpha_cascade}, 
we write the Radon-Nikodym derivative~\eqref{eqn::def_radon_alpha} in the form
\begin{align}\label{eqn::radon_cascade}
R_{\alpha}(\Omega; \eta_1,\ldots, \eta_N) 
= R_{\hat{\alpha}}(\Omega; \eta_1,\ldots, \eta_{j-1}, \eta_{j+1}, \ldots, \eta_N)\times \one_{\{\eta_j\subset \hat{\Omega}_j\}}\exp(c\mu(\Omega; \eta_j, \Omega\setminus \hat{\Omega}_j)) ,
\end{align}
for a fixed $j\in \{1,\ldots,N\}$, where 
$\hat{\alpha} = \alpha \removeLink \link{a_j}{b_j}$. 
Thus, we have
\begin{align*}
\E_{\alpha} [R_{\alpha}(\Omega; \eta_1,\ldots, \eta_N)] 
= \; & 
\E_{\alpha} \big[ \E_{\alpha} \left[R_{\alpha}(\Omega; \eta_1,\ldots, \eta_N) 
\cond \eta_1, \ldots, \eta_{j-1}, \eta_{j+1}, \ldots, \eta_N\right] \big] \\
= \; & 
\E_{\hat{\alpha}} \bigg[R_{\hat{\alpha}}(\Omega; \eta_1,\ldots, \eta_{j-1}, \eta_{j+1}, \ldots, \eta_N) 
\left(\frac{H_{\hat{\Omega}_j}(x_{a_j}, x_{b_j})}{H_{\Omega}(x_{a_j}, x_{b_j})}\right)^h \bigg] 
&&
\text{[by Lemma \ref{lem::sle_boundary_perturbation}]}  \\
\le \; &  \E_{\hat{\alpha}}\left[R_{\hat{\alpha}}(\Omega; \eta_1,\ldots, \eta_{j-1}, \eta_{j+1}, \ldots, \eta_N)\right] 
&&
\text{[by \eqref{eqn::poissonkernel_mono}]}  \\
\le \; &  1 . 
&&
\text{[by ind. hypothesis]}
\end{align*}

Noting that the Radon-Nikodym derivative~\eqref{eqn::def_radon_alpha} also depends on 
the fixed boundary points $x_1, \ldots, x_{2N}$, we define the function
$f_{\alpha}$ of $2N$ variables $x_1, \ldots, x_{2N} \in \partial \Omega$ by
\begin{align}\label{eqn::def_mixingpart}
f_{\alpha}(\Omega; x_1, \ldots, x_{2N}) 
:= \E_{\alpha}[R_{\alpha}(\Omega; \eta_1, \ldots, \eta_N)]= |\QQ_{\alpha}| .
\end{align}
Note that $f_{\alpha}$ is conformally invariant. From the above analysis, we see that it is also bounded:
\begin{align}\label{eq: bound for f alpha}
0 < f_{\alpha} \le 1 .
\end{align}

Second, we show that, for each $j\in\{1,\ldots, N\}$, 
under the probability measure $\QQ_{\alpha}^{\#} := \QQ_{\alpha}/|\QQ_{\alpha}|$,
the conditional law of $\eta_j$ given $\{\eta_1, \ldots, \eta_{j-1}, \eta_{j+1}, \ldots, \eta_N\}$ is 
the $\SLE_{\kappa}$ connecting $x_{a_j}$ and $x_{b_j}$ in the domain $\hat{\Omega}_j$. 
By the cascade property~\eqref{eqn::radon_cascade},
given $\{\eta_1, \ldots, \eta_{j-1}, \eta_{j+1}, \ldots, \eta_N\}$, the conditional law of $\eta_j$ is the same as $\PP(\Omega; x_{a_j}, x_{b_j})$ weighted by $\one_{\{\eta_j\subset \hat{\Omega}_j\}}\exp(c\mu(\Omega; \eta_j, \Omega\setminus \hat{\Omega}_j))$.
Now, by Lemma \ref{lem::sle_boundary_perturbation}, this
is the same as the law of the $\SLE_{\kappa}$ in $\hat{\Omega}_j$ connecting $x_{a_j}$ and $x_{b_j}$. 
This completes the proof.
\end{proof}

\subsection{Properties of Global Multiple SLEs}
\label{subsec::further_properties_of_Zalpha}
Next, we prove useful properties of global multiple $\SLE$s: first, we establish a boundary perturbation property,
and then a cascade property describing the marginal law of one curve in a global multiple $\SLE$. 

To begin, we set $\LB_{\emptyset}:=1$ and $\PartF_\emptyset := 1$, and define, for all integers $N \ge 1$ and link patterns 
$\alpha \in \LP_N$, the bound function $\LB_{\alpha}$ and the pure partition function $\PartF_{\alpha}$ as
\begin{align}\label{eqn::purepartition_alpha_def}
\begin{split}
\LB_{\alpha} \colon \chamber_{2N} \to \Rpos, 
\qquad \qquad
&\LB_{\alpha}(x_1, \ldots, x_{2N}) := 
\prod_{\link{a}{b} \in \alpha} |x_{b}-x_{a}|^{-1},\\
\PartF_{\alpha} \colon \chamber_{2N} \to \Rpos, 
\qquad \qquad
&\PartF_{\alpha}(x_1, \ldots, x_{2N}) := 
f_{\alpha}(\HH; x_1, \ldots , x_{2N}) \, \LB_{\alpha}(x_1, \ldots, x_{2N})^{2h},
\end{split}
\end{align}
where $f_{\alpha} = |\QQ_{\alpha}|$ is the function defined in~\eqref{eqn::def_mixingpart}.

If the points $x_1, \ldots, x_{2N}$ of the polygon $(\Omega; x_1, \ldots, x_{2N})$ lie on
sufficiently regular boundary segments (e.g., $C^{1+\eps}$ for some $\eps > 0$),
we call $(\Omega; x_1, \ldots, x_{2N})$ a \emph{nice polygon}. For a nice polygon $(\Omega; x_1, \ldots, x_{2N})$,
we define
\begin{align}\label{eqn::purepartition_malpha_def_general}
\begin{split}
&\LB_{\alpha}(\Omega; x_1,\ldots, x_{2N}) := 
\prod_{\link{a}{b} \in \alpha} H_{\Omega}(x_{a}, x_{b})^{1/2},\\
&\PartF_{\alpha}(\Omega; x_1,\ldots, x_{2N}) := f_{\alpha}(\Omega; x_1, \ldots, x_{2N}) \, \LB_{\alpha}(\Omega; x_1,\ldots, x_{2N})^{2h}.
\end{split}
\end{align}
This definition agrees with~\eqref{eq: multiple SLE conformal covariance}, by the conformal covariance property
of the boundary Poisson kernel $H_{\Omega}$ and the conformal invariance property of $f_{\alpha}$. 
We also note that the bounds~\eqref{eq: bound for f alpha} show that
\begin{align}\label{eqn::partf_alpha_upper}
\PartF_{\alpha}(\Omega; x_1,\ldots, x_{2N}) \le \LB_{\alpha}(\Omega; x_1,\ldots, x_{2N})^{2h}.
\end{align}

\subsubsection{Boundary Perturbation Property}

Multiple $\SLE$s have a boundary perturbation property analogous to Lemma~\ref{lem::sle_boundary_perturbation}.
To state it, we use the specific notation $\QQ_{\alpha}^{\#}(\Omega; x_1, \ldots, x_{2N})$ for the global $N$-$\SLE_\kappa$
probability measure associated to the link pattern $\alpha = \{\link{a_1}{b_1}, \ldots , \link{a_N}{b_N}\} \in \LP_N$ 
in the polygon $(\Omega; x_1, \ldots, x_{2N})$. 

\begin{proposition}\label{prop::multiplesle_boundary_perturbation}
Let $\kappa \in (0,4]$. Let $(\Omega; x_1, \ldots, x_{2N})$ be a polygon and $U \subset \Omega$ a sub-polygon. 
Then, the probability measure $\QQ_{\alpha}^{\#}(U; x_1, \ldots, x_{2N})$ is absolutely continuous with respect to 
$\QQ_{\alpha}^{\#}(\Omega; x_1, \ldots, x_{2N})$, with Radon-Nikodym derivative 
\begin{align*}
 \frac{\ud \QQ_{\alpha}^{\#}(U; x_1, \ldots, x_{2N})}{\ud \QQ_{\alpha}^{\#}(\Omega; x_1, \ldots, x_{2N})} (\eta_1, \ldots, \eta_N)
\, = \,
\frac{\PartF_{\alpha}(\Omega; x_1, \ldots, x_{2N})}{\PartF_{\alpha}(U; x_1, \ldots, x_{2N})} 
\; \one_{\{\eta_j\subset U \; \forall \; j\}} \exp \bigg(c\mu \Big(\Omega; \Omega\setminus U, \bigcup_{j=1}^N \eta_j \Big) \bigg).
\end{align*} 
Moreover, if $\kappa\le 8/3$ and $(\Omega; x_1, \ldots, x_{2N})$ is a nice polygon, then we have
\begin{align} \label{eqn::partitionfunction_mono}
\PartF_{\alpha}(\Omega; x_1, \ldots, x_{2N}) \geq \PartF_{\alpha}(U; x_1, \ldots, x_{2N}).
\end{align}
\end{proposition}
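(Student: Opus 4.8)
The plan is to express both $\QQ_\alpha^\#(U;x_1,\ldots,x_{2N})$ and $\QQ_\alpha^\#(\Omega;x_1,\ldots,x_{2N})$ through the product measures of independent chordal $\SLE_\kappa$'s and then chain Radon--Nikodym derivatives. Write $\PP_\alpha^U := \bigotimes_{j=1}^N \PP(U;x_{a_j},x_{b_j})$ and let $\QQ_\alpha(U;\ldots)$ be the (unnormalized) measure obtained by applying Proposition~\ref{prop::global_existence} to the polygon $(U;x_1,\ldots,x_{2N})$: it is absolutely continuous with respect to $\PP_\alpha^U$ with density $R_\alpha(U;\cdot)$ as in~\eqref{eqn::def_radon_alpha}, and has total mass $f_\alpha(U;\ldots)$ as in~\eqref{eqn::def_mixingpart}; similarly $\QQ_\alpha(\Omega;\ldots) = R_\alpha(\Omega;\cdot)\,\ud\PP_\alpha$ has mass $f_\alpha(\Omega;\ldots)$. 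Since under each product measure the $N$ curves are independent, applying Lemma~\ref{lem::sle_boundary_perturbation} to each curve separately gives
\[
\frac{\ud\PP_\alpha^U}{\ud\PP_\alpha}(\eta_1,\ldots,\eta_N)
= \prod_{j=1}^N\left(\frac{H_\Omega(x_{a_j},x_{b_j})}{H_U(x_{a_j},x_{b_j})}\right)^{h}
\one_{\{\eta_j\subset U\;\forall\,j\}}\,\exp\!\Big(c\sum_{j=1}^N \mu(\Omega;\eta_j,\Omega\setminus U)\Big) .
\]
In particular $\PP_\alpha^U \ll \PP_\alpha$, hence $\QQ_\alpha(U;\ldots)\ll\PP_\alpha$; and since $R_\alpha(U;\cdot)$ vanishes whenever two curves meet or some $\eta_j\not\subset U$, a short check shows $\QQ_\alpha(U;\ldots)\ll\QQ_\alpha(\Omega;\ldots)$.

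The next step rewrites the loop weight. On the event $\{\eta_j\subset U\;\forall\,j\}$, Lemma~\ref{lem::malpha_boundaryperturbation} applied with $K=\overline{\Omega\setminus U}$ yields
\[
m_\alpha(\Omega;\eta_1,\ldots,\eta_N)
= m_\alpha(U;\eta_1,\ldots,\eta_N) + \sum_{j=1}^N \mu(\Omega;\Omega\setminus U,\eta_j) - \mu\Big(\Omega;\Omega\setminus U,\bigcup_{j=1}^N\eta_j\Big) .
\]
Multiplying $R_\alpha(U;\cdot)=\one_{\{\eta_j\cap\eta_k=\emptyset\}}\exp(c\,m_\alpha(U;\cdot))$ by the first display, substituting this identity, and recognising $\one_{\{\eta_j\cap\eta_k=\emptyset\}}\exp(c\,m_\alpha(\Omega;\cdot))=R_\alpha(\Omega;\cdot)=\ud\QQ_\alpha(\Omega;\ldots)/\ud\PP_\alpha$, the terms $c\sum_j\mu(\Omega;\eta_j,\Omega\setminus U)$ cancel and one obtains
\[
\frac{\ud\QQ_\alpha(U;\ldots)}{\ud\QQ_\alpha(\Omega;\ldots)}(\eta_1,\ldots,\eta_N)
= \prod_{j=1}^N\left(\frac{H_\Omega(x_{a_j},x_{b_j})}{H_U(x_{a_j},x_{b_j})}\right)^{h}
\one_{\{\eta_j\subset U\;\forall\,j\}}\,\exp\!\Big(c\,\mu\big(\Omega;\Omega\setminus U,\bigcup_{j=1}^N\eta_j\big)\Big) .
\]
Normalizing by the total masses ($\QQ_\alpha^\#(\Omega;\ldots)=\QQ_\alpha(\Omega;\ldots)/f_\alpha(\Omega;\ldots)$, and likewise for $U$), the prefactor becomes $\frac{f_\alpha(\Omega;\ldots)}{f_\alpha(U;\ldots)}\prod_{j=1}^N\big(H_\Omega(x_{a_j},x_{b_j})/H_U(x_{a_j},x_{b_j})\big)^{h}$, which by~\eqref{eqn::purepartition_malpha_def_general} together with $\LB_\alpha(\Omega;\ldots)^{2h}=\prod_{j=1}^N H_\Omega(x_{a_j},x_{b_j})^{h}$ (and likewise for $U$) equals $\PartF_\alpha(\Omega;x_1,\ldots,x_{2N})/\PartF_\alpha(U;x_1,\ldots,x_{2N})$; this gives precisely the claimed Radon--Nikodym derivative (when $\Omega$ is not nice, this ratio is read through ratios of boundary Poisson kernels).

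For the monotonicity~\eqref{eqn::partitionfunction_mono}, assume $\kappa\le 8/3$ and $\Omega$, hence $U$, nice so that $\PartF_\alpha(\Omega;\ldots)$ and $\PartF_\alpha(U;\ldots)$ are defined individually. Integrating the above density against the probability measure $\QQ_\alpha^\#(\Omega;\ldots)$ and using that $\QQ_\alpha^\#(U;\ldots)$ is a probability measure gives
\[
\frac{\PartF_\alpha(U;x_1,\ldots,x_{2N})}{\PartF_\alpha(\Omega;x_1,\ldots,x_{2N})}
= \E_{\QQ_\alpha^\#(\Omega;\ldots)}\Big[\one_{\{\eta_j\subset U\;\forall\,j\}}\exp\big(c\,\mu(\Omega;\Omega\setminus U,\bigcup_{j=1}^N\eta_j)\big)\Big] .
\]
Since $\kappa\le 8/3$ forces $c=\frac{(3\kappa-8)(6-\kappa)}{2\kappa}\le 0$ while the Brownian loop mass $\mu(\Omega;\Omega\setminus U,\bigcup_{j=1}^N\eta_j)\ge0$, the integrand is bounded above by $1$; hence the right-hand side is $\le 1$, which is~\eqref{eqn::partitionfunction_mono}. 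The main obstacle I expect is the rigorous justification of the $m_\alpha$-identity when $\Omega\setminus U$ is not relatively compact in $\Omega$: then one must exhaust $\Omega\setminus U$ by an increasing sequence of relatively compact sets and pass to the limit in Lemma~\ref{lem::malpha_boundaryperturbation} via monotone convergence, using that chordal $\SLE_\kappa$ between two finite boundary points is almost surely a bounded curve which, on $\{\eta_j\subset U\;\forall\,j\}$, stays at positive distance from $\Omega\setminus U$, so that all Brownian loop measure quantities involved are finite. The remaining algebra combining the two boundary-perturbation lemmas is routine.
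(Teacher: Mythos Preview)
Your proof is correct and follows essentially the same route as the paper: both arguments combine Lemma~\ref{lem::sle_boundary_perturbation} applied curve-by-curve with Lemma~\ref{lem::malpha_boundaryperturbation} so that the terms $c\sum_j\mu(\Omega;\eta_j,\Omega\setminus U)$ cancel, then normalize and identify the Poisson-kernel ratio with $\PartF_\alpha(\Omega;\ldots)/\PartF_\alpha(U;\ldots)$; the monotonicity for $\kappa\le 8/3$ is handled identically. Your remark about the relative compactness hypothesis in Lemma~\ref{lem::malpha_boundaryperturbation} is a fair point that the paper's proof leaves implicit; the exhaustion/monotone-convergence fix you sketch is the natural way to handle it.
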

\begin{proof}
From the formula~\eqref{eqn::def_radon_alpha} and Lemma~\ref{lem::malpha_boundaryperturbation}, we see that 
\begin{align*}
 & \one_{\{\eta_j\subset U \; \forall \; j\}} \; \ud \QQ_{\alpha}(\Omega; x_1, \ldots, x_{2N}) \\
= & \; \one_{\{\eta_j\subset U \; \forall \; j\}}\one_{\{\eta_j\cap\eta_k=\emptyset \; \forall \; j\neq k\}} 
\; \exp(c m_{\alpha}(\Omega; \eta_1, \ldots, \eta_N)) \; \ud \PP_{\alpha}\\
= & \; \one_{\{\eta_j\subset U \; \forall \; j\}}\one_{\{\eta_j\cap\eta_k=\emptyset \; \forall \; j\neq k\}} 
\; \exp(c m_{\alpha}(U; \eta_1,\ldots, \eta_N))
\; \exp \bigg( \hspace{-1mm} -c \mu \Big(\Omega; \Omega\setminus U, \bigcup_{j=1}^N \eta_j \Big) \bigg)\\
& \times \prod_{j=1}^N\exp\left(c\mu(\Omega; \Omega\setminus U, \eta_j)\right) \; \ud \PP(\Omega; x_{a_j}, x_{b_j})
\end{align*}
By Lemma \ref{lem::sle_boundary_perturbation}, we have 
\begin{align*}
& \one_{\{\eta_j\subset U \; \forall \; j\}} \; \ud \QQ_{\alpha}(\Omega; x_1, \ldots, x_{2N}) \\
= & \; \one_{\{\eta_j\cap\eta_k=\emptyset \; \forall \; j\neq k\}} \; \exp(c m_{\alpha}(U; \eta_1,\ldots, \eta_N))
\; \exp \bigg(\hspace{-1mm} -c \mu \Big(\Omega; \Omega\setminus U, \bigcup_{j=1}^N \eta_j \Big) \bigg)\\
& \times \prod_{j=1}^N\left(\frac{H_{U}(x_{a_j}, x_{b_j})}{H_{\Omega}(x_{a_j}, x_{b_j})}\right)^h \; \ud \PP(U; x_{a_j}, x_{b_j})  \\
= & \; \exp \bigg(\hspace{-1mm} -c \mu \Big(\Omega; \Omega\setminus U, \bigcup_{j=1}^N \eta_j \Big) \bigg) 
\prod_{j=1}^N\left(\frac{H_{U}(x_{a_j}, x_{b_j})}{H_{\Omega}(x_{a_j}, x_{b_j})}\right)^h \; \ud \QQ_{\alpha}(U; x_1, \ldots, x_{2N}).
\end{align*}
Combining this with the definition~\eqref{eqn::purepartition_malpha_def_general},
we obtain the asserted Radon-Nikodym derivative.
The monotonicity property~\eqref{eqn::partitionfunction_mono} follows from the fact that
when $\kappa\le 8/3$, we have $c\le 0$ and thus, 
\begin{align*}
1 \ge\PP[\eta_j\subset U \; \text{ for all } j] \ge 
\frac{\PartF_{\alpha}(U; x_1, \ldots, x_{2N})}{\PartF_{\alpha}(\Omega; x_1, \ldots, x_{2N})}.
\end{align*}
This concludes the proof.
\end{proof}

\subsubsection{Marginal Law}
\label{subsubsec::globalmultiple_marginal}

Next we prove a cascade property for the measure $\QQ_{\alpha}^{\#}$.
Given any link $\link{a}{b} \in \alpha$, let $\eta$ be the curve connecting $x_{a}$ and $x_{b}$ in 
the global $N$-$\SLE_{\kappa}$ with law $\QQ_{\alpha}^{\#}$, as in Theorem~\ref{thm::global_existence}.
Assume that $a < b$ for notational simplicity.
Then, the link $\link{a}{b}$ divides the link pattern $\alpha$ into two sub-link patterns, connecting respectively the points
$\{a+1, \ldots, b-1\}$ and $\{b+1, \ldots, a-1\}$. After relabeling of the indices, we denote these two link patterns by $\alpha^R$ 
and $\alpha^L$. 
Also, the domain $\Omega\setminus\eta$ has two connected components, which we denote by $D_{\eta}^L$ and $D_{\eta}^R$. 
The notations are illustrated in Figure~\ref{fig::malpha_cascade}.

\begin{proposition} \label{prop::GeneralCascadeProp}
The marginal law of $\eta$ under $\QQ_{\alpha}^{\#}$ is absolutely continuous with respect to 
the law $\PP(\Omega; x_{a}, x_{b})$ of the $\SLE_\kappa$ connecting $x_{a}$ and $x_{b}$, 
with Radon-Nikodym derivative 
\begin{align*}
\frac{H_{\Omega}(x_{a}, x_{b})^h}{\PartF_{\alpha}(\Omega; x_1, \ldots, x_{2N})} \; 
\PartF_{\alpha^L}(D_{\eta}^L; x_{b+1}, x_{b+2}, \ldots, x_{a-1}) \; \PartF_{\alpha^R}(D_{\eta}^R; x_{a+1}, x_{a+2}, \ldots, x_{b-1}).
\end{align*}
\end{proposition}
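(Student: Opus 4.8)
The plan is to obtain the marginal law of $\eta := \eta_j$, the curve of the link $\link{a}{b} = \link{a_j}{b_j}$ (with $a<b$), by integrating out the other $N-1$ curves. Recall that $\QQ_{\alpha}^{\#} = \QQ_\alpha / f_\alpha(\Omega)$, where $\QQ_\alpha = R_\alpha \cdot \PP_\alpha$ with $R_\alpha$ as in~\eqref{eqn::def_radon_alpha}, and that the curves are independent under the product measure $\PP_\alpha = \bigotimes_{k}\PP(\Omega; x_{a_k}, x_{b_k})$. By Fubini's theorem, the marginal law of $\eta_j$ under $\QQ_\alpha^\#$ is absolutely continuous with respect to $\PP(\Omega; x_{a}, x_{b})$ with density $g(\eta)/f_\alpha(\Omega)$, where
\[ g(\eta) \, := \, \E_{\bigotimes_{k \neq j}\PP(\Omega; x_{a_k}, x_{b_k})}\big[ R_\alpha(\Omega; \eta_1, \ldots, \eta_N) \,\big|\, \eta_j = \eta \big] , \]
so it remains to evaluate $g(\eta)$.

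The curve $\eta$ splits $\Omega$ into $D_\eta^L$ and $D_\eta^R$, and the remaining curves split into families $\eta^L = (\eta_k)_{k \in L}$ and $\eta^R = (\eta_k)_{k \in R}$ realizing the sub-link patterns $\alpha^L$ and $\alpha^R$. The crucial step is the decomposition of $m_\alpha$ along $\eta$: on the event $\{\eta_i \cap \eta_{i'} = \emptyset\ \forall\, i \neq i'\}$ --- which forces each $\eta_k$, $k \in L$, into $D_\eta^L$ and each $\eta_k$, $k\in R$, into $D_\eta^R$ --- one has
\[ m_\alpha(\Omega; \eta_1, \ldots, \eta_N) \, = \, m_{\alpha^L}(D_\eta^L; \eta^L) + m_{\alpha^R}(D_\eta^R; \eta^R) + \sum_{k \in L}\mu(\Omega; \eta_k, \Omega \setminus D_\eta^L) + \sum_{k \in R}\mu(\Omega; \eta_k, \Omega \setminus D_\eta^R) . \]
I would prove this by Brownian loop bookkeeping. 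A loop disjoint from $\eta$ lies entirely in $D_\eta^L$ or in $D_\eta^R$, and since the connected components of $\Omega \setminus \{\eta_1, \ldots, \eta_N\}$ contained in $D_\eta^L$ are exactly those of $D_\eta^L \setminus \eta^L$, such a loop contributes the same amount to $m_\alpha(\Omega; \eta_1, \ldots, \eta_N)$ as to $m_{\alpha^L}(D_\eta^L; \eta^L)$ (and similarly on the right). A loop $\gamma$ meeting $\eta$ and exactly $p$ of the curves $\eta_k$ with $k \neq j$ contributes $p$ to $m_\alpha(\Omega; \eta_1, \ldots, \eta_N)$ --- here one uses the topological fact that a connected loop meeting a family of disjoint simple curves with endpoints on $\partial\Omega$ meets a sub-family that is connected in the dual tree of complementary components, together with the inclusion--exclusion defining $m_\alpha$ --- and likewise contributes $p$ to the last two sums, since a loop meeting $\eta_k \subset D_\eta^L$ and $\Omega\setminus D_\eta^L$ necessarily crosses $\eta$. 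Equivalently, $m_\alpha(\Omega; \eta_1, \ldots, \eta_N)$ equals the $\mu$-integral over Brownian loops $\gamma$ in $\Omega$ of $\max(\#\{k \colon \gamma \cap \eta_k \neq \emptyset\} - 1,\, 0)$, a quantity independent of $\alpha$ from which the decomposition is immediate; alternatively, the identity can be extracted by iterating the cascade property of Lemma~\ref{lem::malpha_cascade}. This loop analysis is the main obstacle of the proof.

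Granting the decomposition, freeze $\eta_j = \eta$: then $R_\alpha(\Omega; \eta_1, \ldots, \eta_N) = \one_{\{\text{all }\eta_i\text{ disjoint}\}}\exp(c\, m_\alpha)$ factorizes as a function of $\eta^L$ times a function of $\eta^R$, and, $\eta^L$ and $\eta^R$ being independent under $\bigotimes_{k \neq j}\PP(\Omega; x_{a_k}, x_{b_k})$, we get $g(\eta) = g^L(\eta)\, g^R(\eta)$ with
\[ g^L(\eta) \, = \, \E_{\bigotimes_{k\in L}\PP(\Omega; x_{a_k},x_{b_k})}\Big[ \one_{\{\eta^L\text{ pairwise disjoint},\ \eta_k \subset D_\eta^L\ \forall k \in L\}}\, e^{c\, m_{\alpha^L}(D_\eta^L; \eta^L)}\prod_{k\in L}e^{c\,\mu(\Omega;\eta_k,\Omega\setminus D_\eta^L)}\Big] , \]
and analogously for $g^R$. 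Applying the boundary perturbation property of the chordal $\SLE_\kappa$ (Lemma~\ref{lem::sle_boundary_perturbation}) to each factor $\PP(\Omega;x_{a_k},x_{b_k})$, $k \in L$, the weight $\one_{\{\eta_k\subset D_\eta^L\}}e^{c\,\mu(\Omega;\eta_k,\Omega\setminus D_\eta^L)}$ turns $\PP(\Omega;x_{a_k},x_{b_k})$ into $(H_{D_\eta^L}(x_{a_k},x_{b_k})/H_\Omega(x_{a_k},x_{b_k}))^h\,\PP(D_\eta^L;x_{a_k},x_{b_k})$; recalling the definition~\eqref{eqn::def_mixingpart} of $f_{\alpha^L}$ this yields
\[ g^L(\eta) \, = \, \prod_{\link{c}{d}\in\alpha^L}\Big(\frac{H_{D_\eta^L}(x_c,x_d)}{H_\Omega(x_c,x_d)}\Big)^h\, f_{\alpha^L}(D_\eta^L; x_{b+1}, \ldots, x_{a-1}) , \]
and symmetrically for $g^R$ (in particular, conditionally on $\eta_j = \eta$ the families $\eta^L$ and $\eta^R$ are independent, with laws $\QQ_{\alpha^L}^\#(D_\eta^L; x_{b+1}, \ldots, x_{a-1})$ and $\QQ_{\alpha^R}^\#(D_\eta^R; x_{a+1}, \ldots, x_{b-1})$). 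Substituting into $g(\eta)/f_\alpha(\Omega)$, using $\PartF_\beta(\Omega';\,\cdot\,) = f_\beta(\Omega';\,\cdot\,)\prod_{\link{c}{d}\in\beta}H_{\Omega'}(x_c,x_d)^h$ from~\eqref{eqn::purepartition_malpha_def_general}, and noting that the links of $\alpha$ are precisely $\link{a}{b}$ together with the links of $\alpha^L$ and of $\alpha^R$, all the Poisson kernels in $\Omega$ cancel and one is left with
\[ \frac{g(\eta)}{f_\alpha(\Omega)} \, = \, \frac{H_\Omega(x_a,x_b)^h}{\PartF_\alpha(\Omega; x_1, \ldots, x_{2N})}\, \PartF_{\alpha^L}(D_\eta^L; x_{b+1}, \ldots, x_{a-1})\, \PartF_{\alpha^R}(D_\eta^R; x_{a+1}, \ldots, x_{b-1}) , \]
which is the asserted density. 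For a general polygon one keeps track throughout only of the conformally invariant functions $f_\beta$ and of ratios of boundary Poisson kernels, or reduces to $\Omega = \HH$ by conformal covariance.
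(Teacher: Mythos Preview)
Your argument is correct and follows the same overall route as the paper: establish the decomposition
\[
m_\alpha(\Omega;\eta_1,\ldots,\eta_N)=m_{\alpha^L}(D_\eta^L;\eta^L)+m_{\alpha^R}(D_\eta^R;\eta^R)+\sum_{\eta'\neq\eta}\mu(\Omega;\eta,\eta')
\]
(your form with $\mu(\Omega;\eta_k,\Omega\setminus D_\eta^{L/R})$ is the same thing, written so that Lemma~\ref{lem::sle_boundary_perturbation} applies directly), then factorize $R_\alpha$, apply boundary perturbation curve by curve, and read off $f_{\alpha^L}\,f_{\alpha^R}$ and hence the stated density.

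The one difference worth noting is how the decomposition is justified. The paper proves it by induction on $N$, peeling off one curve at a time via Lemma~\ref{lem::malpha_cascade}. You instead argue directly loop by loop, using that in the dual tree of complementary components the curves met by a connected loop form a subtree, and that the inclusion--exclusion defining $m(\mathcal{C})$ makes a loop meeting $q$ of the boundary curves of $\mathcal{C}$ contribute $(q-1)_+$; summing over components then gives your formula $m_\alpha=\int(p-1)_+\,\mathrm{d}\mu$, from which the decomposition is immediate. This is a valid and rather illuminating alternative (in particular it makes transparent that $m_\alpha$ does not actually depend on $\alpha$, only on the unordered set of curves), though the paper's induction is shorter given that Lemma~\ref{lem::malpha_cascade} is already available.
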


\begin{proof}
Note that the points $x_{b+1}, \ldots, x_{a-1}$ (resp.~$x_{a+1}, \ldots, x_{b-1}$)
lie along the boundary of $D_{\eta}^L$ (resp.~$D_{\eta}^R$) in counterclockwise order.
Denote by $(\eta_1, \ldots, \eta_N)\in X_0^{\alpha}(\Omega; x_1, \ldots, x_{2N})$ the global  $N$-$\SLE_{\kappa}$
with law $\QQ_{\alpha}^{\#}$. Amongst the curves other than $\eta$, we denote by $\eta_1^L, \ldots, \eta_l^L$ the 
ones contained in $D_{\eta}^L$ and by $\eta^R_1, \ldots, \eta^R_r$ the ones contained in $D_{\eta}^R$ (so $l+r=N-1$). 

First, we prove by induction on $N \ge 1$ that
\begin{align}\label{eqn::malpha_decomposition}
m_{\alpha}(\Omega; \eta_1, \ldots, \eta_N) = m_{\alpha^L}(D_{\eta}^L; \eta_1^L, \ldots, \eta_l^L) 
+ m_{\alpha^R}(D_{\eta}^R; \eta_1^R, \ldots, \eta^R_r) + \sum_{\eta' \neq \eta}\mu(\Omega; \eta, \eta'). 
\end{align}
Equation~\eqref{eqn::malpha_decomposition} trivially holds for $N=1$, since 
$m_{\emptyset} = 0 = m_{\vcenter{\hbox{\includegraphics[scale=0.2]{figures/link-0.pdf}}}}$. 
By symmetry, we may assume that $\link{a}{b} \neq \link{2N-1}{2N} \in \alpha \, \cap \, \alpha^L$. 
Then, we let $\eta_1 = \eta_1^L \subset D_{\eta}^L$ be the curve connecting $x_{2N-1}$ and $x_{2N}$,
denote $\hat{\alpha} = \alpha\removeLink \link{2N-1}{2N}$, and define $\hat{\alpha}^L$ and $\hat{\alpha}^R$
similarly as above --- so $\alpha^L = \hat{\alpha}^L \cup \{\link{2N-1}{2N}\}$ and $\hat{\alpha}^R = \alpha^R$. 
Applying Lemma~\ref{lem::malpha_cascade} and the induction hypothesis, we~get
\begin{align*}
m_{\alpha}(\Omega; \eta_1, \ldots, \eta_N) 
& = m_{\hat{\alpha}}(\Omega; \eta_2, \ldots, \eta_N) + \mu(\Omega; \eta_1, \Omega\setminus\hat{\Omega}_1) 
\\
& = m_{\hat{\alpha}^L}(D_{\eta}^L; \eta^L_2, \ldots, \eta^L_l) + m_{\alpha^R}(D_{\eta}^R; \eta_1^R, \ldots, \eta_r^R)
 + \sum_{\eta' \neq \eta, \eta_1}\mu(\Omega; \eta, \eta') + \mu(\Omega; \eta_1, \Omega\setminus\hat{\Omega}_1).
\end{align*}
Combining this with the decomposition 
$\mu(\Omega; \eta_1, \Omega\setminus\hat{\Omega}_1) = \mu(D_{\eta}^L; \eta_1, D_{\eta}^L\setminus\hat{\Omega}_1)+\mu(\Omega; \eta_1, \eta)$,
we obtain
\begin{align*}
m_{\alpha}(\Omega; \eta_1, \ldots, \eta_N) 
& = m_{\hat{\alpha}^L}(D_{\eta}^L; \eta^L_2, \ldots, \eta^L_l)+\mu(D_{\eta}^L; \eta_1, D_{\eta}^L\setminus\hat{\Omega}_1) 
+ m_{\alpha^R}(D_{\eta}^R; \eta_1^R, \ldots, \eta_r^R) + \sum_{\eta' \neq \eta}\mu(\Omega; \eta, \eta') \\
& = m_{\alpha^L}(D_{\eta}^L; \eta_1^L, \ldots, \eta^L_l) 
+ m_{\alpha^R}(D_{\eta}^R; \eta_1^R, \ldots, \eta_r^R)+\sum_{\eta' \neq \eta}\mu(\Omega; \eta, \eta') ,
\end{align*}
by Lemma~\ref{lem::malpha_cascade}. This completes the proof of the identity~\eqref{eqn::malpha_decomposition}. 

Next, we prove the proposition. From~\eqref{eqn::def_radon_alpha}, we see that
\begin{align*}
\ud \QQ_{\alpha} 
= & \;\one_{\{\eta_i\cap\eta_k=\emptyset \; \forall \; i\neq k\}} 
\; \exp( c m_{\alpha}(\Omega; \eta_1, \ldots, \eta_N)) \prod_{\link{c}{d} \in \alpha} \; \ud \PP(\Omega; x_{c}, x_{d})\\
= & \;\one_{\{\eta_i\cap\eta_k=\emptyset \; \forall \; i\neq k\}} 
\; \exp\left(c m_{\alpha^L}(D_{\eta}^L; \eta_1^L, \ldots, \eta^L_l)\right) 
\; \exp\left(c m_{\alpha^R}(D_{\eta}^R; \eta^R_1, \ldots, \eta_r^R)\right) \\
& \times \prod_{\eta' \neq \eta}\exp(c\mu(\Omega; \eta, \eta')) 
\prod_{\substack{\link{c}{d} \in \alpha , \\ \link{c}{d} \neq \link{a}{b}}} \; \ud \PP(\Omega; x_{c}, x_{d}) \times \ud \PP(\Omega; x_{a}, x_{b}) 
&& \text{[by \eqref{eqn::malpha_decomposition}]} \\
= & \;\one_{\{\eta_i\cap\eta_k=\emptyset \; \forall \; i\neq k\}} 
\;  \ud \PP(\Omega; x_{a}, x_{b})
&& \text{[by Lemma~\ref{lem::sle_boundary_perturbation}]} \\
& \times \exp\left(c m_{\alpha^L}(D_{\eta}^L; \eta_1^L, \ldots, \eta^L_l)\right) \; 
\prod_{\link{c}{d} \in \alpha^L} \left(\frac{H_{D_{\eta}^L}(x_{c}, x_{d})}{H_{\Omega}(x_{c}, x_{d})}\right)^h \;  \ud \PP(D_{\eta}^L; x_{c}, x_{d})\\
& \times \exp\left(c m_{\alpha^R}(D_{\eta}^R; \eta^R_1, \ldots, \eta_r^R)\right)
\; \prod_{\link{c}{d} \in \alpha^R}\left(\frac{H_{D_{\eta}^R}(x_{c}, x_{d})}{H_{\Omega}(x_{c}, x_{d})}\right)^h\;  \ud \PP(D_{\eta}^R; x_{c}, x_{d}).
\end{align*}
By definitions~\eqref{eqn::def_mixingpart},~\eqref{eqn::purepartition_alpha_def}, and~\eqref{eqn::purepartition_malpha_def_general},
this implies that the law of $\eta$ under $\QQ_{\alpha}^{\#} = \QQ_{\alpha} / f_{\alpha}$
is absolutely continuous with respect to $\PP(\Omega; x_{a}, x_{b})$, and the Radon-Nikodym derivative has the asserted form. 
\end{proof}

\begin{corollary}\label{cor::globalsle_weighted}
Let $\alpha \in \LP_N$ and $j\in\{1, \ldots, 2N-1\}$ such that $\link{j}{j+1} \in \alpha$,
and denote by $\hat{\alpha}=\alpha \removeLink \link{j}{j+1} \in \LP_{N-1}$. 
Let $\eta_j$ be the curve connecting $x_j$ and $x_{j+1}$ in the global $N$-$\SLE_{\kappa}$ with law $\QQ_{\alpha}^{\#}$.
Denote by $D_j$ the connected component of $\Omega\setminus \eta_j$ having $x_1, \ldots, x_{j-1}, x_{j+2}, \ldots, x_{2N}$ on its boundary. 
Then, the marginal law of $\eta_j$ under $\QQ_{\alpha}^{\#}$
is absolutely continuous with respect to
the law $\PP(\Omega; x_j, x_{j+1})$ of the $\SLE_\kappa$ connecting $x_{j}$ and $x_{j+1}$, 
with Radon-Nikodym derivative 
\begin{align*}
\frac{H_{\Omega}(x_j, x_{j+1})^h}{\PartF_{\alpha}(\Omega; x_1, \ldots, x_{2N})} \; \PartF_{\hat{\alpha}}(D_j; x_1, \ldots, x_{j-1}, x_{j+2}, \ldots, x_{2N}).
\end{align*}
\end{corollary}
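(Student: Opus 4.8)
The plan is to obtain Corollary~\ref{cor::globalsle_weighted} as the special case of Proposition~\ref{prop::GeneralCascadeProp} in which the distinguished link is $\link{j}{j+1}$, i.e.\ $a = j$ and $b = j+1$. First I would note that the hypothesis $a < b$ of Proposition~\ref{prop::GeneralCascadeProp} holds trivially here. Next I would identify, in this situation, the two sub-link patterns $\alpha^L, \alpha^R$ and the two components $D_{\eta}^L, D_{\eta}^R$ of $\Omega \setminus \eta_j$ that appear in that proposition: since $j$ and $j+1$ are consecutive, no marked point lies strictly between $x_j$ and $x_{j+1}$ on the relevant boundary arc, so the sub-link pattern $\alpha^R$ connecting the points $\{a+1, \ldots, b-1\}$ is the empty link pattern $\emptyset \in \LP_0$, and $D_{\eta}^R$ is precisely the component of $\Omega \setminus \eta_j$ carrying none of the marked points. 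Consequently $\alpha^L = \alpha \removeLink \link{j}{j+1} = \hat{\alpha} \in \LP_{N-1}$ (after the relabeling of indices) and $D_{\eta}^L = D_j$, the component carrying $x_1, \ldots, x_{j-1}, x_{j+2}, \ldots, x_{2N}$ on its boundary. Since $\eta_j$ meets $\partial \Omega$ only at $x_j$ and $x_{j+1}$, the boundary of $D_j$ near each remaining $x_i$ coincides with that of $\Omega$, so $(D_j; x_1, \ldots, x_{j-1}, x_{j+2}, \ldots, x_{2N})$ is again a (nice) polygon and $\PartF_{\hat{\alpha}}(D_j; \cdot)$ is well defined via~\eqref{eq: multiple SLE conformal covariance}.

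Then I would substitute these identifications into the Radon--Nikodym derivative supplied by Proposition~\ref{prop::GeneralCascadeProp}. Its prefactor becomes $H_{\Omega}(x_j, x_{j+1})^h / \PartF_{\alpha}(\Omega; x_1, \ldots, x_{2N})$; the factor $\PartF_{\alpha^L}(D_{\eta}^L; x_{b+1}, \ldots, x_{a-1})$ becomes $\PartF_{\hat{\alpha}}(D_j; x_1, \ldots, x_{j-1}, x_{j+2}, \ldots, x_{2N})$, where I would remark that the cyclic list $x_{b+1}, \ldots, x_{a-1} = x_{j+2}, \ldots, x_{j-1}$ used in Proposition~\ref{prop::GeneralCascadeProp} is exactly $x_1, \ldots, x_{j-1}, x_{j+2}, \ldots, x_{2N}$ read in counterclockwise order along $\partial D_j$, so the two expressions agree by the conformal covariance definition~\eqref{eq: multiple SLE conformal covariance} of $\PartF_{\hat{\alpha}}$ (and the matching relabeling convention for $\hat{\alpha}$); and the remaining factor $\PartF_{\alpha^R}(D_{\eta}^R; \,) = \PartF_{\emptyset} = 1$ by the normalization fixed in~\eqref{eqn::purepartition_alpha_def}. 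Multiplying the three factors yields exactly the Radon--Nikodym derivative claimed in the corollary.

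I do not expect a genuine obstacle here: the argument is a bookkeeping specialization of Proposition~\ref{prop::GeneralCascadeProp}. The only two points that warrant a line of justification are (i) that one of the two components of $\Omega \setminus \eta_j$ carries no marked point — hence $\alpha^R = \emptyset$ — which is immediate from $j$ and $j+1$ being consecutive, and (ii) the consistency of the cyclic orderings of the marked points between the statement of Proposition~\ref{prop::GeneralCascadeProp} and that of the corollary, which is absorbed into the conformal covariance property~\eqref{eq: multiple SLE conformal covariance} and the definition of the relabeled link pattern $\hat{\alpha}$.
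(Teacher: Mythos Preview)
Your proposal is correct and is exactly the intended route: the paper states the corollary immediately after Proposition~\ref{prop::GeneralCascadeProp} without a separate proof, precisely because it is the specialization $a=j$, $b=j+1$, for which $\alpha^R=\emptyset$, $\PartF_{\emptyset}=1$, and $(\alpha^L,D_\eta^L)=(\hat{\alpha},D_j)$.
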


\section{Pure Partition Functions for Multiple SLEs}
\label{sec::purepartition_existence}
In this section, we prove Theorem~\ref{thm::purepartition_existence}, which says that 
the pure partition functions of multiple $\SLE$s are smooth, positive, and (essentially) unique. 
Corollary~\ref{cor::localmultiplesle} in Section~\ref{subsec::global_vs_local} relates them to certain extremal 
multiple $\SLE$ measures, thus verifying a conjecture from~\cite{KytolaMultipleSLE, KytolaPeoltolaPurePartitionSLE}. 
In Section~\ref{subsec::global_vs_local}, we also complete the proof of Theorem~\ref{thm::global_existence}, 
by proving in Lemma~\ref{lem::global_is_local}
that the local and global $\SLE_\kappa$ associated to $\alpha$ agree.

\subsection{Pure Partition Functions: Proof of Theorem~\ref{thm::purepartition_existence}}
\label{subsec::purepfexistence_proof}
We prove Theorem~\ref{thm::purepartition_existence} by a succession of lemmas 
establishing the 
asserted properties of the pure partition functions $\PartF_{\alpha}$ defined in~\eqref{eqn::purepartition_alpha_def}.
From the Brownian loop measure construction, it is difficult to show directly that the partition function $\PartF_{\alpha}$
is a solution to the system $\mathrm{(PDE)}$~\eqref{eq: multiple SLE PDEs}, because it is not clear 
why $\PartF_{\alpha}$ should be twice continuously differentiable. 
To this end, we use the hypoellipticity of the PDEs~\eqref{eq: multiple SLE PDEs} from 
Proposition~\ref{prop::smoothness}. With hypoellipticity, it suffices to
prove that $\PartF_{\alpha}$ is a distributional solution to $\mathrm{(PDE)}$~\eqref{eq: multiple SLE PDEs},
which we establish in Lemma~\ref{lem::partitionfunction_pde} by constructing a martingale from the conditional expectation 
of the Radon-Nikodym derivative~\eqref{eqn::def_radon_alpha}.  

\begin{lemma}\label{lem::partitionfunction_positive}
The function $\PartF_{\alpha}$ defined in~\eqref{eqn::purepartition_alpha_def} 
satisfies the bound~\eqref{eqn::partitionfunction_positive}.
\end{lemma}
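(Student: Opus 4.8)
The plan is to unfold the definition~\eqref{eqn::purepartition_alpha_def} of $\PartF_{\alpha}$ and invoke the two-sided bound on $f_{\alpha}$ already obtained in the proof of Proposition~\ref{prop::global_existence}. Indeed, by~\eqref{eqn::purepartition_alpha_def},
\[ \PartF_{\alpha}(x_1, \ldots, x_{2N}) = f_{\alpha}(\HH; x_1, \ldots, x_{2N}) \prod_{\link{a}{b} \in \alpha} |x_{b} - x_{a}|^{-2h} , \]
where $f_{\alpha}(\HH; \cdot) = |\QQ_{\alpha}| = \E_{\alpha}[R_{\alpha}(\HH; \eta_1, \ldots, \eta_N)]$ as in~\eqref{eqn::def_mixingpart}. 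Hence it suffices to check that $0 < f_{\alpha} \le 1$, which is precisely the estimate~\eqref{eq: bound for f alpha}.

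For completeness, I recall the two halves. The upper bound $f_{\alpha} \le 1$ is the one established by induction on $N$ in Proposition~\ref{prop::global_existence}, using the cascade property of Lemma~\ref{lem::malpha_cascade}, the boundary perturbation property of Lemma~\ref{lem::sle_boundary_perturbation}, and the monotonicity~\eqref{eqn::poissonkernel_mono} of the boundary Poisson kernel. For strict positivity, one observes that $R_{\alpha}(\HH; \eta_1, \ldots, \eta_N) = \one_{\{\eta_j \cap \eta_k = \emptyset \; \forall \; j \neq k\}} \exp(c \, m_{\alpha}) > 0$ on the event that the curves are pairwise disjoint, so $f_{\alpha} > 0$ as soon as $\PP_{\alpha}[\eta_j \cap \eta_k = \emptyset \; \forall \; j \neq k] > 0$. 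This holds since $\kappa \le 4$: the set $X_0^{\alpha}(\HH; x_1, \ldots, x_{2N})$ is nonempty, so one may fix a reference configuration in it, enclose its curves in pairwise disjoint tubes agreeing with $\HH$ near the marked points, and use that chordal $\SLE_{\kappa}$ puts positive mass on the event that a curve stays inside a prescribed tube (by the small-ball property of the driving Brownian motion and continuity of the Loewner flow in the driving function); independence of the $\eta_j$ under $\PP_{\alpha}$ then gives the claim. Combining the two bounds yields~\eqref{eqn::partitionfunction_positive}.

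I do not expect a genuine obstacle here: the content of the statement is the finiteness bound $f_{\alpha} \le 1$, which is already in hand from Proposition~\ref{prop::global_existence}, while positivity is a soft consequence of the support properties of chordal $\SLE_{\kappa}$ for $\kappa \le 4$. The statement is thus essentially a repackaging of~\eqref{eq: bound for f alpha} through the identity $\PartF_{\alpha} = f_{\alpha} \cdot \LB_{\alpha}^{2h}$.
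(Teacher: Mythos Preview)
Your proposal is correct and follows essentially the same route as the paper: the paper's proof simply cites~\eqref{eqn::partf_alpha_upper}, which in turn reduces to the bound $0<f_{\alpha}\le 1$ from~\eqref{eq: bound for f alpha}, exactly as you do. Your added justification for strict positivity (the tube argument) is more detailed than the paper, which in Proposition~\ref{prop::global_existence} simply declares positivity ``clear from the definition~\eqref{eqn::def_radon_alpha}''.
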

\begin{proof}
This follows from~\eqref{eqn::partf_alpha_upper}, which in turn follows from~\eqref{eq: bound for f alpha}.
\end{proof}

\begin{lemma}\label{lem::partitionfunction_cov}
The function $\PartF_{\alpha}$ defined in~\eqref{eqn::purepartition_alpha_def} 
satisfies the M\"obius covariance $\mathrm{(COV)}$~\eqref{eq: multiple SLE Mobius covariance}. 
\end{lemma}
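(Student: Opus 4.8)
The plan is to read off the transformation behaviour of $\PartF_\alpha$ directly from its definition~\eqref{eqn::purepartition_alpha_def} as a product of two factors, $\PartF_\alpha = f_\alpha(\HH;\,\cdot\,)\,\LB_\alpha(\,\cdot\,)^{2h}$, and to track how each factor changes under a M\"obius map $\varphi$ of $\HH$ with $\varphi(x_1)<\cdots<\varphi(x_{2N})$. The first factor is the total mass $f_\alpha = |\QQ_\alpha|$ of the global $N$-$\SLE_\kappa$ measure, which was already noted in the proof of Proposition~\ref{prop::global_existence} (see~\eqref{eqn::def_mixingpart}) to be conformally invariant; since $\varphi$ is a conformal automorphism of $\HH$, this immediately yields $f_\alpha(\HH;x_1,\ldots,x_{2N}) = f_\alpha(\HH;\varphi(x_1),\ldots,\varphi(x_{2N}))$. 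So the whole content of the lemma reduces to checking how the explicit bound factor $\LB_\alpha(x_1,\ldots,x_{2N})^{2h} = \prod_{\link{a}{b}\in\alpha}|x_b-x_a|^{-2h}$ behaves.

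For that I would invoke the elementary M\"obius identity $|\varphi(x_b)-\varphi(x_a)| = |\varphi'(x_a)|^{1/2}\,|\varphi'(x_b)|^{1/2}\,|x_b-x_a|$, valid for any M\"obius transformation; equivalently, this is the conformal covariance~\eqref{eqn::poisson_cov} of the boundary Poisson kernel combined with the explicit formula~\eqref{eqn::poisson_upperhalfplane} for $H_\HH$. Raising to the power $-2h$ and multiplying over the $N$ links of $\alpha$, and using the key bookkeeping point that every index $i\in\{1,\ldots,2N\}$ belongs to exactly one link of the pair partition $\alpha$, the derivative factors collect into $\prod_{i=1}^{2N}|\varphi'(x_i)|^{h}$, giving $\LB_\alpha(x_1,\ldots,x_{2N})^{2h} = \prod_{i=1}^{2N}|\varphi'(x_i)|^{h}\,\LB_\alpha(\varphi(x_1),\ldots,\varphi(x_{2N}))^{2h}$. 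Multiplying this with the conformal invariance of $f_\alpha$ and recalling that $\varphi'>0$ on $\R$ for such $\varphi$ produces exactly~$\mathrm{(COV)}$~\eqref{eq: multiple SLE Mobius covariance}.

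There is no serious obstacle here: the only substantive input, the conformal invariance of $f_\alpha$, has already been established in Section~\ref{subsec::multiplesle_existence}, and the remainder is the standard M\"obius derivative identity plus the observation about links. An alternative, essentially equivalent, way to organize the argument would be to phrase everything in the general-domain notation~\eqref{eqn::purepartition_malpha_def_general}: there $\LB_\alpha(\Omega;\,\cdot\,) = \prod_{\link{a}{b}\in\alpha} H_\Omega(x_a,x_b)^{1/2}$, and the conformal covariance of $H_\Omega$ together with the conformal invariance of $f_\alpha$ makes the covariance of $\PartF_\alpha$ (in the form~\eqref{eq: multiple SLE conformal covariance}, hence in particular~$\mathrm{(COV)}$) manifest; I would then simply remark that restricting to $\Omega=\HH$ and $\varphi$ a M\"obius automorphism recovers the stated identity.
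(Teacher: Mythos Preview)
Your proposal is correct and follows essentially the same approach as the paper: the paper's proof simply notes that $f_\alpha(\HH;\,\cdot\,)$ is M\"obius invariant by its definition~\eqref{eqn::def_radon_alpha}, and then combines this with the conformal covariance~\eqref{eqn::poisson_cov} of the boundary Poisson kernel to conclude. Your write-up is a more detailed unpacking of exactly these two ingredients.
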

\begin{proof}
The function $f_{\alpha}(\HH; x_1, \ldots, x_{2N})$ is M\"obius invariant by~\eqref{eqn::def_radon_alpha}.
Combining with the conformal covariance~\eqref{eqn::poisson_cov} of the boundary Poisson kernel, 
we see that $\PartF_{\alpha}$ satisfies the M\"obius covariance 
$\mathrm{(COV)}$~\eqref{eq: multiple SLE Mobius covariance}. 
\end{proof}

\begin{lemma}\label{lem::partitionfunction_asy}
The function $\PartF_{\alpha}$ defined in~\eqref{eqn::purepartition_alpha_def} 
satisfies the following 
asymptotics: for all $\alpha \in \LP_N$ and for all $j \in \{1, \ldots, 2N-1 \}$ and 
$x_1 < \cdots < x_{j-1} < \xi < x_{j+2} < \cdots < x_{2N}$, we have 
\begin{align}\label{eqn::partf_alpha_asy_refined}
\lim_{\substack{\tilde{x}_j , \tilde{x}_{j+1} \to \xi, \\ \tilde{x}_i\to x_i \text{ for } i \neq j, j+1}} 
\frac{\PartF_\alpha(\tilde{x}_1 , \ldots , \tilde{x}_{2N})}{(\tilde{x}_{j+1} - \tilde{x}_j)^{-2h}} 
=\begin{cases}
0 , \quad &
    \text{if } \link{j}{j+1} \notin \alpha , \\
\PartF_{\hat{\alpha}}(x_{1},\ldots,x_{j-1},x_{j+2},\ldots,x_{2N}) , &
    \text{if } \link{j}{j+1} \in \alpha ,
\end{cases}
\end{align}
where $\hat{\alpha} = \alpha \removeLink \link{j}{j+1}$.  
In particular, $\PartF_{\alpha}$ satisfies $\mathrm{(ASY)}$~\eqref{eq: multiple SLE asymptotics}.
\end{lemma}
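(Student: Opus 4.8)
The plan is to prove the two cases of the asserted asymptotics separately.

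\emph{Case $\link{j}{j+1} \notin \alpha$.} This case is elementary. Write $\PartF_\alpha = f_\alpha \, \LB_\alpha^{2h}$ as in~\eqref{eqn::purepartition_alpha_def}. Since $\link{j}{j+1} \notin \alpha$, no link of $\alpha$ joins two indices both of which converge to the same point along the limit $\tilde{x}_j, \tilde{x}_{j+1} \to \xi$, $\tilde{x}_i \to x_i$ (the only colliding pair is $\{\tilde{x}_j,\tilde{x}_{j+1}\}$ itself), so $\LB_\alpha(\tilde{x}_1,\ldots,\tilde{x}_{2N}) = \prod_{\link{a}{b}\in\alpha}|\tilde{x}_b - \tilde{x}_a|^{-1}$ stays bounded along the limit, using $x_1 < \cdots < x_{j-1} < \xi < x_{j+2} < \cdots < x_{2N}$. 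Combined with $0 < f_\alpha \le 1$ from~\eqref{eq: bound for f alpha} and $h > 0$ (valid for all $\kappa \le 4$), this yields
\[
\frac{\PartF_\alpha(\tilde{x}_1,\ldots,\tilde{x}_{2N})}{(\tilde{x}_{j+1}-\tilde{x}_j)^{-2h}} = f_\alpha(\tilde{x}_\bullet)\,\LB_\alpha(\tilde{x}_\bullet)^{2h}\,(\tilde{x}_{j+1}-\tilde{x}_j)^{2h} \to 0 .
\]

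\emph{Case $\link{j}{j+1} \in \alpha$.} Set $\hat{\alpha} = \alpha \removeLink \link{j}{j+1} \in \LP_{N-1}$. The starting point is Corollary~\ref{cor::globalsle_weighted}: in the global $N$-$\SLE_\kappa$ in $\HH$ with law $\QQ_\alpha^{\#}$, the marginal law of the curve $\eta_j$ joining $x_j$ and $x_{j+1}$ is absolutely continuous with respect to $\PP(\HH; x_j,x_{j+1})$, with density $\tfrac{H_\HH(x_j,x_{j+1})^h}{\PartF_\alpha(\HH; x_1,\ldots,x_{2N})}\, \PartF_{\hat{\alpha}}(D_j; x_1,\ldots,x_{j-1},x_{j+2},\ldots,x_{2N})$, where $D_j = \HH\setminus K_{\eta_j}$ is the complement of the $\HH$-hull of $\eta_j$ (this ultimately rests on the cascade property of $m_\alpha$, Lemma~\ref{lem::malpha_cascade}). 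Since this density integrates to one against $\PP(\HH; x_j,x_{j+1})$, and $H_\HH(x_j,x_{j+1})^h = (x_{j+1}-x_j)^{-2h}$ by~\eqref{eqn::poisson_upperhalfplane}, I get the exact identity
\[
\frac{\PartF_\alpha(x_1,\ldots,x_{2N})}{(x_{j+1}-x_j)^{-2h}} = \E_{\eta_j \sim \PP(\HH; x_j,x_{j+1})}\big[\PartF_{\hat{\alpha}}(D_j; x_1,\ldots,x_{j-1},x_{j+2},\ldots,x_{2N})\big].
\]
I would then let $\tilde{x}_j,\tilde{x}_{j+1}\to\xi$ and $\tilde{x}_i\to x_i$. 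Realizing the SLEs $\PP(\HH;\tilde{x}_j,\tilde{x}_{j+1})$ on a common probability space by scaling and translating a fixed $\SLE_\kappa$, the hull $K_{\eta_j}$ shrinks almost surely to the point $\xi$, so $D_j$ increases to $\HH$. Granting that $\PartF_{\hat{\alpha}}(D_j;\tilde{x}_\bullet)\to\PartF_{\hat{\alpha}}(\HH;x_\bullet)$ almost surely, the limit~\eqref{eqn::partf_alpha_asy_refined} follows by dominated convergence, the dominating function coming from~\eqref{eqn::partf_alpha_upper} and the monotonicity~\eqref{eqn::poissonkernel_mono} of the boundary Poisson kernel: $\PartF_{\hat{\alpha}}(D_j;\tilde{x}_\bullet) \le \prod_{\link{a}{b}\in\hat{\alpha}} H_{D_j}(\tilde{x}_a,\tilde{x}_b)^h \le \prod_{\link{a}{b}\in\hat{\alpha}}|\tilde{x}_b-\tilde{x}_a|^{-2h}$, which is uniformly bounded as $\tilde{x}_\bullet$ ranges near the pairwise distinct limit points $x_1,\ldots,x_{j-1},x_{j+2},\ldots,x_{2N}$.

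The hard part is the almost sure convergence $\PartF_{\hat{\alpha}}(D_j;\tilde{x}_\bullet)\to\PartF_{\hat{\alpha}}(\HH;x_\bullet)$, i.e.\ continuity of the pure partition functions under the joint perturbation of the domain (through sub-polygons $D_j = \HH\setminus K$ with $K$ a small hull near $\xi$) and of the marked points. For the domain part I would invoke the boundary perturbation property of Proposition~\ref{prop::multiplesle_boundary_perturbation}: integrating its Radon--Nikodym derivative gives
\[
\PartF_{\hat{\alpha}}(D_j;\tilde{x}_\bullet) = \PartF_{\hat{\alpha}}(\HH;\tilde{x}_\bullet)\cdot \E_{\QQ_{\hat{\alpha}}^{\#}(\HH;\tilde{x}_\bullet)}\Big[\one_{\{\gamma_i\subset D_j\,\forall i\}}\exp\big(c\,\mu\big(\HH; K_{\eta_j}, \textstyle\bigcup_i\gamma_i\big)\big)\Big],
\]
and, since the curves $\gamma_i$ of $\QQ_{\hat{\alpha}}^{\#}$ join points distinct from $\xi$ and are therefore almost surely at positive distance from $\xi$, both the indicator tends to $1$ and the Brownian loop term tends to $0$ as $K_{\eta_j}$ shrinks to $\xi$ (a set shrinking to a point is hit by a vanishing mass of Brownian loops that also hit a fixed compact at positive distance from it); passing to the limit inside this expectation is again by dominated convergence via~\eqref{eqn::partf_alpha_upper} and~\eqref{eqn::poissonkernel_mono}. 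The continuity of $\PartF_{\hat{\alpha}}(\HH;\cdot)$ in the marked points can be obtained from the same circle of ideas, organized as an induction on $N$; alternatively it follows from the smoothness of $\PartF_{\hat{\alpha}}$ established in the next lemma, in which case one first proves the weaker asymptotics $\mathrm{(ASY)}$~\eqref{eq: multiple SLE asymptotics} with the other points held fixed and then upgrades to~\eqref{eqn::partf_alpha_asy_refined}. I expect the care needed to justify these Brownian-loop-measure continuity statements — especially near configurations where the independent $\SLE_\kappa$ curves defining $f_{\hat{\alpha}}$ come close to touching — to be the most delicate point of the argument.
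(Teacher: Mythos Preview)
Your overall skeleton coincides with the paper's: the case $\link{j}{j+1}\notin\alpha$ is handled exactly as you do (via $0<f_\alpha\le 1$ and boundedness of $\LB_\alpha$), and in the case $\link{j}{j+1}\in\alpha$ the paper also starts from Corollary~\ref{cor::globalsle_weighted} to get the identity
\[
\frac{\PartF_\alpha(\tilde{x}_1,\ldots,\tilde{x}_{2N})}{(\tilde{x}_{j+1}-\tilde{x}_j)^{-2h}}
= \E_{\PP(\HH;\tilde{x}_j,\tilde{x}_{j+1})}\big[\PartF_{\hat{\alpha}}(D_j;\tilde{x}_\bullet)\big],
\]
and then passes to the limit by dominated convergence with the same dominating bound $\PartF_{\hat{\alpha}}(D_j;\tilde{x}_\bullet)\le \LB_{\hat{\alpha}}(D_j;\tilde{x}_\bullet)^{2h}\le \LB_{\hat{\alpha}}(\HH;\tilde{x}_\bullet)^{2h}$.

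Where you diverge is in the almost sure convergence of the integrand. Instead of your boundary--perturbation detour through Proposition~\ref{prop::multiplesle_boundary_perturbation} and Brownian loop measure estimates, the paper simply applies the conformal covariance rule~\eqref{eq: multiple SLE conformal covariance}: with $\tilde{g}$ the normalized map from $D_j$ onto $\HH$,
\[
\PartF_{\hat{\alpha}}(D_j;\tilde{x}_\bullet)=\prod_{i\neq j,j+1}\tilde{g}'(\tilde{x}_i)^h\,\PartF_{\hat{\alpha}}\big(\tilde{g}(\tilde{x}_\bullet)\big),
\]
and since the hull shrinks to $\xi$ one has $\tilde{g}\to\mathrm{id}_\HH$ almost surely, hence $\tilde{g}'(\tilde{x}_i)\to 1$ and $\tilde{g}(\tilde{x}_i)\to x_i$. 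This one line replaces your entire paragraph on loop--measure continuity and, crucially, keeps everything inside a single expectation rather than the varying base measures $\QQ_{\hat{\alpha}}^{\#}(\HH;\tilde{x}_\bullet)$ that your route introduces. What remains in both approaches is the continuity of $\PartF_{\hat{\alpha}}$ on $\chamber_{2N-2}$; the paper uses it without comment, and it is supplied by the smoothness Lemma~\ref{lem::partitionfunction_pde} (whose proof is independent of the present lemma), so there is no circularity.
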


\begin{proof}
The case $\link{j}{j+1} \notin \alpha$ follows immediately from the bound~\eqref{eqn::partf_alpha_upper} 
with Lemma~\ref{lem::b_alpha_asy_refined} in Appendix~\ref{subsec:appendix-prop}. 
To prove the case $\link{j}{j+1} \in \alpha$, we assume without loss of generality that $j=1$ and $\{1,2\}\in\alpha$. 
Let $\tilde{\eta}$ be the $\SLE_{\kappa}$ in $\HH$ connecting $\tilde{x}_1$ and $\tilde{x}_2$,
let $\tilde{D}$ be the unbounded connected component of $\HH\setminus\tilde{\eta}$, and denote by
$\tilde{g}$ the conformal map from $\tilde{D}$ onto $\HH$ normalized at $\infty$. Then we have
\begin{align*}
\frac{\PartF_\alpha(\tilde{x}_1 , \ldots , \tilde{x}_{2N})}{(\tilde{x}_2 - \tilde{x}_1)^{-2h}} 
&=\E\left[\PartF_{\hat{\alpha}}(\tilde{D}; \tilde{x}_3, \ldots, \tilde{x}_{2N})\right]&&\text{[by Corollary~\ref{cor::globalsle_weighted}]}\\
&=\E\left[\prod_{i=3}^{2N}\tilde{g}'(\tilde{x}_i)^h\PartF_{\hat{\alpha}}(\tilde{g}(\tilde{x}_3), \ldots, \tilde{g}(\tilde{x}_{2N}))\right] . 
&& \text{[by~\eqref{eq: multiple SLE conformal covariance}]}
\end{align*}
Now, as $\tilde{x}_1, \tilde{x}_2 \to \xi$ and $\tilde{x}_i\to x_i$ for $i\neq 1,2$, we have $\tilde{g} \to \mathrm{id}_{\HH}$ almost surely. 
Moreover, by the bound~\eqref{eqn::partf_alpha_upper} and the monotonicity property~\eqref{eqn::b_alpha_mono} from 
Appendix~\ref{subsec:appendix-prop}, 
we have 
\begin{align*}
\PartF_{\hat{\alpha}}(\tilde{D}; \tilde{x}_3, \ldots, \tilde{x}_{2N})\le \LB_{\hat{\alpha}}(\tilde{D}; \tilde{x}_3, \ldots, \tilde{x}_{2N})^{2h} 
\le \LB_{\hat{\alpha}}(\tilde{x}_3, \ldots, \tilde{x}_{2N})^{2h}.
\end{align*}
Thus, by the bounded convergence theorem, as $\tilde{x}_1, \tilde{x}_2 \to \xi$, and $\tilde{x}_i\to x_i$ for $i\neq 1,2$,  we have
\begin{align*}
\frac{\PartF_\alpha(\tilde{x}_1 , \ldots , \tilde{x}_{2N})}{(\tilde{x}_2 - \tilde{x}_1)^{-2h}} 
=\E \left[\prod_{i=3}^{2N}\tilde{g}'(\tilde{x}_i)^h\PartF_{\hat{\alpha}}(\tilde{g}(\tilde{x}_3), \ldots, \tilde{g}(\tilde{x}_{2N})) \right] 
\; \longrightarrow \; \PartF_{\hat{\alpha}}(x_3, \ldots, x_{2N}) ,
\end{align*}
which proves~\eqref{eqn::partf_alpha_asy_refined}. 
The asymptotics property $\mathrm{(ASY)}$~\eqref{eq: multiple SLE asymptotics} is then immediate.
\end{proof}

\begin{lemma}\label{lem::partitionfunction_pde}
The function $\PartF_{\alpha}$ defined in~\eqref{eqn::purepartition_alpha_def} 
is smooth and it satisfies the system $\mathrm{(PDE)}$~\eqref{eq: multiple SLE PDEs} of $2N$ 
partial differential equations of second order.
\end{lemma}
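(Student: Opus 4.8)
The strategy is to exploit hypoellipticity (Proposition~\ref{prop::smoothness}): it suffices to show that $\PartF_\alpha$ is a \emph{distributional} solution to each equation $\mathcal{D}^{(i)} \PartF_\alpha = 0$ in~\eqref{eq: multiple SLE PDEs}, since then Proposition~\ref{prop::smoothness} upgrades this automatically to $\PartF_\alpha \in C^\infty$. So the real content is to produce, for each fixed $i \in \{1,\ldots,2N\}$, a martingale whose expectation encodes $\PartF_\alpha$ and whose drift vanishing gives the $i$-th PDE in the distributional sense.

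The plan is as follows. First I would fix $i$ and, using the cascade/marginal description from Corollary~\ref{cor::globalsle_weighted} (or Proposition~\ref{prop::GeneralCascadeProp}), set up a Loewner chain growing a curve from $x_i$: consider the $\SLE_\kappa$ in $\HH$ started from $x_i$ (targeted appropriately, or rather its localization), let $(g_t)$ be the Loewner maps, $W_t = x_i + \sqrt{\kappa} B_t$ the driving function, and $X_j(t) = g_t(x_j)$ the images of the other marked points, which evolve by $\ud X_j(t) = \frac{2\,\ud t}{X_j(t) - W_t}$. The key object is
\begin{align*}
M_t := \prod_{j \neq i} g_t'(x_j)^h \; \PartF_\alpha\big(X_1(t), \ldots, W_t, \ldots, X_{2N}(t)\big),
\end{align*}
and one wants to show $M_t$ is a local martingale. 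To see this without assuming $\PartF_\alpha \in C^2$, I would instead argue that $M_t$ \emph{is} a martingale directly from the probabilistic construction: by the boundary perturbation property (Proposition~\ref{prop::multiplesle_boundary_perturbation}) together with the Radon--Nikodym derivative~\eqref{eqn::def_radon_alpha}, the quantity $\prod_j g_t'(x_j)^h\,\PartF_\alpha(\cdots)$ is (up to the normalization by $\PartF_\alpha$ at time $0$) exactly the conditional expectation, given $\eta_i[0,t]$, of the loop-measure weight $\one_{\{\text{disjoint}\}}\exp(c\,m_\alpha)$ against the product measure of the remaining independent $\SLE$s in the slit domain. A conditional expectation of a fixed integrable random variable with respect to an increasing filtration is a genuine martingale, which gives the martingale property of $M_t$ for free. (Here one uses the boundary perturbation formula of Lemma~\ref{lem::sle_boundary_perturbation} and Lemma~\ref{lem::malpha_boundaryperturbation} / Lemma~\ref{lem::malpha_cascade} to identify the re-weighting cleanly as the slit-domain version.)

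Then I would translate the martingale property into the distributional PDE. Applying Itô's formula formally to $M_t$ would give a drift term of the form $(\mathcal{D}^{(i)}\PartF_\alpha)(\cdots)\,\ud t$ plus a martingale term; since $M_t$ is already a martingale, the drift must vanish, i.e. $\mathcal{D}^{(i)}\PartF_\alpha = 0$. The subtlety is that we do not yet know $\PartF_\alpha \in C^2$, so I would make this rigorous by a mollification/testing argument: pair $M_t$ against the heat semigroup or convolve $\PartF_\alpha$ with a smooth approximate identity $\rho_\varepsilon$, apply Itô to the smooth function $\PartF_\alpha * \rho_\varepsilon$ (for which the computation is legitimate), use the martingale property of $M_t$ together with continuity and the local boundedness provided by the bound~\eqref{eqn::partf_alpha_upper} to pass to the limit $\varepsilon \to 0$ in the weak sense, and conclude that $\langle \mathcal{D}^{(i)}\PartF_\alpha, \phi\rangle = 0$ for all test functions $\phi \in \Test_space(\domainofdef_{2N};\C)$. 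Applying this at $t = 0$ and varying the marked points (using Möbius covariance, Lemma~\ref{lem::partitionfunction_cov}, to move $x_i$ to a generic location) yields the $i$-th distributional PDE on all of $\chamber_{2N}$, hence on $\domainofdef_{2N}$ by covariance and symmetry. Proposition~\ref{prop::smoothness} then gives $\PartF_\alpha \in C^\infty$, and by smoothness the distributional identity becomes the classical one, completing the proof.

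The main obstacle is the passage from ``$M_t$ is a martingale'' to ``$\PartF_\alpha$ solves the PDE distributionally'' without circularly assuming regularity of $\PartF_\alpha$: one must either carry out the mollification argument carefully (controlling the Itô correction terms uniformly in $\varepsilon$ using the explicit upper bound~\eqref{eqn::partf_alpha_upper} on $\PartF_\alpha$ and on its mollifications near the diagonal), or alternatively invoke a general lemma that a bounded measurable function $F$ for which $\prod g_t'(x_j)^h F(X(t))$ is a local martingale for every $\SLE_\kappa$-type driving is automatically a distributional solution of the corresponding second-order operator. Once hypoellipticity kicks in everything else (COV, ASY, the bound) is already in place from the previous lemmas.
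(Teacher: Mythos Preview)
Your strategy is correct and matches the paper's: reduce to a distributional solution via a martingale coming from a conditional expectation, then invoke hypoellipticity (Proposition~\ref{prop::smoothness}). A few remarks on the details.

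First, a minor discrepancy: the paper grows the curve from $x_1$ toward its link-partner $x_b$ (so the reference measure is $\SLE_\kappa$ from $x_1$ to $x_b$), and the resulting martingale is
\[
M_t \;=\; \prod_{j\neq 1,b} g_t'(x_j)^h \;\times\; \PartF_\alpha\big(W_t,g_t(x_2),\ldots,g_t(x_{2N})\big)\;\times\;(g_t(x_b)-W_t)^{2h},
\]
not your $\prod_{j\neq i} g_t'(x_j)^h \,\PartF_\alpha(\cdots)$ (which corresponds to target $\infty$). Both choices work; the paper's makes the identification with the conditional expectation of $R_\alpha$ cleaner via Lemmas~\ref{lem::malpha_cascade}--\ref{lem::malpha_boundaryperturbation} and Lemma~\ref{lem::sle_boundary_perturbation}, and yields a bounded martingale, which you also want.

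Second, on the crucial conversion ``martingale $\Rightarrow$ distributional PDE'': your mollification suggestion has a real obstacle you do not address. The generator $A$ (and $\mathcal{D}^{(i)}$) has \emph{variable} coefficients, so $A(\PartF_\alpha * \rho_\varepsilon) \neq (A\PartF_\alpha)*\rho_\varepsilon$; applying It\^o to the mollified function gives a drift you cannot relate back to the martingale property of the unmollified $M_t$ without an additional argument. The paper bypasses this entirely by working with the transition semigroup: the martingale property reads $(P_tF - F)(\boldsymbol{x},\boldsymbol{1})=0$, so pairing against a test function and differentiating in $t$ gives $\langle AF,\psi\rangle = \int F \, A^*\psi = 0$ directly, with $A^*$ acting on the (smooth) test function. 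An explicit conjugation identity
\[
A\Big(\prod_{j\neq 1,b} y_j^{\,h}\,(x_b-x_1)^{2h}\,\phi(\boldsymbol{x})\Big)
= \prod_{j\neq 1,b} y_j^{\,h}\,(x_b-x_1)^{2h}\,\mathcal{D}^{(1)}\phi(\boldsymbol{x})
\]
then converts $\langle AF,\psi\rangle=0$ into $\langle \mathcal{D}^{(1)}\PartF_\alpha,\phi\rangle=0$. This is your ``general lemma'' option made concrete, and it is the cleaner route; the mollification alternative would need substantially more work to close.

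Finally, you do not need M\"obius covariance to ``move $x_i$'': the martingale identity $P_tF=F$ already holds at every starting configuration, so the distributional PDE is obtained on all of $\chamber_{2N}$ in one stroke.
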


\begin{proof}
We prove that $\PartF_{\alpha}$ satisfies the partial differential equation 
of~\eqref{eq: multiple SLE PDEs} for $i = 1$; the others follow by symmetry.
Denote the pair of $i = 1$ in $\alpha$ by $b$, 
and denote 
$\hat{\alpha} = \alpha \removeLink \link{1}{b}$.
Let $\eta_1$ be the curve connecting $x_1$ and $x_b$, and 
$\hat{\Omega}_1$ the connected component of $\HH \setminus \{\eta_2, \ldots, \eta_N\}$ 
that has $x_1$ and $x_b$ on its boundary.
Then, given $\{\eta_2, \ldots, \eta_N\}$, the conditional law of $\eta_1$ is that of the chordal 
$\SLE_{\kappa}$ in $\hat{\Omega}_1$ from $x_1$ to $x_b$.

Recall from~\eqref{eqn::purepartition_alpha_def} that the function $\PartF_\alpha$ is defined 
in terms of the expectation of $R_{\alpha}$. We calculate the conditional expectation 
$\E_{\alpha}[R_{\alpha}(\HH;\eta_1, \ldots, \eta_N) \cond \eta_1[0,t]]$ for small $t > 0$,
and construct a martingale involving the function $\PartF_{\alpha}$.
Diffusion theory then provides us with the desired partial differential equation~\eqref{eq: multiple SLE PDEs}
in distributional sense, and we may conclude by hypoellipticity (Proposition~\ref{prop::smoothness}). 

For $t \geq 0$, we denote $K_t := \eta_1[0,t]$, and $H_t:= \HH \setminus K_t$, and 
$\tilde{\eta}_1 := (\eta_1(s), s\ge t)$. 
Using the observation that the Brownian loop measure can be decomposed as
\begin{align}\label{eq: BLM trivial decomposition}
\mu(\HH; \eta_1, \HH \setminus \hat{\Omega}_1) 
= \mu(H_t; \tilde{\eta}_1, \HH\setminus\hat{\Omega}_1) + \mu(\HH; K_t, \HH\setminus\hat{\Omega}_1),
\end{align}
combined with Lemmas~\ref{lem::malpha_cascade} and~\ref{lem::malpha_boundaryperturbation}, 
we write the quantity $m_{\alpha}$ defined in~\eqref{eqn::def_malpha} in the following form: 
\begin{align*}
m_{\alpha}(\HH; \eta_1, \eta_2, \ldots, \eta_N) 
= \; & m_{\hat{\alpha}}(\HH;\eta_2,\ldots,\eta_N) + \mu(\HH; \eta_1, \HH\setminus\hat{\Omega}_1) 
&& \text{[by Lemma \ref{lem::malpha_cascade}]} \\
= \; & m_{\hat{\alpha}}(H_t; \eta_2,\ldots,\eta_N) + \sum_{j=2}^N \mu(\HH; K_t, \eta_j) - 
\mu \big(\HH; K_t, \bigcup_{j=2}^N \eta_j \big)
&& \text{[by Lemma \ref{lem::malpha_boundaryperturbation}]} \\
\; & + \mu(H_t; \tilde{\eta}_1, \HH\setminus\hat{\Omega}_1) + \mu(\HH; K_t, \HH\setminus\hat{\Omega}_1).
&& \text{[by \eqref{eq: BLM trivial decomposition}]} 
\end{align*}
Note that 
$\mu(\HH; K_t, \bigcup_{j=2}^N \eta_j) = \mu(\HH; K_t, \HH\setminus\hat{\Omega}_1)$,
so the last terms of the last two lines cancel. Combining the first terms of these two lines 
with the help of Lemma \ref{lem::malpha_cascade}, we obtain
\begin{align*}
m_{\alpha}(\HH; \eta_1, \eta_2, \ldots, \eta_N) 
= m_{\alpha}(H_t; \tilde{\eta}_1, \eta_2, \ldots, \eta_N) + \sum_{j=2}^N \mu(\HH; K_t, \eta_j) .
\end{align*}
Using this, we write the Radon-Nikodym derivative~\eqref{eqn::def_radon_alpha} in the form
\begin{align*}
\; & R_{\alpha}(\HH; \eta_1, \eta_2, \ldots, \eta_N) \\
= \; & \one_{\{\eta_j\cap\eta_k=\emptyset \; \forall \; j\neq k\}} 
\; \exp(c m_{\alpha}(H_t; \tilde{\eta}_1, \eta_2, \ldots, \eta_N)) 
\;  \prod_{j=2}^N \exp(c\mu(\HH; K_t, \eta_j)) \\
= \; & R_{\alpha}(H_t; \tilde{\eta}_1, \eta_2, \ldots, \eta_N) 
\;  \prod_{j=2}^N \one_{\{\eta_j \subset H_t\}} \exp(c\mu(\HH; K_t, \eta_j))
&& 
\text{[by~\eqref{eqn::def_radon_alpha}]}  \\
= \; & R_{\alpha}(H_t; \tilde{\eta}_1, \eta_2, \ldots, \eta_N)
\prod_{\substack{\link{c}{d} \in \alpha , \\ \link{c}{d} \neq \link{1}{b} }}
\left(\frac{H_{H_t}(x_{c},x_{d})}{H_{\HH}(x_{c},x_{d})}\right)^h 
\frac{\ud \PP(H_t; x_{c},x_{d})}{\ud \PP(\HH; x_{c},x_{d})} .
&& 
\text{[by Lemma~\ref{lem::sle_boundary_perturbation}]} 
\end{align*}
This implies that, given $K_t = \eta_1[0,t]$, the conditional expectation of $R_{\alpha}$ is
\begin{align*}
\E_{\alpha}[R_{\alpha}(\HH; \eta_1, \eta_2, \ldots, \eta_N) \cond K_t]
= \; & 
\E_{\alpha}[R_{\alpha}(H_t;\tilde{\eta}_1, \eta_2, \ldots, \eta_N)]
\prod_{\substack{\link{c}{d} \in \alpha , \\ \link{c}{d} \neq \link{1}{b} }}
\left(\frac{H_{H_t}(x_{c}, x_{d})}{H_{\HH}(x_{c}, x_{d})}\right)^h \\
= \; & 
f_{\alpha}(H_t; \eta_1(t), x_2, \ldots, x_{2N})
\prod_{\substack{\link{c}{d} \in \alpha , \\ \link{c}{d} \neq \link{1}{b} }}
\left(\frac{H_{H_t}(x_{c}, x_{d})}{H_{\HH}(x_{c}, x_{d})}\right)^h .
\end{align*}
Let $g_t$ be the Loewner map normalized at $\infty$ associated to $\eta_1$, and $W_t$ its driving process.
By the conformal invariance of $f_{\alpha}$, using~\eqref{eqn::purepartition_alpha_def}
and the formula $H_{\HH}(x,y) = (y-x)^{-2}$ for the Poisson kernel in $\HH$, we have
\begin{align*}
f_{\alpha}(H_t; \eta_1(t), x_2, \ldots, x_{2N})
= \; & f_{\alpha}(\HH; W_t, g_t(x_2), \ldots, g_t(x_{2N})) \\
= \; & (g_t(x_b)-W_t)^{2h}
\prod_{\substack{\link{c}{d} \in \alpha , \\ \link{c}{d} \neq \link{1}{b} }}
(g_t(x_{c})-g_t(x_{d}))^{2h} \times
\PartF_{\alpha}(W_t, g_t(x_2), \ldots, g_t(x_{2N})) .
\end{align*}
On the other hand, by~\eqref{eqn::poisson_cov}, we have
\begin{align*}
\prod_{\substack{\link{c}{d} \in \alpha , \\ \link{c}{d} \neq \link{1}{b} }}
H_{H_t}(x_{c}, x_{d})^h = 
\prod_{\substack{\link{c}{d} \in \alpha , \\ \link{c}{d} \neq \link{1}{b} }}
g_t'(x_{c})^hg_t'(x_{d})^h (g_t(x_{c})-g_t(x_{d}))^{-2h} .
\end{align*}
Combining the above observations, we get
$\E_{\alpha}[R_{\alpha}(\HH; \eta_1, \eta_2, \ldots, \eta_N)\cond K_t] 
= \prod_{\link{c}{d} \in \alpha} (x_{d}-x_{c})^{2h} \times M_t$,
where
\begin{align*}
M_t := \prod_{i \neq 1, b} g_t'(x_i)^h \times \PartF_{\alpha}(W_t, g_t(x_2), \ldots, g_t(x_{2N})) \times (g_t(x_b)-W_t)^{2h}. 
\end{align*}
Thus, $M_t$ is a martingale for $\eta_1$. Now, we write $M_t= F(X_t)$, where 
\begin{align*} 
F (\boldsymbol{x}, \boldsymbol{y})
=  \prod_{j\neq 1, b} y_j^h \times \PartF_{\alpha}(x_1, x_2, \ldots, x_{2N}) \times (x_b-x_1)^{2h} 
\end{align*}
is a continuous function of  $(\boldsymbol{x}, \boldsymbol{y}) := (x_1, \ldots, x_{2N}, y_2, \ldots, y_{2N}) \in \chamber_{2N} \times \R^{2N-1}$
(independent of $y_b$), and $X_t = (W_t, g_t(x_2), \ldots, g_t(x_{2N}), g_t'(x_2), \ldots, g_t'(x_{2N}))$ is an It\^o process, 
whose infinitesimal generator, when acting on twice continuously differentiable functions, can be written as the differential operator
\begin{align} \label{eq::generator}
A = \frac{\kappa}{2} \partial^2_1 +\frac{\kappa-6}{x_1-x_b}\partial_1
+ \sum_{j = 2}^{2N} \left( \frac{2}{x_{j}-x_{1}}\partial_j - \frac{2y_j}{(x_{j}-x_{1})^{2}} \partial_{2N-1+j} \right) 
\end{align} 
--- see, e.g.,~\cite[Chapter VII]{RevuzYorMartBM} for background on diffusions.

We next consider the generator $A$ in the distributional sense.
Let $(P_t)_{t>0}$ be the transition semigroup of $(X_t)_{t>0}$. 
By definition, $A$ is a linear operator on the space 
$C_0 = C_0(\chamber_{2N} \times (1/2,3/2)^{2N-1} ;\C)$ of continuous functions that vanish at 
infinity\footnote{We remark that the space $C_0 = C_0(\chamber_{2N} \times (1/2,3/2)^{2N-1} ;\C)$ 
is the Banach completion of the test function space
$\Test_space = \Test_space(\chamber_{2N} \times (1/2,3/2)^{2N-1} ;\C)$ of smooth compactly supported functions,
with respect to the sup norm.}.
The domain of $A$ in $C_0$ consists of those functions $f$ for which the limit
\begin{align} \label{eq::Adefn}
Af := \lim_{t \searrow 0} \frac{1}{t}(P_t  - \mathrm{id})f
\end{align}
exists in $C_0$. When restricted to twice continuously differentiable functions in $C_0$, 
$A$ equals the differential operator~\eqref{eq::generator}. More generally, we will argue that 
$A$ can be defined as $A := \lim_{t \searrow 0} \frac{1}{t}(P_t  - \mathrm{id})$ in the distributional sense,
and this extended definition of $A$ agrees with~\eqref{eq::generator} acting on distributions via~\eqref{eq::Diffdistribution}.

For each $t \geq 0$, the operator $P_t$ is bounded (a contraction), so the image $P_t f$ 
of any $f \in C_0$ is locally integrable. Therefore, $P_t f$ defines a distribution 
$P_t f \in \Distr_space = \Distr_space(\chamber_{2N} \times (1/2,3/2)^{2N-1} ;\C)$ via~\eqref{eq::Fdistribution}. 
More generally, because $P_t$ is a continuous operator (with respect to the sup norm), 
and the space $\Test_space  = \Test_space(\chamber_{2N} \times (1/2,3/2)^{2N-1} ;\C)$ of test functions is dense in 
$\Distr_space$~\cite[Lemma 1.13.5]{TaoEpsilon}, 
the operator $P_t$ defines, for any distribution $f \in \Distr_space$, a distribution $P_t f \in \Distr_space$ via
\begin{align*}
\langle P_t f, \psi \rangle 
:= \; & \lim_{n \to \infty} \int_{\chamber_{2N} \times (1/2,3/2)^{2N-1}}
P_t f_n(\boldsymbol{x}, \boldsymbol{y}) \times \psi (\boldsymbol{x},\boldsymbol{y}) \ud \boldsymbol{x} \ud \boldsymbol{y} ,
\end{align*} 
for any test function $\psi \in \Test_space$,
where $f_n \in \Test_space$ is a sequence converging to $f$ in $\Distr_space$. 
In conclusion, the transition semigroup of $(X_t)_{t>0}$ gives rise to a linear operator
$P_t \colon \Distr_space \to \Distr_space$. 

Now, we define $A$ on $\Distr_space$ via its values on the dense subspace $\Test_space \subset \Distr_space$.
In this subspace, $A$ is already defined by~\eqref{eq::Adefn}, and in general, 
for any $f \in \Distr_space$, we define $A f \in \Distr_space$ via
\begin{align} \label{eqn::infinitesimalgenerator_equivdef2}
\langle A f, \psi \rangle 
:= \; \lim_{t\searrow 0} \int_{\chamber_{2N} \times (1/2,3/2)^{2N-1}} \frac{1}{t} 
(P_t f - f)(\boldsymbol{x}, \boldsymbol{y}) \times \psi (\boldsymbol{x},\boldsymbol{y}) \ud \boldsymbol{x} \ud \boldsymbol{y} ,
\end{align} 
if the limit exists for all test functions $\psi \in \Test_space$. Note that if $f \in \Test_space$, then 
$\frac{1}{t} (P_t f - f)$ converges to $Af$ locally uniformly, so the definition~\eqref{eqn::infinitesimalgenerator_equivdef2} 
indeed coincides with~\eqref{eq::Adefn}, and hence with~\eqref{eq::generator}, for all $f \in \Test_space$.
In conclusion, the definition~\eqref{eqn::infinitesimalgenerator_equivdef2} of $A \colon \Distr_space \to \Distr_space$
is an extension of the definition~\eqref{eq::Adefn} of $A$ from $C_0$ to the space of distributions.
In particular, we have
\begin{align*}
\langle A f, \psi \rangle 
= & \; \lim_{t\searrow 0} \int_{\chamber_{2N} \times (1/2,3/2)^{2N-1}} \frac{1}{t} 
(P_t f - f)(\boldsymbol{x}, \boldsymbol{y}) \times \psi (\boldsymbol{x},\boldsymbol{y}) \ud \boldsymbol{x} \ud \boldsymbol{y} 
&& \text{[by~\eqref{eqn::infinitesimalgenerator_equivdef2}]} \\
= & \;  \int_{\chamber_{2N} \times (1/2,3/2)^{2N-1}} 
f(\boldsymbol{x}, \boldsymbol{y}) \times (A^* \psi) (\boldsymbol{x}, \boldsymbol{y}) \ud \boldsymbol{x} \ud \boldsymbol{y} ,
&& \text{[by~\eqref{eq::generator}]}
\end{align*}
where $A^*$ is the transpose (dual operator) of~\eqref{eq::generator}:
\begin{align*}
A^* = \frac{\kappa}{2} \partial^2_1 - \frac{\kappa-6}{x_1-x_b}\partial_1
- \sum_{j = 2}^{2N} \left( \frac{2}{x_{j}-x_{1}}\partial_j + \frac{2y_j}{(x_{j}-x_{1})^{2}} \partial_{2N-1+j} \right)  .
\end{align*}

Now, since $F(X_t)$ is a martingale, we have $\E[F(X_t)]=F(X_0)$, i.e. 
\[(P_t F - F) (\boldsymbol{x}, \boldsymbol{1})=0 , \qquad 
\text{for all } t \geq 0 \text{ and } \boldsymbol{x} \in \chamber_{2N} .\]
Therefore, the continuous function 
$f(\boldsymbol{x}, \boldsymbol{y}) := F (\boldsymbol{x}, \boldsymbol{1}) 
= \PartF_{\alpha}(x_1, x_2, \ldots, x_{2N}) \times (x_b-x_1)^{2h}$, 
independent of $\boldsymbol{y}$, defines a distribution $f \in \Distr_space$
such that $\langle Af, \psi \rangle = 0$ for all test functions $\psi \in \Test_space$.

Recall that our goal is to show that $\PartF_\alpha$ is a distributional solution to 
the hypoelliptic PDE~\eqref{eq: multiple SLE PDEs} for $i=1$, that is, for all test functions $\phi \in \Test_space(\chamber_{2N};\C)$, 
we have
\begin{align} \label{eq::goal}
\langle \mathcal{D}^{(1)} \PartF_{\alpha}, \phi \rangle
:= \; & \int_{\chamber_{2N}} \PartF_{\alpha}(\boldsymbol{x}) \times (\mathcal{D}^{(1)})^* \phi (\boldsymbol{x}) \ud \boldsymbol{x} = 0 ,
\end{align}
where 
\begin{align*}
\mathcal{D}^{(1)} = \frac{\kappa}{2}\partial^2_1 + \sum_{j\neq 1}\left(\frac{2}{x_{j}-x_{1}}\partial_j - 
\frac{2h}{(x_{j}-x_{1})^{2}}\right) , \qquad
(\mathcal{D}^{(1)})^* := \frac{\kappa}{2}\partial^2_1 - \sum_{j\neq 1}\left(\frac{2}{x_{j}-x_{1}}\partial_j +
\frac{2h}{(x_{j}-x_{1})^{2}}\right)
\end{align*}
are respectively the partial differential operator in~\eqref{eq: multiple SLE PDEs} for $i = 1$, and its transpose.
A calculation shows that the two differential operators $A$ and $\mathcal{D}^{(1)}$ are related via
\begin{align*}
A \prod_{j\neq 1, b} y_j^h \times (x_b-x_1)^{2h}\times\phi(\boldsymbol{x})
= \prod_{j\neq 1, b} y_j^h \times (x_b-x_1)^{2h} \times \mathcal{D}^{(1)} \phi(\boldsymbol{x}),
\end{align*}
for any test function $\phi \in \Test_space(\chamber_{2N};\C)$. Therefore, for any $\boldsymbol{y} \in \R^{2N-1}$,
we have
\begin{align} \label{eq::relation_of_two_operators}
\begin{split}
\langle \mathcal{D}^{(1)} \PartF_{\alpha}, \phi \rangle
= \; & \int_{\chamber_{2N}} \PartF_{\alpha}(\boldsymbol{x}) \times 
\Big( \prod_{j\neq 1, b} y_j^{h} \times (x_b-x_1)^{2h} \Big) A^* \Big( \prod_{j\neq 1, b} y_j^{-h} \times (x_b-x_1)^{-2h} \Big)
 \phi (\boldsymbol{x}) \ud \boldsymbol{x}  \\
 = \; & \int_{\chamber_{2N}} F(\boldsymbol{x},\boldsymbol{y}) \times 
(A^* \tilde{\phi} )(\boldsymbol{x}, \boldsymbol{y}) \ud \boldsymbol{x} ,
\end{split}
\end{align}
where we defined 
$\tilde{\phi} (\boldsymbol{x}, \boldsymbol{y}) := \Big( \prod_{j\neq 1, b} y_j^{-h} \times (x_b-x_1)^{-2h} \Big) \phi (\boldsymbol{x})$.
Now, recalling that $\langle A f, \psi \rangle = 0$ for $f(\boldsymbol{x}, \boldsymbol{y}) := F (\boldsymbol{x}, \boldsymbol{1})$,
and choosing $\psi (\boldsymbol{x}, \boldsymbol{y}) 
:= \tilde{\phi} (\boldsymbol{x}, \boldsymbol{1}) = (x_b-x_1)^{-2h} \phi (\boldsymbol{x}) \in \Test_space$, 
we finally obtain
\begin{align*}
0 = \langle A f, \psi \rangle 
= \; & \int_{\chamber_{2N} \times (1/2,3/2)^{2N-1}} 
F(\boldsymbol{x}, \boldsymbol{1}) \times (A^* \tilde{\phi}) (\boldsymbol{x}, \boldsymbol{1}) \ud \boldsymbol{x} \ud \boldsymbol{y} 
 \\
= \; & \int_{(1/2,3/2)^{2N-1}} \ud \boldsymbol{y}  \int_{\chamber_{2N} } 
F(\boldsymbol{x}, \boldsymbol{1}) \times (A^* \tilde{\phi}) (\boldsymbol{x}, \boldsymbol{1}) \ud \boldsymbol{x} 
 \\
= \; & \int_{\chamber_{2N} } 
F(\boldsymbol{x}, \boldsymbol{1}) \times (A^* \tilde{\phi}) (\boldsymbol{x}, \boldsymbol{1}) \ud \boldsymbol{x}  \\
= \; & \langle \mathcal{D}^{(1)} \PartF_{\alpha}, \phi \rangle .
&& \text{[by~\eqref{eq::relation_of_two_operators} with $\boldsymbol{y} = \boldsymbol{1}$]} 
\end{align*}
Because the test function $\phi \in \Test_space(\chamber_{2N};\C)$ can be chosen arbitrarily,
this implies our goal~\eqref{eq::goal}, i.e., that $\mathcal{D}^{(1)} \PartF_{\alpha} = 0$ in the distributional sense. Since
$\PartF_{\alpha}$ is a distributional solution to the hypoelliptic partial differential equation~\eqref{eq: multiple SLE PDEs} for $i = 1$,
Proposition~\ref{prop::smoothness} now implies that $\PartF_{\alpha}$ is in fact a smooth solution.
\end{proof}

We are now ready to conclude:

\purepartitionexistence*
\begin{proof} 
The functions $\{\PartF_\alpha \colon \alpha \in \LP\}$
defined in~\eqref{eqn::purepartition_alpha_def} satisfy all of
the asserted defining properties: 
the 
stronger bound~\eqref{eqn::partitionfunction_positive}, 
partial differential equations $\mathrm{(PDE)}$~\eqref{eq: multiple SLE PDEs}, 
covariance $\mathrm{(COV)}$~\eqref{eq: multiple SLE Mobius covariance}, and 
asymptotics $\mathrm{(ASY)}$~\eqref{eq: multiple SLE asymptotics}
respectively follow from Lemmas~\ref{lem::partitionfunction_positive},
\ref{lem::partitionfunction_pde}, \ref{lem::partitionfunction_cov}, and~\ref{lem::partitionfunction_asy}.
Uniqueness then follows from Corollary~\ref{cor::purepartition_unique}.
Finally, the linear independence is the content of the next Proposition~\ref{prop: FK dual elements}.
\end{proof}

\begin{proposition}\label{prop: FK dual elements}
Let $\{ \FKdual_\alpha \colon \alpha \in \LP \}$ be the collection of linear functionals defined in~\eqref{eq: limit operation} 
and let $\{ \PartF_\alpha \colon \alpha \in \LP \}$ be the collection of functions defined in~\eqref{eqn::purepartition_alpha_def}.
Then, we have $\PartF_\alpha \in \LS_N$ and
\begin{align}\label{eq: FK dual basis}
\FKdual_\alpha ( \PartF_\beta ) = \delta_{\alpha,\beta} 
= & \begin{cases}
1, & \text{if } \beta = \alpha , \\
0, & \text{if }\beta\neq\alpha ,
\end{cases} 
\end{align}
for all $\alpha, \beta \in \LP_N$. In particular, the set $\{ \PartF_\alpha \colon \alpha \in \LP_N \}$ is linearly independent and it thus forms a basis for
the $\Catalan_N$-dimensional solution space $\mathcal{S}_N$ with dual basis $\{ \FKdual_\alpha \colon \alpha \in \LP_N \}$.
\end{proposition}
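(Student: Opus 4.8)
The plan is to prove \eqref{eq: FK dual basis} by induction on $N$, using the asymptotics property \eqref{eqn::partf_alpha_asy_refined} of Lemma \ref{lem::partitionfunction_asy} together with the recursive definition \eqref{eq: limit operation} of the functionals $\FKdual_\alpha$. First I would verify that $\PartF_\alpha \in \LS_N$: by Lemmas \ref{lem::partitionfunction_pde} and \ref{lem::partitionfunction_cov} the functions satisfy $\mathrm{(PDE)}$~\eqref{eq: multiple SLE PDEs} and $\mathrm{(COV)}$~\eqref{eq: multiple SLE Mobius covariance}, and the power law bound \eqref{eqn::powerlawbound} follows from the stronger bound \eqref{eqn::partitionfunction_positive} of Lemma \ref{lem::partitionfunction_positive} (indeed $\PartF_\alpha \le \LB_\alpha^{2h} \le \prod_{\link{a}{b}\in\alpha}|x_b-x_a|^{-2h}$, which is dominated by the right-hand side of \eqref{eqn::powerlawbound} with, say, $p = 2h \vee 1$ after crudely bounding the non-link factors by $1$ on both regimes $|x_j - x_i| \gtrless 1$). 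Hence each $\PartF_\alpha \in \LS_N$, and the limits $\FKdual_\alpha(\PartF_\beta)$ are well-defined and finite by \cite[Lemma~10]{FloresKlebanSolutionSpacePDE1}.

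Next I would prove \eqref{eq: FK dual basis} itself. Fix an allowable ordering $\link{a_1}{b_1}, \ldots, \link{a_N}{b_N}$ of the links of $\alpha$; since $\link{a_1}{b_1}$ is removable, $a_1$ and $b_1$ are consecutive, say $b_1 = a_1 + 1 =: j+1$. Applying the \emph{first} (innermost) limit in \eqref{eq: limit operation} to $\PartF_\beta$ and invoking the refined asymptotics \eqref{eqn::partf_alpha_asy_refined} (which controls this limit even when the other points simultaneously move), we get
\begin{align*}
\lim_{x_j, x_{j+1} \to \xi_1} (x_{j+1} - x_j)^{2h} \PartF_\beta(x_1, \ldots, x_{2N})
= \begin{cases} \PartF_{\beta \removeLink \link{j}{j+1}}(x_1, \ldots, x_{j-1}, x_{j+2}, \ldots, x_{2N}), & \link{j}{j+1} \in \beta, \\ 0, & \link{j}{j+1} \notin \beta. \end{cases}
\end{align*}
If $\link{j}{j+1} = \link{a_1}{b_1} \notin \beta$, then $\FKdual_\alpha(\PartF_\beta) = 0$, consistent with $\beta \neq \alpha$. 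Otherwise $\link{j}{j+1} \in \beta$, and the remaining limits in \eqref{eq: limit operation} — which form an allowable ordering of the links of $\hat\alpha := \alpha \removeLink \link{j}{j+1}$ — compute $\FKdual_{\hat\alpha}(\PartF_{\hat\beta})$ where $\hat\beta := \beta \removeLink \link{j}{j+1}$. By the induction hypothesis $\FKdual_{\hat\alpha}(\PartF_{\hat\beta}) = \delta_{\hat\alpha, \hat\beta}$. It remains to observe that, among $\beta \in \LP_N$ with $\link{j}{j+1} \in \beta$, the map $\beta \mapsto \hat\beta$ is a bijection onto $\LP_{N-1}$, so $\hat\alpha = \hat\beta$ forces $\beta = \alpha$; this gives $\FKdual_\alpha(\PartF_\beta) = \delta_{\alpha,\beta}$, completing the induction. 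The base case $N = 0$ is $\FKdual_\emptyset(\PartF_\emptyset) = 1$ by the normalization $\PartF_\emptyset = 1$.

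The main obstacle is the subtle point that \eqref{eq: limit operation} takes the limits in a fixed nested order with the \emph{outer} points held at generic positions $\xi_2, \ldots, \xi_N$, whereas Lemma \ref{lem::partitionfunction_asy} as stated sends two coordinates to $\xi$ while the others converge to fixed values $x_i$; one must check these are compatible, i.e. that the refined asymptotics \eqref{eqn::partf_alpha_asy_refined} — not merely $\mathrm{(ASY)}$~\eqref{eq: multiple SLE asymptotics} — is exactly what licenses peeling off one limit at a time and that the resulting function $\PartF_{\hat\beta}$ is again in $\LS_{N-1}$ so that the inductive hypothesis applies. This is precisely why Lemma \ref{lem::partitionfunction_asy} was stated in the stronger form \eqref{eqn::partf_alpha_asy_refined}. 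Finally, the linear independence of $\{\PartF_\alpha \colon \alpha \in \LP_N\}$ is an immediate consequence: a linear relation $\sum_\alpha c_\alpha \PartF_\alpha = 0$ yields, upon applying $\FKdual_\beta$, that $c_\beta = 0$ for every $\beta$. Since $\dim \LS_N = \Catalan_N = \#\LP_N$ by \cite{FloresKlebanSolutionSpacePDE1, FloresKlebanSolutionSpacePDE2, FloresKlebanSolutionSpacePDE3, FloresKlebanSolutionSpacePDE4}, the set $\{\PartF_\alpha\}$ is a basis of $\LS_N$, dual to $\{\FKdual_\alpha\}$.
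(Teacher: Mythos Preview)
Your proposal is correct and follows essentially the same approach as the paper: the paper's proof is a terse one-liner saying that $\PartF_\alpha \in \LS_N$ by the preceding lemmas and that \eqref{eq: FK dual basis} follows from the asymptotics $\mathrm{(ASY)}$ of Lemma~\ref{lem::partitionfunction_asy}, while you spell out the underlying induction on $N$ explicitly. One minor remark: since the nested limits in \eqref{eq: limit operation} hold the outer variables fixed at each step, the basic $\mathrm{(ASY)}$~\eqref{eq: multiple SLE asymptotics} already suffices and your concern about needing the refined form \eqref{eqn::partf_alpha_asy_refined} is unnecessary.
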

\begin{proof}
By the above proof, we have $\PartF_\alpha \in \LS_N$.
Property~\eqref{eq: FK dual basis} follows from the asymptotics properties $\mathrm{(ASY)}$~\eqref{eq: multiple SLE asymptotics}
of the functions $\PartF_\alpha$ from Lemma~\ref{lem::partitionfunction_asy}, and the last assertion follows immediately from this.
\end{proof}

In~\cite[Theorem 4.1]{KytolaPeoltolaPurePartitionSLE}, 
K.~Kyt\"ol\"a and E.~Peltola constructed candidates for the pure partition functions 
$\PartF_\alpha$ with $\kappa\in (0,8)\setminus\QQ$ using Coulomb gas techniques and 
a hidden quantum group symmetry on the solution space of
$\mathrm{(PDE)}$~\eqref{eq: multiple SLE PDEs} 
and $\mathrm{(COV)}$~\eqref{eq: multiple SLE Mobius covariance}, inspired by conformal field theory. 
S.~Flores and P.~Kleban proved independently and simultaneously
in~\cite{FloresKlebanSolutionSpacePDE1, FloresKlebanSolutionSpacePDE2, FloresKlebanSolutionSpacePDE3, FloresKlebanSolutionSpacePDE4} 
the existence of such functions for $\kappa\in (0,8)$, 
and argued that they can be found by inverting a certain system of linear equations.
However,  the functions constructed in these works were not shown to be positive.
As a by-product of Theorem~\ref{thm::purepartition_existence}, we establish positivity for these functions when $\kappa\in (0,4)$, 
thus identifying them with our functions of Theorem~\ref{thm::purepartition_existence}.

\subsection{Global Multiple SLEs are Local Multiple SLEs}
\label{subsec::global_vs_local}
In this section, we show that the global $\SLE_\kappa$ probability measures 
$\QQ_\alpha^\#$ constructed in Section \ref{subsec::multiplesle_existence} agree with another natural 
definition of multiple $\SLE$s --- the local $N$-$\SLE_\kappa$. 
The latter measures are defined in terms of their 
Loewner chain description, which allows one to treat the random curves as growth processes.
We first recall the definition of a local multiple $\SLE_\kappa$ from~\cite{DubedatCommutationSLE}
and~\cite[Appendix~A]{KytolaPeoltolaPurePartitionSLE}. 
 
\smallbreak
 
Let $(\Omega; x_1, \ldots, x_{2N})$ be a polygon. The localization neighborhoods $U_1, \ldots, U_{2N}$ 
are assumed to be closed subsets of $\overline{\Omega}$ such that $\Omega\setminus U_j$ are simply 
connected and $U_j\cap U_k=\emptyset$ for $j\neq k$. A \textit{local} $N$-$\SLE_{\kappa}$ in $\Omega$, 
started from $(x_1, \ldots, x_{2N})$ and localized in $(U_1, \ldots, U_{2N})$, 
is a probability measure on $2N$-tuples of oriented unparameterized curves 
$(\gamma_1, \ldots, \gamma_{2N})$. 
For convenience, we choose a parameterization of the curves by $t \in [0,1]$, so that 
for each $j$, the curve $\gamma_j \colon  [0,1]\to U_j$ starts at $\gamma_j(0)=x_j$ and ends at 
$\gamma_j(1)\in \partial(\Omega\setminus U_j)$. The local $N$-$\SLE_{\kappa}$ 
is an indexed collection of probability measures on $(\gamma_1, \ldots, \gamma_{2N})$: 
\[ \localSLE = \left(\localSLE^{(\Omega; x_1, \ldots, x_{2N})}_{(U_1, \ldots, U_{2N})}\right)_{\Omega; x_1, \ldots, x_{2N}; U_1, \ldots, U_{2N}} . \]
This collection 
is required to satisfy conformal invariance $\mathrm{(CI)}$,
domain Markov property $\mathrm{(DMP)}$, and absolute continuity of marginals with 
respect to the chordal $\SLE_{\kappa}$ $\mathrm{(MARG)}$:
\begin{itemize}[align=left]
\item[$\mathrm{(CI)}$] If 
$(\gamma_1, \ldots, \gamma_{2N})\sim \localSLE^{(\Omega; x_1, \ldots, x_{2N})}_{(U_1, \ldots, U_{2N})}$ 
and $\varphi \colon \Omega\to \varphi(\Omega)$ is a conformal map, then 
\[(\varphi(\gamma_1), \ldots, \varphi(\gamma_{2N}))\sim \localSLE^{(\varphi(\Omega); \varphi(x_1), \ldots, \varphi(x_{2N}))}_{(\varphi(U_1), \ldots, \varphi(U_{2N}))}.\]

\item[$\mathrm{(DMP)}$] 
Let $\tau_j$ be stopping times for $\gamma_j$, for $j \in \{1, \ldots, N\}$. 
Given initial segments $(\gamma_1[0,\tau_1], \ldots, \gamma_{2N}[0,\tau_{2N}])$, the conditional law of the
remaining parts $(\gamma_1|_{[\tau_1, 1]}, \ldots, \gamma_{2N}|_{[\tau_{2N}, 1]})$ is 
$\localSLE^{(\tilde{\Omega}; \tilde{x}_1, \ldots, \tilde{x}_{2N})}_{(\tilde{U}_1, \ldots, \tilde{U}_{2N})}$, 
where $\tilde{\Omega}$ is the component of 
$\Omega \setminus \bigcup_j \gamma_j[0,\tau_j]$
containing all tips $\tilde{x}_j=\gamma_j(\tau_j)$ on 
its boundary, and $\tilde{U}_j=U_j\cap\tilde{\Omega}$. 

\item[$\mathrm{(MARG)}$] 
There exist smooth functions $F_j \colon \chamber_{2N} \to \R$, for 
$j \in \{1, \ldots, 2N\}$, such that for 
the domain $\Omega=\HH$, boundary points $x_1<\cdots<x_{2N}$, and their localization neighborhoods 
$U_1, \ldots, U_{2N}$, the marginal law of $\gamma_j$ under 
$\localSLE^{(\HH; x_1, \ldots, x_{2N})}_{(U_1,\ldots, U_{2N})}$ is the Loewner evolution driven by 
$W_t$ which solves
\begin{align}\label{eqn::localmultiple_marginal_sde}
\begin{split}
\ud W_t 
= \; & \sqrt{\kappa} \ud B_t 
+ F_j \left(V_t^1, \ldots, V_t^{j-1}, W_t, V_t^{j+1},\ldots, V_t^{2N}\right) \ud t, 
\qquad W_0 = x_j , \\
\ud V_t^i = \; & \frac{2 \ud t}{V_t^i-W_t}, \qquad V_0^i = x_i, \quad \text{for } i\neq j. 
\end{split}
\end{align}
\end{itemize}
\begin{remark}
It follows from the definition that the local $N$-$\SLE_{\kappa}$ is consistent
under restriction to smaller localization neighborhoods, 
see~\cite[Proposition~A.2]{KytolaPeoltolaPurePartitionSLE}.
\end{remark}

J.~Dub\'edat proved in~\cite{DubedatCommutationSLE} that the local $N$-$\SLE_\kappa$ processes are 
classified by partition functions $\PartF$ as described in the next proposition. 
The convex structure of the set of the local $N$-$\SLE_\kappa$ was further studied in~\cite[Appendix~A]{KytolaPeoltolaPurePartitionSLE}. 

\begin{proposition} \label{prop::local_multiple_SLE}
Let $\kappa>0$. 
\begin{enumerate}
\item Suppose $\localSLE$ is a local $N$-$\SLE_{\kappa}$. Then, there exists a function 
$\PartF \colon \chamber_{2N} \to \Rpos$ satisfying $\mathrm{(PDE)}$~\eqref{eq: multiple SLE PDEs} 
and $\mathrm{(COV)}$~\eqref{eq: multiple SLE Mobius covariance}, such that 
for all $j \in \{1,\ldots,2N\}$,
the drift functions in $\mathrm{(MARG)}$ take the form $F_j=\kappa\partial_j \log\PartF$. 
Such a function $\PartF$ is 
determined up to a multiplicative constant. 

\item Suppose $\PartF \colon \chamber_{2N} \to \Rpos$ satisfies 
$\mathrm{(PDE)}$~\eqref{eq: multiple SLE PDEs} and 
$\mathrm{(COV)}$~\eqref{eq: multiple SLE Mobius covariance}. Then, the random collection of curves 
obtained by the Loewner chain in $\mathrm{(MARG)}$ with $F_j=\kappa\partial_j \log\PartF$,
for all $j \in \{1,\ldots,2N\}$, is a local $N$-$\SLE_{\kappa}$. 
Two functions $\PartF$ and $\tilde{\PartF}$ give rise to the same local $N$-$\SLE_{\kappa}$ if and only if $\PartF=\text{const.}\times\tilde{\PartF}$. 
\end{enumerate}
\end{proposition}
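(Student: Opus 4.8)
The plan is to prove the two implications separately, the second (sufficiency) being constructive and the first (necessity) relying on the commutation analysis of Dub\'edat~\cite{DubedatCommutationSLE}, with the convex-structure refinements of~\cite[Appendix~A]{KytolaPeoltolaPurePartitionSLE}.

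For Part (2), given $\PartF$ satisfying $\mathrm{(PDE)}$~\eqref{eq: multiple SLE PDEs} and $\mathrm{(COV)}$~\eqref{eq: multiple SLE Mobius covariance}, I would fix $j$ and start from the chordal $\SLE_\kappa$ in $\HH$ from $x_j$, with Loewner map $g_t$, driving function $W_t$, and evolved points $V_t^i = g_t(x_i)$. The first step is to form
\begin{align*}
M_t := \prod_{i \neq j} g_t'(x_i)^h \times \PartF(V_t^1,\ldots,V_t^{j-1},W_t,V_t^{j+1},\ldots,V_t^{2N})
\end{align*}
and compute its It\^o differential: the drift is proportional to $M_t/\PartF$ times the operator in~\eqref{eq: multiple SLE PDEs} with $i=j$ applied to $\PartF$, hence vanishes, so $M_t$ is a local martingale (this is the same computation as in Lemmas~\ref{lem::slekapparho_mart} and~\ref{lem::partitionfunction_pde}). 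Tilting the chordal law by $M_t/M_0$ and applying Girsanov's theorem, the tilted driving function solves $\ud W_t = \sqrt{\kappa}\,\ud B_t + \kappa\,\partial_j\log\PartF(\ldots)\,\ud t$, which is precisely~\eqref{eqn::localmultiple_marginal_sde} with $F_j=\kappa\partial_j\log\PartF$. I would then deduce $\mathrm{(CI)}$ from $\mathrm{(COV)}$ via the $\SLE$ coordinate-change rule of~\cite{SchrammWilsonSLECoordinatechanges}, $\mathrm{(DMP)}$ from the strong Markov property of the driving diffusion together with the Loewner flow, and --- the only non-routine point --- the mutual consistency of the $2N$ marginals, i.e.\ that growing curve $i$ then curve $j$ gives the same joint law as growing them in the opposite order. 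This commutation holds exactly because $\PartF$ solves~\eqref{eq: multiple SLE PDEs} for \emph{all} $i$ simultaneously; here I would invoke~\cite{DubedatCommutationSLE,GrahamSLE} rather than redo the two-curve It\^o calculation.

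For Part (1), starting from a local $N$-$\SLE_\kappa$ with marginal drift functions $F_1,\ldots,F_{2N}$ as in $\mathrm{(MARG)}$, the plan is to extract $\PartF$ from the commutation relations: by $\mathrm{(DMP)}$ and consistency under shrinking the localization neighborhoods, the two-curve growth commutes, and Dub\'edat's theorem translates this into the existence of a smooth $\PartF\colon\chamber_{2N}\to\Rpos$ with $F_j=\kappa\,\partial_j\log\PartF$ for all $j$ and, as the no-drift condition for the martingales $M_t$ above, the second-order system~\eqref{eq: multiple SLE PDEs}. Then $\mathrm{(COV)}$ follows from $\mathrm{(CI)}$: translations and dilations give homogeneity in the marked points, and the inversions pin the covariance exponent to $h$. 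For the uniqueness statement I would note that if $\kappa\,\partial_j\log\PartF = F_j = \kappa\,\partial_j\log\tilde{\PartF}$ for all $j$, then $\nabla\log(\PartF/\tilde{\PartF})\equiv 0$ on the connected open set $\chamber_{2N}$, so $\PartF = \text{const.}\times\tilde{\PartF}$; conversely a positive constant multiple obviously produces the same drifts and hence the same local $N$-$\SLE_\kappa$.

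The main obstacle --- which I would quote rather than reprove --- is Dub\'edat's equivalence between the commutation of the two growing curves and the existence of a common potential $\PartF$ solving the second-order system: a priori the $F_j$ are arbitrary smooth functions, and it is the commutation relation that both forces the gradient field $(F_j/\kappa)_j$ to be exact (producing $\log\PartF$) and fixes the precise form~\eqref{eq: multiple SLE PDEs} of the PDEs, via a careful It\^o-calculus analysis of the joint law of two growing curves. A minor technical point throughout is the passage between local martingales and genuine martingales, which causes no trouble because one may restrict to localization neighborhoods on which the relevant quantities are bounded.
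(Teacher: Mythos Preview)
Your proposal is correct and matches the paper's approach: the paper's own proof is a one-line citation to \cite{DubedatCommutationSLE, GrahamSLE, KytolaVirasoroSLE} and \cite[Theorem~A.4]{KytolaPeoltolaPurePartitionSLE}, and what you have written is precisely a sketch of the content of those references. There is no difference in strategy; you simply unpack what the paper leaves to the literature.
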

\begin{proof}
This 
follows from results in~\cite{DubedatCommutationSLE, GrahamSLE, KytolaVirasoroSLE} and~\cite[Theorem A.4]{KytolaPeoltolaPurePartitionSLE}.
\end{proof}

For each (normalized) partition function $\PartF \colon \chamber_{2N} \to \Rpos$, 
that is, a solution to $\mathrm{(PDE)}$~\eqref{eq: multiple SLE PDEs} and 
$\mathrm{(COV)}$~\eqref{eq: multiple SLE Mobius covariance}, we call the collection $\localSLE$ 
of probability measures for which we have in $\mathrm{(MARG)}$ $F_j=\kappa \partial_j \log\PartF$, 
for all $j \in \{1,\ldots,2N\}$,  \textit{the local $N$-$\SLE_{\kappa}$ with partition function $\PartF$}. 
Next, we prove that our construction of the global $N$-$\SLE_\kappa$ measures in Section~\ref{sec::characterization}
is consistent with this local definition.

\begin{lemma}\label{lem::global_is_local}
Let $\kappa\in (0,4]$. Any global $N$-$\SLE_{\kappa}$ satisfying
$\mathrm{(MARG)}$ is a local $N$-$\SLE_{\kappa}$ when it is restricted to any  
localization neighborhoods. For any $\alpha \in \LP_N$, the restriction of the
global $N$-$\SLE_{\kappa}$ probability measure $\QQ_{\alpha}^{\#}$ 
associated to $\alpha$ 
(constructed in Proposition~\ref{prop::global_existence}) to any localization neighborhoods
coincides with the local $N$-$\SLE_{\kappa}$ with 
partition function $\PartF_\alpha$ given by~\eqref{eqn::purepartition_alpha_def}.
\end{lemma}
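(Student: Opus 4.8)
The plan is to verify that the restriction of a global $N$-$\SLE_{\kappa}$ to localization neighborhoods satisfies the three defining properties of a local $N$-$\SLE_{\kappa}$: conformal invariance $\mathrm{(CI)}$, the domain Markov property $\mathrm{(DMP)}$, and absolute continuity of marginals $\mathrm{(MARG)}$. The first assertion of the lemma then reduces to establishing $\mathrm{(CI)}$ and $\mathrm{(DMP)}$ (as $\mathrm{(MARG)}$ is assumed), and the second follows once we check that $\QQ_{\alpha}^{\#}$ satisfies $\mathrm{(MARG)}$ with $F_j = \kappa\,\partial_j\log\PartF_\alpha$ and invoke Proposition~\ref{prop::local_multiple_SLE}. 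Property $\mathrm{(CI)}$ is built into the construction: the conditional-law characterization of a global $N$-$\SLE_{\kappa}$ involves only chordal $\SLE_{\kappa}$'s, which are conformally invariant, so pushing forward by a conformal map commutes with the construction and with localization. For $\mathrm{(DMP)}$, one conditions on initial segments $\gamma_j[0,\tau_j]$ near all marked points and argues, using the conditional-law property, the domain Markov property of the chordal $\SLE_{\kappa}$, and the boundary perturbation property of Proposition~\ref{prop::multiplesle_boundary_perturbation}, that the conditional law of the continuations is again a localized global $N$-$\SLE_{\kappa}$ associated to $\alpha$ in the slit polygon $\Omega\setminus\bigcup_j\gamma_j[0,\tau_j]$ with marked points at the tips.

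The heart of the matter is the computation of the marginal law of a single curve under $\QQ_{\alpha}^{\#}$. Fix $j \in \{1,\dots,2N\}$ and let $\eta$ be the curve of the global $N$-$\SLE_{\kappa}$ having $x_j$ as an endpoint, joining $x_j$ and, say, $x_b$; by reversibility of the $\SLE_{\kappa}$ for $\kappa \le 4$ we may orient $\eta$ from $x_j$. By Proposition~\ref{prop::GeneralCascadeProp} (or Corollary~\ref{cor::globalsle_weighted}), the marginal law of $\eta$ under $\QQ_{\alpha}^{\#}$ is $\PP(\HH; x_j, x_b)$ weighted by
\[
N(\eta) = \frac{H_{\HH}(x_j, x_b)^h}{\PartF_\alpha(\HH; x_1,\dots,x_{2N})}\,\PartF_{\alpha^L}(D_\eta^L)\,\PartF_{\alpha^R}(D_\eta^R) ,
\]
so the law of the initial segment $\eta[0,t]$ is that of the chordal $\SLE_{\kappa}$ from $x_j$ to $x_b$ weighted by the martingale $N_t := \E[N(\eta)\mid\eta[0,t]]$. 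To evaluate $N_t$, note that given $\eta[0,t]$ the continuation $\eta|_{[t,\infty)}$ is the chordal $\SLE_{\kappa}$ in $\HH_t := \HH\setminus\eta[0,t]$ from $\eta(t)$ to $x_b$, and $\HH\setminus\eta = \HH_t\setminus\eta|_{[t,\infty)}$; applying Proposition~\ref{prop::GeneralCascadeProp} in the polygon $\HH_t$ and using that its Radon--Nikodym derivative integrates to $1$ (it relates two probability measures), we obtain
\[
N_t = \frac{H_{\HH}(x_j, x_b)^h}{\PartF_\alpha(\HH; x_1,\dots,x_{2N})}\cdot\frac{\PartF_\alpha\big(\HH_t; \eta(t), (x_i)_{i\neq j}\big)}{H_{\HH_t}(\eta(t), x_b)^h} .
\]
Rewriting numerator and denominator via the conformal covariance~\eqref{eq: multiple SLE conformal covariance} of $\PartF_\alpha$ and~\eqref{eqn::poisson_cov} of the boundary Poisson kernel under the Loewner map $g_t$ of $\eta$, the (singular) derivative factor at the growing tip cancels, and with $W_t$ the driving function (in the $j$-th slot) one gets
\[
N_t = (\mathrm{const})\times\prod_{i\neq j,b} g_t'(x_i)^h\,\big(g_t(x_b)-W_t\big)^{2h}\,\PartF_\alpha\big(g_t(x_1),\dots,W_t,\dots,g_t(x_{2N})\big) .
\]

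Finally I would run a Girsanov computation. Under $\PP(\HH; x_j, x_b)$ the driving function of $\eta$ (Loewner chain from $x_j$) solves $\ud W_t = \sqrt{\kappa}\,\ud B_t + \tfrac{\kappa-6}{W_t - g_t(x_b)}\,\ud t$ by Lemma~\ref{lem::slekapparho_mart}; weighting by $N_t$ adds the drift $\kappa\,\partial_j\log\PartF_\alpha - \tfrac{2\kappa h}{g_t(x_b)-W_t}$, and since $2\kappa h = 6-\kappa$ this cancels the force-point term, leaving precisely the $\mathrm{(MARG)}$ equation with $F_j = \kappa\,\partial_j\log\PartF_\alpha$. (Here $\PartF_\alpha$ is smooth by Lemma~\ref{lem::partitionfunction_pde}, so the drift makes sense, and the bound $\PartF_\alpha \le \LB_\alpha^{2h}$ from~\eqref{eqn::partf_alpha_upper} keeps $N_t$ a genuine martingale up to the relevant stopping times.) Combined with the first part of the lemma and Proposition~\ref{prop::local_multiple_SLE}, this identifies the restriction of $\QQ_{\alpha}^{\#}$ with the local $N$-$\SLE_{\kappa}$ with partition function $\PartF_\alpha$. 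The main obstacle I anticipate is the rigorous justification of $\mathrm{(DMP)}$ --- that conditioning the global measure on arbitrarily small initial segments near \emph{all} $2N$ marked points reproduces the global $N$-$\SLE_{\kappa}$ in the slit polygon --- whereas the marginal computation, once organized via the nested use of Proposition~\ref{prop::GeneralCascadeProp}, is routine diffusion theory.
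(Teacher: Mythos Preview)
Your proposal is correct, and the overall architecture matches the paper's: verify $\mathrm{(CI)}$, $\mathrm{(DMP)}$, and $\mathrm{(MARG)}$, with the last one identified via a martingale computation and Girsanov. There are, however, two notable differences worth pointing out.

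For $\mathrm{(MARG)}$, the paper does not rederive the martingale via a nested use of Proposition~\ref{prop::GeneralCascadeProp} as you do. Instead it simply recycles the martingale
\[
M_t \;=\; \prod_{i=2}^{2N} g_t'(x_i)^{h}\,\PartF_{\alpha}\big(W_t, g_t(x_2),\ldots,g_t(x_{2N})\big)
\]
(with respect to the $\SLE_{\kappa}$ from $x_1$ to $\infty$) already computed in the proof of Lemma~\ref{lem::partitionfunction_pde} via the Brownian loop measure decomposition of $R_{\alpha}$. Your route---computing $N_t=\E[N(\eta)\mid\eta[0,t]]$ by applying Proposition~\ref{prop::GeneralCascadeProp} a second time in the slit polygon $\HH_t$---is an elegant alternative that avoids the loop-measure bookkeeping and instead exploits the cascade identity directly; the resulting martingale (with target $x_b$, hence carrying the factor $(g_t(x_b)-W_t)^{2h}$) is the same as the paper's up to the change-of-target martingale of Lemma~\ref{lem::slekapparho_mart}, and your Girsanov cancellation $2\kappa h = 6-\kappa$ is correct.

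For $\mathrm{(DMP)}$, you overestimate the difficulty. The paper does not use the boundary perturbation Proposition~\ref{prop::multiplesle_boundary_perturbation} here at all. It argues directly: for each link $\{a,b\}\in\alpha$, set $\gamma_a$ to be the initial segment of $\eta$ (run forward from $x_a$) and $\gamma_b$ to be the initial segment of the time-reversal $\bar\eta$ (run from $x_b$). Then $\mathrm{(DMP)}$ for $(\gamma_1,\ldots,\gamma_{2N})$ is an immediate consequence of the domain Markov property and the \emph{reversibility} of the chordal $\SLE_{\kappa}$ applied to each curve in its conditioned domain. The reversibility is the ingredient you did not mention, and it is what makes the two-endpoint localization painless.
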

\begin{proof}
Fix $\Omega=\HH$, boundary points $x_1<\cdots<x_{2N}$, localization neighborhoods 
$(U_1, \ldots, U_{2N})$, and a link pattern $\alpha\in\LP_N$. 
Suppose that $(\eta_1, \ldots, \eta_N)$ is a global $N$-$\SLE_{\kappa}$ associated to~$\alpha$.
Given any link $\link{a}{b} \in \alpha$, let $\eta$ be the curve connecting $x_{a}$ to $x_{b}$,
and denote by $\overline{\eta}$ the time-reversal of $\eta$. Let $\tau$ be the first time when $\eta$ exits $U_{a}$, and define 
$\gamma_{a}$ to be the curve $(\eta(t): 0\le t\le \tau)$. Let $\overline{\tau}$ be the first time 
when $\overline{\eta}$ exits $U_{b}$, and define $\gamma_{b}$ to be the curve
$(\overline{\eta}(t): 0\le t\le \overline{\tau})$. By conformal invariance of the $\SLE_\kappa$, 
the law of the collection $(\gamma_1, \ldots, \gamma_{2N})$ satisfies $\mathrm{(CI)}$.
It also satisfies $\mathrm{(DMP)}$,
thanks to the domain Markov property and reversibility of the $\SLE_{\kappa}$.
Therefore, any global $N$-$\SLE_{\kappa}$ satisfying $\mathrm{(MARG)}$ is a local $N$-$\SLE_{\kappa}$ when 
restricted to any localization neighborhoods.

Suppose then that $(\eta_1,\ldots, \eta_N)\sim\QQ_{\alpha}^{\#}(\HH; x_1, \ldots, x_{2N})$ and define 
$(\gamma_1, \ldots, \gamma_{2N})$ 
as above. 
We only need to check 
the property $\mathrm{(MARG)}$. Without loss of generality, we 
prove it for $\gamma_1$. From the proof of Lemma~\ref{lem::partitionfunction_pde}, we see that 
the marginal law of $\gamma_1$ under $\QQ_{\alpha}^{\#}$ is 
absolutely continuous with respect to the $\SLE_{\kappa}$ in $\HH$ from $x_1$ to $\infty$, 
and the Radon-Nikodym derivative 
is given by the local martingale
\[\prod_{j=2}^{2N} g_t'(x_j)^h \times \PartF_{\alpha}(W_t, g_t(x_2), \ldots, g_t(x_{2N})).\]
This implies that the curve $\gamma_1$ has the same driving function as in $\mathrm{(MARG)}$ for $j=1$, 
with drift function 
$F_1=\kappa \partial_1\log \PartF_{\alpha}$.
Because, by Lemma~\ref{lem::partitionfunction_pde},
$\PartF_{\alpha}$ is smooth, $F_1$ is smooth. 
This completes the proof.   
\end{proof}

We finish this section with the proofs of Theorem~\ref{thm::global_existence} and Corollary~\ref{cor::localmultiplesle}.

\globalexistencethm*
\begin{proof}
A global $N$-$\SLE_{\kappa}$ 
was constructed in Proposition~\ref{prop::global_existence}. 
The two properties 
were proved respectively 
in Propositions~\ref{prop::multiplesle_boundary_perturbation} and~\ref{prop::GeneralCascadeProp}.
That the local and global $\SLE_\kappa$ 
agree follows from Lemma~\ref{lem::global_is_local}.
\end{proof}

Corollary~\ref{cor::localmultiplesle} describes the convex structure of the local multiple $\SLE$ probability measures.
If $\PartF_1$ and $\PartF_2$ are two partition functions, i.e., positive solutions to $\mathrm{(PDE)}$~\eqref{eq: multiple SLE PDEs} and 
$\mathrm{(COV)}$~\eqref{eq: multiple SLE Mobius covariance},
set $\PartF = \PartF_1 + \PartF_2$ and denote by $\localSLE, \localSLE_1, \localSLE_2$ the local multiple $\SLE$s associated to $\PartF, \PartF_1, \PartF_2$, respectively. 
Then, the probability measure $\localSLE$ can be 
written as
the following convex combination; see \cite[Theorem~A.4(c)]{KytolaPeoltolaPurePartitionSLE}: 
\begin{align*}
\localSLE^{(\Omega; x_1, \ldots, x_{2N})}_{(U_1, \ldots, U_{2N})} = 
\frac{\PartF_1(\Omega; x_1, \ldots, x_{2N})}{\PartF(\Omega; x_1, \ldots, x_{2N})} \, (\localSLE_1)^{(\Omega; x_1, \ldots, x_{2N})}_{(U_1, \ldots, U_{2N})} 
\, + \, \frac{\PartF_2(\Omega; x_1, \ldots, x_{2N})}{\PartF(\Omega; x_1, \ldots, x_{2N})} \, (\localSLE_2)^{(\Omega; x_1, \ldots, x_{2N})}_{(U_1, \ldots, U_{2N})}.
\end{align*}

\localmultiplesle*
\begin{proof}
This is a consequence of Theorem~\ref{thm::purepartition_existence} and Proposition~\ref{prop::local_multiple_SLE}. 
\end{proof}

\subsection{Loewner Chains Associated to Pure Partition Functions}
\label{subsec::purepartition_thm}
In this section, we show that the Loewner chain associated to $\PartF_\alpha$ 
is almost surely generated by a continuous curve up to and including the continuation threshold.
This is a consequence of the strong bound~\eqref{eqn::partitionfunction_positive} in Theorem~\ref{thm::purepartition_existence}.

\begin{proposition} \label{prop::loewnerchain_purepartition}
Let $\kappa\in (0,4]$ and $\alpha\in\LP_N$. Assume that $\link{a}{b}\in\alpha$. 
Let $W_t$ be the solution to the following SDEs:
\begin{align}\label{eqn::loewnerchain_purepartition}
\begin{split}
\ud W_t 
= \; & \sqrt{\kappa} \ud B_t 
+\kappa\partial_{a}\log\PartF_{\alpha} \left(V_t^1, \ldots, V_t^{a-1}, W_t, V_t^{a+1}, \ldots, V_t^{2N}\right) \ud t, 
\qquad W_0 = x_a , \\
\ud V_t^i = \; & \frac{2 \ud t}{V_t^i-W_t}, \qquad V_0^i = x_i, \quad \text{for } i\neq a. 
\end{split}
\end{align}
Then, the Loewner chain driven by $W_t$ is well-defined up to the swallowing time $T_b$ of $x_b$.
Moreover, it is almost surely generated by a continuous curve up to and including $T_b$. 
This curve has the same law as the one connecting $x_a$ and $x_b$ in the global multiple $\SLE_{\kappa}$ associated to $\alpha$ 
in the polygon $(\HH; x_1, \ldots, x_{2N})$. 
\end{proposition}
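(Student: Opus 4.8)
The plan is to identify the Loewner chain driven by \eqref{eqn::loewnerchain_purepartition} as the marginal law of a single curve in the global $N$-$\SLE_\kappa$ measure $\QQ_\alpha^\#$, and then transfer continuity (including at the continuation threshold) from the global object. First I would fix the link $\link{a}{b}\in\alpha$ and recall from Corollary~\ref{cor::globalsle_weighted} (the special case of Proposition~\ref{prop::GeneralCascadeProp} when the two endpoints are adjacent, but more generally I want the statement for an arbitrary link) that the marginal law of the curve $\eta$ joining $x_a$ and $x_b$ under $\QQ_\alpha^\#$ is absolutely continuous with respect to $\PP(\HH;x_a,x_b)$, with Radon--Nikodym derivative expressed via $\PartF_\alpha$, $\PartF_{\alpha^L}$, $\PartF_{\alpha^R}$ and the boundary Poisson kernel. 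Using Lemma~\ref{lem::slekapparho_mart} to change the target point from $x_b$ to $\infty$, this marginal can be rewritten as the chordal $\SLE_\kappa$ in $\HH$ from $x_a$ to $\infty$ weighted by the local martingale $M_t=\prod_{j\neq a}g_t'(x_j)^h\,\PartF_\alpha(V_t^1,\ldots,W_t,\ldots,V_t^{2N})$, exactly as in the computation already carried out in the proof of Lemma~\ref{lem::partitionfunction_pde}. By Girsanov's theorem, reweighting by $M_t$ changes the driving Brownian motion by the drift $\kappa\,\partial_a\log\PartF_\alpha$, which is precisely the SDE~\eqref{eqn::loewnerchain_purepartition}; smoothness of $\PartF_\alpha$ (Lemma~\ref{lem::partitionfunction_pde}) and strict positivity (Lemma~\ref{lem::partitionfunction_positive}) guarantee this drift is well-defined and smooth on $\chamber_{2N}$, so the SDE has a unique strong solution up to $T_b$. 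This shows the Loewner chain of \eqref{eqn::loewnerchain_purepartition} is well-defined up to $T_b$ and, up to any time strictly before $T_b$, agrees in law with the said curve in $\QQ_\alpha^\#$, which by Theorem~\ref{thm::global_existence} / Proposition~\ref{prop::global_existence} is a continuous simple curve.

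The remaining, and main, point is continuity \emph{up to and including} the swallowing time $T_b$: one must show that as $t\uparrow T_b$ the Loewner chain is still generated by a continuous curve and that this curve actually terminates at $x_b$ (rather than, say, closing off a region of positive size or approaching a different boundary point). The standard route, following Rohde--Schramm-type arguments, is to control the conformal radius / harmonic-measure behavior of the tip as it approaches $x_b$; concretely, one shows that the driving function $W_t$ and the image point $V_t^{\,b}=g_t(x_b)$ merge continuously, i.e. $W_t-V_t^b\to 0$ and the derivative $g_t'(x_b)$ stays controlled, so no macroscopic piece of boundary is swallowed at the last instant. Here the strong bound \eqref{eqn::partitionfunction_positive}, $0<\PartF_\alpha(x_1,\ldots,x_{2N})\le\prod_{\link{c}{d}\in\alpha}|x_d-x_c|^{-2h}$, is exactly what is needed: together with the refined asymptotics $\mathrm{(ASY)}$ from Lemma~\ref{lem::partitionfunction_asy}, it lets one compare the weighted law near $T_b$ with an $\SLE_\kappa(\rho)$-type process in which $x_b$ plays the role of a force point with $\rho=\kappa-6>-2$ (the $\log$-derivative of the factor $(g_t(x_b)-W_t)^{2h}$ contributes precisely this force), a process which is known to be continuous up to and including the time its force point is hit. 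I would phrase this as: the bound \eqref{eqn::partitionfunction_positive} implies that the Radon--Nikodym derivative relating our process to a genuine $\SLE_\kappa(\kappa-6)$ aimed at $x_b$ is bounded, hence the continuity and termination-at-$x_b$ properties of the latter transfer.

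Finally, having established that the Loewner chain of \eqref{eqn::loewnerchain_purepartition} is generated by a continuous curve up to and including $T_b$ that ends at $x_b$, the identification of its law with that of the curve $\eta$ connecting $x_a$ and $x_b$ in the global multiple $\SLE_\kappa$ associated to $\alpha$ is immediate: the two processes agree in law on every compact time interval $[0,t]$ with $t<T_b$ by the Girsanov computation above, and both are continuous curves on $[0,T_b]$, so their laws coincide on the whole path. I expect the Girsanov identification and the use of Corollary~\ref{cor::globalsle_weighted} to be routine; the genuinely delicate step is the boundary behavior at $T_b$, where one must invoke the strong bound \eqref{eqn::partitionfunction_positive} rather than merely $\mathrm{(ASY)}$ — as emphasized after the statement of Theorem~\ref{thm::multiple_sle_4} — to rule out pathologies and to borrow continuity from the comparison $\SLE_\kappa(\kappa-6)$ process.
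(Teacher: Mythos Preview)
Your plan is essentially the paper's proof: weight chordal $\SLE_\kappa$ from $x_a$ to $x_b$ by the martingale $M_t=\prod_{i\neq a,b}g_t'(x_i)^h\,\PartF_\alpha(\cdot)\,(g_t(x_b)-W_t)^{2h}$, use the strong bound \eqref{eqn::partitionfunction_positive} together with Poisson kernel monotonicity \eqref{eqn::poissonkernel_mono} to show $M_t$ is uniformly bounded, and then transfer the almost-sure properties of the reference curve (continuity up to and including $T_b$, and no other marked point hit) by absolute continuity with bounded density. Two small points on execution: smoothness and positivity of $\PartF_\alpha$ only give a solution of the SDE up to $T=\min_{i\neq a}T_i$, and the identification $T=T_b$ is itself part of the bounded-density step (the reference $\SLE_\kappa$ from $x_a$ to $x_b$ a.s.\ hits no other $x_i$); and neither $\mathrm{(ASY)}$ nor any Rohde--Schramm-type estimate is needed here---once $M_t$ is bounded, the endpoint behavior follows directly.
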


\begin{proof}
Without loss of generality, we assume that $a=1$. 
Consider the Loewner chain $K_t$ driven by $W_t$. Let $\gamma$ be the chordal $\SLE_{\kappa}$ in $\HH$ from $x_1$ to $x_b$. 
For each $i \in \{2, \ldots, 2N\}$, let $T_i$ be the swallowing time of the point $x_i$ and define $T$ to be the minimum of all $T_i$ for $i\neq 1$. 
It is clear that the Loewner chain is well-defined up to $T$. 
For $t<T$, the law of $K_t$ is that of the curve $\gamma[0,t]$ weighted by the martingale 
\begin{align*}
M_t := \prod_{i \neq 1, b} g_t'(x_i)^h \times \PartF_{\alpha}(W_t, g_t(x_2), \ldots, g_t(x_{2N})) \times (g_t(x_b)-W_t)^{2h}. 
\end{align*}
It follows from the bound~\eqref{eqn::partitionfunction_positive} that 
$M_t$ is in fact a bounded martingale: for any $t<T$, we have
\begin{align*}
M_t& \le \prod_{i\neq 1, b} g_t'(x_i)^h \times \prod_{\substack{\link{c}{d} \in \alpha , \\ \link{c}{d} \neq \link{1}{b} }}(g_t(x_{d})-g_t(x_{c}))^{-2h} 
&& 
\text{[by~\eqref{eqn::partitionfunction_positive}]}\\
&=  \prod_{\substack{\link{c}{d} \in \alpha , \\ \link{c}{d} \neq \link{1}{b} }} 
\left(\frac{g_t'(x_{c})g_t'(x_{d})}{(g_t(x_{d})-g_t(x_{c}))^2}\right)^h
\le \prod_{\substack{\link{c}{d} \in \alpha , \\ \link{c}{d} \neq \link{1}{b} }} (x_{d}-x_{c})^{-2h}. 
&& 
\text{[by~\eqref{eqn::poissonkernel_mono}]}
\end{align*}

Now, $\gamma$ is a continuous curve up to and including the swallowing time of $x_b$, and almost surely, it does not hit any other point in $\R$. 
Combining this with the fact that $(M_t, t\le T)$ is bounded, the same property is also true for the Loewner chain $(K_t, t\le T)$, and we have $T=T_b$. 
This shows that the Loewner chain driven by $W_t$ is almost surely generated by a continuous curve up to and including $T_b$. 

Finally, let $\eta$ be the curve connecting $x_1$ and $x_b$ in the global multiple $\SLE_{\kappa}$ associated to $\alpha$.
From the proof of Lemma~\ref{lem::global_is_local}, we know that the Loewner chain $K_t$ has the same law as $\eta[0,t]$ for any $t<T_b$. 
Since both $K$ and $\eta$ are continuous curves up to and including the swallowing time of $x_b$, 
this implies that $(K_t, t\le T_b)$ has the same law as $\eta$.  This completes the proof.
\end{proof}

\subsection{Symmetric Partition Functions}
\label{subsec::total_pf}
In this section, we collect some results concerning the symmetric partition functions
\begin{align}\label{eq: total SLE partition function}
\PartF^{(N)} := \sum_{\alpha\in \LP_N} \PartF_{\alpha},
\end{align}
where $\{\PartF_\alpha \colon \alpha \in \LP\}$ is the collection of functions of 
Theorem~\ref{thm::purepartition_existence}. 
In the range $\kappa \in (0,4]$, the functions $\PartF^{(N)}$ have explicit 
formulas for $\kappa=2,3$, and $4$, given respectively in 
Lemmas~\ref{lem::ztotal_lerw}, \ref{lem::ztotal_ising} and~\ref{lem: gff symmetric pff}. 

\begin{lemma}\label{lem::totalpartitionfunction}
The collection $\{\PartF^{(N)}\colon N\ge 0\}$ of functions $\PartF^{(N)} \colon \chamber_{2N} \to \Rpos$ satisfies 
$\PartF^{(N)} \in \LS_N$ and $\PartF^{(0)} = 1$, and the asymptotics property
\begin{align}\label{eqn::total_asy_refined}
\lim_{\substack{\tilde{x}_j , \tilde{x}_{j+1} \to \xi, \\ \tilde{x}_i\to x_i \text{ for } i \neq j, j+1}} 
\frac{\PartF^{(N)}(\tilde{x}_1 , \ldots , \tilde{x}_{2N})}{(\tilde{x}_{j+1} - \tilde{x}_j)^{-2h}} 
=\PartF^{(N-1)}(x_{1},\ldots,x_{j-1},x_{j+2},\ldots,x_{2N}) ,
\end{align} 
for all $j \in \{1, \ldots, 2N-1 \}$ and 
$x_1 < \cdots < x_{j-1} < \xi < x_{j+2} < \cdots < x_{2N}$.
In particular, we have
\begin{align}\label{eq: total SLE asymptotics}
\lim_{x_j ,x_{j+1} \to \xi} 
\frac{\PartF^{(N)}(x_1 , \ldots , x_{2N})}{(x_{j+1} -x_j)^{-2h}} 
=\PartF^{(N-1)}(x_{1},\ldots,x_{j-1},x_{j+2},\ldots,x_{2N}) .
\end{align}
\end{lemma}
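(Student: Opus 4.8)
The plan is to deduce every assertion of the lemma from the corresponding properties of the individual pure partition functions $\PartF_{\alpha}$ already established in Lemmas~\ref{lem::partitionfunction_positive}, \ref{lem::partitionfunction_cov}, \ref{lem::partitionfunction_asy}, \ref{lem::partitionfunction_pde} (and Proposition~\ref{prop: FK dual elements}), using only linearity of all the conditions involved together with one combinatorial bijection on link patterns. No new analytic input is needed; the work is purely one of assembling a finite sum.

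First I would record that $\PartF^{(N)} \in \LS_N$, is positive, and that $\PartF^{(0)} = 1$. By Proposition~\ref{prop: FK dual elements} each $\PartF_{\alpha}$ lies in $\LS_N$, and $\LS_N$ is a linear subspace of the functions $\chamber_{2N}\to\C$: the system $\mathrm{(PDE)}$~\eqref{eq: multiple SLE PDEs} and the covariance $\mathrm{(COV)}$~\eqref{eq: multiple SLE Mobius covariance} are linear conditions, while the power law bound~\eqref{eqn::powerlawbound} is stable under finite sums (pass to a common exponent $p$ and add the constants $C$). Since $\LP_N$ is finite, $\PartF^{(N)} = \sum_{\alpha\in\LP_N}\PartF_{\alpha}\in\LS_N$; it maps into $\Rpos$ because each $\PartF_{\alpha}$ does; and $\PartF^{(0)} = \PartF_{\emptyset} = 1$ because $\LP_0 = \{\emptyset\}$.

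Next I would prove the refined asymptotics~\eqref{eqn::total_asy_refined}. Fixing $j$ and $\xi$ as in the statement, I would apply the refined asymptotics~\eqref{eqn::partf_alpha_asy_refined} of Lemma~\ref{lem::partitionfunction_asy} to each summand; as the sum over $\alpha\in\LP_N$ is finite, the limit commutes with it, and only the terms with $\link{j}{j+1}\in\alpha$ survive, each contributing $\PartF_{\alpha\removeLink\link{j}{j+1}}$ evaluated at the remaining $2N-2$ points. The map $\alpha\mapsto\alpha\removeLink\link{j}{j+1}$ is a bijection from $\{\alpha\in\LP_N \colon \link{j}{j+1}\in\alpha\}$ onto $\LP_{N-1}$ (its inverse re-inserts a link at positions $j,j+1$ after an order-preserving relabeling), so the surviving sum reassembles precisely into $\PartF^{(N-1)}(x_{1},\ldots,x_{j-1},x_{j+2},\ldots,x_{2N})$. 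This yields~\eqref{eqn::total_asy_refined}, and~\eqref{eq: total SLE asymptotics} is the special case $\tilde{x}_i = x_i$ for $i\neq j,j+1$.

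I do not expect a genuine obstacle: all of the delicate content (positivity, the PDE system and smoothness, M\"obius covariance, and the sharp refined asymptotics) is already in hand for the $\PartF_{\alpha}$. The only step needing a line of care is verifying that membership in $\LS_N$ — specifically the bound~\eqref{eqn::powerlawbound} — is inherited by the finite sum; this is immediate from the uniform estimate $0<\PartF_{\alpha}(x_1,\ldots,x_{2N})\le\prod_{\link{a}{b}\in\alpha}|x_{b}-x_{a}|^{-2h}$ of~\eqref{eqn::partitionfunction_positive} (with $h>0$ for $\kappa\in(0,4]$), which bounds each $\PartF_{\alpha}$, and hence $\PartF^{(N)}$ up to the factor $\Catalan_N$, by a product of the form required in~\eqref{eqn::powerlawbound} with $p=2h$.
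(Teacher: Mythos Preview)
Your proposal is correct and follows essentially the same approach as the paper's proof, which simply cites Proposition~\ref{prop: FK dual elements} for $\PartF^{(N)}\in\LS_N$ and Lemma~\ref{lem::partitionfunction_asy} for the asymptotics. You have merely made explicit what the paper leaves implicit: that $\LS_N$ is closed under finite sums (including the stability of the power law bound~\eqref{eqn::powerlawbound}) and that the link-removal map $\alpha\mapsto\alpha\removeLink\link{j}{j+1}$ bijects $\{\alpha\in\LP_N:\link{j}{j+1}\in\alpha\}$ onto $\LP_{N-1}$.
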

\begin{proof}
The normalization $\PartF^{(0)} = 1$ is clear, and 
we have $\PartF^{(N)} \in \LS_N$ by Proposition~\ref{prop: FK dual elements}.
The asymptotics~\eqref{eqn::total_asy_refined} and~\eqref{eq: total SLE asymptotics} follow from the asymptotics of 
the pure partition functions $\PartF_{\alpha}$ from Lemma~\ref{lem::partitionfunction_asy}.
\end{proof}

\begin{corollary}\label{cor::totalpartitionfunction_existence}
Let $\{F^{(N)}\colon N\ge 0\}$ be a collection of functions $F^{(N)} \in \LS_N$ 
satisfying 
the asymptotics property~\eqref{eq: total SLE asymptotics} with the normalization $F^{(0)} = 1$.
Then we have
$F^{(N)} = \PartF^{(N)}$ for all $N\ge 0$. 
\end{corollary}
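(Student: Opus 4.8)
The plan is to prove uniqueness by induction on $N$, using the result of Flores--Kleban (Theorem~\ref{thm::purepartition_unique}) exactly as it is used in Corollary~\ref{cor::purepartition_unique}. First I would note that the case $N=0$ is trivial since both $F^{(0)}$ and $\PartF^{(0)}$ equal $1$ by the normalization. For the inductive step, suppose $F^{(N-1)} = \PartF^{(N-1)}$ for some $N \geq 1$, and consider the difference $G := F^{(N)} - \PartF^{(N)}$. Since both $F^{(N)}$ and $\PartF^{(N)}$ lie in $\LS_N$, so does $G$; in particular $G$ satisfies $\mathrm{(PDE)}$~\eqref{eq: multiple SLE PDEs}, $\mathrm{(COV)}$~\eqref{eq: multiple SLE Mobius covariance}, and the power law bound~\eqref{eqn::powerlawbound}.

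The key observation is that $G$ has vanishing asymptotics in the sense required by Theorem~\ref{thm::purepartition_unique}. Indeed, for each $j \in \{1, \ldots, 2N-1\}$ and $\xi \in (x_{j-1}, x_{j+1})$ (with the usual convention $x_0 = -\infty$, $x_{2N+1} = +\infty$), the asymptotics property~\eqref{eq: total SLE asymptotics} assumed for $F^{(N)}$ together with Lemma~\ref{lem::totalpartitionfunction} for $\PartF^{(N)}$ gives
\begin{align*}
\lim_{x_j, x_{j+1} \to \xi} \frac{G(x_1, \ldots, x_{2N})}{(x_{j+1} - x_j)^{-2h}}
= F^{(N-1)}(x_1, \ldots, x_{j-1}, x_{j+2}, \ldots, x_{2N}) - \PartF^{(N-1)}(x_1, \ldots, x_{j-1}, x_{j+2}, \ldots, x_{2N}) = 0
\end{align*}
by the induction hypothesis. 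This holds in particular for all $j \in \{2, \ldots, 2N-1\}$, which is precisely the hypothesis of Theorem~\ref{thm::purepartition_unique}. Hence that theorem yields $G \equiv 0$, i.e. $F^{(N)} = \PartF^{(N)}$, completing the induction.

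The main (and essentially only) subtlety is making sure the hypotheses of Theorem~\ref{thm::purepartition_unique} are genuinely met: one needs $G \in \LS_N$ (so that it satisfies $\mathrm{(PDE)}$, $\mathrm{(COV)}$, and the power law bound~\eqref{eqn::powerlawbound}), which is immediate from linearity since $\LS_N$ is a vector space and both $F^{(N)}$ and $\PartF^{(N)}$ belong to it; and one needs the vanishing limit for the interior indices $j \in \{2, \ldots, 2N-1\}$, which follows from the assumed asymptotics~\eqref{eq: total SLE asymptotics} and the induction hypothesis as displayed above. Note that, unlike the proof of Corollary~\ref{cor::purepartition_unique}, here the comparison function is built recursively rather than supplied directly, but the structure is identical. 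There is no hard analytic obstacle: all the heavy lifting is done by Theorem~\ref{thm::purepartition_unique}, and the argument is a short bootstrap.
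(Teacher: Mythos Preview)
Your proof is correct and follows essentially the same approach as the paper: the paper simply says that the proof of Corollary~\ref{cor::purepartition_unique} applies verbatim (with~\eqref{eq: total SLE asymptotics} in place of $\mathrm{(ASY)}$), and you have spelled out that argument explicitly, making the implicit induction on $N$ visible. One trivial typo: the range for $\xi$ should be $(x_{j-1}, x_{j+2})$, not $(x_{j-1}, x_{j+1})$.
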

\begin{proof}
After replacing $\mathrm{(ASY)}$~\eqref{eq: multiple SLE asymptotics} 
by~\eqref{eq: total SLE asymptotics},
the proof of Corollary~\ref{cor::purepartition_unique} applies verbatim to show that the collection 
$\{F^{(N)}\colon N\ge 0\}$ is unique. 
Lemma~\ref{lem::totalpartitionfunction} then shows that we have
$F^{(N)} = \PartF^{(N)}$ for all $N\ge 0$.
\end{proof}

Next we give algebraic formulas for the symmetric partition functions for $\kappa=2,3$ and $4$. 
To state them for $\kappa = 2,3$, we use the following notation.
Let $\Pi_N$ be the set of all partitions
$\varpi = \{\{a_1,b_1\} , \ldots, \{a_N, b_N\}\}$ of 
$\{1,\ldots,2N\}$ into $N$ disjoint two-element subsets $\{a_j,b_j\} \subset \{1,\ldots,2N\}$, 
with the convention that 
$a_j < b_j$, for all $j \in \{ 1 , \ldots, N\}$, and $a_1 < a_2 < \cdots < a_N$.
Denote by $\mathrm{sgn}(\varpi)$ the sign of the partition $\varpi$ 
defined as the sign of the product $\prod (a-c) (a-d) (b-c) (b-d)$ over pairs of distinct elements 
$\{a,b\},\{c,d\} \in \varpi$.

\begin{lemma}\label{lem::ztotal_lerw}
Let $\kappa = 2$. For all $N \geq 1$, we have 
\begin{align}\label{eq: LERW symmetric pff}
\PartF^{(N)}_{\mathrm{LERW}}(x_1, \ldots, x_{2N}) 
= \sum_{\varpi \in \Pi_N} \mathrm{sgn}(\varpi) 
\; \det \left( \frac{1}{(x_{b_j} - x_{a_i})^2} \right)_{i,j=1}^N .
\end{align}
In particular, $\PartF^{(N)}_{\mathrm{LERW}}(x_1, \ldots, x_{2N}) > 0$.
\end{lemma}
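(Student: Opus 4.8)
The plan is to verify that the right-hand side of~\eqref{eq: LERW symmetric pff} fulfils the hypotheses of Corollary~\ref{cor::totalpartitionfunction_existence}, and then to read off positivity from Theorem~\ref{thm::purepartition_existence}. Write
\[
G^{(N)}(x_1,\ldots,x_{2N}) := \sum_{\varpi \in \Pi_N} \mathrm{sgn}(\varpi)\, \det \left( \frac{1}{(x_{b_j}-x_{a_i})^2} \right)_{i,j=1}^{N}, \qquad G^{(0)} := 1 ,
\]
for the claimed formula; the value $G^{(0)}=1$ is forced, since $\Pi_0$ consists of the empty partition of sign $+1$ and the $0\times 0$ determinant equals $1$. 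I would then show that $G^{(N)}\in\LS_N$ for every $N$, that $G^{(0)}=1$, and that the collection $\{G^{(N)}\colon N\ge 0\}$ satisfies the asymptotics~\eqref{eq: total SLE asymptotics}. Corollary~\ref{cor::totalpartitionfunction_existence} then gives $G^{(N)}=\PartF^{(N)}$ for all $N$, and positivity of $\PartF^{(N)}=\sum_{\alpha\in\LP_N}\PartF_\alpha$ is immediate from the strict positivity in~\eqref{eqn::partitionfunction_positive}. Note that for $\kappa=2$ one has $h=1$, so $(x_{j+1}-x_j)^{-2h}=(x_{j+1}-x_j)^{-2}$.

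\textbf{The easy ingredients: covariance, growth bound, recursion.} For $\mathrm{(COV)}$~\eqref{eq: multiple SLE Mobius covariance}: if $\varphi$ is a M\"obius map of $\HH$ with $\varphi(x_1)<\cdots<\varphi(x_{2N})$, then $\varphi'>0$ at every real point involved and $(\varphi(x_{b_j})-\varphi(x_{a_i}))^2=\varphi'(x_{a_i})\varphi'(x_{b_j})(x_{b_j}-x_{a_i})^2$, so each matrix entry picks up the factor $\varphi'(x_{a_i})^{-1}\varphi'(x_{b_j})^{-1}$; pulling these out of rows and columns multiplies the determinant by $\prod_{k=1}^{2N}\varphi'(x_k)^{-1}$, while $\mathrm{sgn}(\varpi)$ is unchanged because every difference $x_a-x_c$ is rescaled by a positive quantity. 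This is precisely~\eqref{eq: multiple SLE Mobius covariance} with $h=1$. The bound~\eqref{eqn::powerlawbound} follows from $|G^{(N)}|\le (2N-1)!!\,N!\,\max_{\varpi}\prod_{\{a,b\}\in\varpi}|x_b-x_a|^{-2}$, which is dominated by $\prod_{i<j}(x_j-x_i)^{\mu_{ij}(2)}$ up to an $N$-dependent constant; and $G^{(N)}$ is manifestly smooth on $\chamber_{2N}$. For the asymptotics~\eqref{eq: total SLE asymptotics}, fix $j$ and let $x_j,x_{j+1}\to\xi$ with the other points fixed. In the expansion over $\varpi\in\Pi_N$, any $\varpi$ with $\{j,j+1\}\notin\varpi$ gives a matrix whose entries all stay bounded, hence contributes $o\big((x_{j+1}-x_j)^{-2}\big)$; if $\{j,j+1\}\in\varpi$, it is the $i$-th pair for some $i$, and the blow-up is confined to the diagonal entry $(i,i)=(x_{j+1}-x_j)^{-2}$, so expanding along it gives $(x_{j+1}-x_j)^{-2}$ times the complementary minor, which is already the matrix for $\varpi\setminus\{j,j+1\}$. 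Since $j,j+1$ are consecutive, $(j-c)(j+1-c)>0$ for every $c\notin\{j,j+1\}$, so $\mathrm{sgn}(\varpi)$ equals the sign of $\varpi\setminus\{j,j+1\}$ (after the order-preserving relabelling), and the cofactor sign $(-1)^{2i}$ is $+1$. Summing over $\Pi_{N-1}$ recovers $G^{(N-1)}$ at the reduced points, which is~\eqref{eq: total SLE asymptotics}.

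\textbf{The main obstacle: the PDE system.} The hard part is $\mathrm{(PDE)}$~\eqref{eq: multiple SLE PDEs} with $\kappa=2$, $h=1$. I would verify the equation for $i=1$; the remaining cases are analogous, since for every $\varpi\in\Pi_N$ the variable $x_i$ occurs in exactly one row or column of the matrix $\big((x_{b_j}-x_{a_i})^{-2}\big)$. For $i=1$ this is especially clean: the index $1$ is always the left endpoint of its pair, so $x_1$ occurs only in the first row, $\partial_1$ and $\partial_1^2$ act on that row alone, and each $\partial_j$ with $j\neq 1$ acts on a single row or column. Applying the one-variable identity $\big(\partial_x^2+\tfrac{2}{y-x}\partial_y-\tfrac{2}{(y-x)^2}\big)(y-x)^{-2}=0$ (the $N=1$ case, a direct check) to the pair containing $1$ cancels the ``diagonal'' contribution, leaving cross terms that mix the first row with another row or column. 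I expect these to cancel in the full signed sum over $\varpi$ — this is the analytic counterpart of Fomin's identity — and the bookkeeping of those cancellations is the only genuinely laborious point. Alternatively one may bypass the computation: the determinant~\eqref{eq: LERW symmetric pff} is the scaling limit of Fomin's formula for multichordal loop-erased-walk connectivities, which is a discrete martingale observable and hence solves the level-two degenerate PDEs in the limit; or one can cite that $\sum_{\alpha\in\LP_N}\PartF_\alpha$ for $\kappa=2$ is identified with this formula in~\cite{KKP:Correlations_in_planar_LERW_and_UST_and_combinatorics_of_conformal_blocks,Kenyon-Wilson:Boundary_partitions_in_trees_and_dimers}, our $\PartF_\alpha$ coinciding with theirs by the uniqueness in Theorem~\ref{thm::purepartition_existence}. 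Either way, once $G^{(N)}\in\LS_N$ and the recursion are in hand, Corollary~\ref{cor::totalpartitionfunction_existence} yields $G^{(N)}=\PartF^{(N)}$, and~\eqref{eqn::partitionfunction_positive} gives $\PartF^{(N)}_{\mathrm{LERW}}>0$.
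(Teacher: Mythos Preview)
Your overall strategy---verify that the right-hand side lies in $\LS_N$, check the normalization and the recursion~\eqref{eq: total SLE asymptotics}, then invoke Corollary~\ref{cor::totalpartitionfunction_existence}---is exactly the paper's approach. Your handling of $\mathrm{(COV)}$ and the growth bound is fine, and for $\mathrm{(PDE)}$ the paper also simply cites~\cite[Lemmas~4.4~and~4.5]{KKP:Correlations_in_planar_LERW_and_UST_and_combinatorics_of_conformal_blocks}, so your fallback to that reference is legitimate.

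The gap is in your asymptotics argument. The assertion ``any $\varpi$ with $\{j,j+1\}\notin\varpi$ gives a matrix whose entries all stay bounded'' is false. The matrix entry at position $(i,l)$ is $(x_{b_l}-x_{a_i})^{-2}$, and this blows up whenever $\{a_i,b_l\}=\{j,j+1\}$, regardless of whether $i=l$. If in $\varpi$ the index $j$ is a $b$-index of one pair and $j+1$ is an $a$-index of a different pair (or $j$ an $a$-index and $j+1$ a $b$-index of different pairs), then $\{j,j+1\}\notin\varpi$ yet an \emph{off-diagonal} entry diverges, and the determinant is of exact order $(x_{j+1}-x_j)^{-2}$. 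Already at $N=2$ with $j=2$: for $\varpi=\{\{1,2\},\{3,4\}\}$ the $(2,1)$ entry is $(x_2-x_3)^{-2}$, and for $\varpi=\{\{1,3\},\{2,4\}\}$ the $(2,1)$ entry is $(x_3-x_2)^{-2}$; neither $\varpi$ contains $\{2,3\}$, yet both contribute at leading order. These two leading contributions cancel only because $\mathrm{sgn}(\varpi)$ differs---the cancellation is \emph{between} different $\varpi$'s, not within a single determinant.

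The paper's proof handles precisely this: it expands each determinant over permutations $\sigma\in\mathfrak{S}_N$, identifies the surviving terms as those with $\{a_{\sigma(k)},b_k\}=\{j,j+1\}$ for some $k$, and then kills the $\sigma(k)\neq k$ cases by a sign-reversing bijection $\varphi_{l,k}$ on pair partitions (swap the partners of $j$ and $j+1$). What remains is exactly the sum over $\varpi$ containing $\{j,j+1\}$ with the permutation fixing that pair, which reduces to $G^{(N-1)}$. Your cofactor argument for the case $\{j,j+1\}\in\varpi$ is correct, but you need this additional pairing argument to dispose of the other $\varpi$'s.
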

\begin{proof}
Consider the function 
$\tilde{\PartF}^{(N)}_{\mathrm{LERW}} := \sum_\varpi \mathrm{sgn}(\varpi) \det \big((x_{b_j} - x_{a_i})^{-2}\big)$ 
on the right-hand side.
By \cite[Lemmas~4.4~and~4.5]{KKP:Correlations_in_planar_LERW_and_UST_and_combinatorics_of_conformal_blocks}
and linearity, this function 
satisfies $\mathrm{(PDE)}$~\eqref{eq: multiple SLE PDEs} and 
$\mathrm{(COV)}$~\eqref{eq: multiple SLE Mobius covariance} with $\kappa = 2$.
It also clearly satisfies the bound~\eqref{eqn::powerlawbound}.
Also, if $N = 0$, then we have $\tilde{\PartF}^{(0)}_{\mathrm{LERW}} = 1$.
Thus, by Corollary~\ref{cor::totalpartitionfunction_existence},
it suffices to show that $\tilde{\PartF}^{(N)}_{\mathrm{LERW}}$ also 
satisfies the asymptotics property~\eqref{eq: total SLE asymptotics} with $\kappa = 2$.
To prove this, fix $j \in \{1, \ldots, 2N-1 \}$ and $\xi \in (x_{j-1}, x_{j+2})$.
The limit in~\eqref{eq: total SLE asymptotics} with $\kappa = 2$ reads
\begin{align}
& \; \lim_{x_j , x_{j+1} \to \xi} 
(x_{j+1} - x_j)^{2} \; \tilde{\PartF}^{(N)}_{\mathrm{LERW}} (x_1, \ldots, x_{2N}) 
\nonumber \\
=\; &\lim_{x_j , x_{j+1} \to \xi} 
(x_{j+1} - x_j)^{2} \sum_{\varpi \in \Pi_N} \mathrm{sgn}(\varpi) 
\; \det \left( \frac{1}{(x_{b_k} - x_{a_l})^2} \right)_{l,k=1}^N 
\nonumber \\
=\; & \lim_{x_{j},x_{j+1}\to\xi}(x_{j+1}-x_{j})^{2} \sum_{\varpi \in \Pi_N} \mathrm{sgn}(\varpi) 
\sum_{\sigma \in \mathfrak{S}_N} \mathrm{sgn}(\sigma)
\prod_{k=1}^N \frac{1}{(x_{b_k} - x_{a_{\sigma(k)}})^2} 
\nonumber \\
=\; & \lim_{x_{j},x_{j+1}\to\xi}(x_{j+1}-x_{j})^{2} \sum_{\sigma \in \mathfrak{S}_N} \mathrm{sgn}(\sigma)
\sum_{\varpi \in \Pi_N} \mathrm{sgn}(\varpi) 
\prod_{k=1}^N \frac{1}{(x_{b_k} - x_{a_{\sigma(k)}})^2} ,
\label{eq: limit for LERW pf}
\end{align}
where $\mathfrak{S}_N$ denotes the group of permutations of $\{1,\ldots,N\}$. 
To evaluate this limit, for any pair of indices $k,l \in \{1,2,\ldots,N\}$, with 
$k \neq l$, we define the bijection
\begin{align*}
\varphi_{l,k} \; \colon \; & 
\{\varpi \in \Pi_N \colon j = b_k \text{ and } j + 1 = a_l \text{ in } \varpi \} 
\; \longrightarrow \;
\{ \varpi \in \Pi_N \colon j = a_l \text{ and } j + 1 = b_k  \text{ in } \varpi \} , \\
\varphi_{l,k}(\varpi) \; := \; & 
\Big( \varpi \setminus \{\{j',j\},\{j+1,(j+1)'\}\}\Big) \cup \{\{j',j+1\},\{j,(j+1)'\}\},
\end{align*}
where $j'$ and $(j+1)'$ denote the pairs of $j$ and $j+1$ in $\varpi$, respectively.
Note that $\mathrm{sgn}(\varphi_{l,k}(\varpi)) = -\mathrm{sgn}(\varpi)$.

Consider a term in~\eqref{eq: limit for LERW pf} with fixed $\sigma \in \mathfrak{S}_N$.
Only terms where in $\varpi = \{\{a_1,b_1\} , \ldots, \{a_N, b_N\}\}$ we have 
for some $k \in \{1,2,\ldots,N\}$ either $j = a_{\sigma(k)}$ and $j+1 = b_k$, 
or $j = b_k$ and $j+1 = a_{\sigma(k)}$, can have a non-zero limit. With the bijections
$\varphi_{\sigma(k),k}$, we may cancel all terms for which $\sigma(k) \neq k$.
Thus, we are left with the terms for which $\{j,j+1\} = \{a_k,b_k\} \in \varpi$ and
$\sigma(k) = k$, which allows us to reduce the sums over $\sigma \in \mathfrak{S}_N$
and $\varpi = \{\{a_1,b_1\} , \ldots, \{a_N, b_N\}\} \in \Pi_N$ into sums over 
$\tau \in \mathfrak{S}_{N-1}$ and 
$\hat{\varpi} = \{\{c_1,d_1\} , \ldots, \{c_{N-1}, d_{N-1}\}\} \in \Pi_{N-1}$, to obtain 
the asserted asymptotics property~\eqref{eq: total SLE asymptotics} with $\kappa = 2$:
\begin{align*}
& \; \lim_{x_j , x_{j+1} \to \xi} 
(x_{j+1} - x_j)^{2} \;  \tilde{\PartF}^{(N)}_{\mathrm{LERW}} (x_1, \ldots, x_{2N})\\
= \; & \sum_{\varpi \colon \{j,j+1\} \in \varpi} \mathrm{sgn}(\varpi) 
\sum_{\tau \in \mathfrak{S}_{N-1}} \mathrm{sgn}(\tau)
\lim_{x_{j},x_{j+1}\to\xi}(x_{j+1}-x_{j})^{2} \prod_{\substack{1 \leq k \leq N, \\ b_k \neq j+1}}
\frac{1}{(x_{b_k} - x_{a_{\tau(k)}})^2} \\
= \; & \sum_{\hat{\varpi} \in \Pi_{N-1}} \mathrm{sgn}(\hat{\varpi}) 
\sum_{\tau \in \mathfrak{S}_{N-1}} \mathrm{sgn}(\tau)
\prod_{k=1}^{N-1} \frac{1}{(x_{d_k} - x_{c_{\tau(k)}})^2} \\
= \; & \sum_{\hat{\varpi} \in \Pi_{N-1}} \mathrm{sgn}(\hat{\varpi}) 
\; \det \left( \frac{1}{(x_{c_k} - x_{d_l})^2} \right)_{k,l=1}^{N-1} \\
= \; & \tilde{\PartF}^{(N-1)}_{\mathrm{LERW}}(x_{1},\ldots,x_{j-1},x_{j+2},\ldots,x_{2N}) .
\end{align*}
This concludes the proof.
\end{proof}

\begin{lemma}\label{lem::ztotal_ising}
Let $\kappa = 3$. For all $N \geq 1$, we have 
\begin{align}\label{eq: ising symmetric pff}
\PartF^{(N)}_{\mathrm{Ising}}(x_1, \ldots, x_{2N}) 
= \mathrm{pf} \left(  \frac{1}{x_{j}-x_{i}} \right)_{i,j=1}^{2N}
= \sum_{\varpi \in \Pi_N} 
\mathrm{sgn}(\varpi) \prod_{\{a,b\} \in \varpi} \frac{1}{x_{b}-x_{a}}  .
\end{align}
In particular, $\PartF^{(N)}_{\mathrm{Ising}}(x_1, \ldots, x_{2N}) > 0$.
\end{lemma}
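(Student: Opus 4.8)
The plan is to mirror the proof of Lemma~\ref{lem::ztotal_lerw}. Write $\tilde{\PartF}^{(N)}_{\mathrm{Ising}} := \mathrm{pf}\big( 1/(x_j-x_i) \big)_{i,j=1}^{2N} = \sum_{\varpi\in\Pi_N}\mathrm{sgn}(\varpi)\prod_{\{a,b\}\in\varpi}(x_b-x_a)^{-1}$ for the function on the right-hand side of~\eqref{eq: ising symmetric pff}, with the convention $\tilde{\PartF}^{(0)}_{\mathrm{Ising}} = 1$. By~\cite{KytolaPeoltolaPurePartitionSLE, IzyurovIsingMultiplyConnectedDomains}, this Pfaffian satisfies $\mathrm{(PDE)}$~\eqref{eq: multiple SLE PDEs} with $\kappa=3$ and $\mathrm{(COV)}$~\eqref{eq: multiple SLE Mobius covariance} (note that $\kappa=3$ gives $h=1/2$, so $2h=1$); it also clearly satisfies the power-law bound~\eqref{eqn::powerlawbound}, whence $\tilde{\PartF}^{(N)}_{\mathrm{Ising}}\in\LS_N$. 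Therefore, by Corollary~\ref{cor::totalpartitionfunction_existence}, it suffices to verify that $\tilde{\PartF}^{(N)}_{\mathrm{Ising}}$ also satisfies the asymptotics property~\eqref{eq: total SLE asymptotics} with $\kappa=3$; since $2h=1$, this amounts to showing, for each $j\in\{1,\ldots,2N-1\}$ and $\xi\in(x_{j-1},x_{j+2})$,
\begin{align*}
\lim_{x_j,x_{j+1}\to\xi}(x_{j+1}-x_j)\;\tilde{\PartF}^{(N)}_{\mathrm{Ising}}(x_1,\ldots,x_{2N})
= \tilde{\PartF}^{(N-1)}_{\mathrm{Ising}}(x_1,\ldots,x_{j-1},x_{j+2},\ldots,x_{2N}).
\end{align*}

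To establish this, I would expand the Pfaffian over partitions $\varpi\in\Pi_N$ and split the sum according to whether $\{j,j+1\}\in\varpi$. If $\{j,j+1\}\notin\varpi$, then $j$ and $j+1$ are paired with indices different from each other, so every factor $(x_b-x_a)^{-1}$ in that term remains bounded as $x_j,x_{j+1}\to\xi$; multiplied by the prefactor $(x_{j+1}-x_j)\to 0$, such a term vanishes in the limit. If $\{j,j+1\}\in\varpi$, the factor $(x_{j+1}-x_j)^{-1}$ exactly cancels the prefactor, leaving $\mathrm{sgn}(\varpi)\prod_{\{a,b\}\in\varpi,\,\{a,b\}\neq\{j,j+1\}}(x_b-x_a)^{-1}$, which converges to the corresponding term attached to $\hat\varpi:=\varpi\setminus\{j,j+1\}\in\Pi_{N-1}$ on the relabeled index set $\{1,\ldots,j-1,j+2,\ldots,2N\}\cong\{1,\ldots,2N-2\}$. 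The one point requiring care is the sign: one must check $\mathrm{sgn}(\varpi)=\mathrm{sgn}(\hat\varpi)$. Since $j$ and $j+1$ are consecutive, deleting them and relabeling preserves the order within every remaining pair, and in the definition of $\mathrm{sgn}(\varpi)$ the only extra contribution (relative to $\mathrm{sgn}(\hat\varpi)$) is, for each remaining pair $\{c,d\}$, the factor $(j-c)(j-d)(j+1-c)(j+1-d)$, which is positive in all three cases ($c,d$ both $<j$; both $>j+1$; or $c<j<j+1<d$). Summing over all $\varpi$ containing $\{j,j+1\}$ — equivalently over all $\hat\varpi\in\Pi_{N-1}$ — then reproduces $\tilde{\PartF}^{(N-1)}_{\mathrm{Ising}}$ at the remaining points, which is the required asymptotics.

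With the asymptotics in hand, Corollary~\ref{cor::totalpartitionfunction_existence} gives $\tilde{\PartF}^{(N)}_{\mathrm{Ising}}=\PartF^{(N)}_{\mathrm{Ising}}$ for all $N$, proving~\eqref{eq: ising symmetric pff}. Strict positivity $\PartF^{(N)}_{\mathrm{Ising}}>0$ is then immediate from the definition~\eqref{eq: total SLE partition function} together with the strict positivity of each $\PartF_\alpha$ in~\eqref{eqn::partitionfunction_positive} of Theorem~\ref{thm::purepartition_existence}. I expect the only genuinely delicate step to be the sign bookkeeping for $\mathrm{sgn}(\varpi)$ under removal of the consecutive pair $\{j,j+1\}$; the rest is either a direct limit computation (in fact simpler than the $\kappa=2$ case, since the Pfaffian is already a single sum over $\Pi_N$ with no extra permutation sum to disentangle) or already contained in the cited references.
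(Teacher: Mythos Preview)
Your proposal is correct and takes essentially the same approach as the paper: verify that the Pfaffian satisfies the hypotheses of Corollary~\ref{cor::totalpartitionfunction_existence} and conclude. The only difference is that the paper simply cites \cite[Proposition~4.6]{KytolaPeoltolaPurePartitionSLE} for all three properties (including the asymptotics), whereas you supply a direct elementary verification of the asymptotics via the partition split on $\{j,j+1\}\in\varpi$; your sign bookkeeping is correct and the argument goes through.
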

\begin{proof}
It was proved in~\cite[Proposition~4.6]{KytolaPeoltolaPurePartitionSLE}
that the function 
$\tilde{\PartF}^{(N)}_{\mathrm{Ising}} 
:= \sum_\varpi \mathrm{sgn}(\varpi) \Big(\prod \frac{1}{x_{b}-x_{a}} \Big)$
on the right-hand side satisfies $\mathrm{(PDE)}$~\eqref{eq: multiple SLE PDEs} and 
$\mathrm{(COV)}$~\eqref{eq: multiple SLE Mobius covariance} with $\kappa = 3$, and that it also has 
the asymptotics property~\eqref{eq: total SLE asymptotics} with $\kappa = 3$.
Moreover, this function obviously satisfies the bound~\eqref{eqn::powerlawbound}, and if $N = 0$, 
then we have $\tilde{\PartF}^{(0)}_{\mathrm{Ising}} = 1$.
The claim then follows from Corollary~\ref{cor::totalpartitionfunction_existence}.
\end{proof}

\begin{lemma} \label{lem: gff symmetric pff}
Let $\kappa = 4$. For all $N \geq 1$, we have 
\begin{align}\label{eq: gff symmetric pff}
\PartF^{(N)}_{\GFF}(x_1, \ldots, x_{2N}) 
= \prod_{1\leq k<l\leq 2N}(x_l-x_k)^{\frac{1}{2} (-1)^{l-k}} .
\end{align}
\end{lemma}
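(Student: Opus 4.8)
The plan is to identify the right-hand side of~\eqref{eq: gff symmetric pff} with the symmetric partition function $\PartF^{(N)}_{\GFF} = \sum_{\alpha \in \LP_N} \PartF_\alpha$ for $\kappa = 4$ by appealing to the uniqueness statement of Corollary~\ref{cor::totalpartitionfunction_existence}. Thus, writing
\begin{align*}
\tilde{\PartF}^{(N)}_{\GFF}(x_1, \ldots, x_{2N}) := \prod_{1\leq k<l\leq 2N}(x_l-x_k)^{\frac{1}{2} (-1)^{l-k}} ,
\end{align*}
it suffices to check four things: that $\tilde{\PartF}^{(N)}_{\GFF}$ satisfies $\mathrm{(PDE)}$~\eqref{eq: multiple SLE PDEs} with $\kappa = 4$ and $h = 1/4$; that it satisfies the M\"obius covariance $\mathrm{(COV)}$~\eqref{eq: multiple SLE Mobius covariance}; that it obeys the power law bound~\eqref{eqn::powerlawbound}; and that it has the asymptotics property~\eqref{eq: total SLE asymptotics}, with the normalization $\tilde{\PartF}^{(0)}_{\GFF} = 1$ (an empty product). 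Together with $\tilde{\PartF}^{(N)}_{\GFF} \in \LS_N$, Corollary~\ref{cor::totalpartitionfunction_existence} then forces $\tilde{\PartF}^{(N)}_{\GFF} = \PartF^{(N)}_{\GFF}$.

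Each verification is an explicit computation. For $\mathrm{(COV)}$, since $h = 1/4$ when $\kappa = 4$, the exponent $\frac14 (-1)^{l-k}$ summed appropriately produces exactly the factor $\prod_i \varphi'(x_i)^{1/4}$ under a M\"obius map, using the standard identity $\varphi(x_l) - \varphi(x_k) = \varphi'(x_k)^{1/2}\varphi'(x_l)^{1/2}(x_l - x_k)$ valid for M\"obius transformations; one checks that each index $i$ appears in $2N-1$ pairs with signs that sum to $\pm 1$ depending on parity, matching the required power. The bound~\eqref{eqn::powerlawbound} is immediate: the exponents $\pm \frac12$ are bounded, so the product is trivially controlled by a product of the form appearing in~\eqref{eqn::powerlawbound} for suitable $C, p$. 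For the asymptotics~\eqref{eq: total SLE asymptotics}, fix $j$ and let $x_j, x_{j+1} \to \xi$; the only factor in $\tilde{\PartF}^{(N)}_{\GFF}$ that blows up is $(x_{j+1}-x_j)^{\frac12(-1)^{(j+1)-j}} = (x_{j+1}-x_j)^{-1/2}$, and $-2h = -1/2$, so dividing by $(x_{j+1}-x_j)^{-2h}$ and taking the limit sends $x_j, x_{j+1} \mapsto \xi$ in all remaining factors; the pairs $\{k, j\}$ and $\{k, j+1\}$ for $k \neq j, j+1$ combine as $(\xi - x_k)^{\frac12(-1)^{j-k}}(\xi - x_k)^{\frac12(-1)^{j+1-k}} = 1$, leaving precisely $\prod_{k < l,\ k,l \neq j,j+1}(x_l - x_k)^{\frac12(-1)^{l-k}}$ with the relabeling that matches $\tilde{\PartF}^{(N-1)}_{\GFF}(x_1, \ldots, x_{j-1}, x_{j+2}, \ldots, x_{2N})$ — note the parity of $l - k$ is preserved after deleting the two consecutive indices $j, j+1$.

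The only genuinely computational step is verifying $\mathrm{(PDE)}$~\eqref{eq: multiple SLE PDEs}. Writing $\tilde{\PartF}^{(N)}_{\GFF} = \exp\big(\frac12 \sum_{k<l}(-1)^{l-k}\log(x_l-x_k)\big)$, one computes $\partial_i \log \tilde{\PartF}^{(N)}_{\GFF} = \frac12 \sum_{m \neq i} \frac{(-1)^{m-i}}{x_m - x_i}$ (being careful with the sign convention when $m < i$), and then plugs $\frac{\kappa}{2}\partial_i^2 \PartF = \frac{\kappa}{2}\PartF\big((\partial_i \log \PartF)^2 + \partial_i^2 \log \PartF\big)$ into the operator in~\eqref{eq: multiple SLE PDEs}. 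With $\kappa = 4$, $h = 1/4$, the claim reduces to a polynomial identity in the variables $\frac{1}{x_m - x_i}$ which can be proved by the standard partial-fractions trick: the cross terms $\frac{1}{(x_m-x_i)(x_n-x_i)}$ for $m \neq n$ cancel in pairs against the $\frac{2}{x_j - x_i}\partial_j$ contributions, and the diagonal terms $\frac{1}{(x_m-x_i)^2}$ cancel against the $-\frac{2h}{(x_j-x_i)^2}$ terms using $2 \cdot \frac{\kappa}{2} \cdot \frac14 = 1 = 2h \cdot \text{(appropriate count)}$ — the precise bookkeeping is the one place where care is needed, but it is entirely elementary. Indeed, this PDE check for the analogous symmetric functions is already carried out in the cited works \cite{Dubedat:Euler_integrals_for_commuting_SLEs, KytolaPeoltolaPurePartitionSLE}, so one may alternatively simply cite them; I expect no real obstacle here, only notational care with signs. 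An entirely parallel, slightly shorter route would be to observe that $\tilde{\PartF}^{(N)}_{\GFF}$ is, up to the conformal covariance factor, the exponential of the GFF two-point-type expression and invoke the known fact that such a function is a correlation function of degenerate fields at level two; but the direct verification above is self-contained and fits the framework already set up in Corollary~\ref{cor::totalpartitionfunction_existence}.
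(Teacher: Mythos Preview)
Your approach is essentially identical to the paper's: both define the candidate function, verify that it belongs to $\mathcal{S}_N$ (the paper by citing \cite[Proposition~4.8]{KytolaPeoltolaPurePartitionSLE} for $\mathrm{(PDE)}$, $\mathrm{(COV)}$, and the asymptotics~\eqref{eq: total SLE asymptotics}, you by sketching these directly), note the trivial bound~\eqref{eqn::powerlawbound} and normalization, and then invoke Corollary~\ref{cor::totalpartitionfunction_existence}. One small slip: in your $\mathrm{(COV)}$ sketch the sum $\sum_{m\neq i}(-1)^{m-i}$ equals $-1$ for every $i$ (not $\pm 1$ depending on parity), which is exactly what is needed to produce the exponent $-h=-1/4$.
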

\begin{proof}
It was proved in~\cite[Proposition~4.8]{KytolaPeoltolaPurePartitionSLE}
that the function 
$\tilde{\PartF}^{(N)}_{\GFF} 
:= \prod_{k<l} (x_l-x_k)^{\frac{1}{2} (-1)^{l-k}}$
on the right-hand side satisfies $\mathrm{(PDE)}$~\eqref{eq: multiple SLE PDEs} and 
$\mathrm{(COV)}$~\eqref{eq: multiple SLE Mobius covariance} with $\kappa = 4$, and that it also has the asymptotics 
property~\eqref{eq: total SLE asymptotics} with $\kappa = 4$.
Moreover, this function obviously satisfies the bound~\eqref{eqn::powerlawbound}, and if $N = 0$, 
then we have $\tilde{\PartF}^{(0)}_{\GFF} = 1$.
The claim then follows from Corollary~\ref{cor::totalpartitionfunction_existence}.
\end{proof}

\section{Gaussian Free Field}
\label{sec::levellines_gff}
This section is devoted to the study of the level lines of the Gaussian free field (GFF) with
alternating boundary data, generalizing the Dobrushin boundary data $-\lambda, +\lambda$
on two complementary boundary segments to $-\lambda, +\lambda, \ldots, -\lambda, +\lambda$ on $2N$
boundary segments. Much of these level lines is already known: a level line starting from
a boundary point is an $\SLE_{4}(\underline{\rho})$ process, 
and the level lines can be coupled with the GFF in such a way that they are almost surely 
determined by the field~\cite{DubedatSLEFreefield, SchrammSheffieldContinuumGFF, MillerSheffieldIG1}. 

We are interested in the probabilities that the level
lines form a particular connectivity pattern, encoded in $\alpha \in \LP_N$. The main result of this 
section, Theorem~\ref{thm::multiple_sle_4}, states that this probability is given by 
the pure partition functions $\PartF_\alpha$ for multiple $\SLE_\kappa$ with $\kappa = 4$.
We prove Theorem~\ref{thm::multiple_sle_4} in Section~\ref{subsec::gff_levellines_Plk}. 
In Section~\ref{sec::pure_pf_for_sle4}, we find explicit formulas for these connection probabilities, 
see~\eqref{eq: Pure partition function for kappa 4} in Theorem~\ref{thm: sle4 pure pffs}.

\subsection{Level Lines of GFF} \label{sub:GFF_pre}

In this section, we introduce the Gaussian free field and its level lines and summarize some of their useful properties.
We refer to the literature~\cite{SheffieldGFFMath, SchrammSheffieldContinuumGFF, MillerSheffieldIG1, WangWuLevellinesGFFI} for details.

To begin, we discuss $\SLE$s with multiple force points (different from multiple $\SLE$s) --- the \textit{$\SLE_{\kappa}(\underline{\rho})$ processes}. They
are variants of the $\SLE_{\kappa}$ where one keeps track of additional points on the boundary. 
Let $\underline{y}^L=(y^{l, L}<\cdots<y^{1,L}\le 0)$ and $\underline{y}^R=(0\le y^{1,R}<\cdots<y^{r,R})$ and 
$\underline{\rho}^L=(\rho^{l,L},\cdots, \rho^{1,L})$ and $\underline{\rho}^R=(\rho^{1,R},\cdots, \rho^{r,R})$, 
where $\rho^{i,q}\in\R$, for $q\in\{L, R\}$ and $i \in \N$.
An $\SLE_{\kappa}(\underline{\rho}^L;\underline{\rho}^R)$ process with force points 
$(\underline{y}^L;\underline{y}^R)$ is the Loewner evolution driven by $W_t$ that solves
the following system of integrated SDEs: 
\begin{align}
\label{eq: SLE_kappa_rho}
\begin{split}
W_t = \; & \sqrt{\kappa} B_t \, + \, 
\sum_{i=1}^l 
\int_0^t\frac{\rho^{i,L} ds}{W_s-V_s^{i,L}} \, + \, 
\sum_{i=1}^r 
\int_0^t\frac{\rho^{i,R} ds}{W_s-V_s^{i,R}} , \\
V^{i,q}_t = \; & y^{i,q} \, + \,  \int_0^t\frac{2ds}{V^{i,q}_s-W_s} , 
\quad \text{for }q\in\{L, R\}  \text{ and } i \in \N,
\end{split}
\end{align}
where $B_t$ is the one-dimensional Brownian motion. Note that the process $V_t^{i,q}$ is 
the evolution of the point $y^{i,q}$, and we may write $g_t(y^{i,q})$ for $V_t^{i,q}$. 
We define the \textit{continuation threshold} 
of the $\SLE_{\kappa}(\underline{\rho}^L;\underline{\rho}^R)$ 
to be the infimum of the time $t$ for which 
\[ \text{either} \quad \sum_{i \; : \; V^{i,L}_t=W_t} \rho^{i,L}\le -2 , 
\quad \text{or} \quad \sum_{i \; : \; V^{i,R}_t=W_t} \rho^{i,R}\le -2 . \]
By~\cite{MillerSheffieldIG1}, 
the $\SLE_{\kappa}(\underline{\rho}^L;\underline{\rho}^R)$
process is well-defined up to the continuation threshold, and it is almost surely
generated by a continuous curve up to and including the continuation threshold.

\smallbreak

Let $D\subsetneq \C$ be a non-empty simply connected domain. 
For two functions $f,g\in L^2(D)$, we denote by $(f,g)$ their inner product in $L^2(D)$,
that is, $(f,g) := \int_D f(z)g(z)d^2z$, where $d^2z$ is the Lebesgue area measure. We denote by $H_s(D)$ 
the space of real-valued smooth functions which are compactly supported in $D$. This space has a 
\textit{Dirichlet inner product} defined by
\[ (f,g)_{\nabla} := \frac{1}{2\pi} \int_D \nabla f(z) \cdot \nabla g(z) \ud^2 z . \]
We denote by $H(D)$ the Hilbert space completion of $H_s(D)$
with respect to the Dirichlet inner product. 

The \textit{zero-boundary $\GFF$} on $D$ is a random sum of the form $\gff =\sum_{j=1}^{\infty} \zeta_j f_j$, 
where $ \zeta_j $ are i.i.d. standard normal random variables and
$(f_j)_{j\geq 0}$ an orthonormal basis for $H(D)$. This sum almost surely diverges within $H(D)$; 
however, it does converge almost surely in the space of distributions --- that is, as $n \to \infty$, the limit of 
$\sum_{j=1}^n  \zeta_j (f_j,g)$ exists almost surely for all $g \in H_s(D)$
and we may define $(\gff,g) := \sum_{j=1}^\infty  \zeta_j (f_j,g)$.
The limiting value as a function of $g$ is almost surely a continuous functional on $H_s(D)$.
In general, for any harmonic function $\gff_0$ on $D$, we define the \textit{$\GFF$ with boundary data $\gff_0$} by  
$\gff := \tilde{\gff} + \gff_0$ where $\tilde{\gff}$ is the zero-boundary $\GFF$ on $D$. 
 
We next introduce the level lines of the $\GFF$ and list some of 
their properties proved in~\cite{SchrammSheffieldContinuumGFF, MillerSheffieldIG1, WangWuLevellinesGFFI}. 
Let $K = (K_t, t\ge 0)$ be an $\SLE_{4}(\underline{\rho}^L;\underline{\rho}^R)$ process with force points 
$(\underline{y}^L;\underline{y}^R)$, with $W,V^{i,q}$ solving the SDE system~\eqref{eq: SLE_kappa_rho}. 
Let $(g_t, t\ge 0)$ be the corresponding family of conformal maps and set $f_t := g_t - W_t$. 
Let $\gff_t^0$ be the harmonic function on $\HH$ with boundary data 
\begin{align*}
\begin{cases}
-\lambda(1+\sum_{i=0}^j\rho^{i,L}) , \qquad & \text{if } x \in \big(  f_t(y^{j+1,L}), f_t(y^{j,L})\big) , \\
+\lambda(1+\sum_{i=0}^j \rho^{i,R}) , \qquad & \text{if } x \in \big( f_t(y^{j,R}), f_t(y^{j+1, R}) \big),
\end{cases}
\end{align*}
where $\lambda=\pi/2$ and 
$\rho^{0,L}=\rho^{0,R}=0$, $\; y^{0,L}=0_-$, $\; y^{l+1, L}=-\infty$,$\; y^{0,R}=0_+$, and $\; y^{r+1, R}=\infty$ by convention. 
Define $\gff_t(z) := \gff_t^0(f_t(z))$.
By~\cite{DubedatSLEFreefield,SchrammSheffieldContinuumGFF, MillerSheffieldIG1}, 
there exists a coupling $(\gff,K)$ where $\gff = \tilde{\gff} + \gff_0$, with $\tilde{\gff}$ the zero-boundary $\GFF$ on $\HH$, 
such that the following is true.  Let$\tau$ be any $K$-stopping time before 
the continuation threshold. Then, the conditional law of $\gff$ restricted to $\HH\setminus K_{\tau}$ given 
$K_{\tau}$ is the same as the law of $\gff_{\tau} + \tilde{\gff}\circ f_{\tau}$. 
Furthermore, in this coupling, the process 
$K$ is almost surely determined by $\gff$. We refer to the $\SLE_{4}(\underline{\rho}^L;\underline{\rho}^R)$ 
in this coupling as the \textit{level line} of the field $\gff$. In particular, if the boundary value of $\gff$ 
is $-\lambda$ on $\R_-$ and $\lambda$ on $\R_+$, then the level line of $\gff$ starting from $0$ has the law 
of the chordal $\SLE_4$ from $0$ to $\infty$. 
In this case, we say that the field has \textit{Dobrushin boundary data}. 
In general, for $u\in\R$, the level line of $\gff$ with height $u$ is the level line of $h-u$.

Let $\gff$ be the $\GFF$ on $\HH$ with piecewise constant boundary data and let $\eta$ be 
the level line of $\gff$ starting from $0$. 
For $0<x<y$, assume that the boundary value of $\gff$ is a constant $c$ on $(x,y)$. 
Consider the intersection of $\eta$ with the interval 
$[x,y]$. The following facts were proved in~\cite[Section 2.5]{WangWuLevellinesGFFI}.
First, if $|c|\ge \lambda$, then $\eta\cap (x,y)=\emptyset$ almost surely; 
second, if $c\ge \lambda$, then $\eta$ can never hit the point $x$; 
third, if $c\le -\lambda$, then $\eta$ can never hit the point $y$, but it may hit the point $x$, 
and when it hits $x$, it meets its continuation threshold and cannot continue. In this case, 
we say that \textit{$\eta$ terminates at $x$}. 

\subsection{Pair of Level Lines}
\label{subsec::GFF_pair_of_level_lines}

Fix four points $x_1<x_2<x_3<x_4$ on the real line and let $\gff$ be the $\GFF$ on $\HH$ with 
the following boundary data (see also Figure~\ref{fig::twosle}):
\begin{align*}
-\lambda\text{ on }(-\infty, x_1), \qquad +\lambda \text{ on }(x_1, x_2), \qquad 
-\lambda\text{ on }(x_2,x_3), \qquad +\lambda \text{ on }(x_3,x_4), \qquad  -\lambda\text{ on }(x_4,\infty) .
\end{align*}
Let $\eta_1$ (resp.~$\eta_2$) be the level line of $\gff$ starting from $x_1$ (resp~$x_3$).
The two curves $\eta_1$ and $\eta_2$ cannot hit each other, and there are two cases for 
the possible endpoints of $\eta_1$ and $\eta_2$,
illustrated in Figure~\ref{fig::twosle}: Case~$\,\vcenter{\hbox{\includegraphics[scale=0.3]{figures/link-2.pdf}}}\,$, where
$\eta_1$ terminates at $x_4$ and $\eta_2$ terminates at $x_2$;
and Case~$\,\vcenter{\hbox{\includegraphics[scale=0.3]{figures/link-1.pdf}}}\,$, where
$\eta_1$ terminates at $x_2$ and $\eta_2$ terminates at $x_4$.
Both 
cases have a positive chance. As a warm-up, we calculate the probabilities for these 
two cases in Lemma~\ref{lem::twosle_proba}. Note that, given $\eta_1$, the curve $\eta_2$ is 
the level line of the $\GFF$ on $\HH\setminus \eta_1$ with Dobrushin boundary data. 
Therefore, in either case, 
the conditional law of $\eta_2$ given $\eta_1$ is the chordal $\SLE_4$ and, similarly, 
the conditional law of $\eta_1$ given $\eta_2$ is the chordal $\SLE_4$. 

\begin{figure}[h]
\begin{subfigure}[b]{0.48\textwidth}
\begin{center}
\includegraphics[width =0.95\textwidth]{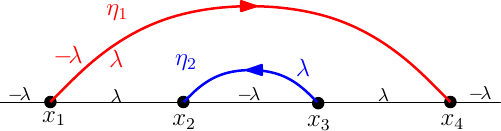}
\end{center}
\caption{Case $\vcenter{\hbox{\includegraphics[scale=0.3]{figures/link-2.pdf}}}=\{ \link{1}{4},\link{2}{3} \}$}
\end{subfigure}
\begin{subfigure}[b]{0.48\textwidth}
\begin{center}\includegraphics[width =0.95\textwidth]{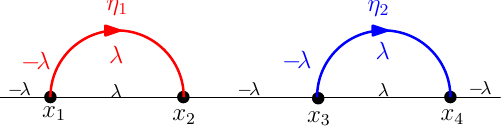}
\end{center}
\caption{Case $\vcenter{\hbox{\includegraphics[scale=0.3]{figures/link-1.pdf}}}=\{\link{1}{2},\link{3}{4}\}$.}
\end{subfigure}
\caption{Two level lines of the $\GFF$. The conditional law of $\eta_1$ given $\eta_2$ is the chordal 
$\SLE_4$ and the conditional law of $\eta_2$ given $\eta_1$ is the chordal $\SLE_4$.}
\label{fig::twosle} 
\end{figure}

\begin{remark}\label{rem::crossratio_trivialbound}
The following trivial fact will be important later: For $x_1<x_2<x_3<x_4$, we have
\begin{align*}\label{eqn::crossratio_trivialbound}
0\le \frac{(x_4-x_1)(x_3-x_2)}{(x_4-x_2)(x_3-x_1)}\le 1.
\end{align*}
\end{remark}

\begin{lemma}\label{lem::twosle_proba}
Set $\vcenter{\hbox{\includegraphics[scale=0.3]{figures/link-2.pdf}}}=\{ \link{1}{4}, \link{2}{3} \}$ 
and $\vcenter{\hbox{\includegraphics[scale=0.3]{figures/link-1.pdf}}}=\{\link{1}{2}, \link{3}{4} \}$. 
Let $P_{\vcenter{\hbox{\includegraphics[scale=0.2]{figures/link-2.pdf}}}}$ 
\textnormal{(}resp.~$P_{\vcenter{\hbox{\includegraphics[scale=0.2]{figures/link-1.pdf}}}}$\textnormal{)}
be the probability for Case $\vcenter{\hbox{\includegraphics[scale=0.3]{figures/link-2.pdf}}}$ 
\textnormal{(}resp.~Case $\vcenter{\hbox{\includegraphics[scale=0.3]{figures/link-1.pdf}}}$\textnormal{)},
as in Figure~\ref{fig::twosle}. 
Then we have
\begin{align*}
P_{\vcenter{\hbox{\includegraphics[scale=0.2]{figures/link-2.pdf}}}} 
= \frac{(x_4-x_3)(x_2-x_1)}{(x_4-x_2)(x_3-x_1)} 
\qquad \qquad \text{and} \qquad \qquad
P_{\vcenter{\hbox{\includegraphics[scale=0.2]{figures/link-1.pdf}}}} 
= 1 - P_{\vcenter{\hbox{\includegraphics[scale=0.2]{figures/link-2.pdf}}}} 
= \frac{(x_4-x_1)(x_3-x_2)}{(x_4-x_2)(x_3-x_1)} . 
\end{align*}
\end{lemma}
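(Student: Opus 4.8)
The plan is to compute $P_{\vcenter{\hbox{\includegraphics[scale=0.2]{figures/link-2.pdf}}}}$ by conditioning on the curve $\eta_1$, since the conditional law of $\eta_2$ given $\eta_1$ is a chordal $\SLE_4$, whose hitting behavior on boundary intervals is well understood. First I would observe that, conditioned on $\eta_1$, Case $\vcenter{\hbox{\includegraphics[scale=0.2]{figures/link-2.pdf}}}$ occurs precisely when $\eta_2$, the chordal $\SLE_4$ in the component of $\HH \setminus \eta_1$ to the right of $\eta_1$ (the one with $x_2, x_3, x_4$ on its boundary), started from $x_3$, terminates at $x_2$ rather than at $x_4$. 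Actually, a cleaner route is to avoid conditioning entirely at first and instead run $\eta_1$ itself: $\eta_1$ is the level line from $x_1$, which is an $\SLE_4(\underline{\rho})$ process with force points at $x_2, x_3, x_4$ (and $-\infty$), determined by the alternating boundary data; its endpoint is $x_2$ in Case $\vcenter{\hbox{\includegraphics[scale=0.2]{figures/link-1.pdf}}}$ and $x_4$ in Case $\vcenter{\hbox{\includegraphics[scale=0.2]{figures/link-2.pdf}}}$.

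The concrete computation I would carry out: let $g_t$ be the Loewner maps of $\eta_1$ with driving function $W_t$, and track the images $V_t^i = g_t(x_i)$ for $i = 2,3,4$. The boundary data forces $\eta_1$ to be the $\SLE_4(\rho^{1,R}, \rho^{2,R}, \rho^{3,R})$ with $\rho$-values read off from the jumps $-\lambda, +\lambda, -\lambda, +\lambda, -\lambda$: at $x_2$ the data jumps from $+\lambda$ to $-\lambda$, etc. Then I would look for the martingale observable that detects whether $\eta_1$ swallows $x_2$ before being absorbed. The natural candidate is a power of a cross-ratio of $W_t, V_t^2, V_t^3, V_t^4$ chosen so that it is a bounded martingale with the right boundary values $0$ and $1$ on the two terminal events. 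Concretely, the cross-ratio $Z_t := \frac{(V_t^4 - V_t^3)(V_t^2 - W_t)}{(V_t^4 - V_t^2)(V_t^3 - W_t)}$ should, for the right choice of $\SLE_4(\underline\rho)$ parameters, be a local martingale; applying It\^o's formula to $Z_t$ using \eqref{eq: SLE_kappa_rho} with $\kappa = 4$ and checking the drift vanishes is the computational heart. One then argues $Z_t$ is bounded (by Remark~\ref{rem::crossratio_trivialbound}, cross-ratios of ordered points lie in $[0,1]$), identifies its limit as $\one$ of the event that $\eta_1$ terminates at $x_2$ (when $W_t$ and $V_t^2$ collide, $Z_t \to 0$; when $W_t$ collides with $V_t^3$, interpreting the continuation-threshold behavior, $Z_t \to 1$), and concludes $P_{\vcenter{\hbox{\includegraphics[scale=0.2]{figures/link-1.pdf}}}} = Z_0 = \frac{(x_4-x_3)(x_2-x_1)}{(x_4-x_2)(x_3-x_1)}$. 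Wait — I should double check the initial value against the claimed answer: the claim is $P_{\vcenter{\hbox{\includegraphics[scale=0.2]{figures/link-2.pdf}}}} = \frac{(x_4-x_3)(x_2-x_1)}{(x_4-x_2)(x_3-x_1)}$, so I would arrange the observable to have $Z_0$ equal to this, i.e. $Z_t = \frac{(V_t^3 - V_t^2)(V_t^4 - W_t)}{(V_t^4 - V_t^2)(V_t^3 - W_t)}$ evaluated at $t=0$ with $W_0 = x_1$ gives $\frac{(x_3-x_2)(x_4-x_1)}{(x_4-x_2)(x_3-x_1)}$, which is $P_{\vcenter{\hbox{\includegraphics[scale=0.2]{figures/link-1.pdf}}}}$; so the observable for $P_{\vcenter{\hbox{\includegraphics[scale=0.2]{figures/link-2.pdf}}}}$ is the complementary cross-ratio. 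The arithmetic of picking the right cross-ratio and orientation I would pin down carefully in the actual proof.

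Alternatively — and this may be the slickest presentation — I would note that the formula is forced by the general machinery already developed: by Theorem~\ref{thm::multiple_sle_4} (or rather, to avoid circularity, by directly invoking the $N=2$ case), $P_{\vcenter{\hbox{\includegraphics[scale=0.2]{figures/link-2.pdf}}}}$ equals the ratio $\PartF_{\vcenter{\hbox{\includegraphics[scale=0.2]{figures/link-2.pdf}}}} / (\PartF_{\vcenter{\hbox{\includegraphics[scale=0.2]{figures/link-2.pdf}}}} + \PartF_{\vcenter{\hbox{\includegraphics[scale=0.2]{figures/link-1.pdf}}}})$, and the explicit $\kappa=4$ formulas in Section~\ref{subsubsec::partitionfunctions_examples} give exactly $z = \frac{(x_2-x_1)(x_4-x_3)}{(x_4-x_2)(x_3-x_1)}$ for this ratio, as already noted there. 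However, since this lemma is stated as a ``warm-up'' and is logically used to motivate the later theorem, I expect the intended proof is the self-contained one via the $\SLE_4(\underline\rho)$ martingale observable, not the one invoking the general theorem.

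The main obstacle I anticipate is the bookkeeping around the continuation threshold and the identification of the limiting value of the martingale: one must argue that $\eta_1$ almost surely terminates at either $x_2$ or $x_4$ (no other behavior, and in particular $\eta_1$ reaches its continuation threshold in finite capacity time), that the cross-ratio observable $Z_t$ converges almost surely as $t$ approaches the terminal time, and that its limit is exactly the indicator of the relevant event — with the subtlety that at the continuation threshold the driving function $W_t$ collides with one of the force points in a way whose limiting cross-ratio must be computed by a careful local analysis (e.g., using that as $V_t^3 - W_t \to 0$ with the other gaps staying bounded away from zero, $Z_t \to 1$). Establishing these convergence statements rigorously — rather than the formal It\^o computation, which is routine — is where the real work lies, though it can be imported wholesale from the $\SLE_\kappa(\underline\rho)$ continuity results of \cite{MillerSheffieldIG1} and the hitting-probability analysis of \cite{WangWuLevellinesGFFI} cited in Section~\ref{sub:GFF_pre}.
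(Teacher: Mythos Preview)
Your self-contained approach is exactly the paper's: run $\eta_1$ as the $\SLE_4(-2,+2,-2)$ with force points $(x_2,x_3,x_4)$, verify by It\^o that the cross-ratio $M_t = \frac{(g_t(x_4)-W_t)(g_t(x_3)-g_t(x_2))}{(g_t(x_4)-g_t(x_2))(g_t(x_3)-W_t)}$ is a bounded local martingale, identify its terminal values, and apply optional stopping to read off the probability that $\eta_1$ ends at $x_2$. The one place your sketch wobbles --- the terminal behavior when $\eta(t)\to x_4$ (the curve does not hit $x_3$; it disconnects $x_2$ and $x_3$ together and then proceeds to $x_4$) --- is handled in the paper not by importing \cite{MillerSheffieldIG1,WangWuLevellinesGFFI} but by a short self-contained appendix lemma (Lemma~\ref{lem::continuouscurve_limit_technical}) that reads off $M_t\to 0$ from the Brownian-motion interpretation of the harmonic measures $g_t(x_j)-g_t(x_i)$.
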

\begin{proof}
We know that 
$\eta:=\eta_1$ is an $\SLE_4(-2,+2,-2)$ process with force points $(x_2,x_3,x_4)$. If $T$ is 
the continuation threshold of $\eta$, then Case $\vcenter{\hbox{\includegraphics[scale=0.3]{figures/link-2.pdf}}}$ corresponds to $\{\eta(T)=x_4\}$ and 
Case $\vcenter{\hbox{\includegraphics[scale=0.3]{figures/link-1.pdf}}}$ to $\{\eta(T)=x_2\}$. 
Let $(W_t, 0\le t\le T)$ be the Loewner driving function of $\eta$ and $(g_t, 0\le t\le T)$ the corresponding conformal maps. Define, for $t<T$, 
\[ M_t := \frac{(g_t(x_4)-W_t)(g_t(x_3)-g_t(x_2))}{(g_t(x_4)-g_t(x_2))(g_t(x_3)-W_t)} . \]
Using It\^{o}'s formula, one can check that $M_t$ is a local martingale, 
and it is bounded by Remark~\ref{rem::crossratio_trivialbound}: we have
$0\le M_t \le 1$, for $t<T$.
Moreover, by Lemma~\ref{lem::continuouscurve_limit_technical} of Appendix~\ref{subsec:appendix-tech}, we have almost surely, as $t\to T$,
\begin{align*}
M_t \to 1, & \quad \text{when }\eta(t)\to x_2
\qquad  \text{and} \qquad M_t\to 0,\quad \text{when }\eta(t)\to x_4.
\end{align*}
Therefore, the optional stopping theorem implies that
\[P_{\vcenter{\hbox{\includegraphics[scale=0.2]{figures/link-1.pdf}}}}
=\PP[\eta(T)=x_2]=\E\left[M_T\right]=M_0
=\frac{(x_4-x_1)(x_3-x_2)}{(x_4-x_2)(x_3-x_1)}.\]
The formula for the probability $P_{\vcenter{\hbox{\includegraphics[scale=0.2]{figures/link-2.pdf}}}}$
then follows by a direct calculation.
\end{proof}

\subsection{Connection Probabilities for Level Lines}
\label{subsec:GFF_connection_proba}
Fix $N\ge 2$ and $x_1<\cdots<x_{2N}$. Let $\gff$ be the $\GFF$ on $\HH$ with \textit{alternating boundary data}:
\begin{align*}
\lambda \text{ on }(x_{2j-1}, x_{2j}), \text{ for } j \in \{ 1, \ldots, N \} ,
\qquad \text{and} \qquad 
-\lambda \text{ on }(x_{2j}, x_{2j+1}) , \text{ for }  j \in \{ 0, 1, \ldots, N \} ,
\end{align*}
with the convention that $x_0=-\infty$ and $x_{2N+1}=\infty$. For $j \in \{1,\ldots,N \}$, let $\eta_j$ be 
the level line of $\gff$ starting from $x_{2j-1}$. The possible terminal points of $\eta_j$ are the $x_n$'s 
with an even index $n$. The level lines $\eta_1, \ldots, \eta_N$ do not hit each other, so their endpoints 
form a (planar) link pattern $\LA\in \LP_N$. 
In Lemma~\ref{lem::levellines_P_alpha},  for each $\alpha\in\LP_N$, 
we derive the connection probability $P_{\alpha}:=\PP[\LA=\alpha]$.
To this end, we use the next lemmas,
which relate martingales for level lines with solutions of the system
$\mathrm{(PDE)}$~\eqref{eq: multiple SLE PDEs} with $\kappa = 4$.

\begin{lemma}\label{lem::localmart}
Let $\eta=\eta_1$ be the level line of $\gff$ starting from $x_1$, let $W_t$ be its driving function,
and $(g_t, t\ge 0)$ the corresponding family of conformal maps. 
Denote $X_{j1} := g_t(x_j)-W_t$ and $X_{ji} := g_t(x_j)-g_t(x_i)$, for $i,j \in \{2,\ldots,2N\}$.
For any subset $S\subset\{1,\ldots,2N\}$ containing $1$, define
\[M_t^{(S)} := \prod_{1\le i<j\le 2N}X_{ji}^{\delta(i,j)} , \qquad \text{where } \quad
\delta(i,j)=\begin{cases}0,&\text{if }i,j\in S, \text{ or }i,j\not\in S,\\
(-1)^{1+i+j}, &\text{if }i\in S \text{ and }j\not\in S, \text{or } i\not\in S \text{ and }j\in S.\end{cases}\]
Then, $M_t^{(S)}$ is a local martingale. 
\end{lemma}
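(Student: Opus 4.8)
The plan is to verify directly that $M_t^{(S)}$ is a local martingale by applying It\^o's formula to the explicit product $\prod_{i<j} X_{ji}^{\delta(i,j)}$, where the $X_{ji}$ evolve according to the Loewner flow. Since $\eta = \eta_1$ is the level line starting from $x_1$ with alternating boundary data, it is an $\SLE_4(\underline{\rho}^L;\underline{\rho}^R)$ process with force points at all the other marked points $x_2, \ldots, x_{2N}$ and weights $\rho^{i,q} = \pm 2$ alternating in sign (this follows from the identification of level lines in Section~\ref{sub:GFF_pre}). So $W_t$ satisfies $\ud W_t = 2\,\ud B_t + \sum_{j=2}^{2N} \frac{\rho_j}{W_t - g_t(x_j)}\,\ud t$ with $\kappa = 4$ and $\rho_j = 2(-1)^{j}$ (with appropriate sign convention so that $\rho_j = -2$ for the force point the curve can terminate at), while each $X_{ji}$ for $i,j \neq 1$ satisfies $\ud X_{ji} = \left(\frac{2}{X_{j1}} - \frac{2}{X_{i1}}\right)\ud t$, and $X_{j1} = g_t(x_j) - W_t$ satisfies $\ud X_{j1} = \frac{2}{X_{j1}}\ud t - \ud W_t$.

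First I would compute $\ud \log M_t^{(S)} = \sum_{i<j} \delta(i,j)\,\ud \log X_{ji}$, collecting the drift and martingale parts. The only stochastic input is $\ud W_t$, which enters through the terms $X_{j1}$ with $j \notin S$ (since $\delta(i,j) = 0$ when both indices are in $S$ or both outside, the surviving terms always pair one index in $S$ with one outside; the index $1 \in S$, so $X_{j1}$-terms appear precisely for $j \notin S$). Writing $a_j := \sum_{i \in S, i < j}\delta(i,j) + \sum_{i \in S, i > j}\delta(j,i)$ for the total exponent of $X_{j1}$ in $M_t^{(S)}$ when $j \notin S$, the quadratic variation contribution and the It\^o correction must cancel against the drift coming from the $\SLE_4(\underline\rho)$ force terms and the Loewner drift $\frac{2}{X_{j1}}$, $\frac{2}{X_{ji}}$. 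The key algebraic identities to check are: (i) the ``null-vector''/level-two degeneracy relation at $\kappa = 4$, i.e. $\frac{\kappa}{2} = 2$ combined with the exponent $h = 1/2$ and the conformal weight $1/4$ at the force points makes the second-order term match; and (ii) a combinatorial identity on the exponents $\delta(i,j) = (-1)^{1+i+j}$ ensuring that, for each fixed $j \notin S$, the coefficient of $\frac{1}{X_{j1}^2}$ and of $\frac{1}{X_{j1}X_{ji}}$ vanishes after summing over $i$. I expect identity (ii) to reduce, after expanding, to telescoping sums of $(-1)^{i+j}$ over the force points, which is exactly where the alternating sign structure of the boundary data enters.

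A cleaner route, which I would actually prefer, is to recognize $M_t^{(S)}$ as (a conformal-covariance transform of) one of the symmetric partition function type expressions: indeed $M_t^{(S)}$ is built from the same product $\prod_{k<l}(x_l - x_k)^{\frac12(-1)^{l-k}}$-flavored monomials appearing in Lemma~\ref{lem: gff symmetric pff}, restricted to ``boundary changes only across $\partial S$''. Concretely, $M_0^{(S)} = \prod_{i \in S, j \notin S}(x_j - x_i)^{(-1)^{1+i+j}}$ is the boundary Poisson-kernel-type observable associated to the GFF boundary data, and the general principle (as in Schramm--Sheffield and the coupling recalled in Section~\ref{sub:GFF_pre}) is that the conditional mean of a bounded harmonic functional of the field, restricted to the complement of the growing level line, is a local martingale. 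So an alternative proof: exhibit $M_t^{(S)}$ as $\E[\,\Phi \mid \eta[0,t]\,]$ up to a deterministic normalization, where $\Phi$ is the indicator-type or harmonic observable whose boundary value on $(x_{2j-1},x_{2j})$ vs. elsewhere is read off from $S$; then the martingale property is automatic from the tower property and the domain Markov property of the level-line/GFF coupling. The main obstacle in either approach is bookkeeping: in the It\^o route it is the sign/exponent combinatorics of $\delta(i,j)$ and making sure the drift cancels for \emph{every} $j \notin S$ simultaneously; in the coupling route it is correctly identifying which harmonic observable $\Phi$ of the field produces exactly the monomial $M_t^{(S)}$ after applying $f_t = g_t - W_t$ and using the explicit boundary data of $\gff_t^0$. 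I would present the It\^o computation since it is self-contained and only uses the $\SLE_4(\underline\rho)$ SDE~\eqref{eq: SLE_kappa_rho} already recalled, relegating the verification of the cancellation to a direct (if tedious) computation with It\^o's formula.
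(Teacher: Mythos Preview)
Your main approach---apply It\^o's formula to the product and check that the drift vanishes---is exactly the paper's proof. Two small corrections will make your computation go through cleanly: the force-point weights are $\rho_j = 2(-1)^{j+1}$ (your stated formula $\rho_j = 2(-1)^j$ has the wrong sign, though your verbal description ``$\rho_j = -2$ at the terminal point'' is correct), and the key simplification is that the $X_{ji}$-drift terms for $i,j \ge 2$ collapse via the identity
\[
\frac{1}{X_{ji}}\Big(\frac{2}{X_{j1}} - \frac{2}{X_{i1}}\Big) = -\frac{2}{X_{j1}X_{i1}},
\]
so after expanding $\ud M_t^{(S)}/M_t^{(S)}$ the only drift coefficients to check are those of $\ud t / X_{j1}^2$, namely $2\delta_j^2 + \delta_j\rho_j = 0$, and of $\ud t / (X_{j1}X_{i1})$, namely $\delta_j\rho_i + \delta_i\rho_j - 2\delta(i,j) + 4\delta_i\delta_j = 0$ (here $\delta_j := \delta(1,j)$). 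Both identities are immediate from the definitions by a short case check; there is no summation over $i$ needed, and no ``null-vector'' input beyond these two one-line equalities. Your reference to coefficients of $1/(X_{j1}X_{ji})$ suggests you have not yet spotted this partial-fraction collapse, which is really the only step.

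The paper does not pursue your alternative GFF-coupling route, though it does remark immediately after the lemma that $M^{(S)}$ is a Radon--Nikodym derivative between the laws of level lines for two different boundary data, which is consistent with your intuition.
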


We remark that the local martingale $M^{(S)}$ in Lemma~\ref{lem::localmart} is in fact the Radon-Nikodym derivative between 
the law of $\eta$ (i.e., the level line of the $\GFF$ with alternating boundary data), 
and the law of a level line of the $\GFF$ with a different boundary data
--- see the discussion in Section~\ref{subsec: Conformal blocks for GFF}.

\begin{proof}
The level line $\eta$ is an $\SLE_4(-2,+2,\ldots, -2)$ process with force points $(x_2, \ldots, x_{2N})$. 
We recall from~\eqref{eq: SLE_kappa_rho} that its driving function satisfies the SDE
\begin{align}\label{eqn::levelline_eta1_sde}
\ud W_t = 2 \ud B_t + \sum_{i=2}^{2N}\frac{-\rho_i \ud t}{g_t(x_i)-W_t}, 
\qquad \text{ where } \quad \rho_i=2(-1)^{i+1}
\end{align}
and $g_t$ is the Loewner map. We rewrite $M_t^{(S)}$ as follows:
\begin{align*}
M_t^{(S)} = \prod_{j=2}^{2N} X_{j1}^{\delta_j} \prod_{2\le i<j\le 2N} X_{ji}^{\delta(i,j)}, 
\qquad \text{where } \quad \delta_j=\delta(1,j).
\end{align*}
By It\^{o}'s formula, we have
\begin{align*}
\frac{\ud M_t^{(S)}}{M_t^{(S)}}
= \; & \sum_{j=2}^{2N} \frac{\delta_j}{X_{j1}} \left(\frac{2 \ud t}{X_{j1}} - \ud W_t\right) 
\, + \sum_{2\le i<j\le 2N} \frac{\delta(i,j)}{X_{ji}} \left(\frac{2 \ud t}{X_{j1}} 
- \frac{2 \ud t}{X_{i1}}\right) \\
\; & + \, \sum_{j=2}^{2N}\frac{2\delta_j(\delta_j-1) \ud t}{X_{j1}^2} 
\, + \sum_{2\le i<j\le 2N}\frac{4\delta_i\delta_j \ud t}{X_{j1}X_{i1}} \\
= \; & \sum_{j=2}^{2N}\frac{2\delta_j^2 \ud t}{X_{j1}^2} 
+ \, \sum_{j=2}^{2N} \sum_{i=2}^{2N} \frac{\delta_j\rho_i \ud t}{X_{j1}X_{i1}} \, + 
\sum_{2\le i<j\le 2N} \left(\frac{-2\delta(i,j)+4\delta_i\delta_j}{X_{j1}X_{i1}}\right) \ud t 
- \, \sum_{j=2}^{2N} \frac{2\delta_j \ud B_t}{X_{j1}} .
\end{align*}
For any $S$ containing $1$, the coefficient of the term $\ud t/X_{j1}^2$ for 
$j \in \{2,\ldots,2N\}$ is 
\[ 2\delta_j^2+\delta_j\rho_j=0 , \]
and the coefficient of the term $\ud t/(X_{j1}X_{i1})$, for 
$i,j \in \{2,\ldots,2N\}$, with $i<j$, is 
\[\delta_j\rho_i+\delta_i\rho_j-2\delta(i,j)+4\delta_i\delta_j=0.\]
Therefore, $M_t^{(S)}$ is a local martingale. 
\end{proof}

\begin{lemma} \label{lem::martingales_PDEs}
Let $\eta=\eta_1$ be the level line of $\gff$ starting from $x_1$, let $(W_t, t \ge 0)$ be its driving function, 
and $(g_t, t \ge 0)$ the corresponding family of 
conformal maps. For a smooth function $\CobloF \colon \chamber_{2N} \to \R$, the ratio 
\[M_t (\CobloF)
:= \frac{\CobloF (W_t, g_t(x_2), \ldots, g_t(x_{2N}) )}{\PartF^{(N)}_{\GFF} (W_t, g_t(x_2), \ldots, g_t(x_{2N}) )} \]
is a local martingale if and only if $\CobloF$ satisfies $\mathrm{(PDE)}$~\eqref{eq: multiple SLE PDEs} with $i = 1$ and $\kappa = 4$. 
\end{lemma}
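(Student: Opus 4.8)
The plan is to reduce the equivalence to a single intertwining identity between the infinitesimal generator of the level-line Loewner flow and the operator $\mathcal{D}^{(1)}$ of $\mathrm{(PDE)}$~\eqref{eq: multiple SLE PDEs} at $\kappa=4$. Set $X_t := (W_t, g_t(x_2), \ldots, g_t(x_{2N})) \in \chamber_{2N}$; by~\eqref{eqn::levelline_eta1_sde} this is an It\^o diffusion, well-defined up to the continuation threshold of $\eta_1$, whose generator is
\[
\mathcal{L} = 2\,\partial_1^2 + \sum_{j=2}^{2N}\frac{2}{x_j-x_1}\,\partial_j + b_1(\boldsymbol{x})\,\partial_1 ,
\qquad b_1(\boldsymbol{x}) := \sum_{j=2}^{2N}\frac{-\rho_j}{x_j-x_1} = \sum_{j=2}^{2N}\frac{2(-1)^j}{x_j-x_1} .
\]
First I would record that $G := \PartF^{(N)}_{\GFF}$ is a positive smooth solution of $\mathcal{D}^{(1)}G = 0$ with $\kappa=4$: positivity and smoothness come from the explicit product formula~\eqref{eq: gff symmetric pff} of Lemma~\ref{lem: gff symmetric pff}, and $\mathcal{D}^{(1)}G=0$ holds since $G = \sum_\alpha \PartF_\alpha \in \LS_N$ by Proposition~\ref{prop: FK dual elements} (already a consequence of Theorem~\ref{thm::purepartition_existence}). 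Differentiating~\eqref{eq: gff symmetric pff} directly gives $4\,(\partial_1 G)/G = b_1(\boldsymbol{x})$, so that $\mathcal{L} = 2\partial_1^2 + \sum_{j\ge2}\frac{2}{x_j-x_1}\partial_j + 4(\partial_1\log G)\,\partial_1$.

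The key step is the intertwining identity: for every smooth $\Phi$ on $\chamber_{2N}$,
\[
\mathcal{D}^{(1)}(G\,\Phi) = G\,\mathcal{L}\,\Phi .
\]
I would prove it by writing $\mathcal{D}^{(1)} = \widetilde{\mathcal{D}} - V$, where $\widetilde{\mathcal{D}} := 2\partial_1^2 + \sum_{j\ge2}\frac{2}{x_j-x_1}\partial_j$ and $V := \sum_{j\ge2}\frac{2h}{(x_j-x_1)^2}$ acts by multiplication, using the Leibniz rule $\widetilde{\mathcal{D}}(G\Phi) = \Phi\,\widetilde{\mathcal{D}}G + G\,\widetilde{\mathcal{D}}\Phi + 4\,(\partial_1 G)(\partial_1\Phi)$, cancelling the vanishing term $\Phi\,\mathcal{D}^{(1)}G$, and rewriting $4(\partial_1 G)/G = b_1$. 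Taking $\Phi = \CobloF/G$ then yields
\[
\mathcal{L}(\CobloF/G) = G^{-1}\,\mathcal{D}^{(1)}\CobloF .
\]

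Finally, by It\^o's formula $M_t(\CobloF) = (\CobloF/G)(X_t)$ is a continuous semimartingale whose bounded-variation part equals $\int_0^t (G^{-1}\mathcal{D}^{(1)}\CobloF)(X_s)\,\ud s$, up to the continuation threshold. If $\CobloF$ satisfies $\mathrm{(PDE)}$~\eqref{eq: multiple SLE PDEs} with $i=1$ and $\kappa=4$, this part is identically zero and $M_t(\CobloF)$ is a local martingale. Conversely, if $M_t(\CobloF)$ is a local martingale then, being continuous and of bounded variation, its drift part is constant, so the continuous integrand $(G^{-1}\mathcal{D}^{(1)}\CobloF)(X_t)$ vanishes for all $t$; evaluating at $t=0$, where $X_0 = (x_1,\ldots,x_{2N})$ is an arbitrary point of $\chamber_{2N}$, and using $G>0$, we get $\mathcal{D}^{(1)}\CobloF \equiv 0$ on $\chamber_{2N}$. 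The only genuinely computational points --- the Leibniz expansion of $\mathcal{D}^{(1)}(G\Phi)$ and the identification $4\,\partial_1\log\PartF^{(N)}_{\GFF} = b_1$ --- are short and are, in essence, already contained in the It\^o computation behind Lemma~\ref{lem::localmart}; I expect the main (mild) obstacle to be bookkeeping with signs and indices, so I would organize the write-up around the two displayed identities above.
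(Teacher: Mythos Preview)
Your proposal is correct and follows essentially the same approach as the paper: both rely on the two identities $4\,\partial_1\log\PartF^{(N)}_{\GFF} = b_1$ and $\mathcal{D}^{(1)}\PartF^{(N)}_{\GFF}=0$, together with It\^o's formula, to identify the drift of $M_t(\CobloF)$ with $G^{-1}\mathcal{D}^{(1)}\CobloF$ evaluated along the flow. The only cosmetic difference is that you package the computation as the intertwining identity $\mathcal{D}^{(1)}(G\Phi)=G\,\mathcal{L}\Phi$, whereas the paper expands $\ud M_t(\CobloF)/M_t(\CobloF)$ directly.
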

\begin{proof}
Recall the SDE~\eqref{eqn::levelline_eta1_sde} for $W_t$. 
Lemma~\ref{lem: gff symmetric pff} gives an explicit formula for the function $\PartF:=\PartF^{(N)}_{\GFF}$. 
Using this, one verifies that $\PartF$ satisfies the following differential equation: for $\boldsymbol{x}=(x_1,\ldots, x_{2N}) \in \chamber_{2N}$,
\begin{align}\label{eqn::martingales_PDEs_aux1}
\bigg( 4 \partial_1 + \sum_{j=2}^{2N} \frac{\rho_j}{x_j-x_1} \bigg) 
\PartF(\boldsymbol{x}) = 0 .
\end{align}
Furthermore, $\PartF$ satisfies $\mathrm{(PDE)}$~\eqref{eq: multiple SLE PDEs} with $i = 1$ and $\kappa = 4$:
\begin{align} \label{eqn::martingales_PDEs_aux2}
\LD^{(1)}\PartF(\boldsymbol{x}) = 0,\qquad \qquad \text{where } \quad
\LD^{(1)} := 2\partial_1^2 + \sum_{j=2}^{2N} \left(\frac{2\partial_j}{x_j-x_1} - \frac{1}{2(x_j-x_1)^2}\right) .
\end{align}
We denote $\boldsymbol{Y} := (W_t, g_t(x_2), \ldots, g_t(x_{2N}))$, and 
$X_{j1} := g_t(x_j)-W_t$, and $X_{ji} := g_t(x_j)-g_t(x_i)$, for $i,j \in \{2,\ldots,2N\}$.
By It\^{o}'s formula, any (regular enough) function $F(x_1,\ldots,x_{2N})$ satisfies
\begin{align*}
\ud F(\boldsymbol{Y}) 
&= 2 \partial_1 F(\boldsymbol{Y}) \, \ud B_t + 
\bigg( 2 \partial_1^2 + 
\sum_{j=2}^{2N} \left(\frac{2 \partial_j}{X_{j1}}-\frac{\rho_j\partial_1}{X_{j1}}\right)
\bigg)  F(\boldsymbol{Y})\, \ud t\\
&=2 \partial_1 F(\boldsymbol{Y}) \, \ud B_t + 
\bigg( \LD^{(1)} + 
\sum_{j=2}^{2N} \left(\frac{1}{2X^2_{j1}}-\frac{\rho_j\partial_1}{X_{j1}}\right)
\bigg)  F(\boldsymbol{Y})\, \ud t.
\end{align*}
Combining with \eqref{eqn::martingales_PDEs_aux1} and \eqref{eqn::martingales_PDEs_aux2}, we see that 
\begin{align*}
\frac{\ud M_t(\CobloF)}{M_t(\CobloF)}
&=\frac{\ud \CobloF(\boldsymbol{Y})}{\CobloF(\boldsymbol{Y})}
-\frac{\ud \PartF(\boldsymbol{Y})}{\PartF(\boldsymbol{Y})}
+4\left(\frac{\partial_1\PartF(\boldsymbol{Y})}{\PartF(\boldsymbol{Y})}\right)^2\ud t
-4\left(\frac{\partial_1\CobloF(\boldsymbol{Y})}{\CobloF(\boldsymbol{Y})}\right)\left(\frac{\partial_1\PartF(\boldsymbol{Y})}{\PartF(\boldsymbol{Y})}\right)\ud t\\
&=\left(\frac{2\partial_1\CobloF(\boldsymbol{Y})}{\CobloF(\boldsymbol{Y})}
-\frac{2\partial_1\PartF(\boldsymbol{Y})}{\PartF(\boldsymbol{Y})}\right)\ud B_t+\frac{\LD^{(1)}\CobloF(\boldsymbol{Y})}{\CobloF(\boldsymbol{Y})}\ud t.
\end{align*}
This implies that $M_t(\CobloF)$ is a local martingale if and only if $\LD^{(1)}\CobloF=0$. 
\end{proof}

Now, we give the formula for the connection probabilities for the level lines of the $\GFF$.
To emphasize the main idea, we postpone a technical detail, Proposition~\ref{prop:: strong limit for ratio Zalpha and Ztotal}, 
to Appendix~\ref{subsec:appendix-tech}.

\begin{lemma}\label{lem::levellines_P_alpha}
We have
\begin{align}\label{eq::crossing_probabilities_for_kappa4_with_Z}
P_{\alpha} 
= \frac{\PartF_{\alpha}  (x_1, \ldots, x_{2N}) }{\PartF^{(N)}_{\GFF} (x_1, \ldots, x_{2N}) } > 0 , \quad \text{ for all } \alpha\in \LP_N,
\qquad\text{where } \quad \PartF^{(N)}_{\GFF} := \sum_{\alpha\in\LP_N}\PartF_{\alpha} ,
\end{align}
and $\{\PartF_{\alpha} \colon \alpha\in \LP\}$ is the collection of functions of 
Theorem~\ref{thm::purepartition_existence} with $\kappa=4$. 
\end{lemma}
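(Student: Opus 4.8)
The plan is to prove the identity~\eqref{eq::crossing_probabilities_for_kappa4_with_Z} by induction on $N$, using the first level line $\eta := \eta_1$ (the one starting from $x_1$) as an exploration process and building a bounded martingale from the partition function $\PartF_\alpha$ of Theorem~\ref{thm::purepartition_existence} with $\kappa = 4$. The base case $N=1$ is immediate, since then $P_{\link{1}{2}} = 1$ and $\PartF_{\link{1}{2}} = \PartF^{(1)}_{\GFF} = (x_2-x_1)^{-1/2}$ (and the case $N=2$ agrees with Lemma~\ref{lem::twosle_proba}, recalling the computation at the end of Section~\ref{subsubsec::partitionfunctions_examples}). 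For the inductive step, recall that $\eta$ is an $\SLE_4(-2, +2, \ldots, -2)$ process with force points $x_2, \ldots, x_{2N}$, defined up to its continuation threshold $T$; let $W_t$ be its driving function and $(g_t)_{t<T}$ the associated Loewner maps. Since $\PartF_\alpha$ solves $\mathrm{(PDE)}$~\eqref{eq: multiple SLE PDEs} with $\kappa = 4$ and $\PartF^{(N)}_{\GFF}$ is the positive solution of Lemma~\ref{lem: gff symmetric pff}, Lemma~\ref{lem::martingales_PDEs} shows that
\[ M_t := \frac{\PartF_{\alpha}\big(W_t, g_t(x_2), \ldots, g_t(x_{2N})\big)}{\PartF^{(N)}_{\GFF}\big(W_t, g_t(x_2), \ldots, g_t(x_{2N})\big)} \]
is a local martingale on $[0,T)$. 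As $\PartF^{(N)}_{\GFF} = \sum_{\beta \in \LP_N} \PartF_{\beta} \ge \PartF_\alpha > 0$, we have $0 < M_t \le 1$, so $M$ is a bounded martingale; in particular $M_T := \lim_{t \to T} M_t$ exists almost surely and $M_0 = \E[M_T]$.

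The next step is to identify $M_T$ with $\PP[\LA = \alpha \,|\, \mathcal{F}_T]$. Almost surely $\eta$ terminates at one of the even-indexed points $x_{2K}$, and it separates $x_2, \ldots, x_{2K-1}$ (lying in the bounded complementary component $D_\eta^R$) from $x_{2K+1}, \ldots, x_{2N}$ (lying in the unbounded component $D_\eta^L = H_T$); one of these subdomains may carry no marked points, in which case $\PartF_{\emptyset} = 1$. Conditionally on $\eta$, the domain Markov property of GFF level lines makes $\eta_2, \ldots, \eta_N$ the level lines of independent GFFs with alternating boundary data in $D_\eta^R$ and $D_\eta^L$. Hence $\{\LA = \alpha\}$ is possible only if $\link{1}{2K} \in \alpha$, and, writing $\alpha \removeLink \link{1}{2K} = \alpha^L \sqcup \alpha^R$ for the induced sub-link patterns, the induction hypothesis together with conformal invariance gives (after the standard relabeling of marked points in each subdomain)
\begin{align*}
\PP[\LA = \alpha \,|\, \mathcal{F}_T] = \one_{\{\link{1}{2K}\in\alpha\}} \, & \frac{\PartF_{\alpha^R}(D_\eta^R; x_2, \ldots, x_{2K-1})}{\PartF^{(K-1)}_{\GFF}(D_\eta^R; x_2, \ldots, x_{2K-1})} \\ & \times \frac{\PartF_{\alpha^L}(D_\eta^L; x_{2K+1}, \ldots, x_{2N})}{\PartF^{(N-K)}_{\GFF}(D_\eta^L; x_{2K+1}, \ldots, x_{2N})} .
\end{align*}

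The main obstacle is to show that this right-hand side equals $M_T$. As $t \to T$, the points $W_t, g_t(x_2), \ldots, g_t(x_{2K})$ all collide, while $g_t(x_{2K+1}), \ldots, g_t(x_{2N})$ converge to the images of $x_{2K+1}, \ldots, x_{2N}$ under the normalized conformal map $D_\eta^L \to \HH$; thus one must control the iterated asymptotics of both $\PartF_\alpha$ and $\PartF^{(N)}_{\GFF}$ as several arguments merge simultaneously. The property $\mathrm{(ASY)}$~\eqref{eq: multiple SLE asymptotics} handles only one pair at a time and is not sufficient; instead, this convergence is precisely the content of Proposition~\ref{prop:: strong limit for ratio Zalpha and Ztotal} in Appendix~\ref{subsec:appendix-tech}, whose proof rests on the strong upper bound~\eqref{eqn::partitionfunction_positive} of Theorem~\ref{thm::purepartition_existence} together with the conformal covariance~\eqref{eq: multiple SLE conformal covariance} (used to rewrite the subdomain partition functions as partition functions on $\HH$). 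Granting this, optional stopping for the bounded martingale $M$ yields
\[ \frac{\PartF_\alpha(x_1, \ldots, x_{2N})}{\PartF^{(N)}_{\GFF}(x_1, \ldots, x_{2N})} = M_0 = \E[M_T] = \E\big[\PP[\LA = \alpha \,|\, \mathcal{F}_T]\big] = \PP[\LA = \alpha] = P_\alpha , \]
and $P_\alpha > 0$ because $\PartF_\alpha > 0$. This closes the induction and proves the lemma.
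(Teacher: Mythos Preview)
Your overall strategy—the bounded martingale $M_t = \PartF_\alpha/\PartF^{(N)}_{\GFF}$ along the Loewner flow of $\eta_1$, optional stopping, and induction—matches the paper's. The gap is in the identification of the terminal value $M_T$.

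You do not reduce to the case $\link{1}{2}\in\alpha$; instead you allow $\eta$ to terminate at an arbitrary $x_{2K}$ and claim that on $\{\link{1}{2K}\in\alpha\}$ the limit $M_T$ factorizes as a product of two subdomain ratios for $\alpha^L$ and $\alpha^R$, attributing this to Proposition~\ref{prop:: strong limit for ratio Zalpha and Ztotal}. But that proposition establishes nothing of the kind: it is stated \emph{only} under the hypothesis $\link{1}{2}\in\alpha$, and its conclusion is merely that $\LB_\alpha/\LB^{(N)}\to 0$ when $\eta$ terminates at $x_{2n}$ with $n\ge 2$. It gives no factorization, and it says nothing about the case $\link{1}{2K}\in\alpha$ with $K\ge 2$. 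What you would need is control of $\PartF_\alpha/\PartF^{(N)}_{\GFF}$ as $2K$ arguments collide simultaneously, producing a nontrivial product limit; this is substantially harder than anything proved in the paper and is not a consequence of \eqref{eqn::partitionfunction_positive} and~\eqref{eq: multiple SLE conformal covariance} alone.

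The paper avoids this difficulty by first fixing $\alpha$ and choosing (without loss of generality, by relabeling) an index with $\link{1}{2}\in\alpha$. Then there are only two scenarios: either $\eta(T)=x_2$, where the single-pair asymptotics of Lemma~\ref{lem::partitionfunction_asy} give $M_T = \PartF_{\hat\alpha}/\PartF^{(N-1)}_{\GFF}$ with $\hat\alpha=\alpha\removeLink\link{1}{2}$; or $\eta(T)=x_{2n}$ with $n\ge 2$, where Proposition~\ref{prop:: strong limit for ratio Zalpha and Ztotal} gives $M_T=0$. The induction then proceeds with a \emph{single} smaller link pattern $\hat\alpha$, not a product of two. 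Your argument can be repaired by inserting this WLOG reduction at the start; without it, the asymptotic step is unjustified.
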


\begin{proof}
By Theorem~\ref{thm::purepartition_existence}, we have $\PartF_{\alpha} > 0$ for all $\alpha\in\LP_N$, so for all 
$(x_1, \ldots, x_{2N}) \in \chamber_{2N}$, we have
\begin{align}\label{eqn::gff_bound_mart}
0 <
\frac{\PartF_{\alpha}(x_1, \ldots, x_{2N}) }{\PartF^{(N)}_{\GFF}(x_1, \ldots, x_{2N}) } \le 1 . 
\end{align}
We prove the assertion by induction on $N\ge0$. The initial case $N=0$ is a tautology: 
$\PartF_\emptyset = 1 = \PartF^{(0)}_{\GFF}$. 
Let then $N \ge 1$ and assume that formula~\eqref{eq::crossing_probabilities_for_kappa4_with_Z}
holds for all $\hat{\alpha} \in \LP_{N-1}$. Let $\alpha\in \LP_N$. 
Without loss of generality, we may assume that $\link{1}{2} \in \alpha$. Let $\eta$ be the level line of the $\GFF$ $\gff$ starting from 
$x_1$, let $T$ be its continuation threshold, $(W_t, t \ge 0)$ its driving function, and $(g_t,t\geq0)$ 
the corresponding family of conformal maps. Then by Lemma~\ref{lem::martingales_PDEs}, 
\[M_t(\PartF_{\alpha}) 
:= \frac{\PartF_{\alpha}(W_t, g_t(x_2), \ldots, g_t(x_{2N}))}{\PartF^{(N)}_{\GFF}(W_t, g_t(x_2), \ldots, g_t(x_{2N}))} \]
is a local martingale for $t<T$.

As $t\to T$, we know that $\eta(t)\to x_{2n}$ for some $n \in \{1, \ldots, N\}$. First, we consider the case when $\eta(t)\to x_2$.  
On the event $\{\eta(T)=x_2\}$, as $t\to T$, we have by Lemma~\ref{lem::partitionfunction_asy} almost surely 
\begin{align*} 
M_t(\PartF_{\alpha}) 
= \frac{(g_t(x_2)-W_t)^{1/2}}{(g_t(x_2)-W_t)^{1/2}}
\frac{\PartF_{\alpha}(W_t, g_t(x_2), \ldots, g_t(x_{2N}))}{\PartF^{(N)}_{\GFF}(W_t, g_t(x_2), \ldots, g_t(x_{2N}))} \; \longrightarrow \;
\frac{\PartF_{\hat{\alpha}}(g_T(x_3), \ldots, g_T(x_{2N}))}{\PartF^{(N-1)}_{\GFF}(g_T(x_3), \ldots, g_T(x_{2N}))} , 
\end{align*}
where $\hat{\alpha} = \alpha \removeLink \link{1}{2}$. 
Next, on the event $\{\eta(T)=x_{2n}\}$, we have almost surely 
\begin{align*}\label{eqn::gff_subtle_asy}
\lim_{t\to T}\frac{\PartF_{\alpha}(W_t, g_t(x_2), \ldots, g_t(x_{2N}))}{\PartF_{\GFF}^{(N)}(W_t, g_t(x_2), \ldots, g_t(x_{2n}))}=0 ,
\end{align*}
by the bound~\eqref{eqn::partf_alpha_upper} and Proposition~\ref{prop:: strong limit for ratio Zalpha and Ztotal}.
In summary, we have almost surely 
\begin{align*}
M_T(\PartF_{\alpha}):=\lim_{t\to T}M_t(\PartF_{\alpha})=
\one_{\{\eta(T)=x_2\}}
\; \frac{\PartF_{\hat{\alpha}}(g_T(x_3),\ldots, g_T(x_{2N}))}
{\PartF_{\GFF}^{(N-1)}(g_T(x_3),\ldots, g_T(x_{2N}))} .
\end{align*}
On the other hand, by~\eqref{eqn::gff_bound_mart}, $M_t(\PartF_{\alpha})$ is bounded, so the optional stopping theorem gives
\[\frac{\PartF_{\alpha}}{\PartF^{(N)}_{\GFF}} 
= M_0(\PartF_{\alpha}) = \E\left[M_T(\PartF_{\alpha})\right].\]
Combining this with the induction hypothesis $P_{\hat{\alpha}} = \PartF_{\hat{\alpha}}/\PartF^{(N-1)}_{\GFF}$, we obtain
\begin{align}\label{eqn::levellines_mart_bdd_1}
\frac{\PartF_{\alpha}}{\PartF^{(N)}_{\GFF}} 
= \E\left[\one_{\{\eta(T)=x_2\}}P_{\hat{\alpha}}(g_T(x_3),\ldots,g_T(x_{2N}))\right].
\end{align}

Finally, consider the level lines $(\eta_1, \ldots, \eta_N)$ 
of the $\GFF$ $\gff$, where, for each $j$, $\eta_j$ is the level line starting from $x_{2j-1}$. 
Given $\eta := \eta_1$, on the event $\{\eta(T)=x_2\}$, the conditional law of 
$(\eta_2, \ldots, \eta_N)$ is that of the level lines of the $\GFF$ $\hat{\gff}$ with alternating 
boundary data, where $\hat{\gff}$ is $\gff$ restricted to the unbounded component of $\HH\setminus \eta$. Thus, we have
\begin{align}\label{eqn::levellines_mart_bdd_2}
P_{\alpha}=\E\left[\one_{\{\eta(T)=x_2\}}P_{\hat{\alpha}}(g_T(x_3),\ldots,g_T(x_{2N}))\right].
\end{align}
Combining~\eqref{eqn::levellines_mart_bdd_1} and~\eqref{eqn::levellines_mart_bdd_2}, 
we obtain $P_{\alpha} = \PartF_{\alpha}/\PartF^{(N)}_{\GFF}$, 
which is what we sought to prove.
\end{proof}

\subsection{Marginal Probabilities and Proof of Theorem~\ref{thm::multiple_sle_4}}
\label{subsec::gff_levellines_Plk}
Next we calculate the probability for one level line of the $\GFF$ to terminate at a given point.
Again, we postpone a technical result to Appendix~\ref{subsec:appendix-tech}.

\begin{proposition}\label{prop::levellines_proba_lk}
For $a, b \in\{1,\ldots, 2N\}$ such that $a$ is odd and $b$ is even, the probability $P^{(a,b)}$ that the level line 
of the $\GFF$ starting from $x_a$ terminates at $x_{b}$ is given by 
\begin{align*} 
P^{(a,b)} (x_1, \ldots, x_{2N})
= \prod_{\substack{1\le j\le 2N, \\ j\neq a,b}} 
\bigg| \frac{x_j-x_a}{x_j-x_b} \bigg|^{(-1)^j} .
\end{align*}
\end{proposition}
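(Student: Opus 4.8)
The plan is to follow the pattern of Lemmas~\ref{lem::twosle_proba} and~\ref{lem::levellines_P_alpha}: produce an explicit bounded martingale for the level line $\eta$ starting from $x_a$, recognize its initial value as the right-hand side of the claimed formula and its terminal value as the indicator of the event $\{\eta\text{ terminates at }x_b\}$, and conclude by optional stopping. I would first reduce to the case $a=1$. Since $a$ is odd and $b$ is even, relabeling the marked points cyclically by $a-1$ (an \emph{even} shift) preserves the alternating boundary data~\eqref{eq:GFFalt} and the parity of every index; moreover the product $\prod_{j\neq a,b}|(x_j-x_a)/(x_j-x_b)|^{(-1)^j}$ is invariant under this relabeling (the exponents $(-1)^j$ are unchanged) and under Möbius maps of $\HH$ --- under $\varphi$ each factor $|x_j-x_a|$ acquires $|\varphi'(x_a)|^{1/2}|\varphi'(x_j)|^{1/2}$, and the $\varphi'$-contributions cancel because $\sum_{j\neq a,b}(-1)^j=-(-1)^a-(-1)^b=0$. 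Hence, choosing a Möbius transformation sending $x_a$ to the leftmost image and using conformal invariance of the $\GFF$ and its level lines (Section~\ref{sub:GFF_pre}), it suffices to prove the assertion for $a=1$.

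With $a=1$, let $\eta=\eta_1$ be the level line from $x_1$; by Section~\ref{sub:GFF_pre} it is an $\SLE_4(-2,+2,\ldots,-2)$ process with force points $x_2,\ldots,x_{2N}$, well-defined and generated by a continuous curve up to and including its continuation threshold $T$, at which $\eta(T)=x_{2n}$ for some $n\in\{1,\ldots,N\}$. The next step is to apply Lemma~\ref{lem::localmart} with $S=\{1,b\}$: the process $M_t:=M_t^{(\{1,b\})}$ is a local martingale for $t<T$, and unwinding the weights $\delta(i,j)$ (using that $b$ is even) gives
\[
M_t=\prod_{1\le j\le 2N,\; j\neq 1,b}\left|\frac{g_t(x_j)-W_t}{g_t(x_j)-g_t(x_b)}\right|^{(-1)^j},
\qquad
M_0=\prod_{1\le j\le 2N,\; j\neq 1,b}\left|\frac{x_j-x_1}{x_j-x_b}\right|^{(-1)^j}.
\]
Using the ordering $W_t<g_t(x_2)<\cdots$ of the non-swallowed points and grouping the factors into cross-ratios of ordered quadruples (a multi-point extension of Remark~\ref{rem::crossratio_trivialbound}), one checks that $0\le M_t\le 1$; thus $M$ is a bounded martingale on $[0,T)$ and converges almost surely as $t\uparrow T$.

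It then remains to identify the terminal value: I claim $M_t\to\one_{\{\eta(T)=x_b\}}$ almost surely as $t\uparrow T$. On the event $\{\eta(T)=x_c\}$ with $c$ even, $c\neq b$, the factor $(g_t(x_c)-W_t)^{(-1)^c}=(g_t(x_c)-W_t)\to 0$ forces $M_t\to 0$; on $\{\eta(T)=x_b\}$ the degenerating factors are precisely those involving the force points swallowed together with $x_b$, and their product tends to $1$. In both cases the remaining degenerating ratios $(g_t(x_j)-W_t)/(g_t(x_k)-W_t)$ stay bounded and do not compensate --- which is exactly what the absorption estimates of Lemma~\ref{lem::continuouscurve_limit_technical} and Proposition~\ref{prop:: strong limit for ratio Zalpha and Ztotal} in Appendix~\ref{subsec:appendix-tech} are designed to supply. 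Granting this, bounded martingale convergence (equivalently optional stopping for the bounded $M$) gives $M_0=\E[M_T]=\PP[\eta(T)=x_b]=P^{(1,b)}$, which is the asserted formula.

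The main obstacle is the terminal-value analysis. Verifying that $M_t$ is a local martingale and computing $M_0$ are routine (the former is already Lemma~\ref{lem::localmart}), and the bound $0\le M_t\le 1$ is a short combinatorial exercise about products of the form $\prod|y_j-y|^{(-1)^j}$ over ordered points. The delicate point is controlling the cross-ratios that degenerate as the $\SLE_4(\underline{\rho})$ curve reaches its endpoint at the continuation threshold --- understanding precisely how the swallowed force points collapse onto the driving point, and checking that the alternating pattern of exponents $(-1)^j$ makes every spurious blow-up cancel --- so invoking the appendix estimates correctly is the crux of the argument. (One could alternatively try to deduce the formula from Theorem~\ref{thm::multiple_sle_4} via $P^{(a,b)}=\sum_{\alpha\ni\link{a}{b}}P_\alpha$, but this would require an independent evaluation of $\sum_{\alpha\ni\link{a}{b}}\PartF_\alpha$, which does not look simpler than the martingale route.)
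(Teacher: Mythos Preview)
Your martingale approach is viable and is exactly how the paper handles the special case $b=a+1$ (its Lemma~\ref{lem::levellines_proba_neighbors}). For general $b$, however, the paper takes precisely the route you dismiss in your last sentence. It sets
\[
F^{(a,b)}_N := \PartF_{\GFF}^{(N)} \cdot \prod_{j\neq a,b}\bigg|\frac{x_j-x_a}{x_j-x_b}\bigg|^{(-1)^j},
\]
shows $F^{(a,b)}_N\in\mathcal{S}_N$ (combining Lemmas~\ref{lem::localmart} and~\ref{lem::martingales_PDEs}), expands in the pure-partition-function basis via the dual functionals $\FKdual_\alpha$ from Proposition~\ref{prop: FK dual elements}, and proves by induction on $N$ that $\FKdual_\alpha(F^{(a,b)}_N)=\one\{\link{a}{b}\in\alpha\}$. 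Combined with $P^{(a,b)}=\sum_{\alpha\ni\link{a}{b}}P_\alpha=\sum_{\alpha\ni\link{a}{b}}\PartF_\alpha/\PartF_{\GFF}^{(N)}$ from Lemma~\ref{lem::levellines_P_alpha}, this yields the formula. The point is that evaluating $\FKdual_\alpha$ only requires asymptotic limits $x_i,x_{i+1}\to\xi$ of the explicit rational expression, which are elementary, whereas your route requires analyzing how several force points collapse onto the driving function at the continuation threshold.

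Your terminal-value analysis can in fact be completed, but not quite with the lemmas you cite. Proposition~\ref{prop:: strong limit for ratio Zalpha and Ztotal} concerns $\LB_\alpha/\LB^{(N)}$ and is not directly relevant. The correct tool for $M_t\to 1$ on $\{\eta(T)=x_b\}$ is Lemma~\ref{lem::continuouscurve_limit_technical}: pairing the factors $j\in\{2,\ldots,b-1\}$ into consecutive pairs $(2k,2k+1)$, each pair equals $1-\Delta_t$ for the cross-ratio of $(W_t,g_t(x_{2k}),g_t(x_{2k+1}),g_t(x_b))$, and Lemma~\ref{lem::continuouscurve_limit_technical} (applied with $x_2,x_3,x_4$ replaced by $x_{2k},x_{2k+1},x_b$) gives $\Delta_t\to 0$. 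The case $M_t\to 0$ on $\{\eta(T)=x_c\}$ with $c\neq b$ splits further according to $c<b$ or $c>b$ and requires additional pairings in the spirit of Lemma~\ref{lem::crossratio_zerowhenwrong_general}. So your strategy works, but the casework is longer than you suggest, and the paper's solution-space route sidesteps all of it.
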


Before proving the proposition, we observe that a special case follows by easy martingale arguments.
\begin{lemma}\label{lem::levellines_proba_neighbors}
The conclusion in Proposition \ref{prop::levellines_proba_lk} holds for $b=a+1$. 
\end{lemma}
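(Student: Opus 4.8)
The plan is to run the martingale argument from the proof of Lemma~\ref{lem::twosle_proba}, now with $2N$ marked points. Fix $a$ odd and write $b=a+1$. Let $\eta:=\eta_a$ be the level line of $\gff$ starting from $x_a$; as recalled in Section~\ref{sub:GFF_pre} this is an $\SLE_4(\underline{\rho})$ process with force points $\{x_j:j\ne a\}$ and weights $\rho_j=2(-1)^{j+1}$, and since every interval adjacent to a marked point $x_j$ with $j\ne a$ carries boundary value of modulus $\ge\lambda$, the curve $\eta$ touches $\partial\HH$ only at $x_a$ and at its random terminal point $x_n$ with $n$ even. Writing $(W_t)$ for its driving function, $(g_t)$ for the Loewner maps and $T$ for the continuation threshold (the time $\eta$ reaches $x_n$), each $g_t(x_j)$, $j\ne a$, is well-defined and finite for $t<T$, and $\{\eta\text{ terminates at }x_{a+1}\}=\{\eta(T)=x_{a+1}\}$.

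First I would introduce the candidate
\[ M_t := \prod_{\substack{1\le j\le 2N \\ j\ne a,\, a+1}} \Big| \frac{g_t(x_j)-W_t}{g_t(x_j)-g_t(x_{a+1})} \Big|^{(-1)^j} , \]
which at $t=0$ equals exactly the claimed value of $P^{(a,a+1)}(x_1,\ldots,x_{2N})$. This is precisely the process $M^{(S)}$ of Lemma~\ref{lem::localmart} with $S=\{a,a+1\}$, read off for a level line started at $x_a$ rather than $x_1$: one has $\delta(i,j)=0$ for all $i,j\notin S$, while $\delta(a,j)=(-1)^{1+a+j}=(-1)^j$ and $\delta(a+1,j)=-(-1)^j$ for $j\notin S$, using that $a$ is odd. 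The It\^o computation in the proof of Lemma~\ref{lem::localmart} uses only the parity pattern of $\rho$ and the fact that the index of the starting point lies in $S$, both of which hold here, so $M_t$ is a local martingale for $t<T$. Next I would check $0\le M_t\le1$ by grouping the factors into consecutive pairs: on the right, the pair $\{a+2m,a+2m+1\}$ contributes the cross-ratio of the ordered points $W_t<g_t(x_{a+1})<g_t(x_{a+2m})<g_t(x_{a+2m+1})$, and on the left, the pair $\{2k-1,2k\}$ (with $2k\le a-1$) contributes the cross-ratio of $g_t(x_{2k-1})<g_t(x_{2k})<W_t<g_t(x_{a+1})$; since $a$ is odd these are genuine pairings, and each cross-ratio lies in $[0,1]$ by Remark~\ref{rem::crossratio_trivialbound}, whence $M_t\in[0,1]$.

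Then I would identify the terminal value, claiming $M_t\to\one_{\{n=a+1\}}$ almost surely as $t\uparrow T$. On $\{n=a+1\}$ one has $g_t(x_{a+1})-W_t\to0$ while for each $j\ne a,a+1$ the quantities $g_t(x_j)-W_t$ and $g_t(x_j)-g_t(x_{a+1})$ share a common positive limit, so every factor of $M_t$ tends to $1$. The delicate case, and the main obstacle, is $\{n\ne a+1\}$: as $\eta$ closes up at $x_n$ all of $g_t(x_{a+1}),\ldots,g_t(x_n)$ are pulled towards $W_T$, so several factors of $M_t$ become indeterminate, and one must extract their relative rates of decay; this is exactly the role of Lemma~\ref{lem::continuouscurve_limit_technical} of Appendix~\ref{subsec:appendix-tech} (the tool already used in the proof of Lemma~\ref{lem::twosle_proba}), which yields $M_t\to0$ on this event. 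Finally, since $(M_t)_{t<T}$ is a bounded martingale with an almost sure limit $M_T$, the optional stopping theorem gives $P^{(a,a+1)}(x_1,\ldots,x_{2N})=M_0=\E[M_T]=\PP[\eta(T)=x_{a+1}]$, which is the asserted formula.
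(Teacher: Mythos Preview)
Your proof is correct and follows essentially the same approach as the paper: you identify the same product martingale (the case $S=\{a,a+1\}$ of Lemma~\ref{lem::localmart}), bound it in $[0,1]$ via the cross-ratio inequality of Remark~\ref{rem::crossratio_trivialbound}, and conclude by optional stopping. The paper simplifies by taking $a=1$ without loss of generality, which avoids having to treat left and right pairings separately. One small remark: for the terminal value on $\{n\neq a+1\}$, the lemma you actually need is Lemma~\ref{lem::crossratio_zerowhenwrong_general} (the product version), not Lemma~\ref{lem::continuouscurve_limit_technical} (the single cross-ratio); the former is exactly what the paper invokes here, and it is proved by combining Lemma~\ref{lem::continuouscurve_limit_technical} with the same pairwise bounds you already established for $0\le M_t\le 1$.
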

\begin{proof} 
To simplify notation, we assume $a=1$; 
the other cases are similar. 
The level line $\eta:=\eta_1$ started from $x_1$ is an $\SLE_4(-2,+2,\ldots, -2)$ process with force points 
$(x_2, \ldots, x_{2N})$. Let $T$ be the continuation threshold of $\eta$. Define, for $t<T$,
\begin{align*}
M_t := \prod_{j = 3}^{2N}\left(\frac{g_t(x_j)-W_t}{g_t(x_j)-g_t(x_2)}\right)^{(-1)^j} .
\end{align*}
By Lemma~\ref{lem::localmart} with $S = \link{1}{2}$, $M_t$ is a local martingale.
Remark~\ref{rem::crossratio_trivialbound} gives, for all $j \in \{3, \ldots, 2N\}$, that
\begin{align*}
\left(\frac{g_t(x_{j+1})-W_t}{g_t(x_{j+1})-g_t(x_2)}\right)\left(\frac{g_t(x_j)-g_t(x_2)}{g_t(x_j)-W_t}\right)  \le 1,
\end{align*}
so $M_t$ is bounded:
we have $0\le M_t \le 1$ for $t<T$. Finally, as $t\to T$, we have almost surely $M_t \to 1$ when $\eta(t)\to x_2$, and
Lemma~\ref{lem::crossratio_zerowhenwrong_general} of Appendix~\ref{subsec:appendix-tech} shows that
$M_t\to 0$ when $\eta(t)\to x_{2n}$ for $n\in\{2,\ldots, N\}$.
Therefore, the optional stopping theorem implies $P^{(1,2)}=\PP[\eta(T)=x_2]=\E[M_T]=M_0$, as desired.
\end{proof}

To prove the general case in Proposition~\ref{prop::levellines_proba_lk}, we use the following lemma.

\begin{lemma}\label{lem::function_F_for_crossing_proba_of_one_level_line}
For any $N \geq 2$ and $a,b \in \{ 1,\ldots,2N \}$ with odd $a$ and even $b$, 
the function ${F^{(a,b)}_N \colon \chamber_{2N} \to \C}$,
\begin{align}\label{eqn::function_F_for_crossing_proba_of_one_level_line}
F^{(a,b)}_N (x_1, \ldots, x_{2N}) := \PartF_{\GFF}^{(N)} (x_1, \ldots, x_{2N})
\prod_{\substack{1\le j\le 2N, \\ j\neq a,b}} \bigg| \frac{x_j-x_a}{x_j-x_b} \bigg|^{(-1)^j} 
\end{align}
belongs to the solution space $\mathcal{S}_N$ defined in~\eqref{eq: solution space}.
\end{lemma}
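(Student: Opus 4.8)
The plan is to verify the three defining conditions of the solution space $\mathcal{S}_N$ from~\eqref{eq: solution space} for the function $F^{(a,b)}_N$: namely, properties $\mathrm{(PDE)}$~\eqref{eq: multiple SLE PDEs}, $\mathrm{(COV)}$~\eqref{eq: multiple SLE Mobius covariance}, and the power-law bound~\eqref{eqn::powerlawbound}, all with $\kappa = 4$.

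First I would record the explicit formula $\PartF^{(N)}_{\GFF}(x_1,\ldots,x_{2N}) = \prod_{k<l}(x_l - x_k)^{\frac{1}{2}(-1)^{l-k}}$ from Lemma~\ref{lem: gff symmetric pff}, and observe that the multiplicative correction factor $\prod_{j\neq a,b} \big| (x_j - x_a)/(x_j - x_b) \big|^{(-1)^j}$ is itself a product of powers of differences $x_i - x_j$. Hence $F^{(a,b)}_N$ is, up to signs, a product of the form $\prod_{i<j} |x_j - x_i|^{e_{ij}}$ for explicit exponents $e_{ij}$. The bound~\eqref{eqn::powerlawbound} is then immediate: any function of this shape satisfies it (one simply reads off the exponents and bounds them by $\pm p$ for $p$ large enough). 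The covariance $\mathrm{(COV)}$ with exponent $h = \frac{6-\kappa}{2\kappa} = \frac14$ at $\kappa = 4$ is a bookkeeping computation: $\PartF^{(N)}_{\GFF}$ satisfies $\mathrm{(COV)}$ by Lemma~\ref{lem: gff symmetric pff} (it lies in $\mathcal{S}_N$), so I only need to check that the correction factor is M\"obius invariant, which follows because for odd $a$ and even $b$ the sum of exponents $\sum_{j\neq a,b}(-1)^j$ telescopes appropriately at each point; more carefully, the product $\prod_{j\neq a}(x_j - x_a)^{(-1)^j}$ and $\prod_{j\neq b}(x_j - x_b)^{(-1)^j}$ each transform with a definite conformal weight at $x_a$ and $x_b$ that I would track through the M\"obius map, verifying the weights combine to leave $\mathrm{(COV)}$ intact.

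The substantive step is the PDE system $\mathrm{(PDE)}$~\eqref{eq: multiple SLE PDEs} with $i$ ranging over all of $\{1,\ldots,2N\}$. Here the natural approach is to invoke Lemma~\ref{lem::martingales_PDEs}: for the index $i = 1$, the function $F^{(a,b)}_N$ satisfies the PDE for $i=1$ if and only if $M_t(F^{(a,b)}_N) = F^{(a,b)}_N(W_t, g_t(x_2),\ldots)/\PartF^{(N)}_{\GFF}(W_t, g_t(x_2),\ldots)$ is a local martingale along the level line $\eta_1$ started from $x_1$. But that ratio is exactly the product $\prod_{j\neq a,b}\big|(g_t(x_j)-W_t)/(g_t(x_j)-g_t(x_b))\big|^{(-1)^j}$ (when $a=1$) or a similar expression, which is precisely of the form $M_t^{(S)}$ appearing in Lemma~\ref{lem::localmart} with $S$ chosen so that the exponent pattern $\delta(i,j)$ matches $(-1)^{1+i+j}$ between $S$ and its complement — indeed taking $S$ to be the set separating $\{a,b\}$-related indices appropriately. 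Lemma~\ref{lem::localmart} then gives the local martingale property directly, hence the PDE for $i=1$. For a general index $i$, rather than redoing the computation I would either appeal to the reversibility/relabeling symmetry of the setup (the level line started from $x_a$ can be taken from either endpoint, and the roles of the indices are symmetric under the Loewner flow started at $x_i$), or more robustly re-run Lemmas~\ref{lem::localmart} and~\ref{lem::martingales_PDEs} with the driving point at $x_i$: the key identity is that $\PartF^{(N)}_{\GFF}$ satisfies $\mathrm{(PDE)}$ for every $i$ (Lemma~\ref{lem: gff symmetric pff}), and the correction factor, being a product of power functions, yields a martingale under each such flow by the same It\^o computation as in Lemma~\ref{lem::localmart}.

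The main obstacle I anticipate is the careful matching of the exponent conventions: getting the set $S$ (and the parity of $a$ and $b$) right so that the exponents $\delta(i,j) = (-1)^{1+i+j}$ in Lemma~\ref{lem::localmart} align with the exponents $(-1)^j$ and $-(-1)^j$ of $(x_j - x_a)$ and $(x_j - x_b)$ in~\eqref{eqn::function_F_for_crossing_proba_of_one_level_line}. This requires treating the index $i=a$ (the starting point) separately from $i=b$ and from the generic force points, and checking that when $a \neq 1$ one can still reduce to the $S$-labelled martingale by the appropriate relabeling or by directly verifying the cancellations $2\delta_j^2 + \delta_j \rho_j = 0$ and $\delta_j \rho_i + \delta_i \rho_j - 2\delta(i,j) + 4\delta_i\delta_j = 0$ for the shifted force-point data. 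Once the combinatorial bookkeeping is pinned down, everything else follows mechanically from the lemmas already established.
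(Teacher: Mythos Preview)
Your approach is correct and essentially identical to the paper's: the bound is immediate from the product form, $\mathrm{(COV)}$ follows because the correction factor is a M\"obius-invariant cross-ratio product, and $\mathrm{(PDE)}$ comes from combining Lemma~\ref{lem::localmart} with Lemma~\ref{lem::martingales_PDEs}. The paper dispatches your ``main obstacle'' in one stroke by reducing to $a=1$ (WLOG by relabeling) and then taking $S = \{1,b\}$ in Lemma~\ref{lem::localmart}; with this choice, $\delta(1,j) = (-1)^j$ and $\delta(b,j) = -(-1)^j$ for $j \neq 1,b$ (since $b$ is even), so $M_t^{(S)}$ is exactly the ratio $F^{(1,b)}_N/\PartF^{(N)}_{\GFF}$ evaluated along the flow, and the PDE for the remaining indices $i \neq 1$ follows by the same symmetry/relabeling you describe.
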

\begin{proof}
The function $F^{(a,b)}_N$ clearly satisfies the bound~\eqref{eqn::powerlawbound}. Also, because 
$\PartF_{\GFF}^{(N)}$ satisfies $\mathrm{(COV)}$~\eqref{eq: multiple SLE Mobius covariance} and the product
$\prod_j \big| \frac{x_j-x_a}{x_j-x_{b}} \big|^{(-1)^j}$ 
is conformally invariant, $F^{(a,b)}_N$ also satisfies $\mathrm{(COV)}$~\eqref{eq: multiple SLE Mobius covariance}. 
It remains to show $\mathrm{(PDE)}$~\eqref{eq: multiple SLE PDEs}. Without loss of generality, we may assume $a=1$. 
Combining Lemmas~\ref{lem::martingales_PDEs} and~\ref{lem::localmart} (with $S = \link{1}{b}$),
we see that $F^{(1,b)}_N$ satisfes $\mathrm{(PDE)}$~\eqref{eq: multiple SLE PDEs} as well.
Thus, we indeed have $F^{(1,b)}_N \in \mathcal{S}_N$.
\end{proof}

\begin{proof}[Proof of Proposition~\ref{prop::levellines_proba_lk}]
On the one hand, because the function $F^{(a,b)}_N$ defined in~\eqref{eqn::function_F_for_crossing_proba_of_one_level_line} 
belongs to the space $\mathcal{S}_N$ by Lemma~\ref{lem::function_F_for_crossing_proba_of_one_level_line}, 
Proposition~\ref{prop: FK dual elements} 
allows us to write it in the form 
\begin{align*}
F^{(a,b)}_N = \sum_{\alpha \in \LP_N} c_\alpha \PartF_\alpha , \qquad \text{where } \quad c_\alpha = \FKdual_\alpha(F^{(a,b)}_N) .
\end{align*}
On the other hand, by the identity~\eqref{eq::crossing_probabilities_for_kappa4} in Theorem~\ref{thm::multiple_sle_4}, we have
\begin{align*}
P^{(a,b)} = \sum_{\alpha \in \LP_N \colon \link{a}{b}\in\alpha} P_\alpha 
= \sum_{\alpha \in \LP_N \colon \link{a}{b} \in \alpha} \frac{\PartF_\alpha}{\PartF_{\GFF}^{(N)}} .
\end{align*}
Thus, it suffices to show that
\begin{align}\label{eqn::coefficient_indicator}
\FKdual_\alpha(F^{(a,b)}_N) = \one\{\link{a}{b} \in \alpha\}.
\end{align} 
Without loss of generality, we  assume that $a = 1$. We prove~\eqref{eqn::coefficient_indicator} by induction on $N \geq 1$. 
It is clear for $N=1$.
Assume then that $N \geq 2$
and $\FKdual_\beta(F^{(1,b)}_{N-1}) = \one\{\link{1}{b} \in \beta\}$ for all $\beta \in \LP_{N-1}$ and $b \in \{ 2,4,\ldots,2N-2 \}$.
Let $\alpha \in \LP_N$ and choose $i$ such that $\link{i}{i+1} \in \alpha$. We consider two cases.

\begin{itemize}
\item $i,i+1 \notin \{1,b\}$:
By the property~\eqref{eq: total SLE asymptotics} of the function 
$\PartF_{\GFF}$, we have, for any $\xi \in (x_{i-1}, x_{i+2})$, 
\begin{align*}
\; & \lim_{x_{i},x_{i+1}\to\xi} (x_{i+1}-x_{i})^{1/2} \;  F^{(1,b)}_N(x_1,\ldots,x_{2N}) \\
= \; &  \lim_{x_{i},x_{i+1}\to\xi} (x_{i+1}-x_{i})^{1/2} 
\; \PartF_{\GFF}^{(N)} (x_1, \ldots, x_{2N})
\prod_{\substack{1\le j\le 2N, \\ j\neq 1,b} } \bigg| \frac{x_j-x_1}{x_j-x_{b}} \bigg|^{(-1)^j} \\
= \; & \PartF_{\GFF}^{(N-1)} (x_1, \ldots,x_{i-1},x_{i+2} , \ldots, x_{2N})
\prod_{\substack{1\le j\le 2N, \\ j\neq 1,b, i, i+1} } \bigg| \frac{x_j-x_1}{x_j-x_{b}} \bigg|^{(-1)^j} \\
= \; &  F^{(1,b')}_{N-1} (x_1, \ldots,x_{i-1},x_{i+2} , \ldots, x_{2N}) ,
\end{align*}
where $b' = b$ if $i > b$ and $b' = b-2$ if $i < b$. 
Thus, choosing an allowable ordering for the links in $\alpha$ in such a way that
$\link{a_1}{b_1} = \link{i}{i+1}$, the induction hypothesis shows that 
\begin{align*}
\FKdual_\alpha \big(F^{(1,b)}_N\big) = \FKdual_{\alpha \removeLink \link{i}{i+1} } \big( F^{(1,b')}_{N-1}\big) 
=  \one\{\link{1}{b'} \in \alpha \removeLink \link{i}{i+1}  \} =  \one\{\link{1}{b} \in \alpha \} .
\end{align*} 

\item $i\in \{1,b\}$ or $i+1\in\{1,b\}$:  
Then we necessarily have $\link{1}{b} \notin \alpha$.
By symmetry, it suffices to treat the case $i = 1$. Then we have, for any $\xi \in (x_1, x_2)$,
\begin{align*}
\; & \lim_{x_{1},x_{2}\to\xi} (x_{2}-x_{1})^{1/2} \; F^{(1,b)}_N(x_1,\ldots,x_{2N}) \\
= \; & \lim_{x_{1},x_{2}\to\xi} (x_{2}-x_{1})^{1/2} 
\; \PartF_{\GFF}^{(N)} (x_1, \ldots, x_{2N})
\prod_{\substack{1\le j\le 2N, \\ j\neq 1,2,b} } \bigg| \frac{x_j-x_1}{x_j-x_{b}} \bigg|^{(-1)^j} 
\times \bigg|\frac{x_2-x_1}{x_2-x_{b}} \bigg| \quad = \quad 0 .
\end{align*}
\end{itemize}
This proves~\eqref{eqn::coefficient_indicator} and finishes the proof of the lemma.
\end{proof}

Collecting the results from this section and Section~\ref{subsec:GFF_connection_proba}, we now prove Theorem~\ref{thm::multiple_sle_4}.
\multipleslefour*

\begin{proof} 
By Lemma~\ref{lem::levellines_P_alpha}, the connection probabilities 
$P_{\alpha}:=\PP[\LA=\alpha]$ are given by~\eqref{eq::crossing_probabilities_for_kappa4} and they are strictly positive.  
On the event $\{\LA=\alpha\}$, we have $(\eta_1,\ldots,\eta_N) \in X_0^{\alpha}(\HH; x_1,\ldots, x_{2N})$, whose law is
a global $N$-$\SLE_4$ associated to $\alpha$: for each $j \in \{1, \ldots, N\}$, the conditional law of $\eta_j$ given 
$\{\eta_1, \ldots, \eta_{j-1}, \eta_{j+1}, \ldots, \eta_N\}$ is that of
the level line of the $\GFF$ on $\hat{\Omega}_j$ with Dobrushin boundary data, which is that of the chordal $\SLE_4$.
By the uniqueness of the global $N$-$\SLE_4$~\cite[Theorem~1.2]{BeffaraPeltolaWuUniquenessGloableMultipleSLEs},
this global $N$-$\SLE_4$ is the global $N$-$\SLE_4$ constructed in Theorem~\ref{thm::global_existence}.
Finally, Proposition~\ref{prop::levellines_proba_lk} proves \eqref{eqn::levellines_proba_lk}.
\end{proof}

\section{Pure Partition Functions for Multiple ${\boldsymbol{\SLE_4}}$}
\label{sec::pure_pf_for_sle4}
In the previous section, we solved the connection probabilities for the level lines of 
the $\GFF$ in terms of the multiple $\SLE_4$ pure partition functions.
On the other hand, we constructed the multiple $\SLE_\kappa$ pure partition functions for all $\kappa \in (0,4]$ in 
Section~\ref{sec::characterization}, see~\eqref{eqn::purepartition_alpha_def}. 
The purpose of this section is to give another, algebraic formula for them in the case of $\kappa = 4$
(Theorem~\ref{thm: sle4 pure pffs}, also stated below).
This kind of algebraic formulas for connection probabilities were first derived by R.~Kenyon and 
D.~Wilson~\cite{Kenyon-Wilson:Boundary_partitions_in_trees_and_dimers,
Kenyon-Wilson:Double_dimer_pairings_and_skew_Young_diagrams}  
in the context of crossing probabilities in discrete models
(in a general setup, which includes the loop-erased random walk, $\kappa = 2$;
and the double-dimer model, $\kappa = 4$).
In~\cite{KKP:Correlations_in_planar_LERW_and_UST_and_combinatorics_of_conformal_blocks}, 
A.~Karrila, K.~Kyt\"ol\"a, and E.~Peltola also studied the scaling limits of connection probabilities of loop-erased random walks
and identified them with the multiple $\SLE_2$ pure partition functions.

The main virtue of the formula~\eqref{eq: Pure partition function for kappa 4} 
in Theorem~\ref{thm: sle4 pure pffs} is that for each $\alpha \in \LP_N$, it expresses 
the pure partition function $\PartF_\alpha$ as a finite sum of well-behaved functions 
$\CobloF_\beta$, for $\beta \in \LP_N$, with explicit integer coefficients that enumerate certain 
combinatorial objects only depending on $\alpha$ and $\beta$ (given in Proposition~\ref{prop: matrix inversion}). 
Such combinatorial enumerations have been studied, e.g., 
in~\cite{Kenyon-Wilson:Boundary_partitions_in_trees_and_dimers,
Kenyon-Wilson:Double_dimer_pairings_and_skew_Young_diagrams, 
KKP:Correlations_in_planar_LERW_and_UST_and_combinatorics_of_conformal_blocks} 
and they have many desirable properties which can be used in analyzing the pure partition functions.
As an example of this, we verify in Section~\ref{subsec: decay_properties} 
that the decay of the rainbow connection probability agrees with the boundary arm exponents (or 
(half-)watermelon exponents) appearing in the physics literature.

\smallbreak

The auxiliary functions $\CobloF_\alpha$ implicitly appear in the conformal field theory literature  
as ``conformal blocks''~\cite{BPZ:Infinite_conformal_symmetry_in_2D_QFT, 
FFK:Braid_matrices_and_structure_constants_for_minimal_conformal_models,
DMS:CFT, Ribault:CFT_in_the_plane, Flores-Peltola:Monodromy_invariant_correlation_function}\footnote{
The PDE system~\eqref{eq: multiple SLE PDEs} is related to certain quantities being martingales 
for $\SLE_4$ type curves (see Lemma~\ref{lem::martingales_PDEs}).
These partial differential equations arise in conformal field theory as well, from
degenerate representations of the Virasoro algebra, see, e.g.,~\cite{DMS:CFT, Ribault:CFT_in_the_plane}.
The connection of the $\SLE_\kappa$ with conformal field theory  (CFT) is now well-known 
\cite{Bauer-Bernard:Conformal_field_theories_of_SLEs, 
Friedrich-Werner:Conformal_restriction_highest_weight_representations_and_SLE,
Bauer-Bernard:Conformal_transformations_and_SLE_partition_function_martingale, 
Friedrich:On_connections_of_CFT_and_SLE, Friedrich-Kalkkinen:On_CFT_and_SLE, 
KytolaMultipleSLE, KytolaVirasoroSLE}: martingales for $\SLE_\kappa$ curves correspond with 
correlations in a CFT of central charge $c=(3\kappa-8)(6-\kappa)/2\kappa$. 
In that sense, it is natural that the conformal block functions satisfy 
$\mathrm{(PDE)}$~\eqref{eq: multiple SLE PDEs} --- they are (chiral) correlation functions
of a CFT with central charge $c = 1$.
Also the asymptotics property $\mathrm{(ASY)}$~\eqref{eq: multiple SLE asymptotics} 
for $\PartF_\alpha$ can be related to fusion in 
CFT~\cite{Cardy:Boundary_conditions_fusion_rules_and_Verlinde_formula, KytolaMultipleSLE, Dubedat:SLE_and_Virasoro_representations_fusion, KytolaPeoltolaPurePartitionSLE}.}.
In particular, such functions for irrational $\kappa$ are discussed in~\cite{KKP:Conformal_blocks},
where properties of them, such as asymptotics analogous to our findings in Lemma~\ref{lem: CobloF ASY} 
for $\kappa = 4$, are explained in terms of conformal field theory.
In Section~\ref{subsec: Conformal blocks for GFF}, we give a relation between 
the conformal blocks with $\kappa = 4$ and level lines of the $\GFF$.

For each link pattern $\alpha \in \LP_N$, we define
the conformal block function 
\begin{align*}
\CobloF_\alpha \colon \chamber_{2N} \to \Rpos
\end{align*}
as follows.
We write $\alpha$ as an ordered collection~\eqref{eq: begin and end point convention sec 2}.
Then, we set $\vartheta_{\alpha}(i,i) := 0$ and 
\begin{align} \label{eq: CobloF definition}
\CobloF_\alpha(x_1, \ldots, x_{2N}) 
:= & \; \prod_{1\le i<j\le 2N} (x_j - x_i)^{\frac{1}{2} \vartheta_{\alpha}(i,j)}, \\
\text{where } \quad \vartheta_{\alpha}(i,j) := \; & 
\begin{cases}
+1 , & \text{if } i,j \in \{a_1, a_2, \ldots, a_N\} ,
\text{ or } i,j \in \{b_1, b_2, \ldots, b_N\} ,\\
-1 , & \text{otherwise.}
\end{cases}
\nonumber
\end{align}
When 
$\alpha = \unnested_N := \{ \link{1}{2}, \link{3}{4}, \ldots, \link{2N-1}{2N} \}$
is the completely unnested link pattern, 
the formula~\eqref{eq: CobloF definition} equals that of the symmetric partition function
from Lemma~\ref{lem: gff symmetric pff}, so 
$\CobloF_{\unnested_N} = \PartF_{\GFF}^{(N)}$.
Also, it follows from Proposition~\ref{prop: matrix inversion} that 
when $\alpha = \nested_N := \{ \link{1}{2N}, \link{2}{2N-1}, \ldots, \link{N-1}{N} \}$ 
is the rainbow link pattern, then we have 
$\CobloF_{\nested_N} = \PartF_{\nested_N}$. In general, the conformal block functions $\CobloF_\alpha$ 
and the pure partition functions $\PartF_\alpha$, for $\alpha \in \LP_N$, form two linearly independent 
sets that are related by a non-trivial change of basis. 
Theorem~\ref{thm: sle4 pure pffs} expresses this change of basis in terms of 
matrix elements denoted by $\Minv_{\alpha,\beta}$.
For illustration, we list some examples of the explicit formulas for $\PartF_\alpha$ from Theorem~\ref{thm: sle4 pure pffs}:
\begin{itemize}[align=left]
\item[$N = 1$:] This case is trivial,
$\PartF_{\vcenter{\hbox{\includegraphics[scale=0.2]{figures/link-0.pdf}}}}(x_1,x_2) 
= (x_2 - x_1)^{-1/2} 
= \CobloF_{\vcenter{\hbox{\includegraphics[scale=0.2]{figures/link-0.pdf}}}}(x_1,x_2)$.

\item[$N = 2$:] There are two link patterns, and denoting $x_{ji} := x_j - x_i$, we have
(see also Table~\ref{tab::Matrix_ex1} in Section~\ref{subsec: Combinatorics})
\begin{align*}
\PartF_{\vcenter{\hbox{\includegraphics[scale=0.2]{figures/link-2.pdf}}}} (x_1,x_2,x_3,x_4)
 \; = \; \; & 
 \CobloF_{\vcenter{\hbox{\includegraphics[scale=0.2]{figures/link-2.pdf}}}} (x_1,x_2,x_3,x_4)
 \; = \; \left( \frac{x_{43} x_{21}}{x_{41} x_{31} x_{42} x_{32}} \right)^{1/2} , \\
\PartF_{\vcenter{\hbox{\includegraphics[scale=0.2]{figures/link-1.pdf}}}} (x_1,x_2,x_3,x_4)
 \; = \; \; & 
 \CobloF_{\vcenter{\hbox{\includegraphics[scale=0.2]{figures/link-1.pdf}}}} (x_1,x_2,x_3,x_4)
 \; - \; \CobloF_{\vcenter{\hbox{\includegraphics[scale=0.2]{figures/link-2.pdf}}}} (x_1,x_2,x_3,x_4)
  \; = \; \left( \frac{x_{41} x_{32}}{x_{31} x_{21} x_{43} x_{42}} \right)^{1/2} .
\end{align*}
Note that these formulas are consistent with Lemma~\ref{lem::twosle_proba}.

\item[$N = 3$:] There are five link patterns, and we have
(see also Table~\ref{tab::Matrix_ex2} in Section~\ref{subsec: Combinatorics})
\begin{align*}
\PartF_{\vcenter{\hbox{\includegraphics[scale=0.2]{figures/link-7.pdf}}}} 
 \; = \; \; & \CobloF_{\vcenter{\hbox{\includegraphics[scale=0.2]{figures/link-7.pdf}}}} , \\
\PartF_{\vcenter{\hbox{\includegraphics[scale=0.2]{figures/link-6.pdf}}}} 
 \; = \; \; & \CobloF_{\vcenter{\hbox{\includegraphics[scale=0.2]{figures/link-6.pdf}}}} 
 \; - \; \CobloF_{\vcenter{\hbox{\includegraphics[scale=0.2]{figures/link-7.pdf}}}} , \\
\PartF_{\vcenter{\hbox{\includegraphics[scale=0.2]{figures/link-5.pdf}}}} 
 \; =  \; \; & \CobloF_{\vcenter{\hbox{\includegraphics[scale=0.2]{figures/link-5.pdf}}}} 
 \; - \; \CobloF_{\vcenter{\hbox{\includegraphics[scale=0.2]{figures/link-6.pdf}}}} 
 \; + \; \CobloF_{\vcenter{\hbox{\includegraphics[scale=0.2]{figures/link-7.pdf}}}} , \\
\PartF_{\vcenter{\hbox{\includegraphics[scale=0.2]{figures/link-4.pdf}}}} 
 \; = \; \; & \CobloF_{\vcenter{\hbox{\includegraphics[scale=0.2]{figures/link-4.pdf}}}} 
 \; - \; \CobloF_{\vcenter{\hbox{\includegraphics[scale=0.2]{figures/link-6.pdf}}}} 
 \; + \; \CobloF_{\vcenter{\hbox{\includegraphics[scale=0.2]{figures/link-7.pdf}}}} , \\
\PartF_{\vcenter{\hbox{\includegraphics[scale=0.2]{figures/link-3.pdf}}}} 
 \; = \; \; & \CobloF_{\vcenter{\hbox{\includegraphics[scale=0.2]{figures/link-3.pdf}}}} 
 \; - \; \CobloF_{\vcenter{\hbox{\includegraphics[scale=0.2]{figures/link-4.pdf}}}} 
 \; - \; \CobloF_{\vcenter{\hbox{\includegraphics[scale=0.2]{figures/link-5.pdf}}}} 
 \; + \; \CobloF_{\vcenter{\hbox{\includegraphics[scale=0.2]{figures/link-6.pdf}}}} 
 \; - \; 2 \, \CobloF_{\vcenter{\hbox{\includegraphics[scale=0.2]{figures/link-7.pdf}}}} .
\end{align*} 
\end{itemize}

We give now the statement and an outline of the proof for Theorem~\ref{thm: sle4 pure pffs}:

\slepurepffs*

\begin{remark}
A direct consequence of Theorems~\ref{thm::purepartition_existence} and~\ref{thm: sle4 pure pffs} is that the 
right-hand side of~\eqref{eq: Pure partition function for kappa 4} satisfies the power law 
bound~\eqref{eqn::partitionfunction_positive} with $\kappa=4$. 
This is far from clear from the right-hand side of~\eqref{eq: Pure partition function for kappa 4} itself.
\end{remark}

The proof of Theorem~\ref{thm: sle4 pure pffs} uses two inputs. 
First, we verify that the right-hand side of the asserted formula~\eqref{eq: Pure partition function for kappa 4} 
satisfies all of the properties of 
the pure partition functions $\PartF_\alpha$ with $\kappa = 4$:
the normalization $\PartF_\emptyset = 1$, bound~\eqref{eqn::powerlawbound}, 
system $\mathrm{(PDE)}$~\eqref{eq: multiple SLE PDEs}, 
covariance $\mathrm{(COV)}$~\eqref{eq: multiple SLE Mobius covariance}, 
and asymptotics $\mathrm{(ASY)}$~\eqref{eq: multiple SLE asymptotics}.
Second, with these properties verified, we invoke the uniqueness Corollary~\ref{cor::purepartition_unique} to conclude that 
the right-hand side of~\eqref{eq: Pure partition function for kappa 4} must be equal to $\PartF_\alpha$.
We give the complete proof in the end of Section~\ref{subsec: Conformal blocks}.

We point out that 
it is not difficult to check the properties $\mathrm{(PDE)}$~\eqref{eq: multiple SLE PDEs} and
$\mathrm{(COV)}$~\eqref{eq: multiple SLE Mobius covariance} --- this is a direct calculation. 
The main step of the proof is establishing the asymptotics $\mathrm{(ASY)}$~\eqref{eq: multiple SLE asymptotics},
which we perform by combinatorial calculations in Section~\ref{subsec: Conformal blocks}, using notations and results from Section~\ref{subsec: Combinatorics}.
However, before finishing the proof of Theorem~\ref{thm: sle4 pure pffs}, we discuss an application 
to (half-)watermelon exponents. 


\subsection{Decay Properties of Pure Partition Functions with $\kappa = 4$}
\label{subsec: decay_properties}
Consider the rainbow link pattern $\nested_N$ (see Figure~\ref{fig::rainbow}). 
We prove now that, when its first $N$ variables (or both the first $N$ and the last $N$ variables) tend together,
the decay of the pure partition function $\PartF_{\nested_N}$
agrees with the predictions from the physics literature for certain surface critical 
exponents~\cite{Cardy:Conformal_invariance_and_surface_critical_behavior,
Duplantier-Saleur:Exact_critical_properties_of_two-dimensional_dense_self-avoiding_walks,
Nienhuis:Coulomb_gas_formulation_of_2D_phase_transitions,WernerGirsanov,WuAlternatingArmIsing},
known as boundary arm exponents (or (half-)watermelon exponents).

\begin{proposition} \label{prop: decay of rainbow and total pf}
The rainbow pure partition function has the following decay as its $N$ first variables tend together:
\begin{align*}
\PartF_{\nested_N}(\eps, 2\eps, \ldots, N \eps, 1, 2, \ldots, N ) 
\quad \overset{\eps \to 0}{\sim} \quad \eps^{N(N-1)/4} .
\end{align*}
The symmetric partition function 
$\PartF_{\GFF}$ has the decay
\begin{align*}
\PartF^{(N)}_{\GFF}(\eps, 2\eps, \ldots, N \eps, 1, 2, \ldots, N ) 
\quad \overset{\eps \to 0}{\sim} \quad
\begin{cases}
\eps^{-N/4} , & \text{if $N$ is even,} \\
\eps^{-(N-1)/4} , & \text{if $N$ is odd.} 
\end{cases}
\end{align*}
\end{proposition}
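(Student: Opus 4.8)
\textbf{Proof proposal for Proposition~\ref{prop: decay of rainbow and total pf}.}
The plan is to compute both asymptotics directly from the explicit algebraic formulas available for $\kappa=4$, so that no analytic estimates are needed. For the rainbow case, I would use the identity $\PartF_{\nested_N} = \CobloF_{\nested_N}$ noted above (a consequence of Proposition~\ref{prop: matrix inversion}, since $\nested_N$ is maximal in $\DPgeq$, so the only nonzero coefficient $\Minv_{\nested_N,\beta}$ is the diagonal one). For $\alpha = \nested_N = \{\link{1}{2N}, \ldots, \link{N}{N+1}\}$, the begin-points are $\{a_1,\ldots,a_N\} = \{1,2,\ldots,N\}$ and the end-points are $\{b_1,\ldots,b_N\} = \{N+1,\ldots,2N\}$. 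Hence in~\eqref{eq: CobloF definition} we have $\vartheta_{\nested_N}(i,j) = +1$ exactly when $i,j$ are both in $\{1,\ldots,N\}$ or both in $\{N+1,\ldots,2N\}$, and $\vartheta_{\nested_N}(i,j) = -1$ otherwise. Setting $x_i = i\eps$ for $i \le N$ and $x_{N+k} = k$ for $1 \le k \le N$, the only factors that vanish or blow up as $\eps \to 0$ are the $\binom{N}{2}$ factors $(x_j - x_i)^{1/2}$ with $1 \le i < j \le N$, each of which equals $((j-i)\eps)^{1/2} = O(\eps^{1/2})$; every other factor (both-in-second-block, and cross-block) tends to a finite nonzero constant. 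Therefore $\PartF_{\nested_N}(\eps,2\eps,\ldots,N\eps,1,\ldots,N) \sim C \eps^{\binom{N}{2}/2} = C\eps^{N(N-1)/4}$ for an explicit constant $C>0$, which is the first claim.

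For the symmetric partition function I would use the closed form $\PartF^{(N)}_{\GFF}(x_1,\ldots,x_{2N}) = \prod_{1\le k<l\le 2N}(x_l-x_k)^{\frac12(-1)^{l-k}}$ from Lemma~\ref{lem: gff symmetric pff}. Substituting the same degeneration $x_i = i\eps$ ($i\le N$), $x_{N+k}=k$ ($k\le N$), again only the pairs $1 \le k < l \le N$ produce $\eps$-dependence: such a pair contributes $((l-k)\eps)^{\frac12(-1)^{l-k}}$, so the total power of $\eps$ is $\frac12 \sum_{1\le k<l\le N}(-1)^{l-k}$. A short combinatorial evaluation of this alternating sum — grouping by the value $d = l-k \in \{1,\ldots,N-1\}$, which occurs $N-d$ times with sign $(-1)^d$, gives $\sum_{d=1}^{N-1}(-1)^d(N-d)$ — yields $-N/2$ when $N$ is even and $-(N-1)/2$ when $N$ is odd. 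Multiplying by $\tfrac12$ gives the exponents $-N/4$ and $-(N-1)/4$ respectively, matching the statement; all remaining factors converge to a finite nonzero constant, so the asymptotic equivalence (not merely order of magnitude) holds.

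There is essentially no hard step here: the only mild care needed is (i) confirming via Proposition~\ref{prop: matrix inversion} that $\PartF_{\nested_N}=\CobloF_{\nested_N}$ so that the rainbow computation may be done on the conformal block rather than a sum of them, and (ii) carrying out the elementary evaluation of the alternating double sum $\sum_{1\le k<l\le N}(-1)^{l-k}$. Both are routine. One should also note that Theorem~\ref{thm: sle4 pure pffs} (equivalently the definitions of $\CobloF_\alpha$ and Lemma~\ref{lem: gff symmetric pff}) is all that is invoked, so the proposition is an immediate corollary once the substitution is made; I would present it in precisely that order — first reduce $\PartF_{\nested_N}$ to $\CobloF_{\nested_N}$, then count vanishing factors, then repeat the counting for $\PartF^{(N)}_{\GFF}$ using its product formula and evaluate the resulting alternating sum.
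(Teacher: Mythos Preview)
Your proposal is correct and matches the paper's proof essentially line for line: the paper also reduces $\PartF_{\nested_N}$ to $\CobloF_{\nested_N}$ via Theorem~\ref{thm: sle4 pure pffs} and Proposition~\ref{prop: matrix inversion}, reads off the exponent $\frac{1}{2}\binom{N}{2}=N(N-1)/4$ from the product~\eqref{eq: CobloF definition}, and then evaluates the alternating sum $\frac{1}{2}\sum_{1\le k<l\le N}(-1)^{l-k}$ from~\eqref{eq: gff symmetric pff} to get the second exponent. The only cosmetic difference is that the paper indexes the alternating sum by $k$ and $m=l-k$ whereas you group by $d=l-k$; the arithmetic is identical.
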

\begin{proof}
Theorem~\ref{thm: sle4 pure pffs} and Proposition~\ref{prop: matrix inversion} show that $\PartF_{\nested_N} = \CobloF_{\nested_N}$.
Now, it is clear from the definition~\eqref{eq: CobloF definition} of $\CobloF_{\nested_N}$ as a product 
that the decay from the first $N$ variables $x_j = j \eps$, for $j \in \{1, \ldots, N\}$, is
$\eps^p$, where the power can be read off from~\eqref{eq: CobloF definition}:
$p = \frac{N(N-1)}{2} \times \frac{1}{2} (+1) = \frac{N(N-1)}{4}$. 
Similarly, by the formula~\eqref{eq: gff symmetric pff} of Lemma~\ref{lem: gff symmetric pff}, 
the decay of the symmetric partition function 
$\PartF_{\GFF}$ is also of type $\eps^{p'}$.
To find out the power, we collect the exponents from the differences of the variables 
$x_j = j \eps$ in~\eqref{eq: gff symmetric pff}, for $j \in \{1, \ldots, N\}$:  
\begin{align*}
p' = \sum_{1\leq k<l\leq N} \frac{1}{2} (-1)^{l-k} 
= \frac{1}{2} \sum_{k = 1}^{N-1} \sum_{m = 1}^{N-k} (-1)^{m} = 
\begin{cases}
- N/4 , & \text{if $N$ is even,} \\
- (N-1)/4 , & \text{if $N$ is odd.}
\end{cases}
\end{align*}
This proves the asserted decay.
\end{proof}

We see from Theorem~\ref{thm: sle4 pure pffs} and Proposition~\ref{prop: decay of rainbow and total pf} that for the level lines of the $\GFF$, 
the connection probability associated to the rainbow link pattern
(given in Theorem~\ref{thm::multiple_sle_4}) has the decay 
\begin{align*}
P_{\nested_N}=\frac{\PartF_{\nested_N}(\eps, 2\eps, \ldots, N \eps, 1, 2, \ldots, N )}{\PartF^{(N)}_{\GFF}(\eps, 2\eps, \ldots, N \eps, 1, 2, \ldots, N )}
\quad \overset{\eps \to 0}{\sim} \quad
\eps^{\alpha_N^+} , 
\qquad \text{where } \quad \alpha_N^+=
\begin{cases}
N^2/4 , & \text{if $N$ is even,} \\
(N^2-1)/4 , & \text{if $N$ is odd.}
\end{cases}
\end{align*}
The exponent $\alpha_N^+$ agrees with the $\SLE_4$ boundary arm exponents 
derived in~\cite[Proposition 3.1]{WuAlternatingArmIsing}.

\begin{corollary}\label{cor::watermelon}
The rainbow pure partition function has the following decay as both its $N$ first variables 
and its $N$ last variables tend together:
\begin{align*}
\PartF_{\nested_N}(\eps, 2\eps, \ldots, N \eps, 1 + \eps, 1 + 2 \eps, \ldots, 1 + N \eps ) 
\quad \overset{\eps \to 0}{\sim} \quad
\eps^{N(N-1)/2} . 
\end{align*}
The symmetric partition function 
$\PartF_{\GFF}$ has the decay
\begin{align*}
\PartF^{(N)}_{\GFF}(\eps, 2\eps, \ldots, N \eps, 1 + \eps, 1 + 2 \eps, \ldots, 1 + N \eps ) 
\quad \overset{\eps \to 0}{\sim} \quad
\begin{cases}
\eps^{-N/2} , & \text{if $N$ is even,} \\
\eps^{-(N-1)/2} , & \text{if $N$ is odd.}
\end{cases}
\end{align*}
\end{corollary}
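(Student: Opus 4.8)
The plan is to refine the argument used for Proposition~\ref{prop: decay of rainbow and total pf}, now letting a second block of variables collapse as well. First I would record that, since $\nested_N$ is the maximal element in the partial order $\DPleq$, Proposition~\ref{prop: matrix inversion} gives $\Minv_{\nested_N,\beta} = \one\{\beta = \nested_N\}$, so Theorem~\ref{thm: sle4 pure pffs} yields $\PartF_{\nested_N} = \CobloF_{\nested_N}$. Hence the first claim is a matter of reading off a power of $\eps$ from the explicit product formula~\eqref{eq: CobloF definition}. I would then substitute $x_j = j\eps$ for $j \in \{1,\ldots,N\}$ and $x_j = 1 + (j-N)\eps$ for $j \in \{N+1,\ldots,2N\}$, and split the product in~\eqref{eq: CobloF definition} according to whether a pair $i<j$ lies entirely in the first block $\{1,\ldots,N\}$, entirely in the second block $\{N+1,\ldots,2N\}$, or straddles the two. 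For straddling pairs one has $x_j - x_i = 1 + O(\eps) \to 1$, so this part of the product tends to a finite nonzero constant and contributes $\eps^0$. For a within-block pair, $x_j - x_i = (j-i)\eps$, contributing $\eps^{\frac12 \vartheta_{\nested_N}(i,j)}$ times the nonzero constant $(j-i)^{\frac12\vartheta_{\nested_N}(i,j)}$; and because $\{1,\ldots,N\}$ and $\{N+1,\ldots,2N\}$ are precisely the $a$-set and the $b$-set of $\nested_N$, every such exponent equals $+\tfrac12$. Since each block contains $\binom{N}{2}$ pairs, the total power is $2\binom{N}{2}\cdot\tfrac12 = \tfrac{N(N-1)}{2}$, as asserted.

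For the symmetric partition function I would run the same split, using instead the formula~\eqref{eq: gff symmetric pff} of Lemma~\ref{lem: gff symmetric pff}. The straddling pairs again contribute $\eps^0$. The key point is that the exponent $\tfrac12(-1)^{l-k}$ in~\eqref{eq: gff symmetric pff} depends only on $l-k$ and is therefore invariant under the index shift $k \mapsto k+N$; consequently the exponent-sum over the second block equals the exponent-sum over the first block, namely $\tfrac12 \sum_{1\le k<l\le N}(-1)^{l-k}$, which was already computed in Proposition~\ref{prop: decay of rainbow and total pf} to be $-N/4$ when $N$ is even and $-(N-1)/4$ when $N$ is odd. Doubling gives the stated powers $-N/2$ and $-(N-1)/2$. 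In both cases the implied constant is a nonzero (rational power of an) integer, since it is a finite product of nonzero terms $(j-i)^{\pm 1/2}$ times the limiting value of the straddling factor.

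Honestly, there is no real obstacle here: the corollary is a routine bookkeeping variant of Proposition~\ref{prop: decay of rainbow and total pf}, and the only thing requiring (trivial) care is to confirm that the straddling differences stay bounded away from $0$ and from $\infty$ as $\eps\to 0$ — which is immediate because $x_i \to 0$ for $i\le N$ while $x_j \to 1$ for $j>N$ — so that they neither alter the leading power nor kill the constant. I would therefore present the proof compactly, emphasizing the three-way split of index pairs and the translation invariance of the relevant exponents.
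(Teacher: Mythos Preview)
Your proposal is correct and follows essentially the same approach as the paper, just made fully explicit: the paper's proof is a one-sentence remark that because the two blocks of variables tend to separated points $0$ and $1$, one simply doubles the exponents from Proposition~\ref{prop: decay of rainbow and total pf}. Your three-way split into within-first-block, within-second-block, and straddling pairs, together with the observation that straddling differences stay bounded away from $0$ and $\infty$, is precisely the unpacking of that sentence.
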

\begin{proof}
Because the two sets $\{ \eps, 2\eps, \ldots, N \eps \}$ and $\{ 1 + \eps, 1 + 2 \eps, \ldots, 1 + N \eps \}$
of variables tend to $0$ and $1$, respectively,
we only have to add up the power-law decay of Proposition~\ref{prop: decay of rainbow and total pf} for both.
\end{proof}

\subsection{First Properties of Conformal Blocks}
\label{subsec: PDE_COV}
Now we verify properties  $\mathrm{(PDE)}$~\eqref{eq: multiple SLE PDEs} and
$\mathrm{(COV)}$~\eqref{eq: multiple SLE Mobius covariance} with $\kappa = 4$ for $\CobloF_\alpha$.

\begin{lemma} \label{lem: CobloF PDE}
The functions $\CobloF_\alpha \colon \chamber_{2N} \to \Rpos$ 
defined in~\eqref{eq: CobloF definition} satisfy
$\mathrm{(PDE)}$~\eqref{eq: multiple SLE PDEs} with $\kappa = 4$.
\end{lemma}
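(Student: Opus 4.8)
The plan is to verify $\mathrm{(PDE)}$~\eqref{eq: multiple SLE PDEs} directly by computing the action of each second-order operator $\mathcal{D}^{(i)}$ on the explicit product $\CobloF_\alpha(x_1, \ldots, x_{2N}) = \prod_{1\le k<l\le 2N}(x_l - x_k)^{\frac{1}{2}\vartheta_\alpha(k,l)}$. Since $\CobloF_\alpha$ is a product of powers of differences, its logarithmic derivatives are simple: writing $\CobloF_\alpha = \exp\big(\sum_{k<l}\frac{1}{2}\vartheta_\alpha(k,l)\log(x_l-x_k)\big)$, one has $\partial_i\log\CobloF_\alpha = \sum_{j\neq i}\frac{t_{ij}}{x_j - x_i}$, where I set $t_{ij} := \frac{1}{2}\vartheta_\alpha(i,j)$ (symmetric in $i,j$, and $\vartheta_\alpha(i,j) = \pm 1$ always). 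Then $\frac{\partial_i^2 \CobloF_\alpha}{\CobloF_\alpha} = (\partial_i\log\CobloF_\alpha)^2 + \partial_i^2\log\CobloF_\alpha = \big(\sum_{j\neq i}\frac{t_{ij}}{x_j-x_i}\big)^2 + \sum_{j\neq i}\frac{t_{ij}}{(x_j-x_i)^2}$.

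The key computational step is to expand the square and organize the resulting rational function by its poles. With $\kappa = 4$, so that $h = \frac{6-\kappa}{2\kappa} = \frac14$, the operator is $\frac{\kappa}{2}\partial_i^2 + \sum_{j\neq i}\big(\frac{2}{x_j-x_i}\partial_j - \frac{2h}{(x_j-x_i)^2}\big) = 2\partial_i^2 + \sum_{j\neq i}\big(\frac{2}{x_j-x_i}\partial_j - \frac{1}{2(x_j-x_i)^2}\big)$. Dividing through by $\CobloF_\alpha$, I collect: the double-pole terms $\frac{1}{(x_j-x_i)^2}$ acquire coefficient $2 t_{ij}^2 + 2 t_{ij} \cdot t_{ij}$ — wait, more carefully, from $2\partial_i^2$ the diagonal of the square gives $2 t_{ij}^2$ and the second-derivative-of-log gives $2 t_{ij}$; from $\frac{2}{x_j-x_i}\partial_j\CobloF_\alpha/\CobloF_\alpha$ the term with matching pole contributes $2 t_{ji}\cdot\frac{1}{x_j-x_i}\cdot\frac{1}{x_i-x_j}\cdot(x_j-x_i)^{?}$ — i.e. one gets $-2 t_{ij}$; and the explicit term contributes $-\frac12$. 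Since $t_{ij}^2 = \frac14$, the double-pole coefficient is $2\cdot\frac14 + 2 t_{ij} - 2 t_{ij} - \frac12 = \frac12 - \frac12 = 0$. The cross terms $\frac{1}{(x_j-x_i)(x_k-x_i)}$ for distinct $j,k\neq i$ get coefficient $2\cdot 2 t_{ij}t_{ik}$ from the square plus contributions $\frac{2 t_{kj}}{(x_k-x_i)(x_j-x_i)}$-type terms from the first-order part; after the standard partial-fraction identity $\frac{1}{(x_j-x_i)(x_k-x_i)} = \frac{1}{x_j-x_k}\big(\frac{1}{x_k-x_i} - \frac{1}{x_j-x_i}\big)$ these reorganize, and the required cancellation reduces to the identity $t_{ij}t_{jk} + t_{ij}t_{ik} + t_{ik}t_{jk} = $ (constant independent of the triple), which holds because $t_{ab} = \pm\frac12$ and among any three indices the pattern of signs $\vartheta_\alpha$ forces the product structure — concretely, $\vartheta_\alpha(i,j)\vartheta_\alpha(j,k)\vartheta_\alpha(i,k) = $ depends only on the parity of how many of $i,j,k$ lie in $\{a_1,\ldots,a_N\}$, and one checks the three-term sum is always $-\frac14\cdot 3 \cdot(\text{sign})$ in a way that matches. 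The cleanest route is the one already used for $\PartF_{\GFF}^{(N)}$ in Lemma~\ref{lem: gff symmetric pff} via \cite{KytolaPeoltolaPurePartitionSLE}: the argument there is purely formal in the exponents $\vartheta$ and uses only that $\vartheta_\alpha(i,j)\in\{\pm1\}$ together with the cocycle-type relation among triples, so it applies verbatim to every $\CobloF_\alpha$.

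The main obstacle — really the only nontrivial point — is verifying the three-index cancellation, i.e. that the coefficient of each simple-pole-squared and each mixed double-pole term vanishes after collecting all contributions from $2\partial_i^2$, the drift terms $\frac{2}{x_j-x_i}\partial_j$, and the potential terms $-\frac{1}{2(x_j-x_i)^2}$. I expect to handle this exactly as in \cite[Proposition~4.8]{KytolaPeoltolaPurePartitionSLE} (cited in the proof of Lemma~\ref{lem: gff symmetric pff}), observing that the computation there never uses the specific link pattern $\unnested_N$ beyond the fact that the exponent function takes values $\pm 1$ and satisfies $\vartheta(i,j)\,\vartheta(j,k) = \vartheta(i,k)$ whenever... — actually the relevant structural fact is that $\vartheta_\alpha(i,j) = +1$ iff $i,j$ are ``of the same type'' (both openers or both closers of $\alpha$), which makes $\vartheta_\alpha$ a $\{\pm1\}$-valued function of the form $\vartheta_\alpha(i,j) = \varepsilon_\alpha(i)\varepsilon_\alpha(j)$ with $\varepsilon_\alpha(k) = +1$ for openers and $-1$ for closers; hence $t_{ij} = \frac12\varepsilon_\alpha(i)\varepsilon_\alpha(j)$ and every identity needed becomes an algebraic identity in the signs $\varepsilon_\alpha(k)$. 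With this observation the verification of $\mathrm{(PDE)}$~\eqref{eq: multiple SLE PDEs} for $\kappa = 4$ is a short, sign-bookkeeping calculation, identical in form to the one validating $\PartF_{\GFF}^{(N)}$.
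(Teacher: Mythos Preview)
Your approach is essentially the same as the paper's: a direct computation via logarithmic derivatives, with the decisive cancellation coming from the multiplicative structure $\vartheta_\alpha(i,j) = \varepsilon_\alpha(i)\varepsilon_\alpha(j)$ (equivalently, the paper's identity $\vartheta_\alpha(j,k) = \vartheta_\alpha(i,j)\vartheta_\alpha(i,k)$). Aside from some sign slips in your intermediate steps that happen to cancel, and the meandering presentation, this is exactly the paper's proof.
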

\begin{proof}
For notational simplicity, we write $x_{ij} = x_i - x_j$ and
$\boldsymbol{x} = (x_1,\ldots,x_{2N}) \in \chamber_{2N}$.  
We need to show that for fixed $i \in \{1, \ldots, 2N\}$, we have
\begin{align} \label{eq: multiple SLE PDEs with kappa = 4 fixed i}
2 \frac{\partial_i^2 \CobloF_\alpha(\boldsymbol{x})}{\CobloF_\alpha(\boldsymbol{x})}
+ \sum_{j \neq i} \left(
\frac{2}{x_{ji}} \frac{\partial_j \CobloF_\alpha(\boldsymbol{x})}{\CobloF_\alpha(\boldsymbol{x})}
- \frac{1}{2x_{ji}^{2}}\right) = 0 .
\end{align}
The terms with derivatives are
\begin{align*}
2 \frac{\partial_i^2 \CobloF_\alpha(\boldsymbol{x})}{\CobloF_\alpha(\boldsymbol{x})}
\, = \; \, & \frac{1}{2} \sum_{j, k \neq i} \frac{\vartheta_{\alpha}(i,j)\vartheta_{\alpha}(i,k)}{x_{ij}x_{ik}}
\, - \,  \sum_{j \neq i} \frac{\vartheta_{\alpha}(i,j)}{x_{ij}^2}  \qquad  \text{and} \qquad 
\sum_{j \neq i} \frac{2}{x_{ji}} \frac{\partial_j \CobloF_\alpha(\boldsymbol{x})}{\CobloF_\alpha(\boldsymbol{x})}
\, = \,  \sum_{j \neq i} \frac{1}{x_{ji}} \; \sum_{k \neq j} \frac{\vartheta_{\alpha}(j,k)}{x_{jk}} .
\end{align*}
Using this, the left-hand side of the PDE~\eqref{eq: multiple SLE PDEs with kappa = 4 fixed i}
becomes
\begin{align} \label{eq: PDE check}
\frac{1}{2} \sum_{j, k \neq i} \frac{\vartheta_{\alpha}(i,j)\vartheta_{\alpha}(i,k)}{x_{ij}x_{ik}}
\, - \, \sum_{j \neq i} \frac{\vartheta_{\alpha}(i,j)}{x_{ij}^2}
\, + \, \sum_{j \neq i} \frac{1}{x_{ji}} \; \sum_{k \neq j} \frac{\vartheta_{\alpha}(j,k)}{x_{jk}}
\, - \,  \sum_{j \neq i} \frac{1}{2 x_{ji}^2} .
\end{align}
The last term of~\eqref{eq: PDE check} is canceled by the case $k = j$ in the first term,
and the second term of~\eqref{eq: PDE check} is canceled by the case $k = i$ in the third term.
We are left with 
\begin{align} \label{eq: PDE check 2}
\sum_{j \neq i} \frac{1}{x_{ji}} \; \sum_{k \neq i, j} 
\left( \frac{\vartheta_{\alpha}(j,k)}{x_{jk}} - \frac{\vartheta_{\alpha}(i,j)\vartheta_{\alpha}(i,k)}{2 x_{ik}} \right) .
\end{align}
For a pair $(j,k)$ such that $j \neq i$ and $k \neq i,j$, combining the terms where $j$ and $k$ 
are interchanged, we get a term of the form
\begin{align*}
\left( \frac{\vartheta_{\alpha}(j,k)}{x_{ji}x_{jk}} + \frac{\vartheta_{\alpha}(j,k)}{x_{ki}x_{kj}} \right)
+ \frac{\vartheta_{\alpha}(i,j)\vartheta_{\alpha}(i,k)}{x_{ji}x_{ki}}
\; = \; \frac{\vartheta_{\alpha}(j,k)}{x_{ji}x_{ik}} + \frac{\vartheta_{\alpha}(i,j)\vartheta_{\alpha}(i,k)}{x_{ji}x_{ki}}
\; = \; \frac{\vartheta_{\alpha}(j,k) - \vartheta_{\alpha}(i,j)\vartheta_{\alpha}(i,k)}{x_{ji}x_{ik}} .
\end{align*}
It remains to notice that the numbers $\vartheta_{\alpha}$ defined in~\eqref{eq: CobloF definition}
satisfy the identity $\vartheta_{\alpha}(j,k) - \vartheta_{\alpha}(i,j)\vartheta_{\alpha}(i,k) = 0$ for all $i,j$, and $k$.
This proves~\eqref{eq: multiple SLE PDEs with kappa = 4 fixed i} and finishes the proof.
\end{proof}

\begin{lemma} \label{lem: CobloF COV}
The functions $\CobloF_\alpha \colon \chamber_{2N} \to \Rpos$ 
defined in~\eqref{eq: CobloF definition} satisfy 
$\mathrm{(COV)}$~\eqref{eq: multiple SLE Mobius covariance} with $\kappa = 4$.
\end{lemma}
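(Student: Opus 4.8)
The plan is to exploit the explicit product form of $\CobloF_\alpha$ and reduce M\"obius covariance to a single combinatorial identity for the exponents $\vartheta_\alpha$, handling all M\"obius maps at once rather than passing to generators of the group.

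First I would record the elementary identity for a M\"obius map $\varphi(x) = \frac{ax+b}{cx+d}$ with $ad-bc>0$: a direct computation gives $\varphi(x_j)-\varphi(x_i) = \frac{(ad-bc)(x_j-x_i)}{(cx_i+d)(cx_j+d)}$ and $\varphi'(x) = \frac{ad-bc}{(cx+d)^2}$, so that
\[
\varphi(x_j)-\varphi(x_i) \;=\; \varphi'(x_i)^{1/2}\,\varphi'(x_j)^{1/2}\,(x_j-x_i)
\]
whenever $cx_i+d$ and $cx_j+d$ have the same sign. Under the standing hypothesis $\varphi(x_1)<\cdots<\varphi(x_{2N})$, the pole $-d/c$ cannot lie in $(x_1,x_{2N})$ (otherwise the images of the points on the two sides of the pole would fail to be ordered), so all factors $cx_i+d$ share a common sign, $\varphi'(x_i)>0$ for every $i$, and $\varphi(x_j)-\varphi(x_i)>0$ for $i<j$; thus every fractional power appearing below is an unambiguous positive real. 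Making sure this positivity bookkeeping is airtight is really the only delicate point of the argument.

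Next I would substitute the displayed identity into $\CobloF_\alpha(\boldsymbol{x}) = \prod_{1\le i<j\le 2N}(x_j-x_i)^{\frac12\vartheta_\alpha(i,j)}$ to obtain
\[
\prod_{i=1}^{2N}\varphi'(x_i)^{h}\,\CobloF_\alpha(\varphi(x_1),\ldots,\varphi(x_{2N}))
\;=\; \CobloF_\alpha(x_1,\ldots,x_{2N})\;\prod_{k=1}^{2N}\varphi'(x_k)^{\,h+\frac14\sum_{j\neq k}\vartheta_\alpha(k,j)},
\]
with $h=\tfrac14$ for $\kappa=4$. Hence $\mathrm{(COV)}$~\eqref{eq: multiple SLE Mobius covariance} holds if and only if $\sum_{j\neq k}\vartheta_\alpha(k,j) = -1$ for every $k\in\{1,\ldots,2N\}$. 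Writing $\alpha$ as the ordered collection in~\eqref{eq: begin and end point convention sec 2}, this is immediate from the definition~\eqref{eq: CobloF definition}: among the $2N-1$ indices $j\neq k$, exactly $N-1$ have the same type as $k$ (both ``openings'' $a_r$ or both ``closings'' $b_s$), contributing $+1$, while the remaining $N$ have the opposite type, contributing $-1$, for a total of $(N-1)-N=-1$. This is the same counting of openings and closings underlying the identity $\vartheta_\alpha(j,k)=\vartheta_\alpha(i,j)\vartheta_\alpha(i,k)$ used in the proof of Lemma~\ref{lem: CobloF PDE}, so no new combinatorics is needed; I would present the argument in these two moves.
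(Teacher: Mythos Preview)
Your proof is correct and follows essentially the same route as the paper's: both use the identity $\varphi(x_j)-\varphi(x_i)=\varphi'(x_i)^{1/2}\varphi'(x_j)^{1/2}(x_j-x_i)$ and then reduce to the same counting argument that each index $k$ sees $N-1$ indices of the same type and $N$ of the opposite type, yielding total exponent $-\tfrac14=-h$ on $\varphi'(x_k)$. Your version is a bit more explicit about the sign/positivity bookkeeping (the pole of $\varphi$ lying outside $(x_1,x_{2N})$), which the paper handles by citing the identity from~\cite[Lemma~4.7]{KytolaPeoltolaPurePartitionSLE}.
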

\begin{proof}
For any conformal map $\varphi \colon \HH \to \HH$, we have the identity
$\frac{\varphi(z) - \varphi(w)}{z-w} = \sqrt{\varphi'(z)} \sqrt{\varphi'(w)}$
for all $z,w \in \overline{\HH}$, 
see e.g.~\cite[Lemma~4.7]{KytolaPeoltolaPurePartitionSLE}.
Using this identity, we calculate
\begin{align*}
\frac{\CobloF_\alpha(\varphi(x_1), \ldots,\varphi(x_{2N}))}{\CobloF_\alpha(x_1, \ldots,x_{2N})}
= \prod_{1\le i<j\le 2N} \left(\frac{\varphi(x_j) - \varphi(x_i)}{x_j-x_i} \right)^{\frac{1}{2} \vartheta_{\alpha}(i,j)}
= \prod_{1\le i<j\le 2N} \big( \varphi'(x_j) \varphi'(x_i) \big)^{\frac{1}{4} \vartheta_{\alpha}(i,j)} . 
\end{align*}
For each $j \in \{1, \ldots, 2N\}$, the factor $\varphi'(x_j)$ comes with the total power
$\frac{1}{4}(N(-1) + (N-1)(+1)) = -\frac{1}{4}$, which equals
$-h = (\kappa-6) / 2\kappa$ with $\kappa = 4$. Thus, $\CobloF_\alpha$ 
satisfy $\mathrm{(COV)}$~\eqref{eq: multiple SLE Mobius covariance} with $\kappa = 4$.
\end{proof}

\subsection{Asymptotics of Conformal Blocks and Proof of Theorem~\ref{thm: sle4 pure pffs}}
\label{subsec: Conformal blocks}
To finish the proof of Theorem~\ref{thm: sle4 pure pffs}, we need to calculate the asymptotics
of the conformal block functions $\CobloF_\alpha$. 
This proof 
is combinatorial, relying on results from~\cite{Kenyon-Wilson:Boundary_partitions_in_trees_and_dimers,
Kenyon-Wilson:Double_dimer_pairings_and_skew_Young_diagrams,
KKP:Correlations_in_planar_LERW_and_UST_and_combinatorics_of_conformal_blocks} 
discussed in Section~\ref{subsec: Combinatorics}.
Recall that we identify link patterns $\alpha \in \LP_N$
with the corresponding Dyck paths~\eqref{eq::DP}. 

\begin{lemma} \label{lem: CobloF ASY}
The collection $\{\CobloF_\alpha \colon \alpha \in \DP\}$
of functions defined in~\eqref{eq: CobloF definition} satisfy the asymptotics property
\begin{align}
\label{eq: CobloF asymptotics}
\lim_{x_j , x_{j+1} \to \xi} 
 \frac{ \CobloF_\alpha(x_1 , \ldots , x_{2N})}{(x_{j+1} - x_j)^{-1/2}}
= \; & \begin{cases}
0 , & \text{if } \slopeat{j} \in \alpha , \\
\CobloF_{ \alpha \removeupwedge{j} } (x_1 , \ldots, x_{j-1}, x_{j+2}, \ldots, x_{2N}) , & \text{if } \upwedgeat{j} \in \alpha , \\
\CobloF_{ \alpha \removedownwedge{j} } (x_1 , \ldots, x_{j-1}, x_{j+2}, \ldots, x_{2N}) , & \text{if } \downwedgeat{j} \in \alpha ,
\end{cases}
\end{align}
for any $j \in \{1, \ldots, 2N-1\}$ and $\xi \in (x_{j-1}, x_{j+2})$.
\end{lemma}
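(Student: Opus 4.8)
The plan is to prove the asymptotics~\eqref{eq: CobloF asymptotics} by a direct computation with the explicit product formula~\eqref{eq: CobloF definition}, isolating the contribution of the colliding variables $x_j, x_{j+1}$. Write $\CobloF_\alpha(x_1,\ldots,x_{2N}) = \prod_{1\le i<k\le 2N}(x_k-x_i)^{\frac{1}{2}\vartheta_\alpha(i,k)}$ and split the product into three groups: the single factor $(x_{j+1}-x_j)^{\frac{1}{2}\vartheta_\alpha(j,j+1)}$; the factors $(x_k-x_i)^{\frac{1}{2}\vartheta_\alpha(i,k)}$ with $\{i,k\}\cap\{j,j+1\}=\emptyset$, which converge to the corresponding factors of the limiting function and are harmless; and the ``mixed'' factors involving exactly one of $j,j+1$, which in the limit $x_j,x_{j+1}\to\xi$ behave like $(x_\ell-\xi)^{\frac{1}{2}(\vartheta_\alpha(j,\ell)+\vartheta_\alpha(j+1,\ell))}$ for each $\ell\neq j,j+1$ (with appropriate sign conventions for $\ell<j$ vs.\ $\ell>j+1$). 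So the whole analysis reduces to two combinatorial facts about the numbers $\vartheta_\alpha$: the value of $\vartheta_\alpha(j,j+1)$, and the value of $\vartheta_\alpha(j,\ell)+\vartheta_\alpha(j+1,\ell)$ for $\ell\neq j,j+1$.

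First I would record these two facts from the definition of $\vartheta_\alpha$ together with the dictionary between link patterns and Dyck paths from Section~\ref{subsec: Combinatorics}. Recall $j$ is an up-step followed by a down-step (i.e.\ $\upwedgeat j\in\alpha$, equivalently $\link j{j+1}\in\alpha$) precisely when $j\in\{a_1,\ldots,a_N\}$ and $j+1\in\{b_1,\ldots,b_N\}$; then $\vartheta_\alpha(j,j+1)=-1$, giving the desired normalizing power $(x_{j+1}-x_j)^{-1/2}$. If $\downwedgeat j\in\alpha$ then $j\in\{b_1,\ldots,b_N\}$ and $j+1\in\{a_1,\ldots,a_N\}$, so again $\vartheta_\alpha(j,j+1)=-1$. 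If instead $\slopeat j\in\alpha$, then $j$ and $j+1$ are either both up-steps (both in $\{a_r\}$) or both down-steps (both in $\{b_s\}$), so $\vartheta_\alpha(j,j+1)=+1$, and $(x_{j+1}-x_j)^{+1/2}/(x_{j+1}-x_j)^{-1/2}=(x_{j+1}-x_j)\to 0$, which already yields the ``slope'' case once one checks that the remaining factors stay bounded. For the mixed factors, the key point is: in the wedge cases ($\upwedgeat j$ or $\downwedgeat j$), one of $j,j+1$ lies in $\{a_r\}$ and the other in $\{b_s\}$, so for \emph{every} $\ell\neq j,j+1$ we have $\vartheta_\alpha(j,\ell)=-\vartheta_\alpha(j+1,\ell)$ and hence $\vartheta_\alpha(j,\ell)+\vartheta_\alpha(j+1,\ell)=0$; the mixed factors each tend to a nonzero finite limit (a power $(x_\ell-\xi)^0=1$ in the naive count, with the genuine limiting dependence on $\xi$ coming only through the grouping of $(x_\ell - x_j)(x_\ell-x_{j+1})^{-1}$-type cancellations), so the product over mixed factors converges to $1$. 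What remains is then exactly $\prod_{i<k,\ \{i,k\}\cap\{j,j+1\}=\emptyset}(x_k-x_i)^{\frac12\vartheta_\alpha(i,k)}$, and one must check this equals $\CobloF_{\alpha\removeupwedge j}$ (resp.\ $\CobloF_{\alpha\removedownwedge j}$) evaluated at the shorter point configuration — i.e.\ that $\vartheta_{\alpha\removeupwedge j}(i,k)=\vartheta_\alpha(i,k)$ after relabeling, which is immediate since removing the wedge deletes $j$ from $\{a_r\}$ (or $\{b_s\}$) and $j+1$ from $\{b_s\}$ (or $\{a_r\}$) without disturbing the class membership of any surviving index.

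Concretely the steps are: (1) state and verify the two $\vartheta_\alpha$-identities above (one line each from~\eqref{eq: CobloF definition} and the Dyck-path dictionary); (2) factor $\CobloF_\alpha$ into the three groups and take limits termwise, using that each surviving factor $(x_k-x_i)^{\bullet}$ with $i,k\notin\{j,j+1\}$ is continuous and nonzero at the limit point, and that the mixed factors pair up into ratios with total exponent $0$; (3) read off that the normalized limit is $0$ when $\slopeat j\in\alpha$ and is $\CobloF_{\alpha\removewedge j}$ (of the appropriate up/down type) otherwise; (4) handle uniformity in the auxiliary limits $\tilde x_i\to x_i$ exactly as needed, though the product formula makes joint continuity transparent so this is routine. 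I do not anticipate a serious obstacle: unlike the pure partition functions, the conformal blocks are given by a closed-form product, so everything is bookkeeping. The only place demanding care is the sign/relabeling accounting in step (2)–(3): one must be consistent about whether $\ell<j$ or $\ell>j+1$ when writing $x_\ell-x_j$ versus $x_j-x_\ell$, and confirm that after removing the two steps the indices $1,\ldots,2N-2$ inherit exactly the $\vartheta$-values of $\alpha\removewedge j$. That is the ``hard part,'' but it is purely combinatorial and parallels the link-removal bookkeeping already used for $\PartF_\alpha$ in Lemma~\ref{lem::partitionfunction_asy}. Once~\eqref{eq: CobloF asymptotics} is in hand, Theorem~\ref{thm: sle4 pure pffs} follows by combining it with the matrix-inversion Proposition~\ref{prop: matrix inversion} and the combinatorial Lemma~\ref{lem: combinatorics} to check that the right-hand side of~\eqref{eq: Pure partition function for kappa 4} satisfies $\mathrm{(ASY)}$~\eqref{eq: multiple SLE asymptotics}, and then invoking the uniqueness Corollary~\ref{cor::purepartition_unique}.
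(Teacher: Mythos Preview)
Your proposal is correct and follows essentially the same approach as the paper's own proof: factor the product~\eqref{eq: CobloF definition} into the single $(x_{j+1}-x_j)$ term, the factors not involving $j$ or $j+1$, and the mixed factors, then use $\vartheta_\alpha(j,j+1)=\pm1$ according to slope/wedge and the cancellation $\vartheta_\alpha(j,\ell)=-\vartheta_\alpha(j+1,\ell)$ in the wedge case. The paper is slightly terser in the slope case (it does not pause to note that the remaining factors stay bounded), but the argument is the same.
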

\begin{proof}
Fix $j \in \{1, \ldots, 2N-1\}$. If $\slopeat{j} \in \alpha$, then either both $j$ and $j+1$
are $a$-type indices with labels $a_r,a_s$, or both are $b$-type indices with labels $b_r,b_s$.
In either case, we have $\vartheta_{\alpha}(j,j+1)=1$, so the limit in~\eqref{eq: CobloF asymptotics} is zero.
Assume then that $\upwedgeat{j} \in \alpha$ (resp. $\downwedgeat{j} \in \alpha$).
In this case, we have $j = b_s$ and $j+1 = a_r$ (resp. $j = a_r$ and $j+1 = b_s$)
for some $r,s \in \{1, \ldots, N\}$, so $\vartheta_{\alpha}(j,j+1)=-1$. By definition~\eqref{eq: CobloF definition},
we have
\begin{align*}
\CobloF_\alpha(x_1, \ldots, x_{2N}) 
= & \; \prod_{1\le k<l\le 2N} (x_l - x_k)^{\frac{1}{2} \vartheta_{\alpha}(k,l)} \\
= & \; (x_{j+1} - x_j)^{-1/2}
\prod_{\substack{k < l , \\ k,l \neq j,j+1}} (x_l - x_k)^{\frac{1}{2} \vartheta_{\alpha}(k,l)} \\
& \; \times \prod_{k < j} (x_{j+1} - x_k)^{\frac{1}{2} \vartheta_{\alpha}(j+1,l)} (x_j - x_k)^{\frac{1}{2} \vartheta_{\alpha}(j,l)}
\prod_{l > j+1} (x_l - x_{j+1})^{\frac{1}{2} \vartheta_{\alpha}(k,j+1)} (x_l - x_j)^{\frac{1}{2} \vartheta_{\alpha}(k,j)} .
\end{align*}
The first factor cancels with the normalization factor $(x_{j+1} - x_j)^{1/2}$ in the 
limit~\eqref{eq: CobloF asymptotics}. The second product is independent of $x_j,x_{j+1}$ and
tends to $\CobloF_{ \alpha \removeupwedge{j} } (x_1 , \ldots, x_{j-1}, x_{j+2}, \ldots, x_{2N})$ in the 
limit~\eqref{eq: CobloF asymptotics} (resp.~to $\CobloF_{ \alpha \removedownwedge{j} }$).
Finally, the products in the last line tend to one in the limit~\eqref{eq: CobloF asymptotics},
because we have $\vartheta_{\alpha}(k,j+1) = -\vartheta_{\alpha}(k,j)$, for all $k < j$, and
$\vartheta_{\alpha}(j+1,l) = -\vartheta_{\alpha}(j,l)$, for all $l > j+1$. This proves the lemma.
\end{proof}

\begin{lemma} \label{lem: Pure partition function ASY}
The functions defined by the right-hand side of~\eqref{eq: Pure partition function for kappa 4} satisfy
$\mathrm{(ASY)}$~\eqref{eq: multiple SLE asymptotics} with $\kappa = 4$.
\end{lemma}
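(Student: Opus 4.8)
The plan is to prove that the candidate functions $\PartF_\alpha := \sum_{\beta} \Minv_{\alpha,\beta} \CobloF_\beta$ satisfy $\mathrm{(ASY)}$ by induction on $N$, reducing the asymptotics of the sum to the combinatorics of the matrix $\Minv$ and the wedge-removal operations on Dyck paths. Fix $j \in \{1,\ldots,2N-1\}$ and $\xi \in (x_{j-1},x_{j+2})$. By Lemma~\ref{lem: CobloF ASY}, each term $\CobloF_\beta$ in the sum contributes to the limit $\lim (x_{j+1}-x_j)^{1/2} \CobloF_\beta$ only when $\beta$ has a wedge at $j$ (either an up-wedge or a down-wedge), and in that case the limit is $\CobloF_{\beta \removewedge{j}}$ evaluated at the remaining variables. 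So the whole limit becomes $\sum_{\beta \,:\, \wedgeat{j}\in\beta} \Minv_{\alpha,\beta}\, \CobloF_{\beta\removewedge{j}}$, and we must show this equals $0$ when $\upwedgeat{j}\notin\alpha$ (i.e.\ $\link{j}{j+1}\notin\alpha$) and equals $\PartF_{\alpha\removeLink\link{j}{j+1}}$ when $\upwedgeat{j}\in\alpha$.

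For the first case, $\upwedgeat{j}\notin\alpha$: here I would use Lemma~\ref{lem: combinatorics}(b), which partitions $\{\beta \DPgeq \alpha\}$ (equivalently, by Proposition~\ref{prop: matrix inversion}, the support of $\Minv_{\alpha,\cdot}$) into three pieces — those $\beta$ with a down-wedge at $j$, their wedge-lifts $\beta\wedgelift{j}$ (which have an up-wedge at $j$), and those with a slope at $j$. Only the first two pieces survive in the limit. Now $\Minv_{\alpha,\beta}$ and $\Minv_{\alpha,\beta\wedgelift{j}}$ are opposite in sign by Lemma~\ref{lem: combinatorics}(d), while the wedge removals $\beta\removedownwedge{j}$ and $(\beta\wedgelift{j})\removeupwedge{j}$ produce the \emph{same} shorter Dyck path (removing the two steps around position $j$ is insensitive to whether the wedge points up or down). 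Hence the contributions cancel in pairs, giving $0$, as required.

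For the second case, $\upwedgeat{j}\in\alpha$ (so $\link{j}{j+1}\in\alpha$): write $\hat\alpha = \alpha\removeupwedge{j} = \alpha\removeLink\link{j}{j+1}$. By Lemma~\ref{lem: combinatorics}(c), for $\beta$ with $\upwedgeat{j}\in\beta$ we have $\alpha \KWleq \beta$ iff $\wedgeat{j}\in\alpha$ (true here) and $\hat\alpha \KWleq \beta\removeupwedge{j}$; combined with the transitive-closure relationship between $\KWleq$ and $\DPleq$ and Proposition~\ref{prop: matrix inversion}, one gets a bijection $\beta \leftrightarrow \beta\removeupwedge{j}$ between $\{\beta : \upwedgeat{j}\in\beta,\ \alpha\DPleq\beta\}$ and $\{\gamma : \hat\alpha\DPleq\gamma\}$, under which $\Minv_{\alpha,\beta} = \Minv_{\hat\alpha,\,\beta\removeupwedge{j}}$ (this is the key identity — I expect it follows from the cover-inclusive Dyck tiling count being unchanged under deleting an up-wedge that lies above a position where $\alpha$ has its own up-wedge, and the argument should parallel the proof of Lemma~\ref{lem: combinatorics}(d)). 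Meanwhile the $\beta$ with a \emph{down}-wedge at $j$, which also survive in the limit, must be shown to contribute $0$: these come in cancelling pairs with the slope case or are killed by an argument analogous to case one applied relative to $\hat\alpha$ rather than $\alpha$ — this bookkeeping is the delicate point. Once the surviving sum is reindexed, it becomes $\sum_{\gamma \DPgeq \hat\alpha} \Minv_{\hat\alpha,\gamma}\CobloF_\gamma = \PartF_{\hat\alpha}$ by the definition~\eqref{eq: Pure partition function for kappa 4} at level $N-1$, which is exactly $\mathrm{(ASY)}$.

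The main obstacle is the precise combinatorial matching in the second case: isolating exactly which $\beta$ (up-wedge versus down-wedge versus slope at $j$) contribute in the limit, verifying the identity $\Minv_{\alpha,\beta} = \Minv_{\hat\alpha,\beta\removeupwedge{j}}$ for the up-wedge family, and confirming the down-wedge family cancels. All the needed tools — parts (a)--(d) of Lemma~\ref{lem: combinatorics}, Proposition~\ref{prop: matrix inversion}, and the dictionary between $\KWleq$, $\DPleq$, and Dyck tilings — are available in Section~\ref{subsec: Combinatorics}, so the proof should be a careful but routine assembly; I would present it as a short lemma-by-lemma reduction followed by the two-case induction, citing \cite{KKP:Correlations_in_planar_LERW_and_UST_and_combinatorics_of_conformal_blocks} for the underlying tiling facts.
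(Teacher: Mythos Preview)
Your treatment of the case $\upwedgeat{j}\notin\alpha$ is correct and coincides with the paper's argument: split over $\beta$ according to wedge type at $j$, pair down-wedges with their lifts via Lemma~\ref{lem: combinatorics}(b), cancel using Lemma~\ref{lem: combinatorics}(d), and discard slope terms by Lemma~\ref{lem: CobloF ASY}.

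For the case $\upwedgeat{j}\in\alpha$, however, your direct combinatorial route has a real gap, and the paper proceeds quite differently. Two concrete issues with your sketch: first, the identity $\Minv_{\alpha,\beta}=\Minv_{\hat\alpha,\,\beta\removeupwedge{j}}$ for $\beta$ with $\upwedgeat{j}\in\beta$ is not among the tools in Section~\ref{subsec: Combinatorics}; it may well follow from KKP-type tiling bijections, but you would have to import and prove it. Second, and more seriously, your proposed handling of the down-wedge $\beta$'s ``analogous to case one'' cannot work as stated, because Lemma~\ref{lem: combinatorics}(d) carries the hypothesis $\upwedgeat{j}\notin\alpha$, which is exactly what fails here; nor can slope terms ``cancel'' anything, since they contribute zero to the limit. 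So as written, the down-wedge contributions are simply unaccounted for.

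The paper sidesteps all of this with an inversion trick. Rather than manipulating $\Minv$ directly, it passes to the inverse relation $\CobloF_\beta=\sum_{\alpha}\Mmat_{\beta,\alpha}\,\tilde{\PartF}_\alpha$ (Proposition~\ref{prop: matrix inversion}), chooses $\beta$ with $\upwedgeat{j}\in\beta$, and takes the limit of both sides. By the already-proven first case, only those $\alpha$ with $\upwedgeat{j}\in\alpha$ survive on the right; by Lemma~\ref{lem: combinatorics}(c), the indicator $\Mmat_{\beta,\alpha}=\one\{\beta\KWleq\alpha\}$ reduces to $\one\{\hat\beta\KWleq\hat\alpha\}=\Mmat_{\hat\beta,\hat\alpha}$. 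Comparing with the known limit of $\CobloF_\beta$ from Lemma~\ref{lem: CobloF ASY} and the inverse relation at level $N-1$, one obtains a square linear system in the unknowns $\lim (x_{j+1}-x_j)^{1/2}\tilde{\PartF}_\alpha$ whose coefficient matrix is the invertible $\Mmat$, forcing the desired identity. This argument uses only Lemma~\ref{lem: combinatorics}(c) and invertibility of $\Mmat$, and avoids any tiling identity specific to the hypothesis $\upwedgeat{j}\in\alpha$.
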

\begin{proof}
Denote the functions in question by
\begin{align}\label{eq: Second formula for pure partition function for kappa 4}
\tilde{\PartF}_\alpha(x_1, \ldots, x_{2N}) 
:= \; & \sum_{\beta \DPgeq \alpha} \Minv_{\alpha,\beta} \, \CobloF_\beta(x_1, \ldots, x_{2N}) .
\end{align}
Fix $j \in \{1, \ldots, 2N-1\}$. 
For the asymptotics property $\mathrm{(ASY)}$~\eqref{eq: multiple SLE asymptotics},
we have two cases to consider: either $\link{j}{j+1} \in \alpha$ or $\link{j}{j+1} \notin \alpha$.
As explained in Section~\ref{subsec: Combinatorics}, these can be equivalently written 
in terms of the Dyck path $\alpha \in \DP_N$
as $\upwedgeat{j} \in \alpha$ and $\upwedgeat{j} \notin \alpha$.
The asserted property $\mathrm{(ASY)}$~\eqref{eq: multiple SLE asymptotics} with $\kappa = 4$ 
can thus be written in the following form:
for all $\alpha \in \LP_N$, and for all $j \in \{1, \ldots, 2N-1 \}$ and $\xi \in (x_{j-1}, x_{j+2})$, we have
\begin{align}\label{eq: multiple SLE asymptotics with Dyck paths}
\lim_{x_j , x_{j+1} \to \xi} 
\frac{\tilde{\PartF}_\alpha(x_1 , \ldots , x_{2N})}{(x_{j+1} - x_j)^{-1/2}} 
=\begin{cases}
0 , \quad &
    \text{if } \upwedgeat{j} \notin \alpha , \\
\tilde{\PartF}_{\alpha \removeupwedge{j}}(x_{1},\ldots,x_{j-1},x_{j+2},\ldots,x_{2N}) , &
    \text{if } \upwedgeat{j} \in \alpha .
\end{cases}
\end{align}
We prove the property~\eqref{eq: multiple SLE asymptotics with Dyck paths} for 
$\tilde{\PartF}_\alpha$ separately in the two cases 
$\upwedgeat{j} \in \alpha$ and $\upwedgeat{j} \notin \alpha$.

\textit{Assume first that $\upwedgeat{j} \notin \alpha$.} We split the right-hand side  
of~\eqref{eq: Second formula for pure partition function for kappa 4} into three sums:
\begin{align*}
\tilde{\PartF}_\alpha  = 
\sum_{\beta \DPgeq \alpha \colon \downwedgeat{j} \in \beta } \Minv_{\alpha,\beta}
 \, \CobloF_\beta
 \, + \sum_{\beta \DPgeq \alpha \colon \upwedgeat{j} \in \beta } \Minv_{\alpha,\beta} \,
\CobloF_\beta
 \, + \sum_{\beta \DPgeq \alpha \colon \slopeat{j}\in \beta } \Minv_{\alpha,\beta}
 \, \CobloF_\beta . 
\end{align*}
Using Lemma~\ref{lem: combinatorics}(b), we combine the first and second sums to one sum over $\beta$ 
such that $\downwedgeat{j} \in \beta$, by replacing $\beta$ in the 
second sum by $\beta \wedgelift{j}$. Furthermore, Lemma~\ref{lem: combinatorics}(d) shows that 
the coefficients in these two sums are related by
$\Minv_{\alpha,\beta} = - \Minv_{\alpha,\beta \wedgelift{j}}$.
Therefore, we obtain
\begin{align*}
\tilde{\PartF}_\alpha  = 
\sum_{\beta \DPgeq \alpha \colon \downwedgeat{j} \in \beta } \Minv_{\alpha,\beta}
 \, \big( \CobloF_\beta - \CobloF_{\beta \wedgelift{j} } \big) 
 \, + \sum_{\beta \DPgeq \alpha \colon \slopeat{j} \in \beta } \Minv_{\alpha,\beta}
 \, \CobloF_\beta . 
\end{align*}
Now, it follows from Lemma~\ref{lem: CobloF ASY} that the last sum vanishes in the 
limit~\eqref{eq: multiple SLE asymptotics with Dyck paths}, and that the functions
$\CobloF_\beta$ and $\CobloF_{\beta \wedgelift{j} }$ have the same limit, so they cancel.
In conclusion, the limit~\eqref{eq: multiple SLE asymptotics with Dyck paths} of $\tilde{\PartF}_\alpha$
is zero when $\upwedgeat{j} \notin \alpha$.

\textit{Assume then that $\upwedgeat{j} \in \alpha$.} By Proposition~\ref{prop: matrix inversion},
the system~\eqref{eq: Second formula for pure partition function for kappa 4} with $\alpha \in \DP_N$
is invertible, and
\begin{align}\label{eq: Formula for conformal block function for kappa 4}
\CobloF_\beta(x_1, \ldots, x_{2N}) 
= \; & \sum_{\alpha \in \DP_N} \Mmat_{\beta,\alpha} \, \tilde{\PartF}_\alpha(x_1, \ldots, x_{2N}) ,
\qquad \text{for any } \beta \in \DP_N ,
\end{align}
where $\Mmat_{\beta,\alpha} = \one\{\beta \KWleq \alpha \}$.
We already know by the first part of the proof that 
the limit~\eqref{eq: multiple SLE asymptotics with Dyck paths} of $\tilde{\PartF}_\alpha$
is zero when $\upwedgeat{j} \notin \alpha$. Therefore, taking the 
the limit~\eqref{eq: multiple SLE asymptotics with Dyck paths} of the right-hand side 
of~\eqref{eq: Formula for conformal block function for kappa 4} gives
\begin{align}
& \; \sum_{\alpha \in \DP_N} \one\{\beta \KWleq \alpha \} \, \lim_{x_j , x_{j+1} \to \xi} 
\frac{\tilde{\PartF}_\alpha(x_1 , \ldots , x_{2N})}{(x_{j+1} - x_j)^{-1/2}} 
\nonumber \\
= \; & \sum_{\alpha \colon \upwedgeat{j} \in \alpha} \one\{\beta \KWleq \alpha \} \, \lim_{x_j , x_{j+1} \to \xi} 
\frac{\tilde{\PartF}_\alpha(x_1 , \ldots , x_{2N})}{(x_{j+1} - x_j)^{-1/2}} ,
\qquad \text{for any } \beta \in \DP_N .
\label{eq: Formula1}
\end{align}
We want to calculate the limit in~\eqref{eq: Formula1} for any fixed $\alpha \in \DP_N$ such that
$\upwedgeat{j} \in \alpha$. 

By Lemma~\ref{lem: combinatorics}(c), we have 
$\beta \KWleq \alpha$ if and only if $\wedgeat{j} \in \beta$ and 
$\beta \removewedge{j} \KWleq \alpha \removeupwedge{j}$.
Now, choose $\beta \in \DP_N$ such that $\upwedgeat{j} \in \beta$, and denote
$\hat{\beta} = \beta \removeupwedge{j}$. Then, by Lemma~\ref{lem: combinatorics}(c), we have 
$\one\{\beta \KWleq \alpha \} = \one\{\hat{\beta} \KWleq \hat{\alpha} \}$ and we can re-index
the sum in~\eqref{eq: Formula1} by $\hat{\alpha} = \alpha \removeupwedge{j}$, to obtain
\begin{align}
\sum_{\hat{\alpha} \in \DP_{N-1}} \one\{\hat{\beta} \KWleq \hat{\alpha} \} \, \lim_{x_j , x_{j+1} \to \xi} 
\frac{\tilde{\PartF}_\alpha(x_1 , \ldots , x_{2N})}{(x_{j+1} - x_j)^{-1/2}} .
\label{eq: Formula1.1}
\end{align}
On the other hand, with $\upwedgeat{j} \in \beta$, Lemma~\ref{lem: CobloF ASY}
gives the limit~\eqref{eq: multiple SLE asymptotics with Dyck paths} of the left-hand side 
of~\eqref{eq: Formula for conformal block function for kappa 4}:
\begin{align}
\frac{\CobloF_\beta(x_1, \ldots, x_{2N}) }{(x_{j+1} - x_j)^{-1/2}} 
= \; &  \CobloF_{ \hat{\beta}} (x_1 , \ldots, x_{j-1}, x_{j+2}, \ldots, x_{2N}) 
\nonumber \\
= \; & \sum_{\hat{\alpha} \in \DP_{N-1}} \one\{\hat{\beta} \KWleq \hat{\alpha} \} \, 
\tilde{\PartF}_{\hat{\alpha}}(x_1 , \ldots, x_{j-1}, x_{j+2}, \ldots, x_{2N})  ,
\label{eq: Formula2}
\end{align}
where in the last equality we used~\eqref{eq: Formula for conformal block function for kappa 4} 
for $\hat{\beta} = \beta \removeupwedge{j}$.
Combining~\eqref{eq: Formula1.1}~and~\eqref{eq: Formula2}, we arrive with 
\begin{align}
\; & \sum_{\hat{\alpha} \in \DP_{N-1}} \Mmat_{\hat{\beta},\hat{\alpha}} \, \lim_{x_j , x_{j+1} \to \xi} 
\frac{\tilde{\PartF}_\alpha(x_1 , \ldots , x_{2N})}{(x_{j+1} - x_j)^{-1/2}} 
\nonumber \\
= \; & \sum_{\hat{\alpha} \in \DP_{N-1}} \Mmat_{\hat{\beta},\hat{\alpha}} \, 
\tilde{\PartF}_{\hat{\alpha}}(x_1 , \ldots, x_{j-1}, x_{j+2}, \ldots, x_{2N}) ,
\qquad \text{for any } \hat{\beta} \in \DP_{N-1} ,
\label{eq: final Formula}
\end{align}
where $\Mmat_{\hat{\beta},\hat{\alpha}} = \one\{\hat{\beta} \KWleq \hat{\alpha} \}$ 
and $\alpha \in \LP_N$ is determined by $\hat{\alpha} = \alpha \removeupwedge{j}$.
Recalling that by Proposition~\ref{prop: matrix inversion}, the system~\eqref{eq: final Formula} 
is invertible, we can solve for the asserted limit~\eqref{eq: multiple SLE asymptotics with Dyck paths}.
This concludes the proof.
\end{proof}

\begin{proof} [Proof of Theorem~\ref{thm: sle4 pure pffs}] 
We first note that the functions defined by the right-hand side of~\eqref{eq: Pure partition function for kappa 4} 
satisfy the normalization $\PartF_\emptyset = 1$ and the bound~\eqref{eqn::powerlawbound}
--- the latter follows immediately from the definition~\eqref{eq: CobloF definition} of $\CobloF_\alpha$,
since the coefficients $\Minv_{\alpha,\beta}$ do not depend on the variables $x_1,\ldots, x_{2N}$.
Lemmas~\ref{lem: CobloF PDE}~and~\ref{lem: CobloF COV} show that the functions $\CobloF_\alpha$ 
satisfy the properties $\mathrm{(PDE)}$~\eqref{eq: multiple SLE PDEs} and $\mathrm{(COV)}$~\eqref{eq: multiple SLE Mobius covariance} with $\kappa = 4$, 
whence the right-hand side of~\eqref{eq: Pure partition function for kappa 4} also satisfies these properties by linearity.
Furthermore, Lemma~\ref{lem: Pure partition function ASY} shows that these functions also enjoy
the asymptotics property $\mathrm{(ASY)}$~\eqref{eq: multiple SLE asymptotics} of the pure partition functions $\PartF_\alpha$ with $\kappa = 4$.
Thus, the uniqueness Corollary~\ref{cor::purepartition_unique} shows that the right-hand side of~\eqref{eq: Pure partition function for kappa 4} 
must be equal to $\PartF_\alpha$.
\end{proof}

\subsection{GFF Interpretation}  
\label{subsec: Conformal blocks for GFF}
In this final section, we give an interpretation for the functions  $\CobloF_\alpha$ appearing in
Theorem~\ref{thm: sle4 pure pffs} as partition functions associated to a particular boundary data of the $\GFF$. 

For $\alpha\in\LP_N$, recall that we also denote by $\alpha\in\DP_N$ the corresponding Dyck path~\eqref{eq::DP}. 
Let $\gff_{\alpha}$ be the $\GFF$ on $\HH$ with the following boundary data:
\begin{align}\label{eq: conformal block boundary values with Dyck path}
\lambda (2\alpha(k) - 1) , 
\quad & \text{if } x \in ( x_{k}, x_{k+1} ) , \qquad \text{ for all } k \in \{0,1,\ldots,2N\} .
\end{align}
Note that by this definition, the boundary value of $\gff_{\alpha}$ is always $-\lambda$ 
on $(-\infty, x_1)\cup (x_{2N}, \infty)$, and $+\lambda$ on $(x_1, x_2)\cup (x_{2N-1}, x_{2N})$. 
Define
\begin{align}\label{eqn::conformalblock_gff_heights}
\Height_{\alpha}(k) := \lambda(\alpha(k-1)+\alpha(k)-1) , \qquad \text{ for all }k\in \{1, 2, \ldots, 2N\}. 
\end{align}
Then we always have $\Height_{\alpha}(1)=\Height_{\alpha}(2N)=0$. 

\begin{proposition}\label{prop::levellines_conformalblocks}
Let $\alpha\in\LP_N$. Let $\gff_{\alpha}$ be the 
$\GFF$ on $\HH$ with boundary data given by~\eqref{eq: conformal block boundary values with Dyck path}.
For all $\link{a}{b} \in \alpha$, let $\eta_{a}$ \textnormal{(}resp.~$\eta_{b}$\textnormal{)}
be the level line of $\gff_{\alpha}$ \textnormal{(}resp.~$-\gff_{\alpha}$\textnormal{)}
with height $\Height_{\alpha}(a)$ \textnormal{(}resp.~$-\Height_{\alpha}(b)$\textnormal{)}
Then, the collection $(\eta_1, \ldots, \eta_{2N})$ is a local $N$-$\SLE_4$ 
with partition function $\CobloF_\alpha$. 
\end{proposition}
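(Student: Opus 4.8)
The plan is to show that the collection $(\eta_1, \ldots, \eta_{2N})$, restricted to arbitrary localization neighborhoods, satisfies the three defining properties $\mathrm{(CI)}$, $\mathrm{(DMP)}$, and $\mathrm{(MARG)}$ of a local $N$-$\SLE_4$, with drift functions of the form $F_j = 4\,\partial_j \log \CobloF_\alpha$. Granting this, since $\CobloF_\alpha \colon \chamber_{2N} \to \Rpos$ is positive, visibly smooth, and satisfies $\mathrm{(PDE)}$~\eqref{eq: multiple SLE PDEs} and $\mathrm{(COV)}$~\eqref{eq: multiple SLE Mobius covariance} with $\kappa = 4$ by Lemmas~\ref{lem: CobloF PDE} and~\ref{lem: CobloF COV}, Proposition~\ref{prop::local_multiple_SLE} identifies this collection as the local $N$-$\SLE_4$ with partition function $\CobloF_\alpha$. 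As a preliminary observation: for a link $\link{a}{b} \in \alpha$ with $a < b$, the matched structure of the Dyck path gives $\alpha(a-1) = \alpha(b)$ and $\alpha(a) = \alpha(b-1)$, so $\Height_\alpha(a) = \Height_\alpha(b)$ by~\eqref{eqn::conformalblock_gff_heights}; by reversibility of $\GFF$ level lines~\cite{WangWuLevellinesGFFI, MillerSheffieldIG1}, the curve $\eta_b$ is then the time-reversal of $\eta_a$, so the $2N$ curves are $N$ curves traversed from both endpoints, exactly as in the proof of Lemma~\ref{lem::global_is_local}.

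The first main step is to identify the marginal law of each $\eta_j$ as an $\SLE_4(\underline{\rho})$ process. Fix $j$ and suppose $j$ is an up-step of $\alpha$ (an $a$-type index), so $\alpha(j) = \alpha(j-1)+1$. By the level-line theory recalled in Section~\ref{sub:GFF_pre}, $\eta_j$ is the $\SLE_4(\underline{\rho}^L; \underline{\rho}^R)$ process started from $x_j$ with force points $x_{j-1}, \ldots, x_1$ on the left and $x_{j+1}, \ldots, x_{2N}$ on the right, whose weights are read off by matching the boundary data of $\gff_\alpha - \Height_\alpha(j)$ against the piecewise-constant profile in the coupling. Using~\eqref{eq: conformal block boundary values with Dyck path} and~\eqref{eqn::conformalblock_gff_heights}, one computes that this boundary value on $(x_{j+m}, x_{j+m+1})$ equals $\lambda\bigl(1 + 2(\alpha(j+m) - \alpha(j))\bigr)$, which forces $\sum_{i=1}^m \rho^{i,R} = 2(\alpha(j+m) - \alpha(j))$, hence $\rho^{m,R} = 2\bigl(\alpha(j+m) - \alpha(j+m-1)\bigr) = 2\,\vartheta_\alpha(j+m,\,j)$, and symmetrically $\rho^{m,L} = 2\,\vartheta_\alpha(j-m,\,j)$. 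When $j$ is a down-step ($b$-type), the same computation applied to $-\gff_\alpha$ with height $-\Height_\alpha(j)$ --- which near $x_j$ again exhibits Dobrushin-type boundary data --- yields the identical weights $2\,\vartheta_\alpha(\cdot,\,j)$. In all cases the weights lie in $\{-2,+2\}$, so by~\cite{MillerSheffieldIG1} the process is well-defined and generated by a continuous curve up to and including the continuation threshold.

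It then remains to assemble the three axioms. For $\mathrm{(MARG)}$: by~\eqref{eq: SLE_kappa_rho} with $\kappa = 4$, the driving function of $\eta_j$ solves $\ud W_t = 2\,\ud B_t + \sum_{i \neq j} \tfrac{2\vartheta_\alpha(i,j)}{W_t - V_t^i}\,\ud t$ with $\ud V_t^i = \tfrac{2\,\ud t}{V_t^i - W_t}$, which is exactly~\eqref{eqn::localmultiple_marginal_sde} with $F_j(x_1,\ldots,x_{2N}) = \sum_{i\neq j} \tfrac{2\vartheta_\alpha(i,j)}{x_j - x_i}$; differentiating the product formula~\eqref{eq: CobloF definition} and using the symmetry of $\vartheta_\alpha$ gives $4\,\partial_j \log \CobloF_\alpha = \sum_{i\neq j} \tfrac{2\vartheta_\alpha(i,j)}{x_j - x_i} = F_j$, which is smooth. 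For $\mathrm{(CI)}$: the $\GFF$ and its level lines are conformally invariant, and the boundary data~\eqref{eq: conformal block boundary values with Dyck path} is preserved (up to the harmonic transformation rule) under conformal maps, so $\varphi(\eta_j)$ are the level lines of $\gff_\alpha \circ \varphi^{-1}$. For $\mathrm{(DMP)}$: conditionally on initial segments $(\gamma_1[0,\tau_1], \ldots, \gamma_{2N}[0,\tau_{2N}])$, the coupling with the field and the domain Markov property of the $\GFF$ show that $\gff_\alpha$ restricted to the component $\tilde\Omega$ containing all tips is a $\GFF$ whose boundary data (the original data plus $\pm\lambda$ jumps along the traced curves) is again of conformal-block type relative to the tips, and whose level lines are the remaining pieces $\gamma_j|_{[\tau_j,1]}$ --- so the conditional law is the same collection localized in $\tilde U_j = U_j \cap \tilde\Omega$, in the same spirit as the conditioning step in the proof of Lemma~\ref{lem::levellines_P_alpha}. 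With $\mathrm{(CI)}$, $\mathrm{(DMP)}$, and $\mathrm{(MARG)}$ in hand, Proposition~\ref{prop::local_multiple_SLE} concludes the argument.

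I expect the main obstacle to be twofold: the careful bookkeeping that matches the $\GFF$ boundary data of $\pm\gff_\alpha$ to the $\SLE_4(\underline{\rho})$ weights $2\,\vartheta_\alpha(\cdot,\,j)$ --- elementary, but requiring attention to the Dyck-path conventions and to the two cases (up-step versus down-step) --- and making $\mathrm{(DMP)}$ fully rigorous for the joint collection, which rests on the iterated $\GFF$/level-line coupling results of~\cite{SchrammSheffieldContinuumGFF, MillerSheffieldIG1, WangWuLevellinesGFFI}. The remaining computations are direct.
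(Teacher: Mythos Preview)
Your proposal is correct and follows essentially the same approach as the paper: verify the three axioms $\mathrm{(CI)}$, $\mathrm{(DMP)}$, and $\mathrm{(MARG)}$ of a local multiple $\SLE$, with the key step being the identification of the marginal of each $\eta_j$ as an $\SLE_4(\underline{\rho})$ with weights $\rho_i = 2\vartheta_\alpha(i,j)$ and matching this drift to $4\,\partial_j \log \CobloF_\alpha$. The paper's proof is terser (it declares $\mathrm{(CI)}$ and $\mathrm{(DMP)}$ ``clear'' and only writes out the $a$-type case of $\mathrm{(MARG)}$), while you add the time-reversal observation $\Height_\alpha(a)=\Height_\alpha(b)$ and treat both step types explicitly, but the substance is the same.
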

\begin{proof}
The collection $(\eta_1, \ldots, \eta_{2N})$ clearly satisfies the conformal invariance 
$\mathrm{(CI)}$ and the domain Markov property $\mathrm{(DMP)}$ from the definition of local multiple $\SLE$s
in Section \ref{subsec::global_vs_local}. Thus, we only need to check the marginal law property
$\mathrm{(MARG)}$ for each curve. 
We do this for $\eta_{a}$.
On the one hand, as $\eta_{a}$ is the level line of $\gff_{\alpha}$ with height 
$\Height_{\alpha}(a)$, 
its marginal law is an $\SLE_4(\underline{\rho})$ with force points $\{x_1, \ldots, x_{2N}\}\setminus\{x_{a}\}$, 
where each $x_{a_j}$ (resp.~$x_{b_j}$) is a force point with weight $+2$ (resp.~$-2$). 
Therefore, the driving function $W_t$ of $\eta_{a}$ satisfies the SDEs
\begin{align}
\label{eqn::localmultiple_marginal_sde_for_conformal_block}
\begin{split}
\ud W_t = \; & 2\ud B_t \, + \,  \sum_{i\neq a}\frac{\rho_i \ud t}{W_t-V_t^i} , \qquad 
\text{where } \quad \rho_i = \begin{cases}
+2 , \quad & \text{if } i \in \{a_1, \ldots, a_N \} \setminus \{a\}, \\
-2 , & \text{if } i \in \{b_1, \ldots, b_N\} .
\end{cases} \\
\ud V_t^i = \; & \frac{2\ud t}{V_t^i-W_t},\quad \text{ for } i \neq a ,
\end{split}
\end{align}
where $V_t^i$ are the time evolutions of the force points $x_i$, for $i \neq a$.
On the other hand,~\eqref{eqn::localmultiple_marginal_sde_for_conformal_block} coincides with the SDE 
system~\eqref{eqn::localmultiple_marginal_sde} of $\mathrm{(MARG)}$ with 
$F_j = 2 \partial_{a} \log \CobloF_{\alpha}$, since
by definition~\eqref{eq: CobloF definition} of $\CobloF_{\alpha}$, we have 
\begin{align*}
4\partial_{a} \log \CobloF_{\alpha}
\, = \, \sum_{i \neq a}\frac{2\vartheta_{\alpha}(i,a)}{W_t-V_t^i}
\, = \, \sum_{i \neq a}\frac{\rho_i}{W_t-V_t^i} . 
\end{align*}
This completes the proof.
\end{proof}

Let $\alpha=\unnested_N$ be the completely unnested link pattern. Then, $\gff_{\unnested_N}$ 
is the $\GFF$ on $\HH$ with alternating boundary data, $\Height_{\unnested_N}(k)=0$, for all $k$, and 
$\CobloF_{\unnested_N}=\PartF_{\GFF}^{(N)}$. This is the situation discussed in 
Section~\ref{subsec:GFF_connection_proba}. 
By Theorem~\ref{thm::multiple_sle_4}, all connectivities $\beta \in \LP_N$ for 
the level lines of  $\gff_{\unnested_N}$ have a positive chance.

However, for a general link pattern
$\alpha \in \LP_N \setminus \{\unnested_N\}$, the boundary data for $\gff_\alpha$ is more 
complicated, and its level lines cannot necessarily form all of the different connectivities:
only level lines of $\gff_\alpha$ and level lines of $-\gff_\alpha$ with respective heights $\Height$ and
$-\Height$ of the same magnitude can connect with each other.
For example,
when $\alpha=\nested_N$, then $\gff_{\nested_N}$ is the $\GFF$ with the following boundary data: 
\begin{align*}
\begin{cases}
\lambda ( 2j - 1),\qquad & \text{if } x \in ( x_{j}, x_{j+1} ) , 
\quad \text{for all } j \in \{0,1,\ldots,N\} , \\
\lambda ( 4N - 1 - 2j), \qquad &\text{if } x \in ( x_{j}, x_{j+1} ) ,
\quad \text{for all } j  \in \{N+1,N+2,\ldots,2N \} ,
\end{cases}
\end{align*}
and the heights of the level lines are 
$\Height_{\nested_N}(k) = 2\lambda(k-1)$, for $k \in \{1, \ldots, N\}$ and $\Height_{\nested_N}(k) = 2\lambda(2N-k)$,
for $k \in\{N+1, \ldots, 2N\}$. 
In this case, we have $\CobloF_{\nested_N}=\PartF_{\nested_N}$, and 
for $j\in \{1, \ldots, N\}$, the curve $\eta_j$ merges with $\eta_{2N+1-j}$ almost surely, 
that is, the level lines necessarily form the rainbow connectivity $\nested_N$. 
The marginal law of $\eta_1$ is the $\SLE_4(+2,\ldots,+2,-2,\ldots,-2)$ 
in $\HH$ from $x_1$ to $x_{2N}$ with force points $(x_2, \ldots, x_{2N-1})$, where $x_k$ 
(resp.~$x_l$) is a force point with weight $+2$ for $k \le N$ (resp.~with weight $-2$ for $l \ge N+1$).

\begin{remark}
For each link pattern $\alpha \in \LP_N$, we associate a balanced subset  
$S(\alpha) \subset \{1,\ldots,2N\}$ (that is, a subset containing equally many even and odd indices)
as follows. Write $\alpha = \{ \link{a_1}{b_1}, \ldots, \link{a_N}{b_N} \}$ as an ordered collection 
as in~\eqref{eq: begin and end point convention sec 2}. Define
\[ S(\alpha) := \{a_r \; \colon \; r \in \{1,\ldots,N\} \text{ and } a_r \text{ is odd }\} 
\cup \{b_s \; \colon \; s \in \{1,\ldots,N\} \text{ and } b_s \text{ is even }\} . \]
Let $\gff$ be the $\GFF$ on $\HH$ with alternating boundary data. Then the probability that 
the level lines of $\gff$ connect the points with indices in $S(\alpha)$ among themselves and the points
with indices in the complement $\{1,2,\ldots,2N\} \setminus S(\alpha)$ among themselves equals
\begin{align*}
\frac{\CobloF_\alpha (x_1, \ldots, x_{2N}) }{\PartF_{\GFF}^{(N)} (x_1, \ldots, x_{2N})}.
\end{align*}
This fact was proved in~\cite{Kenyon-Wilson:Boundary_partitions_in_trees_and_dimers}
for interfaces in the double-dimer model. The corresponding claim for the level lines of the $\GFF$ 
can be proved similarly.
\end{remark}

\appendix

\section{Properties of Bound Functions}
\label{subsec:appendix-prop}
We first recall the definition of the bound functions:
we set $\LB_\emptyset := 1$ and, for all $\alpha \in \LP_N$ and for all nice polygons $(\Omega; x_1, \ldots, x_{2N})$, we define
\begin{align*}
\LB_{\alpha}(\Omega; x_1,\ldots, x_{2N}) := \prod_{\link{a}{b} \in \alpha} H_{\Omega}(x_{a}, x_{b})^{1/2} .
\end{align*}
We also note that, by the monotonicity property~\eqref{eqn::poissonkernel_mono} of the boundary
Poisson kernel, for any sub-polygon $(U; x_1, \ldots, x_{2N})$ we have the inequality
\begin{align}\label{eqn::b_alpha_mono}
\LB_{\alpha}(U; x_1, \ldots, x_{2N})\le \LB_{\alpha}(\Omega; x_1, \ldots, x_{2N}).  
\end{align} 
Then we collect some useful properties of the functions $\LB_{\alpha}$ with $\Omega=\HH$:
\begin{align}\label{eqn::b_alpha_def}
\LB_{\alpha} \colon \chamber_{2N} \to \Rpos, 
\qquad \qquad
&\LB_{\alpha}(x_1, \ldots, x_{2N}) := \prod_{\link{a}{b} \in \alpha}  |x_{b}-x_{a}|^{-1}.
\end{align}

\begin{lemma}\label{lem::b_alpha_asy_refined}
The function $\LB_{\alpha}$ 
satisfies the following 
asymptotics: with $\hat{\alpha} = \alpha \removeLink \link{j}{j+1}$, we have
\begin{align*}
\lim_{\substack{\tilde{x}_j , \tilde{x}_{j+1} \to \xi, \\ \tilde{x}_i\to x_i \text{ for } i \neq j, j+1}} 
\frac{\LB_\alpha(\tilde{x}_1 , \ldots , \tilde{x}_{2N})}{(\tilde{x}_{j+1} - \tilde{x}_j)^{-1}} 
=\begin{cases}
0 , \quad &
    \text{if } \link{j}{j+1} \notin \alpha , \\
\LB_{\hat{\alpha}}(x_{1},\ldots,x_{j-1},x_{j+2},\ldots,x_{2N}) , &
    \text{if } \link{j}{j+1} \in \alpha ,
\end{cases}
\end{align*}
for all $\alpha \in \LP_N$, and for all $j \in \{1, \ldots, 2N-1 \}$ and  $x_1 < \cdots < x_{j-1} < \xi < x_{j+2} < \cdots < x_{2N}$.  
\end{lemma}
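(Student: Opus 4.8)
The plan is to work directly from the product formula~\eqref{eqn::b_alpha_def} defining $\LB_{\alpha}$, isolating the factor attached to the pair $\link{j}{j+1}$ and keeping track of which factors remain bounded away from $0$ and $\infty$ along the prescribed limit. The first step is the elementary observation that, since the points satisfy $x_1 < \cdots < x_{j-1} < \xi < x_{j+2} < \cdots < x_{2N}$, every $x_i$ with $i \notin \{j,j+1\}$ is distinct from $\xi$. Consequently, for any link $\link{a}{b} \in \alpha$ with $a,b \notin \{j,j+1\}$, the factor $|\tilde{x}_{b}-\tilde{x}_{a}|^{-1}$ converges to the finite, strictly positive value $|x_{b}-x_{a}|^{-1}$ as $\tilde{x}_i \to x_i$ for $i \neq j,j+1$ and $\tilde{x}_j,\tilde{x}_{j+1} \to \xi$. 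Moreover, after relabeling the surviving indices $1,\ldots,j-1,j+2,\ldots,2N$ by $1,\ldots,2N-2$, the collection of such links is exactly $\hat{\alpha} = \alpha \removeLink \link{j}{j+1}$ in the case $\link{j}{j+1}\in\alpha$.

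In the case $\link{j}{j+1}\in\alpha$, no other link of $\alpha$ can contain the indices $j$ or $j+1$, so I can write
\[
\LB_{\alpha}(\tilde{x}_1,\ldots,\tilde{x}_{2N}) = |\tilde{x}_{j+1}-\tilde{x}_j|^{-1} \prod_{\substack{\link{a}{b}\in\alpha \\ \link{a}{b}\neq\link{j}{j+1}}} |\tilde{x}_{b}-\tilde{x}_{a}|^{-1} .
\]
Dividing by $(\tilde{x}_{j+1}-\tilde{x}_j)^{-1}$ cancels the first factor, and the remaining product converges, by the first step, to $\prod_{\link{a}{b}\in\hat{\alpha}} |x_{b}-x_{a}|^{-1} = \LB_{\hat{\alpha}}(x_1,\ldots,x_{j-1},x_{j+2},\ldots,x_{2N})$, which is the asserted limit.

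In the case $\link{j}{j+1}\notin\alpha$, let $a$ be the partner of $j$ and $b$ the partner of $j+1$ in $\alpha$. Then $a,b\notin\{j,j+1\}$ and $a\neq b$, so the two factors $|\tilde{x}_j-\tilde{x}_a|^{-1}$ and $|\tilde{x}_{j+1}-\tilde{x}_b|^{-1}$ converge to the finite limits $|\xi-x_a|^{-1}$ and $|\xi-x_b|^{-1}$, and every remaining factor converges as well by the first step; hence $\LB_{\alpha}$ itself converges to a finite limit. Since $(\tilde{x}_{j+1}-\tilde{x}_j)\to 0$, multiplying $\LB_{\alpha}$ by this quantity forces $\LB_{\alpha}/(\tilde{x}_{j+1}-\tilde{x}_j)^{-1}\to 0$.

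There is essentially no obstacle in this argument; the only points requiring a word of care are the separation of $\xi$ from the remaining $x_i$ (immediate from the ordering hypothesis) and the bookkeeping identifying the surviving product with $\LB_{\hat{\alpha}}$, which is exactly the definition of the link-removal-and-relabeling operation $\alpha\mapsto\alpha\removeLink\link{j}{j+1}$.
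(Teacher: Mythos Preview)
Your proof is correct and follows the same approach as the paper, which simply states that the lemma follows immediately from the definition~\eqref{eqn::b_alpha_def}. You have supplied the straightforward case analysis that the paper leaves implicit.
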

\begin{proof}
This follows immediately from the definition~\eqref{eqn::b_alpha_def}.
\end{proof}

Then we define, for all $N\ge 1$, the functions
\begin{align}\label{eqn::b_n_def}
\LB^{(N)} \colon \chamber_{2N} \to \Rpos, 
\qquad \qquad
&\LB^{(N)}(x_1, \ldots, x_{2N}) := \prod_{1\leq k<l\leq 2N}(x_l-x_k)^{(-1)^{l-k}}.
\end{align}
We note the connection with the symmetric partition function 
$\PartF_{\GFF}$ for $\kappa=4$ defined in Lemma~\ref{lem: gff symmetric pff}:
\begin{align*}
\LB^{(N)}(x_1, \ldots, x_{2N})=\PartF_{\GFF}^{(N)}(x_1, \ldots, x_{2N})^2 .
\end{align*}
Also, for a nice polygon $(\Omega; x_1, \ldots, x_{2N})$,  we define
\begin{align*}
\LB^{(N)}(\Omega; x_1, \ldots, x_{2N}) := \prod_{i=1}^{2N} |\varphi'(x_i)| \times \LB^{(N)}(\varphi(x_1), \ldots, \varphi(x_{2N})),
\end{align*}
where $\varphi \colon\Omega\to\HH$ is any conformal map such that $\varphi(x_1)<\cdots<\varphi(x_{2N})$.

\begin{lemma}\label{lem::b_total_asy_refined}
For all $n \in \{ 1, \ldots, N \}$
and $\xi < x_{2n+1} < \cdots < x_{2N}$, we have 
\begin{align*}
\lim_{\substack{\tilde{x}_1, \ldots, \tilde{x}_{2n} \to \xi, \\ \tilde{x}_{i} \to x_i \text{ for } 2n < i \le  2N}} 
\frac{\LB^{(N)}(\tilde{x}_1, \ldots, \tilde{x}_{2N})}{\LB^{(n)}(\tilde{x}_1, \ldots, \tilde{x}_{2n})} 
= \LB^{(N-n)}(x_{2n+1},\ldots, x_{2N}).
\end{align*}
\end{lemma}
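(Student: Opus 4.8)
\textbf{Proof proposal for Lemma~\ref{lem::b_total_asy_refined}.}

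The plan is to reduce everything to the explicit product formula~\eqref{eqn::b_n_def} and simply track which factors survive in the limit. Writing $\LB^{(N)}(\tilde x_1,\ldots,\tilde x_{2N}) = \prod_{1\le k<l\le 2N}(\tilde x_l - \tilde x_k)^{(-1)^{l-k}}$, I would split the set of pairs $\{k,l\}$ with $1\le k<l\le 2N$ into three groups: (i) both $k,l \le 2n$; (ii) both $k,l > 2n$; (iii) $k\le 2n < l$. The product over group (i) is exactly $\LB^{(n)}(\tilde x_1,\ldots,\tilde x_{2n})$, so it cancels against the denominator. The product over group (ii) converges to $\LB^{(N-n)}(x_{2n+1},\ldots,x_{2N})$ as $\tilde x_i \to x_i$ for $i>2n$, since the relabeling $i \mapsto i-2n$ preserves the sign $(-1)^{l-k}$. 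So the whole content of the lemma is that the product over the "cross" group (iii) tends to $1$.

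For group (iii), fix $l > 2n$. As $\tilde x_1,\ldots,\tilde x_{2n}\to\xi$, each factor $(\tilde x_l - \tilde x_k)^{(-1)^{l-k}}$ with $k\le 2n$ tends to $(x_l - \xi)^{(-1)^{l-k}}$, which is finite and nonzero because $\xi < x_{2n+1}\le x_l$. Grouping these $2n$ factors for this fixed $l$, the total exponent of $(x_l-\xi)$ is $\sum_{k=1}^{2n}(-1)^{l-k} = (-1)^{l}\sum_{k=1}^{2n}(-1)^{-k} = 0$, since $\sum_{k=1}^{2n}(-1)^k = 0$ (an even number of alternating $\pm 1$'s cancels). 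Hence the partial product over group (iii) with $l$ fixed converges to $(x_l-\xi)^0 = 1$, and taking the product over all $l\in\{2n+1,\ldots,2N\}$ gives $1$. Since all three limits exist separately and the group-(iii) and group-(ii) factors are continuous and bounded away from $0$ and $\infty$ near the limiting configuration, they multiply, giving the claim.

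There is essentially no obstacle here: the only mild point of care is making sure the joint limit (all of $\tilde x_1,\ldots,\tilde x_{2n}$ going to $\xi$ simultaneously, and $\tilde x_i\to x_i$ for $i>2n$) is handled correctly, but since every surviving factor is a continuous function of the arguments on a neighborhood of the limiting point $(\xi,\ldots,\xi,x_{2n+1},\ldots,x_{2N})$ in $\overline{\chamber}$ — using $\xi < x_{2n+1}$ to stay away from the diagonal for the cross terms — ordinary continuity of finite products suffices. One should also note for cleanliness that the statement implicitly requires $\tilde x_1<\cdots<\tilde x_{2n}$ to remain ordered while collapsing to $\xi$, which is consistent with staying in $\chamber_{2N}$; this plays no role in the computation since we never divide by a difference of two collapsing variables except inside $\LB^{(n)}$, which is exactly cancelled.
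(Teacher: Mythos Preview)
Your proof is correct and is precisely the computation the paper has in mind: the paper's own proof is the one-line ``This follows immediately from the definition~\eqref{eqn::b_n_def},'' and your three-group decomposition with the cancellation $\sum_{k=1}^{2n}(-1)^{l-k}=0$ is exactly what makes it immediate.
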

\begin{proof}
This follows immediately from the definition~\eqref{eqn::b_n_def}.
\end{proof}

\section{Technical Lemmas}
\label{subsec:appendix-tech}
In this appendix, we prove useful results of technical nature.
The main result is the next proposition, which we prove in the end of the appendix. 

\begin{proposition}\label{prop:: strong limit for ratio Zalpha and Ztotal}
Let $\alpha \in \LP_N$ and suppose that $\link{1}{2} \in \alpha$.
Fix an index $n \in \{2,\ldots, N\}$ and real points $x_1 < \cdots < x_{2N}$.
Suppose $\eta$ is a continuous simple curve in $\HH$ starting from $x_1$ and terminating at $x_{2n}$ at time $T$, 
which hits $\R$ only at $\{x_1, x_{2n}\}$. Let $(W_t, 0\le t\le T)$ be its Loewner driving function and $(g_t, 0\le t\le T)$ 
the corresponding conformal maps. Then we have
\begin{align}\label{eq: strong limit for ratio Zalpha and Ztotal}
\lim_{t \to T} 
\frac{\LB_\alpha (W_t, g_t(x_2), \ldots, g_t(x_{2N})) }{\LB^{(N)} (W_t, g_t(x_2), \ldots, g_t(x_{2N})) }
= 0 .
\end{align}
\end{proposition}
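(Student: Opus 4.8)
The idea is to factor the ratio in~\eqref{eq: strong limit for ratio Zalpha and Ztotal} as a product of a uniformly bounded term and exactly the cross-ratio quantity already analyzed in Lemma~\ref{lem::levellines_proba_neighbors}, whose degeneration along $\eta$ is known. Throughout I write $u_1 := W_t$ and $u_i := g_t(x_i)$ for $i \ge 2$, so that $u_1 < u_2 < \cdots < u_{2N}$, and abbreviate $\LB_\alpha(u) := \LB_\alpha(u_1,\ldots,u_{2N})$, using the defining formulas~\eqref{eqn::b_alpha_def} and~\eqref{eqn::b_n_def}.

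First I would record the elementary pointwise bound
\begin{align*}
0 < \frac{\LB_\beta(v_1,\ldots,v_{2M})}{\LB^{(M)}(v_1,\ldots,v_{2M})} \le 1, \qquad \text{for all } M \ge 1,\ \beta \in \LP_M,\ (v_1,\ldots,v_{2M}) \in \chamber_{2M},
\end{align*}
by induction on $M$. Every $\beta \in \LP_M$ contains a nearest-neighbour link $\link{j}{j+1} \in \beta$. Isolating in $\LB_\beta = \prod_{\link{a}{b}\in\beta}(v_b-v_a)^{-1}$ and in $\LB^{(M)} = \prod_{k<l}(v_l-v_k)^{(-1)^{l-k}}$ the factors that involve one of the indices $j,j+1$, a direct computation gives
\begin{align*}
\frac{\LB_\beta(v)}{\LB^{(M)}(v)} = \Big( \prod_{i \neq j,j+1} \Big(\tfrac{v_j-v_i}{v_{j+1}-v_i}\Big)^{(-1)^{i-j}} \Big)^{-1} \times \frac{\LB_{\beta\removeLink\link{j}{j+1}}(v_{\neq j,j+1})}{\LB^{(M-1)}(v_{\neq j,j+1})},
\end{align*}
where $v_{\neq j,j+1}$ are the remaining $2M-2$ coordinates (and each base $(v_j-v_i)/(v_{j+1}-v_i)$ is positive). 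Pairing consecutive factors in the first product and invoking Remark~\ref{rem::crossratio_trivialbound} exactly as in the proof of Lemma~\ref{lem::levellines_proba_neighbors} shows that this product is $\ge 1$; the inductive hypothesis bounds the remaining ratio, and the base case $M=1$ is the identity $\LB_{\link{1}{2}}/\LB^{(1)} \equiv 1$.

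Then I would turn to the proposition itself. Since $\link{1}{2} \in \alpha$, the link $\link{1}{2}$ is the only link of $\alpha$ meeting $\{1,2\}$, hence $\LB_\alpha(u) = (u_2-u_1)^{-1}\,\LB_{\hat\alpha}(u_3,\ldots,u_{2N})$ with $\hat\alpha := \alpha\removeLink\link{1}{2} \in \LP_{N-1}$. Applying the same splitting as above with $j=1$ yields $\LB^{(N)}(u) = (u_2-u_1)^{-1}\,M_t^{-1}\,\LB^{(N-1)}(u_3,\ldots,u_{2N})$, where
\begin{align*}
M_t := \prod_{j=3}^{2N} \Big( \frac{g_t(x_j)-W_t}{g_t(x_j)-g_t(x_2)} \Big)^{(-1)^j}
\end{align*}
is precisely the bounded cross-ratio of Lemma~\ref{lem::levellines_proba_neighbors} (with $a=1$). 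Dividing, the factors $(u_2-u_1)^{-1}$ cancel, giving the exact identity
\begin{align*}
\frac{\LB_\alpha(W_t,g_t(x_2),\ldots,g_t(x_{2N}))}{\LB^{(N)}(W_t,g_t(x_2),\ldots,g_t(x_{2N}))} = M_t \times \frac{\LB_{\hat\alpha}(g_t(x_3),\ldots,g_t(x_{2N}))}{\LB^{(N-1)}(g_t(x_3),\ldots,g_t(x_{2N}))}.
\end{align*}
By the pointwise bound the second factor lies in $(0,1]$ for every $t<T$, so the left-hand side is at most $M_t$. Finally, since $\eta$ terminates at $x_{2n}$ with $n \ge 2$ and meets $\R$ only at $\{x_1,x_{2n}\}$, Lemma~\ref{lem::crossratio_zerowhenwrong_general} (already invoked in the proof of Lemma~\ref{lem::levellines_proba_neighbors}) gives $M_t \to 0$ as $t\to T$; hence the left-hand side tends to $0$, which is~\eqref{eq: strong limit for ratio Zalpha and Ztotal}.

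The genuinely analytic input is isolated in Lemma~\ref{lem::crossratio_zerowhenwrong_general} — the vanishing of the cross-ratio $M_t$ when the curve swallows $x_2,\ldots,x_{2n-1}$ and closes off at the ``wrong'' marked point $x_{2n}$ — proved earlier in the appendix by harmonic-measure / extremal-length estimates for the maps $g_t$. Everything else is the bookkeeping in the two product identities plus the induction for the pointwise bound; the points I expect to require care are the verification of the splitting identity for $\LB^{(N)}$ and the positivity $\prod_{i\neq j,j+1}\big(\tfrac{v_j-v_i}{v_{j+1}-v_i}\big)^{(-1)^{i-j}} \ge 1$, which are the same manipulations already carried out for Lemma~\ref{lem::levellines_proba_neighbors}.
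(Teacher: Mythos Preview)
Your argument is correct and takes a cleaner route than the paper's. The paper factors
\[
\frac{\LB_\alpha}{\LB^{(N)}} = \frac{\LB^{(n)}}{\LB^{(N)}} \cdot \frac{\LB_\alpha}{\LB^{(n)}},
\]
controls the first factor via Lemma~\ref{lem::b_total_asy_refined}, and then uses two further lemmas (Lemmas~\ref{lem::mart_zerowhenwrong_induction} and~\ref{lem::finiteness_property}) to decompose $F_\alpha^{(n)}=\LB_\alpha/\LB^{(n)}$ as the product of the cross-ratio to index $2n$ (tending to zero by Lemma~\ref{lem::crossratio_zerowhenwrong_general}) and a factor with finite $\limsup$, established by a somewhat elaborate case analysis. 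Your single pointwise inequality $\LB_\beta/\LB^{(M)}\le 1$ replaces all of that: once you have it, the exact identity $\LB_\alpha/\LB^{(N)} = M_t\cdot \LB_{\hat\alpha}/\LB^{(N-1)}$ plus $M_t\to 0$ finishes the proof immediately. Both approaches rest on the same analytic input, Lemma~\ref{lem::crossratio_zerowhenwrong_general}.

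Two small points to tighten. First, your $M_t$ runs to $2N$, whereas Lemma~\ref{lem::crossratio_zerowhenwrong_general} gives the product only to $2n$; you need the observation that the tail $\prod_{j=2n+1}^{2N}(\cdot)^{(-1)^j}\le 1$ (pairing $(2m-1,2m)$ via Remark~\ref{rem::crossratio_trivialbound}), so $M_t$ is dominated by the product to $2n$. The paper makes the same implicit jump in Lemma~\ref{lem::levellines_proba_neighbors}. Second, in your inductive bound the pairing ``exactly as in Lemma~\ref{lem::levellines_proba_neighbors}'' only covers the case $j$ odd. A link pattern need not have a nearest-neighbour link with odd $j$ (e.g.\ $\{\{1,6\},\{2,3\},\{4,5\}\}$), so when $j$ is even you must pair $(2m,2m+1)$ instead of $(2m-1,2m)$ and handle the leftover indices $1$ and $2M$ as a single cross-ratio, which is still $\ge 1$. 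This is routine but is \emph{not} literally the same pairing as in Lemma~\ref{lem::levellines_proba_neighbors}; cf.\ the two parity cases in the paper's Lemma~\ref{lem::finiteness_property}.
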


For the proof, we need a few lemmas.

\begin{lemma} \label{lem::continuouscurve_limit_technical}
Let $x_1<x_2<x_3<x_4$. 
Suppose $\eta$ is a continuous simple curve in $\HH$ starting from $x_1$ and terminating at $x_4$ at time $T$, 
which hits $\R$ only at $\{x_1, x_4\}$. Let $(W_t, 0\le t\le T)$ be its Loewner driving function and $(g_t, 0\le t\le T)$ 
the corresponding conformal maps. Define, for $t<T$, 
\[ \Delta_t := \frac{(g_t(x_4)-W_t)(g_t(x_3)-g_t(x_2))}{(g_t(x_4)-g_t(x_2))(g_t(x_3)-W_t)} . \]
Then we have $0\le \Delta_t\le 1$, for all $t<T$, and $\Delta_t\to 0$ as $t\to T$. 
\end{lemma}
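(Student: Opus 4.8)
\textbf{Proof plan for Lemma~\ref{lem::continuouscurve_limit_technical}.}
The plan is to recognise $\Delta_t$ as (the image under $g_t$ of) a cross-ratio of the four points $x_1, x_2, x_3, x_4$, computed in the domain $\HH \setminus \eta[0,t]$, and to analyse its behaviour as $t \to T$ by a conformal-geometry argument. First I would observe that $\Delta_t$ is exactly the cross-ratio
\begin{align*}
\Delta_t = \frac{(g_t(x_4)-W_t)(g_t(x_3)-g_t(x_2))}{(g_t(x_4)-g_t(x_2))(g_t(x_3)-W_t)},
\end{align*}
with $W_t = g_t(\eta(t))$, so that $\Delta_t$ equals the cross-ratio of the four boundary points $\eta(t), x_2, x_3, x_4$ of the slit domain $H_t := \HH \setminus \eta[0,t]$, a conformally invariant quantity. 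The bound $0 \le \Delta_t \le 1$ is then immediate from Remark~\ref{rem::crossratio_trivialbound} applied to the ordered real points $W_t < g_t(x_2) < g_t(x_3) < g_t(x_4)$ (the order is preserved because $\eta$ only touches $\R$ at $x_1, x_4$, so for $t < T$ the points $x_2, x_3$ are not swallowed and keep their relative order to the tip).

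The substance is the limit $\Delta_t \to 0$. The idea is that as $t \to T$ the curve $\eta$ approaches $x_4$, so $\eta[0,T]$ together with the boundary arc from $x_1$ to $x_4$ (not through $x_2, x_3$) bounds a Jordan domain, and $x_2, x_3$ end up separated from the ``tip side'' by $\eta$. More concretely, I would use the standard fact that the cross-ratio $\Delta_t$ can be bounded in terms of an extremal length / harmonic-measure quantity: up to a fixed monotone homeomorphism, $\Delta_t$ is comparable to $\exp(-\pi \, \mathrm{EL})$ where $\mathrm{EL}$ is the extremal length in $H_t$ of the family of curves separating $\{\eta(t), x_4\}$ from $\{x_2, x_3\}$ (equivalently, connecting the boundary arc $(\eta(t) \to x_4)$ not through $x_2,x_3$ to the complementary arc). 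As $t \to T$ and $\eta(t) \to x_4$, this separating annulus degenerates — its modulus tends to $+\infty$ — because any such separating curve must go ``around'' the shrinking region between the tip and $x_4$; hence $\Delta_t \to 0$. An alternative, more hands-on route: pick a conformal map $\phi_t \colon H_t \to \HH$ sending $x_2 \mapsto 0$, $x_3 \mapsto 1$, $x_4 \mapsto \infty$; then $\Delta_t$ is (a fixed Möbius function of) $\phi_t(\eta(t))$, and one shows $\phi_t(\eta(t)) \to \infty$ using that $\eta(t) \to x_4$ together with a normal-families / Carathéodory-kernel argument for the maps $\phi_t$.

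I expect the main obstacle to be making the degeneration argument rigorous without appealing to a black-box kernel theorem in a form that is awkward here: one must control $\phi_t(\eta(t))$ (or the extremal length) uniformly as $t \to T$, using only that $\eta$ is a continuous simple curve hitting $\R$ solely at $\{x_1, x_4\}$ and terminating at $x_4$. The cleanest way I would organise this is: (i) fix a small $\epsilon > 0$ and a crosscut $C_\epsilon$ of $\HH$ separating $x_4$ from $x_1, x_2, x_3$ and enclosing a neighbourhood $N_\epsilon$ of $x_4$ with $N_\epsilon \cap \R \subset (x_3, +\infty)$; (ii) since $\eta(t) \to x_4$, there is $t_\epsilon < T$ with $\eta(t) \in N_\epsilon$ for $t \in (t_\epsilon, T)$, and moreover $\eta[t_\epsilon, T] \subset N_\epsilon$ can be arranged; (iii) then the domain $H_t$ near $x_4$ is squeezed between $\eta$ and the arc $(x_3, x_4) \subset \R$, forcing the harmonic measure in $H_t$, seen from $x_2$, of the arc $\partial H_t$ running from $\eta(t)$ to $x_4$ (through $N_\epsilon$) to be $\le \omega(\epsilon)$ with $\omega(\epsilon) \to 0$; (iv) conclude via the standard comparison between this harmonic measure and the cross-ratio $\Delta_t$. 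Steps (i)--(iii) are where the care is needed; (iv) is routine. The $0 \le \Delta_t \le 1$ part and the reduction to a cross-ratio are short, so the write-up weight should fall almost entirely on the degeneration estimate.
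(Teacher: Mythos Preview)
Your plan is correct and the bound $0\le\Delta_t\le 1$ via Remark~\ref{rem::crossratio_trivialbound} is exactly what the paper does. For the limit $\Delta_t\to 0$, however, the paper takes a more direct and shorter route than your extremal-length/Carath\'eodory-kernel scheme. It writes $\Delta_t$ algebraically in terms of the three gaps $X_{21}=g_t(x_2)-W_t$, $X_{32}=g_t(x_3)-g_t(x_2)$, $X_{43}=g_t(x_4)-g_t(x_3)$, reducing the claim to $X_{32}/X_{21}\to 0$ and $X_{32}/X_{43}\to 0$. Then it invokes the standard identity (e.g.\ \cite[Remark~3.50]{LawlerConformallyInvariantProcesses}) that each gap $g_t(x_j)-g_t(x_i)$ equals $\lim_{y\to\infty} y\,\PP^{yi}[B_\tau\in(x_i,x_j)]$, the harmonic measure from $\infty$ in $\HH\setminus\eta[0,t]$ of the corresponding boundary arc, while $X_{21}$ is the same quantity for the arc ``right side of $\eta[0,t]$ together with $(x_1,x_2)$''. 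The two ratio limits are then immediate from the geometry: $X_{21}$ stays bounded below (the right side of $\eta$ is freely accessible from $\infty$) while $X_{32}\to 0$ (the arc $(x_2,x_3)$ is enclosed), and conditionally on entering through the shrinking gap near $x_4$, the Brownian motion hits $(x_3,x_4)$ before $(x_2,x_3)$ with probability tending to $1$. Your approach would work too, and has the merit of not relying on the hydrodynamic-normalisation identity; but the paper's Brownian-motion interpretation dispatches the degeneration estimate in a couple of lines without needing any crosscut or normal-families argument.
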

\begin{proof}
The bound $0\le \Delta_t\le 1$ follows 
from Remark~\ref{rem::crossratio_trivialbound} and the fact that $W_t< g_t(x_2)< g_t(x_3)<g_t(x_4)$.
It remains to check the limit of $\Delta_t$ as $t\to T$. 
To simplify notations, we denote $g_t(x_2)-W_t$ by $X_{21}$ and $g_t(x_3)-g_t(x_2)$ by $X_{32}$, and $g_t(x_4)-g_t(x_3)$ by $X_{43}$. Then we have 
\[\Delta_t=\frac{(X_{43}+X_{32}+X_{21})X_{32}}{(X_{43}+X_{32})(X_{32}+X_{21})}=\frac{X_{32}/X_{21}+X_{32}/X_{43}+X_{32}^2/(X_{21}X_{43})}{1+X_{32}/X_{21}+X_{32}/X_{43}+X_{32}^2/(X_{21}X_{43})}.\]
To show that $\Delta_t\to 0$ as $t \to T$, it suffices to show that
\begin{align}\label{eqn::technical_crossratio_zero}
X_{32}/X_{21}\to 0 \qquad \text{and}\qquad X_{32}/X_{43}\to 0. 
\end{align}
For $z\in\C$, denote by $\PP^z$ the law of Brownian motion in $\C$ started from $z$. 
Let $\tau$ be the first time when $B$ exits $\HH\setminus\eta[0,t]$.
Then by \cite[Remark 3.50]{LawlerConformallyInvariantProcesses}, we have 
\begin{align*}
X_{43}=\lim_{y\to\infty} y \, \PP^{yi}[B_{\tau}\in (x_3, x_4)], \qquad 
X_{32}=\lim_{y\to\infty} y \, \PP^{yi}[B_{\tau}\in (x_2, x_3)],
\end{align*}
and $X_{21}$ is the same limit of the probability that $B_{\tau}$ belongs the union of the right side of $\eta[0,t]$ and $(x_1, x_2)$. 
Property~\eqref{eqn::technical_crossratio_zero} follows from this. 
\end{proof}

\begin{lemma}\label{lem::crossratio_zerowhenwrong_general}
Fix an index $n \in \{2,3, \ldots, N\}$ and real points $x_1 < \cdots < x_{2n}$.
Suppose $\eta$ is a continuous simple curve in $\HH$ starting from $x_1$ and terminating at $x_{2n}$ at time $T$, 
which hits $\R$ only at $\{x_1, x_{2n}\}$. Let $(W_t, 0\le t\le T)$ be its Loewner driving function and $(g_t, 0\le t\le T)$ 
the corresponding conformal maps. Then we have
\begin{align*} 
\lim_{t \to T} \prod_{j=3}^{2n} \left( \frac{g_t(x_j) - W_t}{g_t(x_j )- g_t(x_2)} \right)^{(-1)^{j}} = 0 .
\end{align*}
\end{lemma}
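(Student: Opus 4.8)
The plan is to reduce the statement to the already-established Lemma~\ref{lem::continuouscurve_limit_technical} by grouping the factors of the product into consecutive pairs, each of which is a cross-ratio controlled by Remark~\ref{rem::crossratio_trivialbound}. Throughout, fix $t<T$ and abbreviate $w := W_t$ and $u_j := g_t(x_j)$ for $j\in\{2,3,\ldots,2n\}$. Since the curve $\eta$ emanates from $x_1 < x_2$ and has not yet hit $\R$ again for $t<T$, the Loewner flow gives the ordering
\begin{align*}
w < u_2 < u_3 < \cdots < u_{2n} , \qquad \text{for all } t<T ,
\end{align*}
with $w = g_t(\eta(t))$ the image of the tip; this is the only point of the argument requiring a little care, and it justifies both the pairing below and the applicability of Remark~\ref{rem::crossratio_trivialbound}.

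The index set $\{3,4,\ldots,2n\}$ of the product has $2n-2$ (an even number of) elements, and since $n\ge 2$ I would pair them as $(2k+1,2k+2)$ for $k=1,\ldots,n-1$, noting that $2k+1$ is odd and $2k+2$ is even. Each pair contributes
\begin{align*}
P_k := \left(\frac{u_{2k+1}-w}{u_{2k+1}-u_2}\right)^{-1}\left(\frac{u_{2k+2}-w}{u_{2k+2}-u_2}\right)^{+1}
= \frac{(u_{2k+2}-w)(u_{2k+1}-u_2)}{(u_{2k+2}-u_2)(u_{2k+1}-w)} ,
\end{align*}
so that the product in question equals $\prod_{j=3}^{2n}\big((u_j-w)/(u_j-u_2)\big)^{(-1)^j} = \prod_{k=1}^{n-1}P_k$. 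Applying Remark~\ref{rem::crossratio_trivialbound} with $(w,u_2,u_{2k+1},u_{2k+2})$ in place of $(x_1,x_2,x_3,x_4)$ — legitimate by the ordering above — gives $0\le P_k\le 1$ for every $k$ and every $t<T$.

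Finally I would treat the last pair $P_{n-1}$ separately to extract the vanishing. Observe that $\eta$ is a continuous simple curve in $\HH$ from $x_1$ to $x_{2n}$ hitting $\R$ only at $\{x_1,x_{2n}\}$, and that $x_1<x_2<x_{2n-1}<x_{2n}$ (here $n\ge 2$ is used, so $2<2n-1$). Hence Lemma~\ref{lem::continuouscurve_limit_technical}, applied with the four marked points $(x_1,x_2,x_{2n-1},x_{2n})$, says precisely that
\begin{align*}
P_{n-1} = \frac{(g_t(x_{2n})-W_t)(g_t(x_{2n-1})-g_t(x_2))}{(g_t(x_{2n})-g_t(x_2))(g_t(x_{2n-1})-W_t)} \; \longrightarrow \; 0 , \qquad \text{as } t\to T .
\end{align*}
Combining the three steps, $0\le \prod_{k=1}^{n-1}P_k \le P_{n-1}\to 0$, so the asserted limit follows by the squeeze theorem. (For $n=2$ there is a single pair and the claim is literally Lemma~\ref{lem::continuouscurve_limit_technical}.) There is no genuine analytic obstacle, since all of the vanishing is supplied by Lemma~\ref{lem::continuouscurve_limit_technical}; the work is purely the bookkeeping of pairing the factors and checking the orderings.
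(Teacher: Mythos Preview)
Your proof is correct and essentially identical to the paper's own argument: the paper also groups the product into pairs indexed by odd $j\in\{3,5,\ldots,2n-3\}$, bounds each pair by $1$ via Remark~\ref{rem::crossratio_trivialbound}, and uses Lemma~\ref{lem::continuouscurve_limit_technical} with the four points $(x_1,x_2,x_{2n-1},x_{2n})$ to make the last pair vanish. Your write-up is slightly more explicit about the ordering $w<u_2<\cdots<u_{2n}$ and the squeeze, but the substance is the same.
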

\begin{proof}
For all odd $j \in \{3,5,\ldots, 2n-3\}$, Remark~\ref{rem::crossratio_trivialbound} shows that 
\begin{align*}
0\le \frac{(g_t(x_j)-g_t(x_2))(g_t(x_{j+1})-W_t)}{(g_t(x_j)-W_t)(g_t(x_{j+1})-g_t(x_2))}\le 1.
\end{align*}
Combining this with Lemma~\ref{lem::continuouscurve_limit_technical}, we see that, when $t \to T$, we have
\begin{align*} 
0 \leq \prod_{j=3}^{2n} \left( \frac{g_t(x_j) - W_t}{g_t(x_j )- g_t(x_2)} \right)^{(-1)^{j}} \le 
\; \frac{(g_t(x_{2n-1})-g_t(x_2))(g_t(x_{2n})-W_t)}{(g_t(x_{2n-1})-W_t)(g_t(x_{2n})-g_t(x_2))}  \; \longrightarrow \; 0 . 
\end{align*}
This proves the lemma.
\end{proof}

Next, for any $\alpha \in \LP_N$ and $n \in \{1, \ldots, N\}$, we define the function 
\begin{align}\label{eq: auxiliary function}
F_\alpha^{(n)} (x_1, \ldots, x_{2N})
:= \frac{\LB_{\alpha}(x_1, \ldots, x_{2N})}{\LB^{(n)}(x_1, \ldots, x_{2n})}. 
\end{align}

\begin{lemma}\label{lem::mart_zerowhenwrong_induction}
Let $\alpha \in \LP_N$ and suppose that $\link{1}{2} \in \alpha$. Then for all $n \in \{1, \ldots, N\}$, with $\hat{\alpha} = \alpha \removeLink \link{1}{2}$, we have 
\begin{align*}
F_\alpha^{(n)} (x_1, x_2, x_3, \ldots, x_{2N})
= \prod_{j=3}^{2n} \left( \frac{x_j - x_1}{x_j - x_2} \right)^{(-1)^{j}} \times F_{\hat{\alpha}}^{(n-1)} (x_3, x_4, \ldots, x_{2N}) .
\end{align*}
\end{lemma}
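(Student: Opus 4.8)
The plan is to prove the identity by a direct computation: first isolate the contribution of the link $\link{1}{2}$ inside $\LB_\alpha$, and then reduce everything to a purely algebraic identity relating $\LB^{(n)}$ and $\LB^{(n-1)}$.

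First I would use that $\link{1}{2} \in \alpha$ is a link of a perfect matching, so every other link of $\alpha$ involves only indices in $\{3, \ldots, 2N\}$; the relabeling $i \mapsto i-2$ then identifies these remaining links bijectively with the links of $\hat{\alpha} = \alpha \removeLink \link{1}{2} \in \LP_{N-1}$. Plugging this into the definition $\LB_{\alpha}(x_1,\ldots,x_{2N}) = \prod_{\link{a}{b}\in\alpha}|x_b-x_a|^{-1}$ from~\eqref{eqn::b_alpha_def} yields the factorization
\[
\LB_{\alpha}(x_1,\ldots,x_{2N}) = (x_2-x_1)^{-1}\,\LB_{\hat{\alpha}}(x_3,\ldots,x_{2N}),
\]
where on the right the variables $x_3,\ldots,x_{2N}$ are substituted into the $2N-2$ arguments of $\LB_{\hat{\alpha}}$.

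Next I would substitute this factorization into the definition~\eqref{eq: auxiliary function} of $F_\alpha^{(n)} = \LB_{\alpha}/\LB^{(n)}(x_1,\ldots,x_{2n})$ and into $F_{\hat{\alpha}}^{(n-1)}(x_3,\ldots,x_{2N}) = \LB_{\hat{\alpha}}(x_3,\ldots,x_{2N})/\LB^{(n-1)}(x_3,\ldots,x_{2n})$. The common factor $\LB_{\hat{\alpha}}(x_3,\ldots,x_{2N})$ cancels, reducing the asserted identity to
\[
\frac{\LB^{(n-1)}(x_3,\ldots,x_{2n})}{\LB^{(n)}(x_1,\ldots,x_{2n})} = (x_2-x_1)\prod_{j=3}^{2n}\Big(\frac{x_j-x_1}{x_j-x_2}\Big)^{(-1)^j}.
\]
To prove this, I would observe from~\eqref{eqn::b_n_def} that, since $(-1)^{l-k}$ is invariant under shifting both indices by $2$, one has $\LB^{(n-1)}(x_3,\ldots,x_{2n}) = \prod_{3\le k<l\le 2n}(x_l-x_k)^{(-1)^{l-k}}$. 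Hence the ratio on the left-hand side is exactly the product of $(x_l-x_k)^{-(-1)^{l-k}}$ over those pairs $1\le k<l\le 2n$ with $k\in\{1,2\}$. For $k=1$ the exponent $-(-1)^{l-1}$ equals $(-1)^{l}$, and for $k=2$ it equals $-(-1)^{l}$; pulling out the single $l=2$, $k=1$ factor, which is $(x_2-x_1)$, delivers precisely the claimed right-hand side. The boundary value $n=1$ (empty products, with the convention $\LB^{(0)}:=1$ so that $F_{\hat\alpha}^{(0)} = \LB_{\hat\alpha}$) I would check separately; it is immediate from the factorization above.

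There is essentially no analytic difficulty here; the only point requiring care is the bookkeeping of the relabeling defining $\hat{\alpha}$ and of the signs $(-1)^{l-k}$ under the index shift. I would present the exponent computation cleanly by distinguishing the parity of $l$, which makes the cancellations transparent.
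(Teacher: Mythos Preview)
Your proof is correct and is precisely the computation the paper has in mind; the paper's own proof simply states that the identity ``follows immediately from the definition of $F_\alpha^{(n)}$,'' and your argument unpacks exactly that.
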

\begin{proof}
This follows immediately from the definition~\eqref{eq: auxiliary function} of $F_\alpha^{(n)}$.
\end{proof}

\begin{lemma}\label{lem::finiteness_property}
For any $\alpha \in \LP_N$, $n \in \{1, \ldots, N\}$, and $\xi < x_{2n+1} < \cdots < x_{2N}$, we have 
\begin{align}\label{eq::finiteness_property}
\limsup_{\substack{\tilde{x}_1, \ldots, \tilde{x}_{2n} \to \xi, \\ \tilde{x}_{i} \to x_i \text{ for } 2n < i \le  2N}} 
F_\alpha^{(n)} (\tilde{x}_1, \ldots, \tilde{x}_{2N}) < \infty .
\end{align}
\end{lemma}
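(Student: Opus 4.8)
The plan is to prove the statement by induction on $N$, with $n$ allowed to range over $\{0,1,\dots,N\}$ (the case $n=0$ is trivial, since then $F_\alpha^{(0)}=\LB_\alpha$ is a continuous function of $x_1<\dots<x_{2N}$, hence finite). The conceptual point is that one must control a genuine $2n$-parameter degeneration $\tilde x_1,\dots,\tilde x_{2n}\to\xi$ rather than a self-similar one, so naive power counting of the exponents in $\LB_\alpha$ and $\LB^{(n)}$ is not enough; the tool that handles the general degeneration is the boundedness of products of four-point cross-ratios supplied by Remark~\ref{rem::crossratio_trivialbound}.

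As preparation I would record two facts. (i) A version of Lemma~\ref{lem::mart_zerowhenwrong_induction} that removes an arbitrary ``innermost'' link: if $\link{j}{j+1}\in\alpha$ with $j+1\le 2n$ and $\hat\alpha=\alpha\removeLink\link{j}{j+1}\in\LP_{N-1}$, then expanding $\LB_\alpha$ and $\LB^{(n)}$ as products and cancelling the common factor $(x_{j+1}-x_j)^{-1}$ yields
\begin{align*}
F_\alpha^{(n)}(x_1,\dots,x_{2N})=\Phi_j(x_1,\dots,x_{2n})\cdot F_{\hat\alpha}^{(n-1)}(x_1,\dots,x_{j-1},x_{j+2},\dots,x_{2N}),
\end{align*}
where $\Phi_j=\prod_{k<j}\left(\frac{x_{j+1}-x_k}{x_j-x_k}\right)^{(-1)^{j-k}}\prod_{j+1<l\le 2n}\left(\frac{x_l-x_{j+1}}{x_l-x_j}\right)^{(-1)^{l-j}}$; grouping the indices in each product into consecutive pairs and applying Remark~\ref{rem::crossratio_trivialbound} to the resulting cross-ratios (the unpaired leftover factor, when present, being manifestly $\le 1$) shows $0\le\Phi_j\le 1$ for all $x_1<\dots<x_{2n}$. (ii) For every $m\ge 1$, $\LB^{(m)}(\tilde x_1,\dots,\tilde x_{2m})\to+\infty$ as $\tilde x_1,\dots,\tilde x_{2m}\to\xi$; this I would prove by induction on $m$, the step using the same type of identity applied to $\LB^{(m)}$ at the consecutive pair $(2m-1,2m)$, namely $\LB^{(m)}(x_1,\dots,x_{2m})=(x_{2m}-x_{2m-1})^{-1}\,\Psi(x_1,\dots,x_{2m})\,\LB^{(m-1)}(x_1,\dots,x_{2m-2})$ with $\Psi\ge 1$ (again by pairing cross-ratios, this time in the direction of Remark~\ref{rem::crossratio_trivialbound} that makes each pair $\ge1$), together with $(x_{2m}-x_{2m-1})^{-1}\to\infty$ and the inductive hypothesis.

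For the inductive step I would fix $N\ge 1$, $\alpha\in\LP_N$, $n\in\{1,\dots,N\}$, and distinguish two cases according to whether $\alpha$ has a link entirely inside $\{1,\dots,2n\}$. If it does, then by planarity (all points strictly between the endpoints of such a link are themselves paired inside $\{1,\dots,2n\}$) the link of minimal length has the form $\link{j}{j+1}$ with $j+1\le 2n$, so fact~(i) together with $0\le\Phi_j\le1$ reduces the claim to $\limsup F_{\hat\alpha}^{(n-1)}<\infty$ along the degeneration in which $x_1,\dots,x_{j-1},x_{j+2},\dots,x_{2n}\to\xi$ and the remaining variables converge — exactly the inductive hypothesis applied to $\hat\alpha\in\LP_{N-1}$ with index $n-1$. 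If $\alpha$ has no link inside $\{1,\dots,2n\}$, then every link $\link{a}{b}$ of $\alpha$ has at most one endpoint in $\{1,\dots,2n\}$, so along $\tilde x_1,\dots,\tilde x_{2n}\to\xi$, $\tilde x_i\to x_i$ ($i>2n$) each factor of $\LB_\alpha$ converges to a finite positive limit (to $|x_b-x_a|^{-1}$ if $a,b>2n$, and to $|x_b-\xi|^{-1}$, finite since $x_b\ge x_{2n+1}>\xi$, if $a\le 2n<b$), hence $\LB_\alpha$ stays bounded, while $\LB^{(n)}(\tilde x_1,\dots,\tilde x_{2n})\to\infty$ by fact~(ii); therefore $F_\alpha^{(n)}\to 0$. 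This closes the induction.

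The main obstacle is fact~(i), specifically the verification $\Phi_j\le1$: this is precisely the step that upgrades exponent bookkeeping (valid only along self-similar approaches to the diagonal) to control over the full $2n$-parameter limit. After grouping consecutive indices it reduces to repeated use of the elementary inequality in Remark~\ref{rem::crossratio_trivialbound}, but one has to track parities carefully to be sure the single unpaired factor (which occurs exactly when $j$ is even, on each of the two products) always lands on the side that is $\le 1$; the analogous bookkeeping enters fact~(ii).
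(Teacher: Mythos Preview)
Your proof is correct and shares the paper's core strategy: induct on $N$, and when $\alpha$ has an innermost link $\link{j}{j+1}$ inside $\{1,\ldots,2n\}$, factor $F_\alpha^{(n)}=\Phi_j\cdot F_{\hat\alpha}^{(n-1)}$ and bound $\Phi_j\le 1$ by grouping the factors into four-point cross-ratios controlled by Remark~\ref{rem::crossratio_trivialbound}. This is exactly the paper's Case~1, and your parity bookkeeping for the unpaired factor when $j$ is even matches what the paper does.

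Where you diverge is in the complementary situation, when $\alpha$ has no link entirely inside $\{1,\ldots,2n\}$. The paper still picks some innermost link $\link{j}{j+1}$ (now with $j\ge 2n$), splits into two further cases $j>2n$ and $j=2n$, and reduces each by the inductive hypothesis. You instead observe directly that every link of $\alpha$ then has at most one endpoint among the degenerating variables, so $\LB_\alpha$ stays bounded, while $\LB^{(n)}\to\infty$ by your fact~(ii); hence $F_\alpha^{(n)}\to 0$ outright. Your route is more conceptual here and avoids the two-case split, at the price of establishing the auxiliary divergence fact~(ii), which the paper never needs since it always falls back on the inductive hypothesis. Both arguments ultimately rest on the same cross-ratio inequality, so the trade-off is purely organizational.
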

\begin{proof}
We prove the claim by induction on $N \ge 1$. It is clear for $N = 1$, as $F_\emptyset^{(0)} = 1$. 
Assume then that
\begin{align*}
\limsup_{\substack{\tilde{x}_1, \ldots, \tilde{x}_{2\ell} \to y, \\ \tilde{x}_{i} \to x_i \text{ for } 2\ell < i \le  2N-2}} 
F_\beta^{(\ell)} (\tilde{x}_1, \ldots, \tilde{x}_{2N-2}) < \infty 
\end{align*}
holds for all $\beta \in \LP_{N-1}$, $\ell \in \{1, \ldots, N-1\}$, and $y < x_{2\ell+1} < \cdots < x_{2N-2}$. Let 
$\alpha \in \LP_N$, $n \in \{1, \ldots, N\}$, and $\xi < x_{2n+1} < \cdots < x_{2N}$. 
Choose $j$ such that $\link{j}{j+1} \in \alpha$. We consider three cases.

\begin{itemize}
\item $j+1 \leq 2n$: In this case, by Lemma \ref{lem::mart_zerowhenwrong_induction}, we have
\begin{align*}
F_\alpha^{(n)} (\tilde{x}_1, \ldots, \tilde{x}_{2N}) = \prod_{\substack{1 \leq i \leq 2n, \\ i \neq j, j+1 }} 
\bigg| \frac{\tilde{x}_i - \tilde{x}_j}{\tilde{x}_i - \tilde{x}_{j+1}} \bigg|^{(-1)^{i+j+1}} \times
F_{\alpha \removeLink \link{j}{j+1}}^{(n-1)} (\tilde{x}_1, \ldots, \tilde{x}_{j-1}, \tilde{x}_{j+2}, \ldots, \tilde{x}_{2N})  .
\end{align*}
Using Remark~\ref{rem::crossratio_trivialbound}, we see that 
if $j$ is odd, then we have  
\[ \prod_{\substack{1 \leq i \leq 2n, \\ i \neq j, j+1 }} 
\bigg| \frac{\tilde{x}_i - \tilde{x}_j}{\tilde{x}_i - \tilde{x}_{j+1}} \bigg|^{(-1)^{i+j+1}} 
= \prod_{\substack{1\le m\le n , \\ m\neq (j+1)/2} }
\bigg|\frac{(\tilde{x}_{2m-1}-\tilde{x}_{j+1})(\tilde{x}_{2m}-\tilde{x}_{j})}{(\tilde{x}_{2m-1}-\tilde{x}_{j})(\tilde{x}_{2m}-\tilde{x}_{j+1})}\bigg| \le 1 ,\]
and if $j$ is even, 
then we have 
\begin{align*}
 \prod_{\substack{1 \leq i \leq 2n, \\ i \neq j, j+1 }} 
\bigg| \frac{\tilde{x}_i - \tilde{x}_j}{\tilde{x}_i - \tilde{x}_{j+1}} \bigg|^{(-1)^{i+j+1}} 
= \; \bigg|\frac{(\tilde{x}_1-\tilde{x}_j)(\tilde{x}_{2n}-\tilde{x}_{j+1})}{(\tilde{x}_1-\tilde{x}_{j+1})(\tilde{x}_{2n}-\tilde{x}_j)}\bigg| \;
\prod_{\substack{1\le m<n , \\  m\neq j/2} }
\bigg|\frac{(\tilde{x}_{2m}-\tilde{x}_{j+1})(\tilde{x}_{2m+1}-\tilde{x}_{j})}{(\tilde{x}_{2m}-\tilde{x}_{j})(\tilde{x}_{2m+1}-\tilde{x}_{j+1})}\bigg|\le 1.
\end{align*}
Thus, we have 
\[F_\alpha^{(n)} (\tilde{x}_1, \ldots, \tilde{x}_{2N}) \le 
F_{\alpha \removeLink \link{j}{j+1}}^{(n-1)} (\tilde{x}_1, \ldots, \tilde{x}_{j-1}, \tilde{x}_{j+2}, \ldots, \tilde{x}_{2N}) ,
\]
so by the induction hypothesis, 
$F_\alpha^{(n)}$ remains finite in the limit~\eqref{eq::finiteness_property}.

\item $j > 2n$: In this case, we have
\begin{align*}
F_\alpha^{(n)} (\tilde{x}_1, \ldots, \tilde{x}_{2N}) = (\tilde{x}_{j+1} - \tilde{x}_j)^{-1} \;
F_{\alpha \removeLink \link{j}{j+1}}^{(n)} (\tilde{x}_1, \ldots, \tilde{x}_{j-1}, \tilde{x}_{j+2}, \ldots, \tilde{x}_{2N}) ,
\end{align*}
which by the induction hypothesis remains finite in the limit~\eqref{eq::finiteness_property}.

\item $j = 2n$: In this case, we have
\begin{align*}
F_\alpha^{(n)} (\tilde{x}_1, \ldots, \tilde{x}_{2N}) 
= & \; 
\left( \frac{\tilde{x}_{2n} - \tilde{x}_{2n-1}}{\tilde{x}_{2n+1} - \tilde{x}_{2n}} \right) \;
\prod_{i = 1}^{2n - 2 } \left( \frac{\tilde{x}_{2n-1} - \tilde{x}_i}{\tilde{x}_{2n} - \tilde{x}_i} \right)^{(-1)^{i}} 
\\
& \; \times F_{\alpha \removeLink \link{j}{j+1}}^{(n-1)} (\tilde{x}_1, \ldots, \tilde{x}_{j-1}, \tilde{x}_{j+2}, \ldots, \tilde{x}_{2N}) .
\end{align*}
By Remark \ref{rem::crossratio_trivialbound}, we have 
\[\prod_{i = 1}^{2n - 2 } \left( \frac{\tilde{x}_{2n-1} - \tilde{x}_i}{\tilde{x}_{2n} - \tilde{x}_i} \right)^{(-1)^{i}} 
= \; \prod_{m = 1}^{n-1}\frac{(\tilde{x}_{2n}-\tilde{x}_{2m-1})(\tilde{x}_{2n-1}-\tilde{x}_{2m})}{(\tilde{x}_{2n-1}-\tilde{x}_{2m-1})(\tilde{x}_{2n}-\tilde{x}_{2m})} \; \le \; 1 . \]
By the induction hypothesis, the limit~\eqref{eq::finiteness_property} of $F_{\alpha \removeLink \link{j}{j+1}}^{(n-1)}$ is finite, so
we see that $F_\alpha^{(n)}$ also remains finite in the limit~\eqref{eq::finiteness_property} 
(in fact, the limit of $F_\alpha^{(n)}$ is zero in this case).
\end{itemize}

This completes the proof.
\end{proof}

\begin{proof}[Proof of Proposition \ref{prop:: strong limit for ratio Zalpha and Ztotal}]
Write $\LB_\alpha / \LB^{(N)}  = \big( \LB^{(n)} / \LB^{(N)} \big) \big( \LB_\alpha / \LB^{(n)} \big)$.
Then, Lemma~\ref{lem::b_total_asy_refined} shows that
in the limit~\eqref{eq: strong limit for ratio Zalpha and Ztotal},
we have 
$ \LB^{(N)}/\LB^{(n)} \to \LB^{(N-n)} > 0$. Thus, it suffices to show that 
$F_\alpha^{(n)}=\LB_\alpha / \LB^{(n)} \to 0$ 
in this limit.  
Indeed, combining Lemmas \ref{lem::crossratio_zerowhenwrong_general}--\ref{lem::finiteness_property}, 
we see that in the limit $t \to T$, we have
\begin{align*}
\frac{\LB_\alpha(W_t, g_t(x_2), \ldots, g_t(x_{2N}))}
{\LB^{(n)}(W_t, g_t(x_2), \ldots, g_t(x_{2n}))} 
= & \;  F_\alpha^{(n)}(W_t, g_t(x_2), \ldots, g_t(x_{2N})) \\
= & \;  \prod_{j=3}^{2n} \left( \frac{g_t(x_j) - W_t}{g_t(x_j) - g_t(x_2)} \right)^{(-1)^{j}}
\times  F_{\hat{\alpha}}^{(n-1)} (g_t(x_3), \ldots, g_t(x_{2N}))
\; \longrightarrow \; 0 ,
\end{align*}
where $\hat{\alpha} = \alpha \removeLink \link{1}{2}$.
This concludes the proof.
\end{proof}

{\small
\singlespacing

\newcommand{\etalchar}[1]{$^{#1}$}

}

\end{document}